\numberwithin{equation}{section}
\theoremstyle{plain}
\newtheorem{thm}{Theorem}[section]%
\newtheorem{lem}[thm]{Lemma}%
\newtheorem{cor}[thm]{Corollary}%
\newtheorem{prop}[thm]{Proposition}%
\newtheorem{deflem}[thm]{Definition and Lemma}%
\newtheorem{cordef}[thm]{Corollary and Definition}
\newtheorem*{douadyhubbardlandingtheorem}{Douady-Hubbard landing theorem}
\theoremstyle{definition}
\newtheorem{defn}[thm]{Definition}%
\newtheorem{obs}[thm]{Observation}%
\newtheorem{rmk}[thm]{Remark}%
 \newtheoremstyle{claimstyle}%
   {}
   {}
   {\normalfont}
   {}
   {\itshape}
   {.}
   { }
   {\thmnote{#3}}
\theoremstyle{claimstyle}
\newtheorem*{varclaim}{}
\newenvironment{claim}[1][Claim]{\begin{varclaim}[#1]}{\end{varclaim}}
\newenvironment{remark}[1][Remark]{\begin{varclaim}[#1]}{\end{varclaim}}
\newenvironment{subproof}{\begin{proof}}{%
               \end{proof}}
\newcommand{\unbdd}[1]{\accentset{\infty}{#1}}
\newcommand*{\defeq}{\mathrel{\vcenter{\baselineskip0.5ex \lineskiplimit0pt
                     \hbox{\scriptsize.}\hbox{\scriptsize.}}}%
                     =}
\newcommand{\eqdef}{=\mathrel{\vcenter{\baselineskip0.5ex \lineskiplimit0pt
                     \hbox{\scriptsize.}\hbox{\scriptsize.}}}}
\newcommand{\cyl}{\operatorname{cyl}}
\newcommand{\WW}{W_0}
\newcommand{\tail}{\tau}  
\newcommand{\tailtilde}{\vartheta} 
\newcommand{\closure}{\operatorname{cl}}
\newcommand{\Deriv}{{\rm D}}
\newcommand{\deriv}{{\rm d}}
\newcommand{\distW}{\dist_{\cyl}^{\WW}}
\newcommand{\diamW}{\diam_{\cyl}^{\WW}}
\newcommand{\PC}{\mathcal{P}}
\newcommand{\PP}{\PC}
\renewcommand{\P}{\PC}
\newcommand{\B}{\mathcal{B}}
\newcommand{\FF}{\mathcal{F}}
\renewcommand{\AA}{\mathcal{A}}
\newcommand{\BB}{\mathcal{B}}
\newcommand{\s}{{\underline{s}}}
\newcommand{\ov}{\overline}
\newcommand{\ra}{\rightarrow}
\newcommand{\N}{\mathbb{N}}
\newcommand{\Z}{\mathbb{Z}}
\newcommand{\R}{\mathbb{R}}
\newcommand{\C}{\ensuremath{\mathbb{C}}}
\newcommand{\Ch}{\hat{\mathbb{C}}}
\newcommand{\D}{\mathbb{D}}
\newcommand{\eps}{\ensuremath{\varepsilon}}
\renewcommand{\theta}{\vartheta}
\renewcommand{\phi}{\varphi}
\newcommand{\re}{\operatorname{Re}}
\newcommand{\dist}{\operatorname{dist}}
\newcommand{\diam}{\operatorname{diam}}
\title[A landing theorem for entire functions]{A landing theorem for entire functions \\ with bounded post-singular sets}
\author{Anna Miriam Benini}
\address{Dipartimento di Scienze Matematiche Fisiche e Informatiche, Universit\`a di Parma, Parma,  IT}
\email{ambenini@gmail.com}
\author{Lasse Rempe}
\address{Dept.\ of Mathematical Sciences, University of Liverpool, Liverpool L69 7ZL, UK}
\email{l.rempe@liverpool.ac.uk}
\date{\today}
\thanks{The first author  was supported by the European Union’s Horizon 2020 research and innovation programme under the Marie Sk{\l}odowska-Curie Grant Agreement No.~703269 COTRADY and by the SIR grant NEWHOLITE no.~RBSI14CFME. The second author was partially supported by a Philip Leverhulme Prize.}
\subjclass[2020]{Primary 37F20; Secondary 30D05, 37F10, 37F12}
\keywords{Transcendental entire function, transcendental dynamics, accessibility, combinatorics, external ray, hair, filament.}
\begin{document}

\begin{abstract}
  The \emph{Douady-Hubbard landing theorem} for periodic external 
    rays is one of the cornerstones of the study
   of polynomial dynamics. It states that, for a complex polynomial $f$ with bounded postcritical set, every 
   periodic external ray lands at a repelling or parabolic periodic point, and conversely every repelling or parabolic
    point is the landing point of at least one periodic external ray. 

We prove an analogue of this theorem for an entire function $f$ with bounded postsingular set. If $f$ has finite order of growth, then it is known that
  the escaping set $I(f)$ contains certain curves called \emph{periodic hairs}; we show that 
  every periodic hair lands at a repelling or parabolic periodic point, and conversely every repelling or parabolic periodic point is the landing point of at least one periodic hair.
  For a postsingularly bounded entire function $f$ of infinite order, such hairs may not exist. Therefore we introduce 
  certain dynamically natural connected subsets of $I(f)$, called \emph{filaments}. We show that 
 every periodic filament lands at a repelling or parabolic periodic point, and conversely every repelling or parabolic periodic point is the landing point of at least one periodic filament. 

   More generally, we prove that every point of a hyperbolic set is the landing point of a filament.
\end{abstract}

\maketitle


\section*{A note on terminology by the second author}
 In previous versions of this article, including that published in 2020 in \emph{Geometric and Functional Analysis}, 
  the objects we now call ``filaments'' were called ``dreadlocks''~-- a term that I was using informally to refer to generalisations of 
   ``hairs'' for some time before collaboration on the present paper began. I now regret this 
    terminology. Dreadlocks are a hairstyle particularly associated
  with afro-textured hair. I was ignorant of the shameful history of discrimination 
   against those whose hair does not conform to Eurocentric beauty standards; see
   ``How the fight for natural black hair became a civil rights issue''
     (Keli Godd for The Guardian, April 2021).
   While I find the mathematical objects in question beautiful and natural, some may consider them 
   exotic or pathological. This makes the term ``dreadlocks''
   particularly inappropriate; it has been replaced
    with the agreement of my co-author. I encourage all mathematicians to be 
   mindful of terminology that may be offensive or unwelcoming to colleagues from underrepresented groups.

\section{Introduction}
  Let $p\colon\C\to\C$ be a polynomial. The \emph{filled-in Julia set} $K(p)$ consists of those
    points $z\in\C$ whose orbits remain  bounded under repeated application of $p$. 
    In their study of the dynamics of complex polynomials and the Mandelbrot set \cite{DH84},
    Douady and Hubbard introduced the notion of \emph{external rays}, which can be characterised as the
     gradient lines of the Green's function on the basin  of attraction  of infinity, $\C\setminus K(p)$.  
     \emph{Periodic}  (and pre-periodic) rays are of particular importance, due to the following result.
 
  \begin{douadyhubbardlandingtheorem}
    Let $p$ be a polynomial whose \emph{post-critical set}
      \begin{equation}\label{eqn:postcritical} \PC(p) \defeq \overline{\bigcup_{c\colon  p'(c)=0} \{p^n(c)\colon  n\geq 1\}} \end{equation}
     is bounded. (Equivalently, assume that $K(p)$ is connected.) 

   Then every periodic ray of $p$ lands at a repelling  or parabolic periodic point, and conversely every
     repelling or parabolic periodic point of $p$ is the landing point of at least one and at most finitely many
     periodic external rays.  
  \end{douadyhubbardlandingtheorem}

   The first half of this theorem, concerning the landing of periodic rays, can be found in
     \cite[Expos\'e~VIII.II,  Proposition 2]{DH84}. The second half,
     which is more difficult, is due to  Douady; the first published proofs are in \cite{ELv,Hu}. 
    Ever since, the Douady-Hubbard theorem has been  a cornerstone of  the   study of polynomial dynamics. In particular, 
    it forms the basis of the ``puzzle techniques'' that were pioneered by Yoccoz, Branner and Hubbard,  and continue to    
   lead to fundamental new results; see \cite{Hu,roeschyin,avilalyubichshen}.

 In the study of  rational functions and 
    transcendental entire functions, there is no  immediate analogue of the basin of infinity, 
    and this is  one of  the reasons that the study  of these classes has presented greater challenges
    than that of polynomials. Nonetheless, in  both settings 
    analogues of the above-mentioned
    puzzle techniques have been employed to certain classes of functions with considerable 
    success. We refer to \cite{Ro08,Be15} for two examples. 

  Our goal is to extend the Douady-Hubbard landing theorem to 
    the case of a transcendental entire function $f$. 
    In this setting, the role that critical values play in polynomial dynamics is taken by the larger set $S(f)$ 
    of \emph{singular values} of $f$. These are those points  
     not having a neighbourhood in which all branches of $f^{-1}$ are defined and holomorphic. 
    Analogously to~\eqref{eqn:postcritical}, the \emph{postsingular set} of $f$ is defined as
    \begin{equation}\label{eqn:postsingularset}
      \P(f) \defeq \overline{\bigcup_{s\in S(f)} \{f^n(s)\colon n\geq 0\}}. \end{equation}

 For transcendental maps,  
    $\infty$ is an essential  singularity,  rather than  a super-attracting
    fixed point. Hence the definition of external rays for polynomials as gradient lines of a Green's function
    has no natural analogue. 
   Nonetheless,  it has long been known that the \emph{escaping set}
    \[ I(f) \defeq \{z\in\C\colon f^n(z)\to\infty\} \]
   often contains curves to infinity; indeed, in some cases this was already noticed by Fatou~\cite{fatou26}.
   It was the work of Devaney and his collaborators (see e.g.\ \cite{DK,DT86}) that really began the 
   study of these \emph{hairs} or \emph{dynamic rays} in the 1980s, particularly for functions in the 
   \emph{exponential family},
             \begin{equation}\label{eqn:exponentialfamily} f_a\colon z\mapsto e^z+a. \end{equation}
   Devaney, Goldberg and Hubbard were probably the first to suggest that such hairs 
   could serve as analogues of external rays of polynomials; compare \cite{dgh,BD}. Subsequently,
   Schleicher and Zimmer \cite{expescaping} and Schleicher and Rottenfu{\ss}er \cite{cosescaping} proved
    that, for the families of exponential maps~\eqref{eqn:exponentialfamily} and of
     \emph{cosine maps} $z\mapsto ae^z+ be^{-z}$, 
respectively, the \emph{entire} escaping set $I(f)$ consists of hairs. 

   On the other hand, in \cite{R3S} it is shown that there is a transcendental entire function $f$ 
     for which $I(f)$ 
     contains no arcs. Hence there are no
     curves in $I(f)$, of any kind, landing at any of the repelling
     periodic points of $f$. (Recall that 
     repelling periodic points  are dense in the Julia set of any transcendental entire function.) 
    Furthermore, the postsingular set $\P(f)$ of this function is bounded. Indeed, $S(f)$ is a compact subset of 
    the immediate basin of attraction of a single attracting fixed point. 

\subsection*{Filaments}
  In view of the preceding example, we develop a novel approach to the landing problem that 
    removes the focus on hairs altogether, by connecting repelling periodic points to infinity using 
    more general sets of escaping points. 

 More precisely, we introduce a notion of \emph{filaments} for postsingularly bounded entire functions. These are 
    certain unbounded connected
    sets of escaping points
    generalising the concept of hairs. (See Section~\ref{sec:filaments} for formal definitions.) The set of
    filaments has a natural combinatorial structure, and in tame cases, all filaments are in fact
    hairs. In general, however, filaments can be topologically much more complicated.
    Indeed, it follows from \cite{arclike} that 
    the closure of a filament may be a hereditrarily indecomposable continuum.\footnote{%
    More precisely, \cite{arclike} shows that a \emph{Julia continuum} of a disjoint-type entire function
       may have this property, Every such Julia continuum is the closure of a filament in our sense;
      compare Lemma~\ref{lem:control} and Remark~\ref{rmk:disjointtype}.}

    With this terminology, we are able to prove the following generalisation of
    the Douady-Hubbard landing theorem for postsingularly bounded entire functions:
    Every periodic filament lands, and every repelling or parabolic periodic point is the landing point of at least one and at    most finitely many periodic filaments (Theorem~\ref{thm:DHfilaments}).
   In particular, {without requiring the definitions of Section~\ref{sec:filaments}, we can state
     the following result.}

\begin{thm}[Landing at periodic points]\label{thm:landingsets}
   Let $f$  be a transcendental entire function such that $\P(f)$ is bounded, and let $\zeta$ be a repelling
    or parabolic periodic point. Then there is a connected and unbounded set $A\subset I(f)$ and a period   $p$ with the following
     properties.
    \begin{enumerate}[(a)]
      \item $\overline{A} = A\cup \{\zeta\}$ and $\overline{A}$ does not separate the plane;\label{item:Aclosure} 
      \item $f^p(A)=A$ and $f^j(A)\cap A = \emptyset$ for $1\leq j < p$;\label{item:Aperiod}
      \item for every $\eps>0$, $f^n$ tends to $\infty$ uniformly on $\{z\in A\colon \lvert z-\zeta\rvert \geq \eps\}$.\label{item:Auniform}
    \end{enumerate} 
   If  $\tilde{\zeta}\neq\zeta$ is a different repelling or parabolic periodic point and $\tilde{A}$ is a set as above for $\tilde{\zeta}$, then $A\cap\tilde{A}=\emptyset$. 
\end{thm}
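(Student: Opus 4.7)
The plan is to deduce this statement as a near-immediate corollary of the dreadlock landing theorem (Theorem~\ref{thm:DHdreadlocks}) stated informally in the paragraph above the theorem. Given a repelling or parabolic periodic point $\zeta$, that theorem will produce a periodic dreadlock $A$ landing at $\zeta$, and I take this $A$ as the set in the statement. Conditions~(a) and~(b) should then be essentially tautological once one unpacks the definitions from Section~\ref{sec:dreadlocks}: a dreadlock is by construction a connected unbounded subset of $I(f)$, giving the first sentence; ``landing at $\zeta$'' is defined to mean precisely $\overline{A}=A\cup\{\zeta\}$, giving~(a); and ``periodic of minimal period~$p$'' is defined to mean precisely $f^p(A)=A$ with disjoint forward images under lower iterates, giving~(b).

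The substantive verification is (c), uniform escape of $f^n$ on $\{z\in A\colon\lvert z-\zeta\rvert\geq\eps\}$. My approach is two-step. First, use the local dynamics of $f^p$ at $\zeta$ (Koenigs linearisation in the repelling case, Fatou coordinates in the parabolic case) to see that any point of $A$ lying in some fixed neighbourhood $U$ of $\zeta$ but outside $B(\zeta,\eps)$ is pushed out of $U$ by a uniformly bounded number of iterates of $f^p$. Second, combine this with the escape control for points of a dreadlock (such as that indicated by Lemma~\ref{lem:control}) to show that, on the complement of $U$ within $A$, the iterates tend to infinity uniformly. Gluing the two regimes yields the required uniform escape on $A\setminus B(\zeta,\eps)$.

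The disjointness of $A$ and $\tilde A$ should follow from the fact that every escaping point lies in at most one dreadlock, a built-in property of the combinatorial structure set up in Section~\ref{sec:dreadlocks} (distinct dreadlocks carry distinct addresses). If $A\cap\tilde A\neq\emptyset$ then necessarily $A=\tilde A$ as sets, and then $\{\zeta\}=\overline{A}\setminus A=\overline{\tilde A}\setminus\tilde A=\{\tilde\zeta\}$ by~(a), forcing $\zeta=\tilde\zeta$. The main obstacle in this plan is not the corollary itself but rather the proof of the underlying Theorem~\ref{thm:DHdreadlocks}; within the present deduction, the only real work is condition~(c), where the parabolic case is likely to require slightly more care than the repelling one.
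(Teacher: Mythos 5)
Your overall strategy coincides with the paper's: Theorem~\ref{thm:landingsets} is deduced as a direct corollary of Theorem~\ref{thm:DHdreadlocks} by taking $A=G_{\s}$ for a periodic dreadlock landing at $\zeta$, with (b) coming from the action of $f$ on addresses and the final disjointness claim from the fact that distinct addresses give disjoint dreadlocks (Lemma~\ref{lem:addressproperties}). However, your treatment of part~(c) — which you correctly identify as the only substantive step — contains a genuine gap. First, ``landing'' in this paper is \emph{not} defined as $\overline{A}=A\cup\{\zeta\}$; it is defined (Definition~\ref{defn:accumulationset}) by the condition $\Lambda(\s)=\{\zeta\}$, where $\Lambda(\s)$ is the accumulation set of the backward orbit $f_{\s}^{-n}(\zeta_0)$ of a base point. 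Neither (a) nor (c) is tautological from this definition; both are exactly the content of Corollary~\ref{cor:alternativelanding} (equivalently, Proposition~\ref{prop:accumulationsets}\ref{item:accumulationsetinclosure} and~\ref{item:accumulationnonuniform}): applying part~\ref{item:accumulationnonuniform} with $U=\Ch\setminus\overline{D_{\eps}(\zeta)}$, whose closure misses $\Lambda(\s)=\{\zeta\}$, yields uniform escape on $A\setminus D_{\eps}(\zeta)$ in one line. This is the route the paper takes.

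Your proposed substitute for this step does not work as described. The ``second regime'' — showing that $f^n\to\infty$ uniformly on $A\setminus U$ for a fixed neighbourhood $U$ of $\zeta$ — \emph{is} statement~(c) (for the radius of $U$ in place of $\eps$), so the local-dynamics analysis in the first regime does not reduce the problem; the gluing is circular. Moreover, the escape control you invoke, Lemma~\ref{lem:control}, applies only to (eventually) disjoint-type addresses, i.e.\ addresses whose fundamental domains stay away from $\partial D$; it gives $\overline{G_{\s}}=J_{\s}$ in that case but says nothing about uniform escape away from the landing point, and the periodic addresses arising here are typically not of this type. The correct tool is the chain of shrinking preimage domains $V_n(\s)$ of Lemma~\ref{lem:preimagedomains}: points of $G_{\s}$ outside $\bigcup_{j\geq n}V_j(\s)$ escape at a uniform rate (part~\ref{item:uniformity} of that lemma), which is what underlies Proposition~\ref{prop:accumulationsets}\ref{item:accumulationnonuniform}. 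If you route (a) and (c) through Corollary~\ref{cor:alternativelanding}, no local analysis at $\zeta$ (Koenigs or Fatou coordinates) is needed at all, and the rest of your argument stands.
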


We emphasise that using filaments, rather than restricting to cases
 where hairs exist (see Theorem~\ref{thm:mainrays} below), is crucial
 if one wishes to obtain results for general classes
 of functions. Indeed, Pfrang \cite{pfrangthesis} uses
 our results to construct \emph{(homotopy) Hubbard trees} for all postsingularly
 finite entire functions. This is a natural result 
  whose 
  hypothesis and conclusion make no mention of hairs. Its proof in this form 
  is made possible by the use of filaments; compare the discussion at the end of the final section of \cite{pfrangrothgangschleicher}. Similarly,
  work of Fagella and the first author \cite{BF15,BF17,BF20}, 
  was formulated only for
  functions with hairs, but contains a number of results whose conclusion makes sense without this assumption. 
   For example, the conclusion of the 
  main theorem of \cite{BF17} states that every non-repelling cycle has a singular orbit that is associated to it in a certain
   explicit manner. These results should now extend to all postsingularly bounded
  entire functions, by replacing the role of hairs in the proofs by our ``filaments''. 
  In addition, the key technique of \emph{fundamental tails} that we use to control
  filaments (see Section~\ref{sec:tails}) has already found further applications,
  for instance 
  in the study of inner functions arising in transcendental dynamics \cite{FagellaTails}, and in a new version of the Fatou-Shishikura inequality~\cite{BF20}.

 Moreover, our results offer the possibility of developing puzzle-type arguments for 
   all post\-singularly bounded entire functions, and of using the powerful techniques of symbolic dynamics to study the 
   behaviour
   of non-escaping points. As mentioned above, this is the 
   reason why the structure of polynomial Julia sets is so well understood.
   Theorem~\ref{thm:landingsets} opens up large classes of entire transcendental  
   functions to the same type of analysis. 
   
\subsection*{Existence and landing of periodic hairs}
  In many interesting cases, periodic filaments are in fact 
   periodic hairs. That is, the connected set $A$ in Theorem~\ref{thm:landingsets} is
   an arc connecting $\zeta$ to $\infty$. In particular, this holds for functions
    satisyfing the following property, which states that
    the escaping set consists entirely of hairs. 

 \begin{defn}[Criniferous functions]
    We say that an entire function $f$ is \emph{criniferous}\footnote{``Criniferous'' means ``having hair'' or ``hairy'', from Latin \emph{crinis} (hair) + \emph{ferre} (to bear).}     if   the following holds for every $z\in I(f)$:
      For all sufficiently large $n$ there is an arc $\gamma_n$ 
      connecting $f^n(z)$ to $\infty$, in 
      such a way that $f$ maps $\gamma_n$ injectively onto $\gamma_{n+1}$, 
      and such that $\min_{z\in \gamma_n} \lvert z\rvert\to\infty$  
     as $n\to \infty$. 
 \end{defn}

    The counterexample from \cite{R3S} mentioned above shows that entire functions, even those with 
      bounded postsingular sets, need not be {criniferous}. However, the same article also establishes
     {criniferousness} for a large and natural class of functions, as follows. 
    The \emph{Eremenko-Lyubich class}
     $\BB$ consists of those transcendental entire functions for which $S(f)$ is bounded, and hence compact. 
     (If $\P(f)$ is bounded, then  $f\in \BB$ by definition.)
    It is proved in \cite{R3S} that $f$ is {criniferous} whenever  $f\in\BB$ and $f$ has \emph{finite order of growth}, 
     i.e., 
           \[ \log \log \lvert f(z)\rvert= O(\log \lvert z \rvert).\]
    Furthermore, any finite composition of functions with these properties is also {criniferous}. 

        To discuss periodic hairs,  let 
         us use the following definition from~\cite{Re08}.

 \begin{defn}[Periodic hairs]\label{defn:periodichair}
   An \emph{invariant hair} of  a transcendental entire function $f$ is a continuous and injective curve        
      $\gamma\colon \R\to I(f)$ such that $f(\gamma(t)) = \gamma(t+1)$ for all $t$ and 
      $\lim_{t\to+\infty} \lvert \gamma(t)\rvert = \infty$. A \emph{periodic hair} is a curve that is an invariant hair for some
      iterate $f^n$ of  $f$. 

     Such a hair \emph{lands} if the limit $z_0 = \lim_{t\to -\infty} \gamma(t)$ exists; this limit is called the
      \emph{landing point} (sometimes also \emph{endpoint}) of the hair $\gamma$. 
 \end{defn}
 
  With this terminology, Theorem~\ref{thm:landingsets} takes the following form for {criniferous} functions. 
 \begin{thm}[Landing theorem for periodic hairs]\label{thm:mainrays}
   Let $f$ be a transcendental entire function such that 
     the postsingular set $\P(f)$
     is bounded.  Then
     every periodic hair of  $f$ lands at a repelling or parabolic periodic point. If, in addition, $f$ is {criniferous}, then conversely every
     repelling or parabolic periodic point  of $p$ is the  landing point of at least one and at most finitely 
     many periodic hairs. 
 \end{thm}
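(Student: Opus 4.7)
The plan is to derive Theorem~\ref{thm:mainrays} as a consequence of Theorem~\ref{thm:landingsets} together with the more general landing theorem for dreadlocks (Theorem~\ref{thm:DHdreadlocks}). The first assertion does not require {\hairy}ness, while the converse needs this additional hypothesis precisely in order to upgrade dreadlocks into honest arcs.

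For the first assertion, let $\gamma\colon \R\to I(f)$ be a periodic hair of period $p$, so that $f^p(\gamma(t))=\gamma(t+1)$. Its image is an unbounded, connected subset of $I(f)$, forward-invariant under $f^p$, and thus contained in a periodic dreadlock $D$ in the sense of Section~\ref{sec:dreadlocks}. By the landing half of Theorem~\ref{thm:DHdreadlocks}, $D$ lands at a repelling or parabolic periodic point $\zeta$; in particular $\overline{D}=D\cup\{\zeta\}$. Since $\gamma(t)\to\infty$ as $t\to+\infty$, the accumulation set $L$ of $\gamma$ in $\C$ as $t\to-\infty$ is a non-empty, compact, connected subset of $D\cup\{\zeta\}$. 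The uniform-escape statement in Theorem~\ref{thm:landingsets} applied to $D$ implies that no $w\in D$ can lie in $L$, since otherwise $\gamma$ would accumulate at $w$ while $f^{np}(w)\to\infty$ contradicts the forward invariance of $\gamma$. Hence $L=\{\zeta\}$ and $\gamma$ lands at $\zeta$.

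For the converse, assume $f$ is {\hairy} and fix a repelling or parabolic periodic point $\zeta$ of period $p$. By Theorem~\ref{thm:landingsets} there is a connected, unbounded set $A\subset I(f)$ landing at $\zeta$ with $f^p(A)=A$. The goal is to carve out of $A$ an injective curve $\Gamma$ from $\zeta$ to $\infty$ on which $f^p$ acts as a unit shift. Choose $z\in A$ close to $\zeta$; {\hairy}ness applied to $z$ supplies, for every sufficiently large $n$, an arc $\alpha_n$ from $f^{np}(z)$ to $\infty$ in $I(f)$ with $f^p(\alpha_n)=\alpha_{n+1}$ and $\min_{w\in\alpha_n}\lvert w\rvert\to\infty$. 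Pulling $\alpha_n$ back along the orbit of $z$ by the branch of $f^{-np}$ that remains inside $A$ produces an arc $\beta_n\subset A$ connecting a preimage $z_n$ of $z$ (with $z_n\to\zeta$) to $\infty$. The main technical obstacle is to show that a Hausdorff limit of $\beta_n\cup\{\zeta\}$ is genuinely an arc $\Gamma\cup\{\zeta\}$ rather than a more complicated continuum. In linearising coordinates near $\zeta$ (attracting Fatou coordinates for the inverse branch in the parabolic case) the pull-backs along the backward orbit act as uniform contractions in a sector containing $A$, so the $\beta_n$ converge uniformly on compact subsets away from $\zeta$ to an invariant arc $\Gamma$. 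Parameterising $\Gamma$ by the dynamics produces a periodic hair landing at $\zeta$. Finiteness of the number of such hairs is inherited from the corresponding finiteness statement for dreadlocks in Theorem~\ref{thm:DHdreadlocks}, since distinct periodic hairs at $\zeta$ lie in distinct periodic dreadlocks.
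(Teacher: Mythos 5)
Your reduction of the first statement to Theorem~\ref{thm:DHdreadlocks} is in the spirit of the paper's own proof, but it is incomplete at one point: you assert that the accumulation set $L$ of $\gamma(t)$ as $t\to-\infty$ is a \emph{compact} subset of $\C$ contained in $D\cup\{\zeta\}$. A priori the hair could also accumulate at $\infty$ (the closure of the dreadlock in $\hat{\C}$ is $D\cup\{\zeta,\infty\}$), and ruling this out is precisely the non-trivial content of Corollary~\ref{cor:hairaccumulation} (equivalently, of the hyperbolic-contraction Claim in the proof of Theorem~\ref{thm:landingperiodic}). Once that is granted, your exclusion of points $w\in D$ from $L$ does work: $L$ is forward invariant under $f^p$, so $f^{np}(w)\to\infty$ would contradict compactness. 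So the first half is essentially the paper's argument with one step left unproved.

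The converse is where there is a genuine gap. Your plan is to pull back the arcs $\alpha_n$ supplied by criniferousness and pass to a Hausdorff limit, and you yourself flag that the limit of the $\beta_n$ need not be an arc; the justification offered --- uniform contraction ``in linearising coordinates in a sector containing $A$'' --- only controls the picture near $\zeta$, says nothing about injectivity or local structure of the limit along the unbounded parts of the $\beta_n$, and the set $A$ need not be contained in a sector at all. So the key step is not established. It is also unnecessary: the paper shows (Proposition~\ref{prop:haircharacterisation} and its corollary) that for a criniferous function \emph{every} dreadlock is already an arc to infinity, i.e.\ a hair, and (Proposition~\ref{prop:periodichairdreadlock}) that a periodic dreadlock which is a hair admits a parameterisation as a periodic hair in the sense of Definition~\ref{defn:periodichair}. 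Hence the set $A$ produced by Theorem~\ref{thm:DHdreadlocks} \emph{is} the desired hair, and landing of the dreadlock is equivalent to landing of the hair by Corollary~\ref{cor:hairaccumulation}; no limiting construction is needed. Your finiteness argument is fine once one knows (again Proposition~\ref{prop:periodichairdreadlock}) that a periodic hair coincides with its dreadlock, so distinct periodic hairs at $\zeta$ do give distinct periodic dreadlocks.
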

  The first part of the theorem, concerning landing behaviour of periodic rays, is not new. It was 
    proved for exponential maps in~\cite{SZ03}, and later in full generality 
    by the second author~\cite[Corollary~B.4]{Re08}; see 
    also~\cite{De}. 
    The proof uses similar ideas as in the polynomial case, namely expansion properties 
    for the hyperbolic metric, although there are also some additional ingredients.   

  On the other hand, the usual proofs for accessibility
    of repelling and parabolic periodic points in the polynomial case \cite{ELv,Hu,Pr94} strongly rely on the 
    presence of the open basin of attraction of infinity, and thus break down completely in the transcendental setting. 
    Nonetheless, there has been some previous work in this direction. Under the additional dynamical assumption
    that $f$ is \emph{geometrically finite}, the theorem was proved by Mihaljevi\'c-Brandt \cite{Mi10}. 
    Furthermore, 
    the first author and Lyubich \cite{BL14} proved Theorem~\ref{thm:mainrays} when
    $f$ belongs to the exponential family~\eqref{eqn:exponentialfamily}. 

   For exponential maps, boundedness of the postsingular set is a strong dynamical condition (though weaker   than geometrical finiteness), as it implies \emph{non-recurrence} of the singular value  $a$.  However, the 
    non-recurrence property is not  used in any essential way in \cite{BL14}, and the ideas used there form one of the ingredients in our
    proofs of Theorems~\ref{thm:landingsets} and~\ref{thm:mainrays}. 

  During the preparation of this manuscript, Dierk Schleicher informed us that he has an alternative approach to Theorem~\ref{thm:mainrays}, using ideas 
   from~\cite{SZ03}.

\subsection*{Hyperbolic sets} As in \cite{BL14}, our techniques
   apply not only to repelling (and parabolic) periodic
   points, but also to \emph{hyperbolic sets}; see \cite{Pr94} for the corresponding
   result for polynomials.
   Recall that a compact, forward-invariant set $K \subset \C$ is called \emph{hyperbolic} 
   if for some $k\in\N$ and $\eta>1$  we have $\lvert (f^k)'(z)\rvert > \eta$ for all $z\in K$. 

   If $\P(f)$ is bounded and $K$ is such a hyperbolic set,  then we prove that every point of $K$ is
    ``accessible'' from the escaping set, via a filament  (see Theorem~\ref{thm:mainhyperbolic}). 
     Again, we can state the following result without requiring the terminology of filaments.

\begin{thm}[Landing at hyperbolic sets]\label{thm:hyperbolicsetsintroduction}
   Let $f$  be a transcendental entire function such that $\P(f)$ is bounded, and let 
    $K$ be a hyperbolic set of $f$. Then there is a collection $\mathcal{A}$ of pairwise disjoint, connected 
     and unbounded sets
      $A\subset I(f)$       with the following
     properties.
    \begin{enumerate}[(a)]
      \item For every $A\in\mathcal{A}$, there is $z_0(A)\in K$ such that
                    $\overline{A} = A\cup \{z_0(A)\}$, and $\overline{A}$ does not separate the plane;\label{item:hyperbolicclosure} 
      \item the function
                    $\mathcal{A}\to K; A\mapsto z_0(A)$ is surjective;
      \item $f(A)\in \mathcal{A}$ for all $A\in\mathcal{A}$; \label{item:hyperbolicimage}
      \item for every $\eps>0$, $f^n$ tends to $\infty$ uniformly on 
               $\{z\in \bigcup \mathcal{A}\colon \dist(z, K) \geq \eps\}$;\label{item:hyperbolicuniform}
      \item if $z_0(A)$ is periodic of period $p$, then $f^{kp}(A)=A$ for some $k\geq 1$.\label{item:hyperbolicperiod}
    \end{enumerate}
  If $f$ is {criniferous}, then every $A\in\mathcal{A}$ is an arc connecting $z_0(A)\in K$ to $\infty$. 
\end{thm}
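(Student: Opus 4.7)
The strategy adapts the approach of Benini and Lyubich~\cite{BL14} for the exponential family, using dreadlocks in place of hairs. First, replacing $f$ by an iterate, I would assume $|f'(z)| \geq \eta > 1$ for all $z \in K$; property~(\ref{item:hyperbolicperiod}) is formulated precisely to permit this reduction. The core technical ingredient is then the hyperbolic metric of $\C \setminus \P(f)$: since $K$ is forward invariant and hyperbolic and $\P(f)$ is bounded, one can find a neighborhood $W$ of $K$ in which every inverse branch of $f$ along an orbit in $K$ is univalent and strictly contracts that hyperbolic metric, uniformly along $K$. Once a dreadlock has been pulled back so that part of it lies in $W$, further pullbacks shrink it exponentially toward $K$.

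For each $z \in K$ I would construct $A = A(z)$ as a Hausdorff limit of pullbacks of dreadlocks. Concretely, the tract structure developed in Section~\ref{sec:dreadlocks} associates to the orbit of $z$ a symbolic address that picks out, for each $n$, a dreadlock $D_n$ at infinity whose pullback under a univalent branch of $f^{-n}$ lands close to $z$. Such an address can be produced by combining the existence of dreadlocks at repelling periodic points (Theorem~\ref{thm:DHdreadlocks}, yielding a dense supply of valid addresses) with the contraction property, which allows one to transport addresses from nearby periodic orbits to arbitrary $z \in K$. A Hausdorff limit in a compactification that sees escape to $\infty$ then yields $A \subset I(f)$ with $\overline{A} = A \cup \{z\}$. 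Property~(\ref{item:hyperbolicimage}) is immediate from the construction; (\ref{item:hyperbolicuniform}) follows from the tract structure of the $D_n$ combined with the uniform contraction; and~(\ref{item:hyperbolicperiod}) reduces to the finiteness of admissible dreadlock addresses at a single point. Pairwise disjointness of distinct $A$'s is a consequence of the injectivity of the address map on dreadlocks.

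The principal obstacle is the combinatorial bookkeeping: dreadlocks can be topologically wild (closures may be hereditarily indecomposable continua, as noted after Theorem~\ref{thm:landingsets}), so the address map $\mathcal{A} \to K$ must be defined purely combinatorially via tracts, independently of any curve structure. A secondary difficulty is verifying that the Hausdorff limit $A$ is connected and unbounded: both properties need to be preserved in the limit, which forces one to work in a suitable compactification, and unboundedness in particular requires each $D_n$ to contain an escape point that persists under pullback. The final clause concerning {\hairy} functions follows by noting that, under this hypothesis, the tract selected at $f^n(z)$ for large $n$ is an arc to $\infty$; pulling this arc back univalently along $f^{-n}$ shows that $A$ itself is an arc connecting $z$ to $\infty$.
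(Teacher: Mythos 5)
Your overall architecture (pass to an iterate, exploit expansion near $K$, encode points of $K$ by symbolic addresses, realise each address by a dreadlock) matches the spirit of the paper, but the step that actually produces the address for an arbitrary $z\in K$ has a genuine gap, and part of it is circular. You propose to obtain a ``dense supply of valid addresses'' from the accessibility of repelling periodic points via Theorem~\ref{thm:DHdreadlocks}; in the paper that half of Theorem~\ref{thm:DHdreadlocks} is itself \emph{deduced} from the hyperbolic-set landing theorem (a repelling orbit is a hyperbolic set), so you cannot use it here. Moreover, periodic points need not be dense in $K$ itself, and even when a repelling point $w$ lies close to $z$, ``transporting'' its address to $z$ requires an address whose pullbacks shadow the \emph{entire forward orbit} of $z$, not merely its initial position; nothing in your sketch supplies this. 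The paper's non-circular substitute is elementary: by the blowing-up property of the Julia set, the backward orbit $f^{-n}(\zeta)$ of a base point meets every ball $B_0(w)$, $w\in K$, for all $n\geq N$; pulling these balls back univalently along the orbit of $z$ yields, for each $n$, a nonempty finite set $\S_n(z)$ of length-$n$ addresses whose fundamental tails meet $B_{n-N}(z)$, and K\"onig's lemma extracts an infinite address $\s$ with $\zeta_n(\s)\to z$ (using the Euclidean shrinking of the preimage domains $V_n(\s)$).

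A second, related issue is the object you construct. Taking a Hausdorff limit of pullbacks of dreadlocks does not obviously produce a subset of $I(f)$ (limits of escaping points need not escape), and it is not needed: the set $A$ is simply the dreadlock $G_{\s}$ at the address produced above, whose landing at $z$ follows from the shrinking of the nested domains $V_n(\s)\subset B_k(z)$, and whose connectedness, unboundedness and containment in $I(f)$ are already established in Sections~\ref{sec:dreadlocks}--\ref{sec:accumulationlanding}. Finally, for property~(\ref{item:hyperbolicperiod}) your reduction to ``finiteness of admissible addresses at a single point'' is not available a priori: Lemma~\ref{lem:One for all} gives that finiteness only once one periodic address landing at $z_0$ is known, which is exactly what must be produced; the paper does this by a pigeonhole argument on the finite sets $\S_{n_0}(z)$ using the bijectivity of the shift between consecutive levels. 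The clause about criniferous functions and the deduction of~(\ref{item:hyperbolicuniform}) from uniform landing plus Lemma~\ref{lem:preimagedomains} are fine in your sketch.
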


   This generalisation is of particular relevance in the case where $\P(f)$ itself is a hyperbolic set,
     which is often the case for non-recurrent entire functions (see \cite{RvS}). Hence, in this case, each singular 
     value can itself be connected to infinity by a filament, which in turn allows one to study the Julia set
     via symbolic dynamics rather closely. For example, in \cite{Be15}, 
     the existence of a ray landing at the omitted value
     is exploited to prove strong \emph{rigidity} properties of non-recurrent parameters in 
    the exponential family,
     extending previous work \cite{Be11} in the postsingularly finite case.

To conclude the introduction, we remark on the case where the postsingular set $\P(f)$ is unbounded.
   If $f$ is a polynomial, then every unbounded orbit escapes to infinity. For polynomials with 
   escaping singular orbits, the Douady-Hubbard landing theorem no longer holds. Indeed,
   it is possible that a repelling periodic point is the landing point of uncountably many
   external rays, none of which are periodic. Compare \cite{LP}. 

  For transcendental entire functions, it is possible for singular orbits to be unbounded
    without converging to infinity.
    It is conceivable that, for $f\in\B$ with all singular orbits nonescaping, a version of the
    landing theorem holds. However, even for exponential maps
    this is not known (see \cite{Re06} for a partial result), and it appears that significant further new
    ideas would be required to approach it. See Section~\ref{sec:unboundedpostsingularset} for further discussion.

 \subsection*{Structure of the paper}
 {Section~\ref{sec:unboundedsets} gives an overview of expansivity properties for functions in class $\BB$ 
 without the assumption of bounded postsingular set. It also defines the concept of external addresses, and 
 gives sufficient conditions on such addresses to be realised by certain unbounded connected sets of points. 
 Several of the ideas used in this section are already implicitly or explicitly
   contained in the literature, e.g.\ in \cite{EL92,eremenkoproperty,Re08,Re09}, but 
   are combined here in a novel, systematic and  unified manner.

From Section~\ref{sec:tails} onward, we restrict to functions with bounded postsingular sets, beginning by
   discussing hyperbolic expansion estimates for such maps, and introducing the important combinatorial notion of
   \emph{fundamental tails}. With these preparations, Section~\ref{sec:filaments} 
   introduces filaments for  a function with bounded postsingular set, 
    studies their main 
   topological and combinatorial properties, and also shows that the escaping 
   set consists of filaments. The ideas in this section have their 
   roots in~\cite{eremenkoproperty}. In particular, we recover
   the main result of that paper; see Corollary~\ref{cor:lms}.
   In Section~\ref{sec:hairs}, we discuss the relation between filaments and hairs. 

 Section~\ref{sec:accumulationlanding} introduces accumulation sets of filaments at bounded addresses, 
   and gives different characterisations of when a filament \emph{lands}. This section contains a crucial   innovation, which is central to the proofs of our main theorems: Rather  than having    to contend with the potentially complicated topological structure of filaments,
   we can instead study their landing properties by considering a certain chain of 
   open simply connected sets. In Section~\ref{sec:separation}, we establish that such a landing filament cannot separate the plane.
  We are then ready to state our main theorems concerning filaments
   in  Section~\ref{sec:maintheorems}, and to derive
   Theorems~\ref{thm:landingsets} and~\ref{thm:mainrays} from these.

 {The} three following sections are dedicated to proving the main theorems of this paper. 
  Section~\ref{Periodic filaments land} establishes the landing of periodic filaments, 
   in Section~\ref{sec:Landing at hyperbolic sets} we show accessibility of hyperbolic sets and repelling periodic orbits,
  and finally Section~\ref{sec:parabolic} is dedicated to the proof of accessibility of parabolic points.

  We remark that one can take an alternative, less natural but more direct, approach to establishing our theorems,
    bypassing most of the material in Sections~\ref{sec:unboundedsets} and \ref{sec:filaments}--\ref{sec:accumulationlanding}. 
    Readers interested in such a short-cut are referred to  Remark~\ref{rmk:shortcut}.

  To round off the paper, Section~\ref{sec:classS} discusses bounds on the number of rays landing together at a given point
    in a hyperbolic set.
 We also include two appendices. The first, Section~\ref{sec:cyclicorder}, gives some details concerning the cyclic order at infinity of 
   unbounded connected sets, which are used in some of our arguments. The second, Section~\ref{sec:unboundedpostsingularset}, discusses
   open questions about landing theorems for entire functions with unbounded postsingular sets.

   \subsection*{Notation and preliminaries} We write $\C$ for the complex plane and $\hat{\C}$ 
   for the Riemann sphere. 
   We denote the closure in $\C$ of a set $A\subset\C$ by $\overline{A}$, and occasionally 
     $\closure(A)$. The closure of $A$ in $\Ch$ is denoted by
     $\hat{A}$.

  The Euclidean disk of radius $R$ around a point $z$ is denoted by $D_R(z)$; 
   the unit disk is $\D\defeq D_1(0)$.  If  $D$ is any Euclidean disc, we also write $r(D)$ for the radius of $D$. 

   We denote Euclidean distance and diameter by $\dist$ and $\diam$, respectively. If $U\subset\C$ is an open
     set omitting more than two points, then we denote hyperbolic distance on $U$ by $\dist_U$, and similarly
     $\diam_U$ for hyperbolic diameter. We also denote the density of the hyperbolic metric of $U$ at a point
      $z$ by $\rho_U(z)$. That is, the   length element of the hyperbolic metric is given by  $\rho_U(z)\lvert \deriv z\rvert$. 

 \subsection*{Acknowledgements} We are extremely grateful to Dave Sixsmith and to David Pfrang for their many and extraordinarily helpful suggestions that 
    considerably improved the presentation of the paper. We also thank Daniel Meyer for interesting comments, particularly a suggestion on the presentation
    of cyclic order in Section~\ref{sec:cyclicorder}.

\section{Unbounded sets of escaping points}\label{sec:unboundedsets}
  {In this section, we briefly review basic properties of the dynamics of 
    a function $f\in\B$, and  review the definition of external addresses for such maps.  
    Then we state a theorem (Theorem~\ref{thm:unboundedsets}) about the existence of 
    unbounded connected sets for these addresses, and devote the rest of the section to the proof thereof. 
    These sets will provide the basis of the ``filaments'' that
    are introduced (for postsingularly bounded entire functions) in Section \ref{sec:filaments}.}

{Throughout this section, fix a function  $f\in\B$. Recall that this implies that $S(f)$ is bounded. 
   For now, we do \emph{not} assume that $\PP(f)$ is also bounded.
  Let us} begin by reviewing the method of partitioning the locus where $f$ is large into (topological) half-strips known as
    \emph{fundamental domains}. 
    (Compare e.g.\ \cite[Section~2]{Re08} or \cite[Section~2]{guenterthesis}.)
    For this construction, we 
   fix a Euclidean disk $D$ around the origin containing $S(f)$. 
   The connected components of $f^{-1}(\C\setminus \overline{D})$ are called the \emph{tracts} of~$f$. 
    If $T$ is a tract, then $f\colon T \to \C\setminus \overline{D}$ is a universal covering map;
     in particular, $T$  is unbounded and simply connected. In fact (applying the same argument to a slightly smaller
     disc than  $D$), $T$ is a Jordan domain in $\Ch$ whose boundary passes through infinity, 
     and $f$ is a universal covering $f\colon \overline{T} \to \C\setminus D$ on the closure of $T$ (in $\C$). 

 We may assume in the following that $D\cap f(D)\neq \emptyset$, e.g.\ 
    by ensuring that $f(0)\in D$. Then it is easy to see that there is an arc 
    $\delta$ connecting  a point of $\partial D$ to infinity in the complement of the 
     closure 
      of the tracts. We define
     \begin{equation}
         \WW \defeq \C\setminus( \overline{D}\cup\delta).
     \end{equation}
   The connected components of $f^{-1}(\WW)$ are called the 
    \emph{fundamental domains} of $f$; see Figure~\ref{fig:tractsfundamentaldomains}. 

\begin{figure}
\def\svgwidth{.65\textwidth}
\begingroup%
  \makeatletter%
  \providecommand\color[2][]{%
    \renewcommand\color[2][]{}%
  }%
  \providecommand\transparent[1]{%
    \renewcommand\transparent[1]{}%
  }%
  \providecommand\rotatebox[2]{#2}%
  \ifx\svgwidth\undefined%
    \setlength{\unitlength}{705.78776855bp}%
    \ifx\svgscale\undefined%
      \relax%
    \else%
      \setlength{\unitlength}{\unitlength * \real{\svgscale}}%
    \fi%
  \else%
    \setlength{\unitlength}{\svgwidth}%
  \fi%
  \global\let\svgwidth\undefined%
  \global\let\svgscale\undefined%
  \makeatother%
  \begin{picture}(1,0.73807518)%
    \put(0,0){\includegraphics[width=\unitlength]{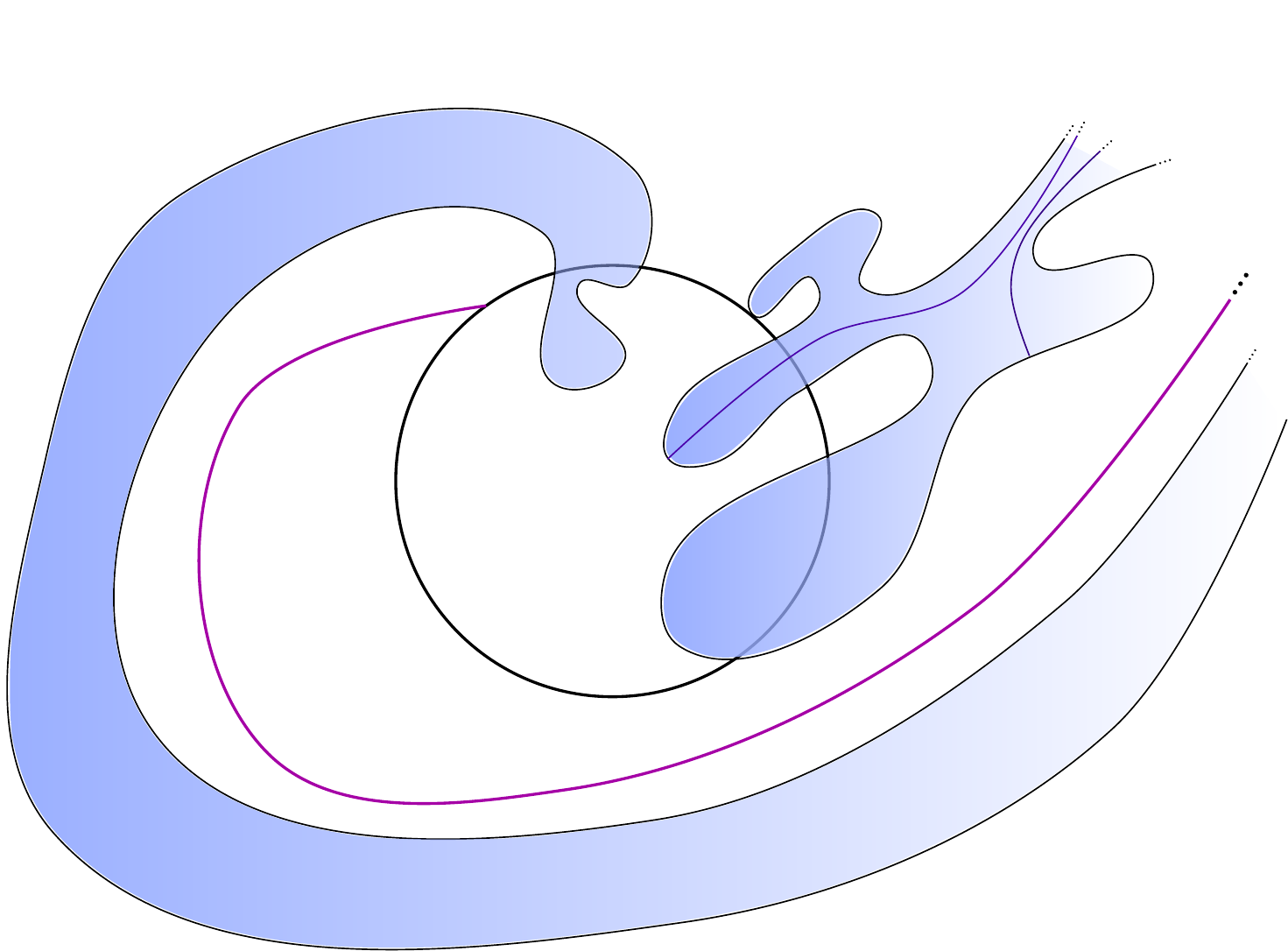}}%
    \put(0.35,0.35){\color[rgb]{0,0,0}\makebox(0,0)[lb]{\small{$D\supset P(f)$}}}%
    \put(-0.15,0.6){\color[rgb]{0,0,0}\makebox(0,0)[lb]{\small{$W_0=\C\setminus(\ov{D}\cup\delta)$}}}%
    \put(0.1213254,0.32438186){\color[rgb]{0,0,0}\makebox(0,0)[lb]{\small{$\delta$}}}%
    \put(0.13562029,0.0512865){\color[rgb]{0,0,0}\makebox(0,0)[lb]{\small{$T_1$}}}%
    \put(0.73980271,0.58009572){\color[rgb]{0,0,0}\makebox(0,0)[lb]{\small{$T_2$}}}%
    \put(0.73860633,0.47002682){\color[rgb]{0,0,0}\makebox(0,0)[lb]{\small{$F_i$}}}%
  \end{picture}%
\endgroup%
  \caption{\label{fig:tractsfundamentaldomains}
 The dynamical plane for  a function with two tracts $T_1$ and $T_2$. One of the fundamental domains  $F_i$ obtained by taking preimages of $\delta$ is shown inside $T_2$.}
\end{figure}

     We remark that there are only finitely many fundamental domains that intersect a given compact set, due to the following simple fact. 

     \begin{lem}[Preimage components intersecting a compact set]\label{lem:preimageK}
       Let $f\colon X \to Y$ be a holomorphic map
        between Riemann surfaces $X$ and $Y$. 
       Furthermore, let $U\subset Y$ be a domain 
       whose boundary (in $Y$) is locally connected; i.e.\
       every point of the boundary of $U$ in $Y$ has arbitrarily small connected relative neighbourhoods in $Y$.

      Then for any compact set $K\subset X$, 
         only finitely many connected components of  $f^{-1}(U)$ intersect $K$.
     \end{lem}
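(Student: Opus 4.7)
I would argue by contradiction. Suppose that $K$ meets infinitely many distinct components $V_n$ of $f^{-1}(U)$ and pick $x_n\in V_n\cap K$. By compactness of $K$, after passing to a subsequence $x_n\to x_\infty\in K$; set $y_\infty := f(x_\infty)$. Since $f(x_n)\in U$ for every $n$, continuity gives $y_\infty\in\overline U$. The goal is to derive a contradiction by producing a neighbourhood of $x_\infty$ that meets only finitely many of the $V_n$.

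If $y_\infty\in U$, this is immediate: then $x_\infty\in f^{-1}(U)$, which is open, so a connected open neighbourhood $W$ of $x_\infty$ lies in a single component $V$ of $f^{-1}(U)$. For large $n$ we have $x_n\in W\subset V$, forcing $V_n=V$ and contradicting that the $V_n$ were distinct.

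The substantive case is $y_\infty\in\partial U$. Here I would invoke the standard holomorphic normal form: provided $f$ is not constant on the component of $X$ containing $x_\infty$ (the constant case is trivial, since a compact set meets only finitely many components of a Riemann surface), one can choose local coordinates in which $f$ looks like $z\mapsto z^k$ on a small disk $B\ni x_\infty$ which is mapped properly and $k$-to-$1$ onto a disk $\Delta\ni y_\infty$. The hypothesis on $\partial U$ now lets me shrink $\Delta$ to a connected neighbourhood $N$ of $y_\infty$ inside $\Delta$ for which $N\cap U$ has only finitely many components; this is the essential use of local connectedness of $\partial U$, ruling out an ``infinite accumulation'' of components of $U$ at $y_\infty$. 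Since $f|_B$ is a proper $k$-fold branched cover onto $\Delta$, the preimage in $B$ of each component of $N\cap U$ has at most $k$ connected components. Thus $f^{-1}(U)\cap f^{-1}(N)\cap B$ has only finitely many components; but eventually $x_n$ lies in this open set, so all but finitely many of the $V_n$ must coincide with one of these components, contradicting distinctness.

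The main obstacle is the step where local connectedness of $\partial U$ is translated into the finiteness of components of $N\cap U$ near $y_\infty$. Once the precise form of the hypothesis is unpacked, this is essentially definitional (it is exactly the content of the ``i.e.'' clause in the statement), and the remainder of the argument is then a routine combination of compactness with the local structure of a holomorphic map.
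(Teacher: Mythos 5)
Your argument follows the same route as the paper's. The paper covers $K\cap f^{-1}(\overline{U})$ by neighbourhoods on which $f$ is $z\mapsto z^d$ in local coordinates, uses the local connectivity hypothesis to produce finitely many connected open sets $W_1,\dots,W_n\subset f(V_1)\cap U$ such that $\{f(z)\}\cup W_1\cup\dots\cup W_n$ is a neighbourhood of $f(z)$ in $\{f(z)\}\cup U$, concludes that each such neighbourhood meets at most $dn$ components of $f^{-1}(U)$, and finishes with a finite subcover; your contradiction-and-subsequence framing is just the contrapositive of that covering argument, and your use of the normal form and of properness is identical.

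The gap sits exactly at the step you single out as ``the main obstacle'' and then dismiss as ``essentially definitional''. The ``i.e.''\ clause of the hypothesis says only that every point of $\partial U$ has arbitrarily small connected neighbourhoods in the subspace $\partial U$; what you need is a statement about $U$ itself, namely that all points of $U$ sufficiently near $y_\infty$ lie in finitely many connected subsets of $\Delta\cap U$. These are different assertions, and passing from one to the other is the entire content of the lemma: it is precisely what breaks down in the paper's own remark about a domain spiralling towards the unit circle, where $N\cap U$ has infinitely many components near a point of the circle, each lifting to a distinct component of $\exp^{-1}(U)$. Worse, your specific formulation~--- that some neighbourhood $N$ of $y_\infty$ has $N\cap U$ with only finitely many components~--- does \emph{not} follow from local connectedness of $\partial U$ as a topological space: take $U$ to be an open cone with apex $y_\infty$ from which one removes a sequence of full cross-cuts accumulating at the apex. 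Then $U$ is a disjoint union of infinitely many trapezoids clustering at $y_\infty$, yet $\partial U$ (the cone boundary together with the cross-cuts, each of which attaches at both ends to the cone boundary) is locally connected, since small connected relative neighbourhoods of $y_\infty$ are obtained from the two boundary rays near the apex together with the cross-cuts they carry. The paper asserts its (formally weaker, but equally affected) finiteness statement without detailed justification as well, so your proposal is at the same level of rigour as the paper here; but the step is not definitional, and a complete write-up must either supply a genuine argument or work with a more precise hypothesis on $U$~--- one that does hold for the sets $\WW$ to which the lemma is actually applied.
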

  \begin{remark}
    The condition that $\partial U$ is locally connected is necessary: Let $X=Y=\C$, $f=\exp$, and let $U$ be a simply connected domain
     in the punctured unit disc that spirals in towards the unit circle. (I.e., any branch of the argument on $U$ tends to infinity as $\lvert z\rvert\to 1$ in $U$.) 
    Then infinitely many components of $f^{-1}(U)$ intersect the closed unit disc.
  \end{remark}
     \begin{proof}
        We begin by reformulating the hypothesis that $\partial U$ is locally connected, as follows.
        \begin{claim}
         Let $\zeta\in \overline{U}$, and let $\Delta$ be a neighbourhood of $\zeta$ in $Y$. 
         Then there is a finite collection of connected open sets $W_1,\dots,W_n\subset V\cap U$ such that  
          $\{\zeta\}\cup W_1\cup \dots \cup W_n$ is a neighbourhood of  $\zeta$ in $\{\zeta\}\cup U$. 
         \end{claim}
         \begin{subproof}
          Shrinking $\Delta$ if necessary, we may assume that $\overline{\Delta}$ is a closed topological disc, and that 
             $U\not\subset \Delta$. 
                 Consider the compact set 
                            $Q \defeq \partial \Delta \cup (\Delta\setminus U)$.
                 Since $U$ is connected but not contained in $\Delta$, the boundary of each connected component of                  $U\cap\Delta = \Delta\setminus Q$ 
                 intersects $\partial\Delta$. Recall that $\partial U$ is locally connected; it follows readily that 
                 $Q$ is also. Hence $U\cap\Delta$ has 
                 at most finitely many connected components of diameter greater than, say,  
                    $\delta \defeq \dist( \zeta ,\partial \Delta)/2$; see 
                 \cite[Theorem~4.4 in Chapter VI]{whyb}. 
                 (Here $\dist$ refers to distance with respect to some metric on the topological disc $\overline{\Delta}$.) 
                 Therefore only finitely many connected components 
                 of
                 $W_1,\dots,W_n$ of $U\cap\Delta$ 
                 intersect the disc of radius $\delta$ around $\zeta$, as claimed.
         \end{subproof}
          
        Let $z\in f^{-1}(\overline{U})$, and $\zeta\defeq f(z)$. Then $z$ has a neighbourhood $V_1$ that is topologically mapped 
           as by
          $z\mapsto z^d$, where
          $d$ is the local degree of $f$ at $z$. Take $W_1,\dots,W_n$ as in the claim, for $\Delta = f(V_1)$. 
        If $V_z\subset V_1$ is a sufficiently small disc around  $z$, then 
         any point in $f^{-1}(U)\cap V_z$ maps into some $W_j$. As $f^{-1}(W_j)$ has $d$ connected components in  
         $V_1$, it follows  that 
         $V_z$ intersects at most $dn$ connected components of $f^{-1}(U)$.

        So the compact set $K\cap f^{-1}(\overline{U})$ has an open cover by sets $V_z$, each of which intersects
         only finitely many connected components of $f^{-1}(U)$. 
          The claim follows by taking a finite subcover.
     \end{proof} 

     It follows that 
     there are only finitely many fundamental domains $F$ whose closure intersects the disc $\overline{D}$. 
      When this does not occur for any $F$, the function $f$ is dynamically particularly simple; more precisely,
      it is of \emph{disjoint type} (hyperbolic with connected  Fatou set). For a detailed study of the 
      topological dynamics of such functions, see~\cite{arclike}. (Compare also the discussion of 
      disjoint-type addresses in Remark~\ref{rmk:disjointtype}.) In the following, given a  fundamental domain  $F$ we 
      denote by $\unbdd{F}$ the unbounded connected component of $F\setminus \overline{D}$.

 \subsection*{Expansion properties and relative cylindrical distance}
 It is known that functions in $\B$ are strongly expanding near infinity. More precisely, if $f\in\B$, then the 
   \emph{cylindrical derivative} of 
   $f$ is large whenever $f(z)$ is large \cite[Lemma~1]{EL92}. That is,
    \begin{equation}\label{eqn:ELexpansion}
     \lVert \Deriv f(z)\rVert_{\cyl} \defeq 
      \left\lvert f'(z) \cdot \frac{z}{f(z)}\right\rvert \to \infty \qquad \text{as}\qquad \lvert f(z)\rvert \to \infty. 
    \end{equation}
  In view of~\eqref{eqn:ELexpansion}, we may assume that the radius $r(D)$ is chosen 
    sufficiently large to ensure that
      \begin{equation}\label{eqn:normalised}
         \lVert \Deriv f(z)\rVert_{\cyl} \geq 2
      \end{equation} 
    whenever $f(z)\notin D$. In particular, $f(0)\in D$. These assumptions will remain 
    in place for the remainder of the paper. 

   A number of results in the literature are phrased not for  the function  $f$ directly, but in terms of a
    \emph{logarithmic transform} $\mathcal{L}$ of $f$. (See e.g.\ \cite{EL92} or \cite{eremenkoproperty}.) 
     Such a transform can be
     obtained using the change of variable $z = \rho \cdot \exp(\zeta)$, where $\rho=r(D)$ is the radius of
      $D$. I.e., there is a $2\pi i$-periodic
      function $\mathcal{L}$ defined by
       \[ \rho\cdot \exp(\mathcal{L}(\zeta)) = f(\rho\cdot \exp(\zeta)), \] 
     defined whenever the right-hand side (i.e., $f(z)$) belongs to $\C\setminus\overline{D}$. 
     Dynamical properties of $\mathcal{L}$
     easily translate to properties of  $f$ on the set of points whose orbits remain outside $\overline{D}$ forever. 
     Our assumptions on $D$ imply that
     the function $\mathcal{L}$ is \emph{normalised} in the sense of \cite{eremenkoproperty,R3S}. 

    We occasionally
     cite results from other articles that are phrased in this language, but
     never use the logarithmic transform $\mathcal{L}$ directly in this article. Instead, we use the following terminology, 
     which is inspired by this change of coordinates. See Figure~\ref{fig:W0}~\subref{subfig:W0-illustration}. 

   \begin{defn}[Relative cylindrical distance]\label{defn:relativedistance}
      For $z,w\in\WW$, we define the \emph{relative cylindrical distance} 
          $\distW(z,w)$ to be the shortest cylindrical length of a curve $\gamma$ from    
         $z$ to  $w$ that is {homotopic, in $\C\setminus\overline{D}$, to a curve in $\WW$}. 
    
        Equivalently, if $U$ is a connected component of $\exp^{-1}(\WW)$ and $\zeta$, $\omega$ are the  
        logarithms of $z$ and  $w$ that belong to  $U$, then 
      \[ \dist_{\cyl}^{\WW}(z,w) = \lvert \zeta - \omega \rvert. \]
        
          We similarly define the distance between two subsets of $\WW$, and the 
       diameter $\diamW$ with respect to this metric. 
   \end{defn}
 
\begin{figure}
  \subfloat[Definition~\ref{defn:relativedistance}]{
      \fbox{
\def\svgwidth{.47\textwidth}
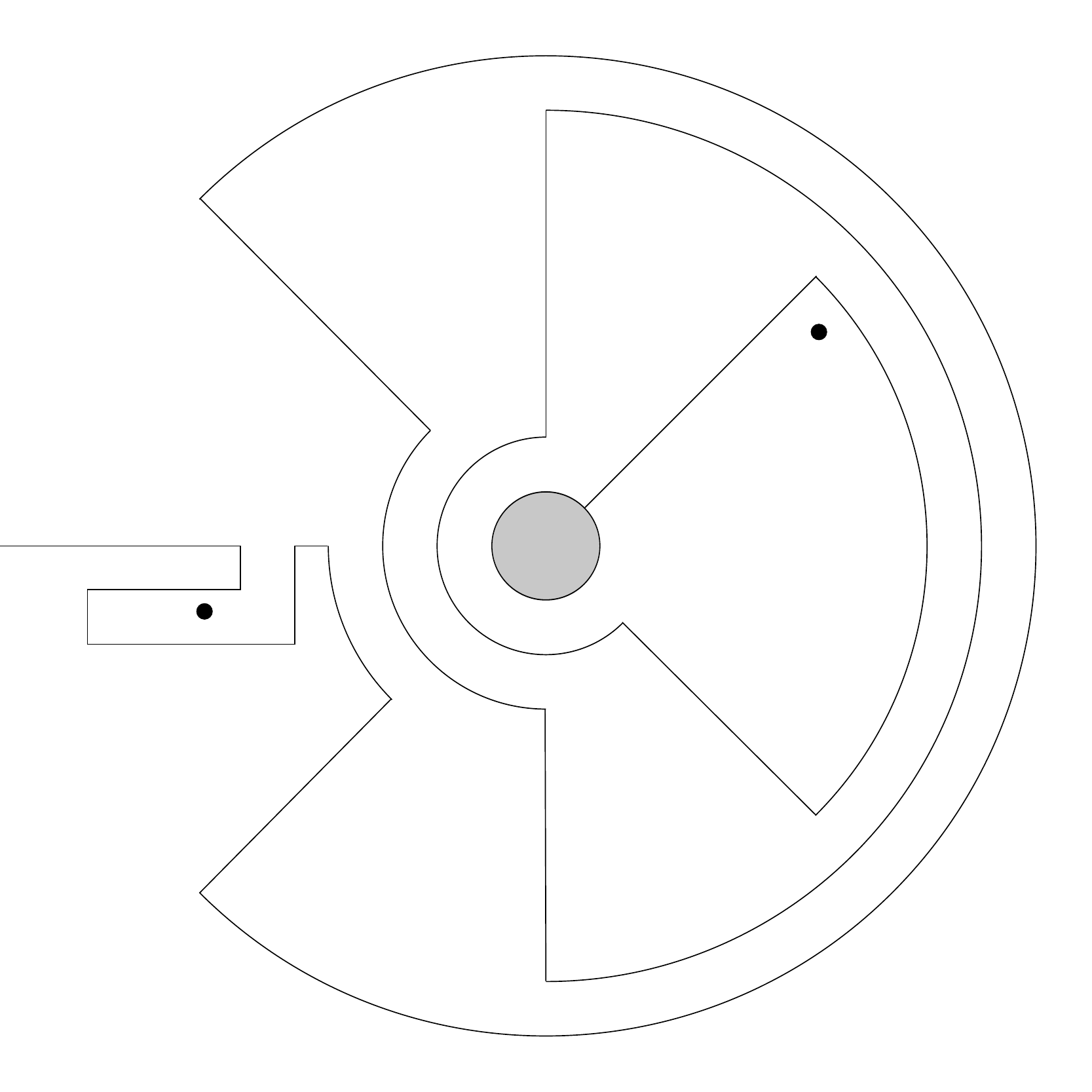\label{subfig:W0-illustration}
  }}\hfill
  \subfloat[Lemma~\ref{lem:crosscuts}]{
      \fbox{
\def\svgwidth{.47\textwidth}
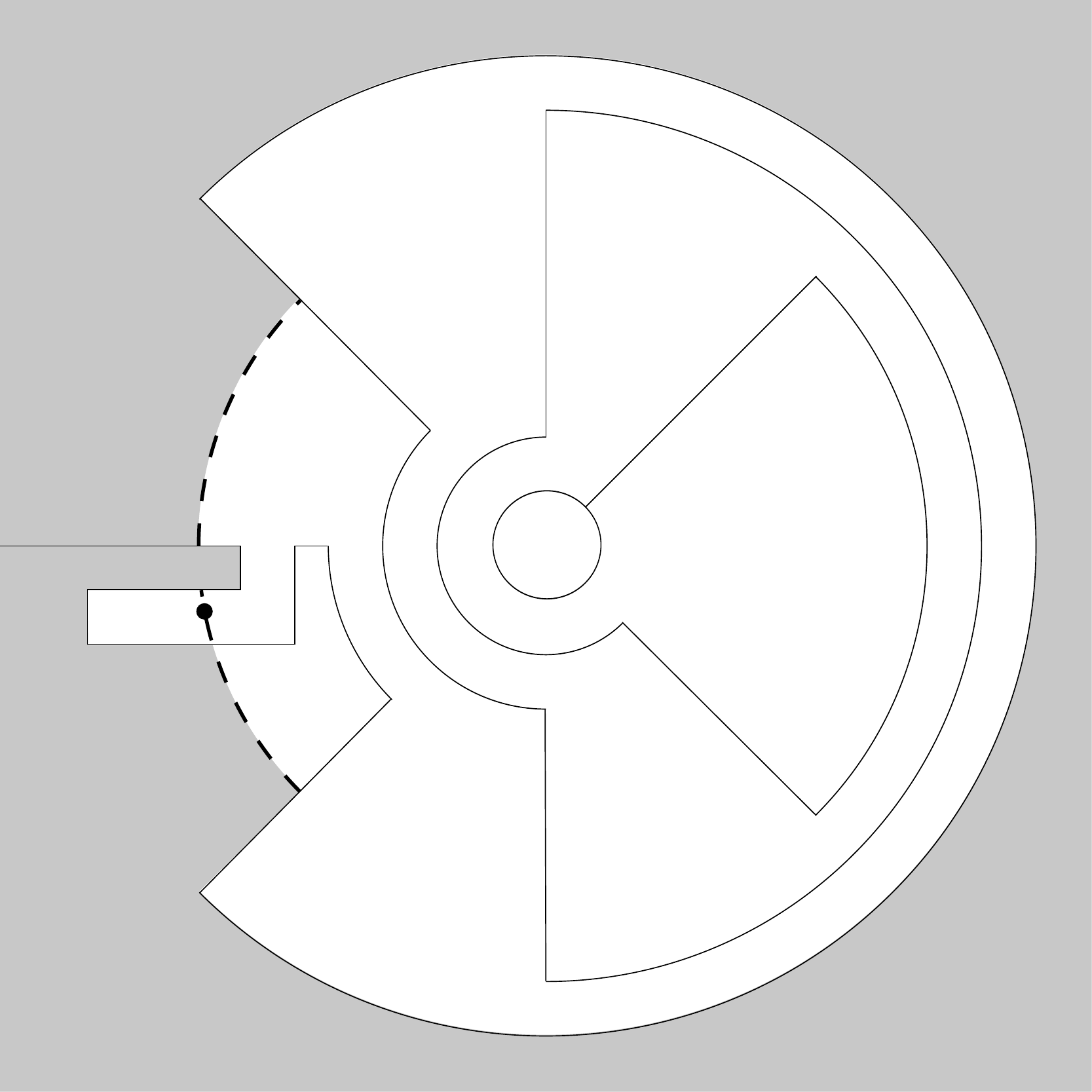\label{subfig:W0-crosscuts}
  }}
  \caption{\label{fig:W0}
     In~\protect\subref{subfig:W0-illustration}, the cylindrical distance between $z$ and $w$ is less than $\pi$. However,
       any curve connecting $z$ and $w$ in $\WW$ takes more than two full turns around  $D$, and hence 
       the relative distance in the sense of Definition~\ref{defn:relativedistance} is greater than  $4\pi$. 
       The second image, in~\protect\subref{subfig:W0-crosscuts} illustrates Lemma~\ref{lem:crosscuts} in this case.
       Here $\Gamma$ is a union of three cross-cuts of $\WW$ satisfying the conclusion of the lemma; 
       the domain $\WW^{\Gamma}$ is shown
       shaded in grey. (Nb.\ the set $\Gamma$ constructed in the proof of Lemma~\ref{lem:crosscuts}
       contains two additional cross-cuts; omitting these does not change the domain
       $\WW^{\Gamma}$, and therefore the conclusions of the lemma.)}
\end{figure}

      Since $f$ is expanding with respect to the cylindrical metric on  $\WW$ by~\eqref{eqn:normalised}, we have
        \begin{equation}\label{eqn:expansionW} \distW(f(z),f(w)) \geq 2\cdot \distW(z,w) \end{equation}
       whenever $z$ and $w$ both belong to $\unbdd{F}$ for some fundamental domain $F$. 

    Let us also note the following fact for future reference.
     \begin{obs}\label{obs:absolutevaluecylindrical}
      Let $\zeta_0\in \WW$. If
        $(z_n)_{n=0}^{\infty}$ is a sequence of points in  $\WW$, then 
        $\lvert z_n\rvert \to\infty$ if and only if $\distW(z_n, \zeta_0)\to\infty$. 
     \end{obs}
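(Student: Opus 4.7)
The plan is to reduce the statement to the equivalent formulation given by the second half of Definition~\ref{defn:relativedistance}. Fix $\omega_0\in U$ to be the lift of $\zeta_0$ in the component $U$ of $\exp^{-1}(\WW)$ containing it, and for each $z_n\in \WW$ let $\zeta_n\in U$ denote the corresponding lift. Then $\distW(z_n,\zeta_0)=|\zeta_n-\omega_0|$, so the observation amounts to asserting that $|\zeta_n-\omega_0|\to\infty$ in $U$ if and only if $|z_n|\to\infty$ in $\WW$.

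The forward implication is immediate: since $\operatorname{Re}\zeta=\log|z|$ for any $z\in\WW$ and its lift $\zeta\in U$, one has
\[ |\zeta_n-\omega_0|\ \geq\ |\operatorname{Re}\zeta_n-\operatorname{Re}\omega_0|\ =\ \bigl|\log|z_n|-\log|\zeta_0|\bigr|, \]
which tends to infinity whenever $|z_n|\to\infty$.

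For the converse, I would argue by contrapositive. Suppose $|z_n|$ does not tend to infinity; after passing to a subsequence, we may assume $|z_n|\leq M$ for some constant $M$. Then $\operatorname{Re}\zeta_n\in(\log r(D),\log M]$ is automatically bounded, so the problem reduces to controlling $\operatorname{Im}\zeta_n$; equivalently, to showing that $U\cap\{\operatorname{Re}\zeta\leq \log M\}$ is a bounded subset of $\C$. The domain $U$ is a component of $\exp^{-1}(\WW)$, so it is enclosed between two lifts of $\delta$ whose imaginary parts differ by exactly $2\pi$; provided these two lifts have bounded imaginary part on every vertical strip $\{\operatorname{Re}\zeta\leq M\}$, the region $U\cap\{\operatorname{Re}\zeta\leq \log M\}$ is bounded, which forces $|\zeta_n-\omega_0|$ to remain bounded, contradicting the hypothesis.

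The main difficulty is precisely this boundedness of $U$ in the vertical direction at bounded real part, which is a question about the chosen cut $\delta$ rather than about the dynamics of $f$. It is handled by the (implicit) choice of $\delta$ as a tame proper arc in the complement of the closure of the tracts—for instance, a piecewise linear one, or any arc that crosses each circle $\{|z|=r\}$ only finitely often—whose lifts accordingly do not spiral infinitely in any bounded annulus. Such an arc always exists, because the tracts are disjoint Jordan domains and their complement admits many well-behaved proper paths from $\overline{D}$ to infinity.
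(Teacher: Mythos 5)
The paper records this as an \emph{Observation} without proof, so there is nothing to compare against directly; judged on its own terms, your reduction to the logarithmic picture and your forward implication are correct, but the converse as written has two problems.

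The first is that the decisive step is asserted rather than proved. Even granting that the two boundary lifts $\tilde\delta_0$ and $\tilde\delta_1=\tilde\delta_0+2\pi i$ of $\delta$ meet the strip $S=\{\log r(D)<\operatorname{Re}\zeta\leq\log M\}$ only inside a bounded box $B$ spanning the width of $S$, it does \emph{not} formally follow that $U\cap S$ is bounded: the curves leave $S$ and go to infinity, and the topological ``region between them'' could a priori re-enter $S$ at arbitrarily large heights. Concretely, $S\setminus B$ has exactly two unbounded components $S^{\pm}$, each disjoint from $\partial U\cap S\subset B$ and hence either contained in $U$ or disjoint from $U$; you must rule out $S^{\pm}\subset U$. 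This is where the periodicity of the covering enters: if, say, $S^{+}\subset U$, choose $\zeta\in S^{+}$ with $\zeta-2\pi i\in S^{+}$ as well; then $\zeta$ and $\zeta-2\pi i$ both lie in $U$, while $\zeta-2\pi i$ also lies in $U-2\pi i$, which is a \emph{different} component of $\exp^{-1}(\WW)$ (otherwise $\exp|_U$ would not be injective) and hence disjoint from $U$ --- a contradiction. With that, $U\cap S\subset B$ and the converse follows.

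The second problem is the tameness hypothesis in your last paragraph, which is both unnecessary and contrary to the paper's explicit stance (see the second remark after Lemma~\ref{lem:crosscuts}, where the authors decline to assume $\delta$ piecewise analytic or otherwise tame). The boundedness you need is automatic for \emph{any} arc $\delta$ tending to infinity: since $\lvert\delta(t)\rvert\to\infty$, the set $\{t:\lvert\delta(t)\rvert\leq M\}$ is compact, so $\delta\cap\overline{D_M(0)}$ is contained in a compact initial sub-arc $\delta([0,T_M])$; a continuous branch of $\arg$ along this compact arc is bounded, say by $K$, whence $\tilde\delta_0\cap\{\operatorname{Re}\zeta\leq\log M\}\subset[\log r(D),\log M]\times[-K,K]$. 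The scenario you worry about --- a lift spiralling unboundedly within a bounded annulus --- is impossible for a proper arc to infinity, precisely because such an arc spends only a compact parameter interval in any bounded region. So the hypothesis should be deleted and replaced by this compactness argument; the observation holds for every admissible $\delta$.
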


   \subsection*{External addresses and symbolic dynamics}
    The reason for introducing fundamental domains is that they can be used to assign symbolic dynamics to points
     whose orbit stays sufficiently large, and in particular to escaping points. 

    \begin{defn}[External addresses]
      Let $f\in\B$, and let fundamental domains be defined as above. 
      An \emph{(infinite) external address} is a sequence $\s= F_0  F_1 F_2 \dots$ of fundamental domains of 
        $f$.  The address $\s$ is \emph{bounded} if the set of fundamental domains occurring in  $\s$ is finite; 
       it is \emph{periodic} if there is  $k$ such that $F_{n+k}=F_n$ for all $n\geq 0$. 

      Let  $\s=F_0 F_1 F_2\ldots$ be an external address, and recall that $\unbdd{F}_n$  denotes the unbounded connected component 
        of $F_n\setminus\overline{D}$.  Then we define 
        
        \[J^0_{\s}(f):=\{z\in\C\colon  f^n(z)\in\unbdd{F_n} \text{   for all $n\geq 0$}\}. \]
    \end{defn}
 \begin{remark}
      For the purpose of this paper, we shall often use ``address'' synonymously with ``external address''. 
   \end{remark}

   The main goal of this section is to prove the following.
     \begin{thm}[Realisation of addresses]\label{thm:unboundedsets}
 Let $\s$ be an external address.  
     \begin{enumerate}[(a)]
       \item Suppose that $J^0_{\s}$ contains some point $z_0$. 
          Then  $J^0_{\s}$ also contains a {closed} unbounded 
          connected set $X$ on which the iterates of $f$ tend to infinity uniformly. Moreover,           
             $\distW(z_0, X)\leq 4\pi$. \label{item:unboundedsets}  
       \item If  $X_1$ and  $X_2$ are unbounded, closed, connected subsets of $J^0_{\s}$ with
               $X_1\not\subset X_2$, then
                $X_2\subset X_1$ and  $f^n|_{X_2}\to\infty$ uniformly. \label{item:terminal}
      \item  If $\s$ is bounded, then  $J^0_{\s}\neq\emptyset$. {Furthermore, 
                 there exists $R>r(D)$, depending on the finite collection of fundamental domains
                 occurring in $\s$ but not otherwise on $\s$, such that the set $X$ in~\ref{item:unboundedsets}
                 can be chosen to contain a point of modulus $R$.}\label{item:boundedexistence} 
      \item\label{item:boundedescape}%
     Conversely, if $\FF$ is a finite collection of fundamental  domains, then there is $R>0$
                such that the iterates of $f$ tend to infinity uniformly on the closed set
      \begin{equation}\label{eqn:boundedescape} \bigcup_{\s\in\FF^{\N_0}} J_{\s}^0\setminus D_R(0) = 
            \left\{z\in \C\colon \lvert z \rvert \geq R \text{ and }
                      f^n(z)\in {\bigcup_{F\in\FF} \overline{F}} \text{ for all $n\geq 0$} \right\}. \end{equation}
     \end{enumerate}
       \end{thm}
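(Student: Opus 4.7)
\textbf{Strategy and proof of \ref{item:unboundedsets}.}
The plan combines the cylindrical expansion \eqref{eqn:expansionW} with the observation that $f$ restricts to a homeomorphism $\overline{F}\to\overline{\WW}$ on every fundamental domain $F$, so that closed subsets of $\overline{\WW}$ can be pulled back along any address $\s$ with $\distW$-diameters shrinking by a factor of at least two at each step. Given $z_0\in J^0_\s$, for each $n\geq 0$ I would pick a cross-cut $\Gamma_n\subset\WW$ separating $f^n(z_0)$ from $\overline{D}$ and of $\distW$-diameter at most $2\pi$ (for instance, a circular arc at modulus $r_n\gg|f^n(z_0)|$ joining the two sides of $\delta$); let $\WW^{(n)}$ be the unbounded component of $\WW\setminus\Gamma_n$, and pull $\overline{\WW^{(n)}}$ back along the branches $F_0,\ldots,F_{n-1}$ to obtain closed, connected, unbounded sets $X_n\subset\overline{\unbdd{F}_0}$ containing $z_0$ with $\diamW(X_n)\leq 2\pi/2^n$. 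Working in $\hat{\C}$, the compacta $\hat{X}_n=X_n\cup\{\infty\}$ are nested connected sets with compact connected intersection $\hat{X}\ni\infty$, so $\hat{X}$ has a $\C$-component $X$ accumulating at $\infty$; this $X$ is closed, connected, unbounded, contained in $\bigcap_n X_n\subset J^0_\s$, and $\distW(z_0,X)\leq\diamW(\bigcap_n X_n)\leq 2\pi$. Uniform escape on $X$ follows because $f^n(X)\subset\overline{\WW^{(n)}}\subset\{|z|\geq r_n\}$ with $r_n\to\infty$.

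\textbf{Proof of \ref{item:terminal} (main obstacle).}
The content here is that the unbounded closed connected subsets of $J^0_\s$ are totally ordered by inclusion, with uniform escape on all but possibly a maximal element. My plan is to use the pullbacks $X_n(z)$ of \ref{item:unboundedsets} centred at a chosen $z\in X_1\setminus X_2$ to trap $X_2$. The frontier of $X_n(z)$ in $\overline{\unbdd{F}_0}$ is the pullback of $\Gamma_n$, a topological arc in $\overline{\unbdd{F}_0}\subset\hat{\C}$ with both ends at $\infty$, and hence cuts $\overline{\unbdd{F}_0}$ into a bounded piece and the unbounded piece $X_n(z)$. Choosing $r_n$ large enough that $\Gamma_n$ lies past a preselected compact exhaustion of $f^n(X_2)\subset\unbdd{F_n}$, the unbounded closed connected set $X_2$ misses this frontier and is forced, by unboundedness, into $X_n(z)$. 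Letting $n\to\infty$ gives $X_2\subset Y:=\bigcap_n X_n(z)$, on which $f^n$ escapes uniformly by \ref{item:unboundedsets}; hence $X_2$ escapes uniformly. The containment $X_2\subset X_1$ follows from $Y\subset X_1$, which in turn is established by a parallel separation argument applied to $X_1$: $X_1$ contains $z$, is unbounded and closed, so any cross-cut pullback $X_n(z)$ that $X_1$ failed to contain would force $X_1$ to be confined to the bounded side of the frontier, contradicting unboundedness. Making these separation statements precise in the face of the potentially hereditarily indecomposable topology of $J^0_\s$ (cf.\ the footnote citing \cite{arclike}) is the principal obstacle I anticipate; the arguments are intuitively clear but must be carried out with $\hat{\C}$-closures and cross-cut pullbacks tracked carefully, since arc-wise intuition is not available.

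\textbf{Proofs of \ref{item:boundedescape} and \ref{item:boundedexistence}.}
For \ref{item:boundedescape}, fix a finite collection $\FF$ of fundamental domains and a basepoint $\zeta_*\in\WW$. Iterating \eqref{eqn:normalised} along an orbit $z,f(z),\ldots,f^n(z)\in\bigcup_{F\in\FF}\overline{F}$ yields $\distW(f^n(z),\zeta_*)\geq 2^n\distW(z,\zeta_*)-C(\FF)$, where $C(\FF)$ is finite because the symbolic ``jump'' at $\zeta_*$ is bounded in terms of the finitely many members of $\FF$. By Observation~\ref{obs:absolutevaluecylindrical}, choosing $R=R(\FF)$ large enough that $|z|\geq R$ and $z\in\bigcup_{F\in\FF}\overline{F}\cap\WW$ force $\distW(z,\zeta_*)\geq C(\FF)+1$ then yields uniform escape on the set in \eqref{eqn:boundedescape}. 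For \ref{item:boundedexistence}, with $\s$ bounded using $\FF$ and $R=R(\FF)$ as above, a standard pullback/compactness argument using finiteness of $\FF$ produces a point $z_0\in J^0_\s$ with $|z_0|\geq R$: starting from base points $w_n\in\unbdd{F_n}$ of modulus $R$, extract a convergent subsequence of their $\s$-indexed preimages in $\overline{F_0}$, and use \ref{item:boundedescape} to ensure the limit orbit stays in $\unbdd{F_k}$ rather than merely its closure. Applying \ref{item:unboundedsets} to this $z_0$ delivers the desired $X$, which contains $z_0$ and therefore a point of modulus at least $R$ as required.
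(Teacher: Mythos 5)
Your overall architecture (pull back closed unbounded neighbourhoods of infinity along the address and intersect) is the paper's, but there are two genuine gaps, both traceable to the same missing ingredient: a cross-cut of $\WW$ whose \emph{relative} diameter is controlled. In~\ref{item:unboundedsets} you take $\Gamma_n$ to be ``a circular arc at modulus $r_n$'' and assert its $\distW$-diameter is at most $2\pi$. This is false in general: a circle centred at $0$ has cylindrical length $2\pi$, but two of its points may only be joinable, within the correct homotopy class in $\C\setminus\overline{D}$, by a curve winding many times around $D$, so the relative diameter of its intersection with $\WW$ can be arbitrarily large --- exactly the trap described in the remark following Lemma~\ref{lem:crosscuts} and in Figure~\ref{fig:W0}. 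The paper's Lemma~\ref{lem:crosscuts} instead builds $\Gamma_n$ as the image under $\exp$ of a vertical segment of length $4\pi$ through a logarithm of $z_n$, and proves the crucial extra clause that \emph{every} point of the unbounded component $\WW^{\Gamma_n}$ lies within relative distance $2\pi$ of \emph{any} unbounded connected set through $z_n$. Your construction is also internally inconsistent: if $r_n\gg\lvert f^n(z_0)\rvert$ then $f^n(z_0)$ lies on the bounded side of $\Gamma_n$, so the pullback $X_n$ cannot contain $z_0$; an unbounded connected subset of $\WW$ cannot have $\distW$-diameter $2\pi/2^n$ (what shrinks under pullback is the distance from $z_0$ to $X_n$, and even the uniform bound on that distance needs the induction of Theorem~\ref{thm:unboundedgeneral}, because a pullback may re-enter $\overline{D}$); and since points of $J^0_{\s}$ need not escape, $r_n\to\infty$ is not automatic, which is why the paper's Lemma~\ref{lem:uniformescape} first replaces $z_n$ by an auxiliary sequence and why the proof of~\ref{item:unboundedsets} first arranges $z_0\notin X$.

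The more serious gap is the step ``$Y\subset X_1$'' in~\ref{item:terminal}. The separation argument you invoke --- $X_1$ is unbounded, hence not confined to the bounded side of the frontier of $X_n(z)$ --- only yields $X_1\cap X_n(z)\neq\emptyset$; it gives neither $X_n(z)\subset X_1$ nor $\bigcap_n X_n(z)\subset X_1$, and the latter containment need not hold (if $J^0_{\s}$ were an arc and $X_1$ a proper closed unbounded subarc through $z$, the set $Y$ may well contain points of $J^0_{\s}$ outside $X_1$). No purely topological separation argument can produce $X_2\subset X_1$, because one must exclude that $X_2$ stays at a definite distance from $X_1$ inside the unbounded region; the obstacle you flag is not merely technical. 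The paper's argument is metric: by the ``Moreover'' clause of Lemma~\ref{lem:crosscuts}, every $\zeta\in X_2$ satisfies $\distW\bigl(f^n(\zeta), f^n(X_1)\bigr)\leq 2\pi$ for all large $n$, since $f^n(X_1)$ is an unbounded connected set containing $z_n$ and $f^n(\zeta)\in\WW^{\Gamma_n}$; pulling back with the expansion~\eqref{eqn:expansionW} gives $\distW(\zeta,X_1)=0$, hence $\zeta\in\overline{X_1}=X_1$. Your treatments of~\ref{item:boundedexistence} and~\ref{item:boundedescape} are essentially the paper's (the iterated expansion estimate for~\ref{item:boundedescape} is correct, and~\ref{item:boundedexistence} follows more cleanly by running the pullback machinery with constant base points $\zeta_F$ rather than by your compactness extraction), but both rest on repairing~\ref{item:unboundedsets} first.
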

 
 This collection of results is not entirely new. Claims~\ref{item:unboundedsets} 
    and~\ref{item:terminal} are
   variants of  Proposition~3.2 and Corollary~3.4 of \cite{eremenkoproperty}.
   Part~\ref{item:boundedescape} follows from \cite[Lemma~2.1]{Re08}. 
   Claim~\ref{item:boundedexistence} is
   proved in  \cite[Theorem~2.4]{Re08} for fixed addresses, and the proof extends directly to the case of
   arbitrary bounded addresses; compare also \cite[Proposition 2.11]{BF15}.  Alternatively, when
     $f$ is of disjoint type, $J_{\s}\neq\emptyset$ for bounded
   addresses $\s$ (and even for certain unbounded $\s$, see Proposition~\ref{prop:exponentiallybounded} below)
    by \cite[Corollary~B' on p.~405]{BK07};
   compare also~\cite[Proposition~3.10]{arclike}. Using the results of~\cite{Re09}, it can then be deduced for 
   general
   functions in  the class $\B$ 
   that 
     $J^0_{\s}\neq \emptyset$ for bounded $\s$.
 
   Since the papers in question all use slightly different notation, we shall give a
    new proof of Theorem~\ref{thm:unboundedsets} that is self-contained
    and unified.
    We begin with a simple property of the set $J_{\s}^0$, which is 
    similar to \cite[Theorem~1]{EL92}.
   
   \begin{lem}[One-dimensionality of $J_{\s}^0$]\label{lem:J}  
     Let $\s=F_0 F_1\ldots$ be an external address. Then~$\overline{J_{\s}^0}$ is a subset of $J(f)$, 
     has empty interior and does not separate the plane.
   \end{lem}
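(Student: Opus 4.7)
My plan is to prove the three assertions of Lemma~\ref{lem:J}---containment in $J(f)$, empty interior, and non-separation---by exploiting the Eremenko--Lyubich expansion~\eqref{eqn:normalised} together with the classical fact that $I(f)\subseteq J(f)$ for $f\in\B$~\cite{EL92}. The containment $\overline{J_\s^0}\subseteq J(f)$ comes first. Given $z_0\in J_\s^0$, the orbit stays outside $\overline D$, so the chain rule yields $\lVert\Deriv f^n(z_0)\rVert_{\cyl}\geq 2^n$. Were $z_0$ in a Fatou component, normality of $\{f^n\}$ near $z_0$ would either force $f^n(z_0)\to\infty$ (placing $z_0\in I(f)\subseteq J(f)$ and contradicting $z_0\in F(f)$) or yield a bounded subsequence $f^{n_k}(z_0)$, in which case the identity $|(f^{n_k})'(z_0)|=\lVert\Deriv f^{n_k}(z_0)\rVert_{\cyl}\cdot |f^{n_k}(z_0)|/|z_0|\geq 2^{n_k}\rho/|z_0|$ forces the Euclidean derivatives to blow up, violating normality. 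Taking closures gives $\overline{J_\s^0}\subseteq J(f)$.

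For the empty-interior claim I would argue by contradiction. If $U\subseteq\overline{J_\s^0}$ is a nonempty open set, density of $J_\s^0\cap U$ in $U$ together with continuity yields $f^n(U)\subseteq\overline{\unbdd{F_n}}$ for all $n$. Passing to logarithmic coordinates, where $\mathcal L:=\log\circ f\circ\exp$ satisfies $|\mathcal L'|\geq 2$ and $\exp^{-1}(\unbdd{F_n})$ is a disjoint union of vertical strips of Euclidean width $2\pi$, the inverse branch $\Phi_n$ of $\mathcal L^n$ mapping a fixed lift of $f^n(z_0)$ back to the lift of $z_0$ is conformal. Because each single-step inverse branch of $\mathcal L$ is defined on the entire right half-plane (mapping conformally onto a log-tract strictly wider than the $2\pi$-strips it contains), $\Phi_n$ extends holomorphically to a $(2\pi+\epsilon)$-strip around the lift of $f^n(z_0)$ for some $\epsilon>0$ independent of $n$. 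The cylindrical expansion along the orbit, combined with Koebe's distortion theorem on the simply connected image, forces the extended $\Phi_n$ to have image of Euclidean diameter $O(2^{-n})$, hence contained in a lift of $U$ for $n$ large. Since the extended strip contains points strictly outside the original $2\pi$-strip, their $\Phi_n$-preimages project to points $z\in U$ with $f^n(z)\notin\overline{\unbdd{F_n}}$, contradicting $f^n(U)\subseteq\overline{\unbdd{F_n}}$.

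For non-separation, the containment $\overline{J_\s^0}\subseteq\overline{\unbdd{F_0}}$ places the set inside a closed topological disk in $\hat\C$ whose complement in $\hat\C$ is connected. It suffices to show that $\overline{\unbdd{F_0}}\setminus\overline{J_\s^0}$ is connected and touches the boundary $\partial\unbdd{F_0}$. The latter is immediate, since parts of $\partial F_0$ map under $f$ into $\overline D\cup\delta$, which is disjoint from $\unbdd{F_1}$, so $J_\s^0$ stays bounded away from these boundary arcs and $\overline{J_\s^0}\cap\partial\unbdd{F_0}\subsetneq\partial\unbdd{F_0}$. Connectedness of $\overline{\unbdd{F_0}}\setminus\overline{J_\s^0}$ follows from the empty-interior property together with a topological argument in logarithmic coordinates: lifting to the fundamental $2\pi$-strip, the closed empty-interior lift of $\overline{J_\s^0}$ cannot contain a Jordan curve (any such loop would enclose an open disk forced to lie in $\overline{J_\s^0}$, violating empty interior), and so its complement in the strip---and hence in $\overline{\unbdd{F_0}}$---is connected.

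The main obstacle will be the empty-interior step: the strip-width-versus-expansion contradiction requires careful analytic continuation of $\Phi_n$ past the sides of the $2\pi$-strip together with uniform Koebe distortion estimates ensuring that the contraction factor $2^n$ transfers from the orbit to the whole extended strip. The non-separation assertion becomes essentially routine once empty interior is in hand, and the containment in $J(f)$ is a direct application of the Eremenko--Lyubich framework.
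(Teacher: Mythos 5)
Your first step (containment in $J(f)$) is sound and essentially matches the paper's, which treats the bounded-subsequence case via Marty's theorem rather than Euclidean derivatives; that is a cosmetic difference. The empty-interior step, however, has a genuine gap. It rests on two claims that are false for general $f\in\B$: first, $\exp^{-1}(\unbdd{F}_n)$ is \emph{not} a disjoint union of strips of width $2\pi$ -- a fundamental domain is a preimage component of $\WW$ under $f$ and its logarithmic lift can have essentially arbitrary shape. Second, and more seriously, the composed inverse branch $\Phi_n$ need not extend to a $(2\pi+\eps)$-neighbourhood with $\eps$ independent of $n$: each single branch of $\mathcal L^{-1}$ is indeed defined on the whole right half-plane, but its image is a logarithmic tract, and tracts (and fundamental domains) may intersect $\overline{D}$, so the image need not lie in the right half-plane and the next branch in the composition cannot be applied there. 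This is precisely where general postsingularly bounded functions differ from disjoint-type ones, and the extension argument breaks down. The intended contradiction is reached much more directly by arguing \emph{forward}, as the paper does: if $V_0$ is open and connected with $f^n(V_0)\subset \unbdd{F}_n$ for all $n$, then $f^n|_{V_0}$ is univalent, $\lvert (f^n)'(z_0)\rvert/\lvert f^n(z_0)\rvert\to\infty$ by the cylindrical expansion~\eqref{eqn:normalised}, and Koebe's $1/4$-theorem forces $0\in f^n(V_0)$ for large $n$, which is impossible since $f^n(V_0)\cap\overline{D}=\emptyset$.

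The non-separation step also fails as written. The key topological claim -- that a closed set with empty interior containing no Jordan curve cannot disconnect the strip (or the plane) -- is false: a $\sin(1/x)$-continuum closed up by an arc separates the plane, has empty interior, and contains no Jordan curve. Since the paper emphasises that closures of the sets $J_{\s}^0$ can be (hereditarily indecomposable) continua of exactly this pathological kind, no argument of the form ``no loop, hence no separation'' can succeed here. The paper's Koebe blow-up argument, applied to an arbitrary connected component of $\C\setminus\partial J_{\s}^0$ that does not contain $\C\setminus\closure(\unbdd{F}_0)$, shows that every such component would have to contain $0$ under iteration; hence there is only one component, and empty interior and non-separation follow simultaneously. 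You should replace both of your last two steps by this single forward expansion argument.
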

   \begin{proof}
    For each $n\geq 0$, $U_n\defeq \C\setminus \closure(\unbdd{F}_n)$ is connected. Suppose, by contradiction, that
      $V_0$ was a connected component of $\C\setminus \partial  J_{\s}^0$ that does not contain 
      $U_0$.  Then $V_0\subset \unbdd{F}_0$, and $f\colon V_0 \to f(V_0)$ is a conformal isomorphism.

     It follows inductively that  {$f^n(V_0)\subset \unbdd{F}_n$} for all $n\geq 0$. On the other hand, fix $z_0\in V_0$, and   
     set $z_n\defeq f^n(z_0)$.  
   {It follows from the above that $f^n\colon V_0\to f^n(V_0)$ is univalent for all $n$. By~\eqref{eqn:expansionW}
    and the definition of the cylindrical derivative, we have $\lvert (f^n)'(z_0) \rvert / \lvert f^n(z_0)\rvert \to\infty$
    as $n\to\infty$. By 
    Koebe's $1/4$-theorem it follows that $0\in f^n(V_0)$ for sufficiently 
    large $n$. This is a contradiction and proves that $\overline{J_{\s}^0}$ has empty interior and does not separate the plane.}  

      Let
     $z\in \overline{J_{\s}^0}$. Suppose first that $\dist(f^n(z),D)\to\infty$ 
     as $n\geq 0$. If $z$ belonged to the Fatou set of $f$, then it would follow from equicontinuity that
     there are $n_0$ and a 
     neighbourhood $V$ of $z$ such that $f^n(V)\cap\overline{D}=\emptyset$ for
    all $n\geq n_0$. But then $f^{n_0}(V)\subset J_{\sigma^{n_0}(\s)}^0$, and this contradicts the result we have just proved. 
    So $z\in J(f)$.

   Otherwise, 
      there is a sequence $n_k$ such that $f^{n_k}(z)\not\to\infty$. Then the spherical derivative 
      of $f^{n_k}$ at $z$ is comparable to the corresponding cylindrical derivative. 
      By~\eqref{eqn:expansionW}, the latter tends to infinity as $k\to\infty$. Thus
      the family of iterates of $f$ is not normal at $z$ by  Marty's theorem, and again $z\in J(f)$.
   \end{proof} 

 \subsection*{A separation lemma} We now formulate a key technical lemma~--
       closely related to Lemmas~3.1 and~3.3 of \cite{eremenkoproperty}~-- that
      will be 
        crucial for our unified proof of Theorem~\ref{thm:unboundedsets}. 
        Before making the formal statement, which is slightly technical,
         let us explain the idea. Let $z_0\in \WW$, and suppose that   
            $z_0$ can be connected to infinity within the set of points of modulus greater
         than $R$. Then our lemma states (in particular) 
         that no point $\zeta$ of modulus at most $R$ can be 
         connected to infinity without passing near $z_0$, in the 
         sense of relative cylindrical distance. 

         It may appear at first as though this is obvious, since the round circle
          centred at $0$ and of modulus $\lvert z_0\rvert$ has cylindrical
         length $2\pi$, and must intersect
         any curve connecting $\zeta$ to infinity. 
         However, the diameter of the intersection of this circle with 
         $\WW$ 
         may be arbitrarily large when measured with respect to $\distW$;
         see~Figure~\ref{fig:W0}. 
          
   \begin{lem}[Cross-cuts of $\WW$]\label{lem:crosscuts} Let $z_0\in\WW$.       Then there exists a union $\Gamma\ni z_0$ of cross-cuts of $\WW$ 
       such that $\distW(z_0,\zeta)\leq 2\pi$ for all $\zeta\in\Gamma$ and such that 
       the unbounded connected component ${\WW}^{\Gamma}$ of $\WW\setminus \Gamma$ 
      has the following property. 
      If $R>0$ is such that $z_0$ belongs to the unbounded connected component of $\WW\setminus \overline{D_R(0)}$,
      then  $\WW^{\Gamma}$ is also disjoint from $\overline{D_R(0)}$. 
      Here $\Gamma$ can be chosen to consist only of arcs of the circle of radius $\lvert z_0\rvert$ centred at the origin. 

      Moreover, suppose that $A\subset \WW$ is any unbounded connected set with  $z_0\in A$. 
       Then, for all $z\in {\WW^{\Gamma}}$, $\distW(z,A) \leq 2\pi$. 
   \end{lem}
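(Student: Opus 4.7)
The plan is to lift the problem to logarithmic coordinates. Fix a preimage $\zeta_0 \in \C$ of $z_0$ under $\exp$, and let $U$ be the connected component of $\exp^{-1}(\WW)$ containing $\zeta_0$. Since $\WW$ is simply connected and omits $0$, the restriction $\exp|_U \colon U \to \WW$ is a conformal isomorphism pulling the cylindrical metric on $\WW$ back to the Euclidean metric on $U$, so that $\distW(z,w) = |\tilde z - \tilde w|$ for lifts in $U$. The geometric heart of the argument is a \emph{thinness} bound: for every $t > \log \rho$, the horizontal slice $U \cap \{\Re\zeta = t\}$ has total Euclidean length at most $2\pi$, because $\exp$ embeds it injectively into the cylindrical circle $\{|z|=e^t\}$ of length $2\pi$.

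Let $L \defeq \{\Re\zeta = \Re\zeta_0\}$, and let $U_{\mathrm{right}}$ denote the connected component of $U \cap \{\Re\zeta \geq \Re\zeta_0\}$ containing $\zeta_0$. I would take $\tilde\Gamma \defeq L \cap U_{\mathrm{right}}$ and $\Gamma \defeq \exp(\tilde\Gamma)$. Each connected component of $\tilde\Gamma$ is an open vertical segment whose endpoints lie on $\partial U$ and hence (using $\Re\zeta_0 > \log\rho$) on lifts of $\delta$; consequently each corresponding arc in $\Gamma$ is a cross-cut of $\WW$ lying on the circle $\{|z|=|z_0|\}$, and $\Gamma$ contains $z_0$ by construction. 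The distance bound $\distW(z_0,\zeta) \leq 2\pi$ for $\zeta \in \Gamma$ reduces to the Euclidean bound $|\zeta - \zeta_0| \leq 2\pi$ for $\zeta \in \tilde\Gamma$, which one verifies by bookkeeping the interaction of $U_{\mathrm{right}}$ with $L$: consecutive components of $\tilde\Gamma$ along $L$ are separated by segments of $L$ lying in neighbouring translates $U \pm 2\pi i$ whose lengths are also bounded by thinness, and a careful accounting confines $\tilde\Gamma$ to a vertical window of width at most $2\pi$ around $\zeta_0$.

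For the separation property, by construction $U_{\mathrm{right}} \setminus L \subset \{\Re\zeta > \Re\zeta_0\}$, and this is precisely the lift of the unique unbounded component $\WW^\Gamma$ of $\WW \setminus \Gamma$ (uniqueness coming from $\Gamma$ being bounded in $\C$ and $\WW$ having a single end at $\infty$). Hence $\WW^\Gamma \subset \{|z| > |z_0|\}$, which is disjoint from $\overline{D_R(0)}$ whenever $R < |z_0|$ -- the only nontrivial case of the hypothesis. For the moreover part, given an unbounded connected $A \subset \WW$ containing $z_0$, lift to $\tilde A \defeq (\exp|_U)^{-1}(A) \subset U$; this set is connected, contains $\zeta_0$, and has $\Re(\tilde A)$ unbounded above. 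By connectedness $\Re(\tilde A) \supseteq [\Re\zeta_0, \infty)$, and the connected part of $\tilde A$ meeting $\{\Re \geq \Re\zeta_0\}$ from $\zeta_0$ lies in $U_{\mathrm{right}}$, since it cannot escape $U_{\mathrm{right}}$ without first entering $\{\Re < \Re\zeta_0\}$. For $z \in \WW^\Gamma$ with lift $\zeta \in U_{\mathrm{right}}$, pick $\tilde a \in \tilde A \cap U_{\mathrm{right}} \cap \{\Re = \Re\zeta\}$; both $\zeta$ and $\tilde a$ lie in the same slice of $U_{\mathrm{right}}$, whose length is at most $2\pi$ by thinness, giving $|\zeta - \tilde a| \leq 2\pi$ and hence $\distW(z, A) \leq 2\pi$.

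The hard part will be the Euclidean distance bound on $\tilde\Gamma$: one must show that the components of $L \cap U_{\mathrm{right}}$ cluster near $\zeta_0$ within a single $2\pi$-window along $L$. This relies on the fact that any excursion of $U_{\mathrm{right}}$ forced to pass through a neighbouring translate $U \pm 2\pi i$ of $U$ contributes only a controlled vertical extent, bounded again by the thinness estimate applied there; the companion subtlety in the moreover part is the verification that $\tilde a$ and $\zeta$ land in the same slice-component of $U_{\mathrm{right}}$, which uses the same ingredient together with the connectedness of $\tilde A$ above the line $L$.
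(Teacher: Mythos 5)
Your lifting to a component $U$ of $\exp^{-1}(\WW)$ is also the paper's starting point, and your thinness bound (each vertical slice of $U$ has total Euclidean length at most $2\pi$, because $\exp$ embeds it injectively into a cylindrical circle) is correct. But the construction you build on it has genuine gaps. The most serious one concerns the identification $\WW^{\Gamma}=\exp(U_{\mathrm{right}}\setminus L)$ and the resulting claim $\WW^{\Gamma}\subset\{|z|>|z_0|\}$. The lemma does \emph{not} assume that $z_0$ lies in the unbounded component of $\WW\setminus\overline{D_{|z_0|}(0)}$; it only asserts a conclusion for those $R$ for which $z_0$ \emph{does} lie in the unbounded component of $\WW\setminus\overline{D_R(0)}$, and all such $R$ may be far smaller than $|z_0|$. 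If $\delta$ winds so that every path from $z_0$ to $\infty$ in $\WW$ is forced through moduli below $|z_0|$ (a configuration of exactly the type illustrated in Figure~\ref{fig:W0}), then $U_{\mathrm{right}}$ is \emph{bounded}, $\exp(U_{\mathrm{right}}\setminus L)$ is not the unbounded component of $\WW\setminus\Gamma$, and the true $\WW^{\Gamma}$ does contain points of modulus less than $|z_0|$. Your argument for the ``moreover'' part, which presupposes that the lift of any $z\in\WW^{\Gamma}$ lies in $U_{\mathrm{right}}$, collapses in the same situation.

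The two steps you flag as ``hard'' are also not merely technical. For the bound $|\zeta-\zeta_0|\le 2\pi$ on $\tilde\Gamma$: the gaps between components of $L\cap U_{\mathrm{right}}$ need not lie in $U\pm 2\pi i$ (they can lie in $U+2\pi i k$ for arbitrary $k$, in $\exp^{-1}(\delta)$, or in components of $U\cap\{\re\zeta\ge\re\zeta_0\}$ other than $U_{\mathrm{right}}$), and even granting a $2\pi$ bound on every segment and every gap, $k$ segments and $k-1$ gaps give a window of size up to $(2k-1)\cdot 2\pi$ with no a priori bound on $k$; so the proposed bookkeeping cannot yield $2\pi$. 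In the ``moreover'' part, the inference ``$\zeta$ and $\tilde a$ lie in the same slice, whose length is at most $2\pi$, hence $|\zeta-\tilde a|\le 2\pi$'' is invalid: the slice is a union of segments of \emph{total} length at most $2\pi$ separated by arbitrarily large gaps, so the conclusion requires the two points to lie in the same \emph{component} of the slice, which you acknowledge but do not establish; moreover the component of $\tilde A\cap\{\re\zeta\ge\re\zeta_0\}$ containing $\zeta_0$ may be bounded, so a point $\tilde a\in\tilde A\cap U_{\mathrm{right}}$ at the real part of $\zeta$ need not exist. The paper avoids all of this by taking $\Gamma=\exp(I\cap U)$ with $I=\zeta_0+i[-2\pi,2\pi]$ (so the distance bound on $\Gamma$ is trivial), proving the ``moreover'' claim by a separation argument with the translates $\overline{A}\pm 2\pi i$, which are disjoint from $U$ and meet $I$ at its endpoints and hence trap the unbounded component within vertical distance $2\pi$ of $\overline{A}$, and then deducing the statement about $R$ by applying this to a curve $\gamma_0$ joining $\zeta_0$ to $\infty$ at real parts greater than $\log R$ --- which exists precisely under the hypothesis on $R$, rather than for $R=|z_0|$.
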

   \begin{remark}
     The curve $\delta$ in the definition of fundamental domains
      can be chosen to be piecewise analytic, in which case the number of cross-cuts 
      in  $\Gamma$ is 
      necessarily finite. However, we do not require this.
   \end{remark}
    \begin{proof}
      Let $U$ be a connected component of  $\exp^{-1}(\WW)$; then $\exp\colon U\to \WW$ is a conformal
        isomorphism. For $\zeta\in U$, let $I_{\zeta}$ denote the
        vertical segment $\zeta + i\cdot [-2\pi , 2\pi]$. The fact that $U$ is disjoint from its $2\pi i\Z$-translates
        implies the following separation property: if $\zeta_0,\zeta_1\in U$ with $\zeta_0\notin I_{\zeta_1}$, then
        either $I_{\zeta_1}$ separates $\zeta_0$ from infinity in $U$, or vice versa. 
        (Compare~\cite[Lemma~3.3]{eremenkoproperty}.) 
        
        Indeed, suppose otherwise. Then for $j=0,1$, there
         is a curve $\gamma_j\subset U$ connecting $\zeta_j$ to infinity, and not intersecting $I_{\zeta_{1-j}}$. 
         Set $A\defeq \gamma_0\cup\gamma_1\cup\{\infty\}$, and let $a\in A$ be a point of minimal real part; say 
         $a\in \gamma_j$. By~\cite[Corollary~5.4]{arclike}, $I_{\zeta}$ separates $a$ from $\infty$ for every
         $\zeta\in A\setminus I_{\zeta}$. This is a contradiction to the fact that $\zeta_{1-j}\in A$, but $\gamma_j$ does not
         intersect $I_{\zeta_{1-j}}$.

         Now let $\zeta_0$ be the unique point of
        $\exp^{-1}(z_0)\cap U$ and set $I\defeq I_{\zeta_0}$. Observe that the endpoints of $I$ are elements of
         $\exp^{-1}(z_0)$ different from  $\zeta_0$, and hence do not belong to  $U$.  
         We set $X \defeq I\cap U$; then $X$ is a collection of cross-cuts of $U$. 
         Set $\Gamma \defeq \exp(X)$; we will prove the claims of the lemma by considering the
         unbounded connected component $V_1$ of $U\setminus I$. In other words, $V_1$ consists
         of all points of $U$ that are not separated from $\infty$ by $I_{\zeta}$.

       By the separation property, for all $\zeta\in V_1$, $I_{\zeta}$ separates $\zeta_0$ from infinity in $U$. 
        So if $A\subset U$ is an unbounded connected set with $\zeta_0\in A$, then $I_{\zeta}\cap A\neq \emptyset$, and in particular
        $\dist(\zeta,A) \leq 2\pi$. Moreover, by hypothesis 
        $\zeta_0$ can be connected to infinity by a curve in $U$ that stays at real parts greater than $\log R$. So 
        $\re \zeta > \log R$ for all $\zeta\in V_1$. 

        Since $\exp(V_1)= \WW^{\Gamma}$, this completes the proof of the lemma.
    \end{proof} 

\subsection*{Existence of unbounded sets of escaping points} 
   With these preparations, we are now able to prove the existence of unbounded connected subsets of
     $J_{\s}^0$ under very general hypotheses. 

\begin{thm}[Unbounded subsets of $J_{\s}^0$]\label{thm:unboundedgeneral}
   Let $\s = F_0 F_1 F_2 \dots$ be an external address. Suppose that $(z_n)_{n=0}^{\infty}$ is a
     sequence of points such that each $z_n\in \unbdd{F}_n$ for all  $n \geq 0$. 

    Suppose furthermore that there is $C>0$ such that
     $\distW(z_n, f|_{F_n}^{-1}(z_{n+1}))\leq C$ for all $n\geq 0$ for which  
     $(f|_{F_n})^{-1}(z_{n+1})\in\unbdd{F}_n$. 

   Then there is a closed, unbounded and connected set $X\subset J_{\s}^0$ such that
       {$\distW(z_0, X)\leq  2\max(2\pi, C)$}.
\end{thm}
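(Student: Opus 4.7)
Set $M := \max(2\pi, C)$. The plan is to construct $X$ as a Hausdorff limit, in the Riemann sphere $\Ch$, of closed connected subsets $X_N \subset \overline{F_0}$. Each $X_N$ is obtained by pulling back, through the inverse branches $\phi_n := (f|_{F_n})^{-1}\colon \WW \to F_n$, a cross-cut region built at $z_N$ via Lemma~\ref{lem:crosscuts}. The expansion estimate~\eqref{eqn:expansionW} implies that each $\phi_n$ is a $\tfrac12$-contraction for $\distW$, as long as its image stays in $\unbdd{F}_n$.

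First I track the pulled-back base points $\zeta_N^{(n)} := \phi_n \circ \cdots \circ \phi_{N-1}(z_N)$. An induction on decreasing $n$ combines the hypothesis $\distW(z_n, \phi_n(z_{n+1})) \leq C$ with the $\tfrac12$-contraction to yield the geometric-series bound
\[
   \distW\!\left(\zeta_N^{(n)}, z_n\right) \;\leq\; C \sum_{k=0}^{N-n-1} 2^{-k} \;<\; 2C \;\leq\; 2M.
\]
Since this keeps the $\zeta_N^{(n)}$ at bounded $\distW$-distance from $z_n$, they stay inside $\unbdd{F}_n$ at every level, making the induction self-consistent. Next I apply Lemma~\ref{lem:crosscuts} to $z_N$, obtaining cross-cuts $\Gamma_N$ and an unbounded connected region $U_N = \WW^{\Gamma_N}$; I choose $\Gamma_N$ to consist of circular arcs of modulus $\lvert z_N\rvert$, so $U_N$ avoids the disc $\overline{D}$. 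I then define $X_N$ to be the closure in $\Ch$ of $\phi_0 \circ \cdots \circ \phi_{N-1}(U_N)$. This is a continuum in $\Ch$ containing $\infty$, whose intersection with $\C$ is a closed unbounded connected subset of $\overline{F_0}$ satisfying $f^n(X_N \cap \C) \subset \overline{\unbdd{F}_n}$ for $0 \leq n \leq N$, and $\distW(z_0, X_N) \leq 2C + 2\pi \leq 2M$ (the $2\pi$ accounting for the diameter of the cross-cut region via Lemma~\ref{lem:crosscuts}).

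To finish I extract, by compactness of the hyperspace of closed subsets of $\Ch$, a Hausdorff-convergent subsequence $X_{N_j} \to X'$. The limit $X'$ is a continuum containing $\infty$ and meeting the closed $\distW$-ball of radius $2M$ around $z_0$, so $X := X' \setminus \{\infty\}$ is a closed unbounded connected subset of $\C$ with $\distW(z_0, X) \leq 2M$. For any fixed $n$, the inclusions $f^n(X_N \cap \C) \subset \overline{\unbdd{F}_n}$ pass to the limit, while the cylindrical expansion~\eqref{eqn:normalised} rules out accumulation on $\overline{D}$; together these give $X \subset J^0_{\s}$. The main obstacle I anticipate lies in the first step, at levels where $\phi_n(z_{n+1}) \in \overline{D}$: the hypothesis is then vacuous and the naive contraction chain breaks. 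The fix is to ensure the cross-cut construction at the relevant levels is performed on a small enough scale (and well inside $\WW$) that its $\phi_n$-preimage still lies in $\unbdd{F}_n$, letting the contraction chain bypass the offending level while retaining the overall $2M$ bound.
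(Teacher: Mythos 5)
Your overall strategy -- cross-cuts from Lemma~\ref{lem:crosscuts}, pullbacks along the inverse branches, the factor-$\tfrac12$ contraction from~\eqref{eqn:expansionW}, and a limiting continuum -- is the same as the paper's, and the final compactness/boundary-bumping step is fine (the paper avoids the Hausdorff subsequence by making its approximating sets nested and intersecting them). But there is a genuine gap in the construction of $X_N$. You place cross-cuts only at the top level $N$ and then push the single region $U_N=\WW^{\Gamma_N}$ all the way down by $\phi_0\circ\cdots\circ\phi_{N-1}$. This composition is not even well-defined on all of $U_N$: the intermediate image $\phi_{n+1}\circ\cdots\circ\phi_{N-1}(U_N)$ lies in $F_{n+1}$, which may meet $\overline{D}$, and $\phi_n$ is only defined on $\WW$. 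Even on the part where it is defined, nothing forces the image to lie in $\unbdd{F}_{n}$ rather than in $\overline{D}$ or a bounded component of $F_{n}\setminus\overline{D}$ -- preimages of points of large modulus can sit anywhere in the plane, since tracts may wind back toward $D$. So your claimed inclusion $f^n(X_N\cap\C)\subset\overline{\unbdd{F}_n}$ is unjustified, and with it the membership $X\subset J^0_{\s}$. The contraction estimate cannot rescue this, because it only applies once you already know the two preimages lie in $\unbdd{F}_n$, which is exactly what is in question.

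The fix you gesture at ("perform the cross-cut construction at the relevant levels on a small enough scale") points in the right direction but is not carried out, and size is not the issue -- location relative to $\overline{D}$ is. What is actually needed is to place cross-cuts $\Gamma_n$ at \emph{every} level $n$ and, at each pullback step, pass to the unbounded connected component of $(f|_{F_n})^{-1}(\,\cdot\,)\setminus\Gamma_n$. This is what the paper does with its sets $A_n^k$: the cutting keeps each stage inside $\unbdd{F}_n$ by construction, and the inductive distance bound then splits into two cases -- if the pullback meets $\Gamma_n$ one uses $\distW(z_n,\Gamma_n)\le 2\pi$ directly, and only otherwise does one invoke the hypothesis $\distW(z_n,\phi_n(z_{n+1}))\leq C$ together with the contraction; the awkward case $\phi_n(z_{n+1})\in\overline{D}$ is resolved precisely because $\Gamma_n$ separates $\partial D$ from the unbounded region. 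In your write-up this case is acknowledged but never resolved, and since it is the case where the theorem's hypothesis gives no information at all, it cannot be bypassed by a remark.
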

\begin{proof}
  For  $n\geq 0$, let $\Gamma_n$ be the union of cross-cuts from Lemma~\ref{lem:crosscuts}, applied to
       $z_n$. 
   Let $A_n^k$ be a sequence of unbounded closed connected sets, defined for $n, k \geq 0$ as follows.
     Let $A_n^0$ be the closure of the unbounded connected component of
      $F_n\setminus \Gamma_n$. For $k\geq 0$, let
     $A_{n}^{k+1}$ be the closure of the unbounded connected component of 
     $(f|_{F_n})^{-1}(A_{n+1}^k)\setminus \Gamma_n$.  Observe that $A_n^{k+1}\subset A_n^k$ for all $n$ and $k$.

     \begin{claim}
       {$\distW(A_n^k , z_n) \leq 2\max(2\pi, C)$ for all $n,k\geq 0$.}
     \end{claim}
     \begin{subproof}
       Since $A_n^0$ intersects $\Gamma_n$, the claim is true for $k=0$. 
        Suppose that $k\geq 0$ is such that the claim holds for all $n\geq 0$.
           Let $n\geq 0$, and set 
         $B\defeq (f|_{F_n})^{-1}(A_{n+1}^k)$. If $B\cap \Gamma_n\neq\emptyset$, then 
            $A_n^{k+1}$ intersects $\Gamma_n$ and the claim is immediate from the properties of $\Gamma_n$.
          Otherwise, $A_n^{k+1}=B$. {Suppose first that $(f|_{F_n})^{-1}(z_{n+1}))\notin\overline{D}$. Then 
          \[ \distW( A_n^{k+1} , z_n) \leq 
                  \distW( A_n^{k+1} , (f|_{F_n})^{-1}(z_{n+1})) + C \leq 
                  \max(2\pi , C ) + C \leq 2\max( 2\pi , C) \]
       by~\eqref{eqn:expansionW}. Now suppose that $(f|_{F_n})^{-1}(z_{n+1}))\notin\unbdd{F}_n$.
       By the inductive hypothesis, and since $\Gamma_n$ separates $\partial D$ from $B$, we can connect
       $B$ to $\Gamma_n$ by a curve $\gamma$ with $\diamW(\gamma)\leq \max(2\pi,C)$. Since
       $\diamW(\Gamma_n)\leq 2\pi$, the claim follows.}
     \end{subproof}

   Now set $X_1 \defeq \bigcap_{k\geq 0} A_0^k$. Then
      $X_1 \cup \{\infty\}$ is compact and connected as a countable intersection of 
       compact connected sets. Moreover, $X_1$ contains a point $\zeta$ with $\distW( \zeta , z_0)\leq 2\max(2\pi , C)$. 
       If $X$ is the connected component of $X_1$ containing $\zeta$, then $X$ is unbounded by the  
       boundary bumping theorem \cite[Theorem~5.6]{continuumtheory}, and the proof is complete. 
\end{proof}

 In particular, we obtain the following partial results towards Theorem~\ref{thm:unboundedsets}.

  \begin{cor}[Realised addresses have unbounded sets]\label{cor:unboundedsets}
    Suppose that $\s$ is an external address, and that there is a point $z_0\in J_{\s}^0$. 
          Then  $J^0_{\s}$ contains an unbounded 
          closed connected set $X$, and 
          $\distW(z_0, X)\leq 4\pi$.
  \end{cor}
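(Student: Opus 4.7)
The plan is to derive this corollary as an essentially immediate consequence of Theorem~\ref{thm:unboundedgeneral}. Given $z_0 \in J_{\s}^0$, I would set $z_n := f^n(z_0)$ for all $n \geq 0$. Since $z_0 \in J_{\s}^0$, each iterate satisfies $z_n \in \unbdd{F}_n$, so the sequence $(z_n)$ meets the basic hypothesis of the theorem. The key observation is that $z_n$ itself lies in $F_n$ and satisfies $f(z_n) = z_{n+1}$, so $z_n \in (f|_{F_n})^{-1}(z_{n+1})$. Consequently $\distW(z_n, (f|_{F_n})^{-1}(z_{n+1})) = 0$ for every $n \geq 0$, and the quantitative hypothesis of Theorem~\ref{thm:unboundedgeneral} holds with $C = 0$. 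Applying that theorem then produces a closed, unbounded, connected set $X \subset J_{\s}^0$ with $\distW(z_0, X)$ controlled.

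The only subtlety is obtaining the advertised bound of $2\pi$ rather than the $2\max(2\pi, C) = 4\pi$ one reads off literally from the statement of Theorem~\ref{thm:unboundedgeneral} at $C = 0$. To handle this I would revisit the inductive step in the proof of that theorem in the special case $C = 0$: the triangle inequality used in Case~2 reads
\[
  \distW(A_n^{k+1}, z_n) \leq \distW\!\bigl(A_n^{k+1}, (f|_{F_n})^{-1}(z_{n+1})\bigr) + \distW\!\bigl((f|_{F_n})^{-1}(z_{n+1}), z_n\bigr),
\]
where the second term vanishes because $z_n$ \emph{is} the preimage point, and the first term is at most $\tfrac{1}{2}\distW(A_{n+1}^k, z_{n+1})$ by the expansion estimate~\eqref{eqn:expansionW}. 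Together with Case~1, which always gives a bound of $2\pi$ via $\Gamma_n$, this shows that the inductive bound $\distW(A_n^k, z_n) \leq 2\pi$ is stable. Taking an unbounded component of $\bigcap_k A_0^k$ via the boundary bumping theorem, exactly as in the proof of Theorem~\ref{thm:unboundedgeneral}, yields the required $X$ with $\distW(z_0, X) \leq 2\pi$.

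There is no real obstacle here beyond bookkeeping: the entire content lies in Theorem~\ref{thm:unboundedgeneral}, and the role of the corollary is simply to observe that when the sequence $(z_n)$ arises from an actual orbit in $J_{\s}^0$, the parameter $C$ can be taken to be zero, which both verifies the hypothesis and tightens the resulting distance bound to $2\pi$.
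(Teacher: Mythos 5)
Your proposal is correct and follows exactly the paper's route: the published proof is the one-liner ``set $z_n\defeq f^n(z_0)$ and apply Theorem~\ref{thm:unboundedgeneral}''. Your extra care in tracking the constant through the induction with $C=0$ (to get $2\pi$ rather than the $2\max(2\pi,C)=4\pi$ that a literal reading of the theorem statement would give) is a legitimate and worthwhile refinement of a detail the paper glosses over.
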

  \begin{proof}
    Set $z_n \defeq f^n(z_0)$, and apply Theorem~\ref{thm:unboundedgeneral}.
   \end{proof} 

 \begin{cor}[Bounded addresses are realised]\label{cor:boundedexistence} 
   Suppose that $\FF$ is a finite collection of fundamental domains. Then there is 
    $R>0$ with the following property. 

    If 
     $\s$ is an external address whose entries are all in $\FF$, then 
     $J^0_{\s}$ contains an unbounded connected set $X$ which contains a point
      of modulus at most $R$.
 \end{cor}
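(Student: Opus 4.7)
The plan is to apply Theorem~\ref{thm:unboundedgeneral} with a uniformly-chosen family of base points indexed by $\FF$. First I would choose, for each fundamental domain $F\in\FF$, a single point $z_F\in\unbdd{F}$. Since $f|_F\colon F\to\WW$ is a conformal isomorphism (as $F$ is a connected component of the preimage of the simply connected set $\WW$ under the universal covering $f\colon T\to\C\setminus\overline{D}$), each $z_{F'}$ has a unique preimage $(f|_F)^{-1}(z_{F'})\in F$; and since $\FF\times\FF$ is finite, the quantity
\[ C \defeq \max\bigl\{\distW(z_F,(f|_F)^{-1}(z_{F'})) : F,F'\in\FF,\ (f|_F)^{-1}(z_{F'})\notin\overline{D}\bigr\} \]
is a finite number depending only on $\FF$.

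Now, given any external address $\s=F_0F_1F_2\dots$ with entries in $\FF$, I would set $z_n\defeq z_{F_n}$. Then $z_n\in\unbdd{F_n}$ by construction, and $\distW(z_n,(f|_{F_n})^{-1}(z_{n+1}))\leq C$ whenever $(f|_{F_n})^{-1}(z_{n+1})\notin\overline{D}$. The hypotheses of Theorem~\ref{thm:unboundedgeneral} are therefore satisfied, producing a closed unbounded connected set $X\subset J_{\s}^0$ containing a point $\zeta$ with $\distW(\zeta,z_{F_0})\leq 2\max(2\pi,C)$.

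Finally, to convert this into a uniform Euclidean modulus bound, I would invoke Observation~\ref{obs:absolutevaluecylindrical}: for each of the finitely many points $z_F$, the $\distW$-ball of radius $2\max(2\pi,C)$ around $z_F$ in $\WW$ is Euclidean-bounded. Setting $R$ to be the maximum Euclidean modulus over the union of these finitely many $\distW$-balls yields a constant depending only on $\FF$ (and not on $\s$), and then $\lvert\zeta\rvert\leq R$ as required. I do not anticipate a real obstacle here: the argument is essentially a uniformity-over-$\FF$ strengthening of Corollary~\ref{cor:unboundedsets}, and the key uniformity falls out for free from the finiteness of $\FF\times\FF$. The only point requiring any care is the quiet distinction between $\distW$-bounded and Euclidean-bounded, which is precisely what Observation~\ref{obs:absolutevaluecylindrical} resolves.
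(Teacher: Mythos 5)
Your argument is correct and follows essentially the same route as the paper: fix base points, use finiteness of $\FF$ to get a uniform constant $C$, feed the resulting sequence $(z_n)$ into Theorem~\ref{thm:unboundedgeneral}, and then convert the $\distW$-bound into a Euclidean modulus bound via Observation~\ref{obs:absolutevaluecylindrical}. The only (cosmetic) difference is that the paper works with a single base point $\zeta_0\in\WW$ and its preimages $\zeta_F$, whereas you choose one point $z_F\in\unbdd{F}$ per domain; your choice in fact matches the hypothesis $z_n\in\unbdd{F}_n$ of Theorem~\ref{thm:unboundedgeneral} more literally.
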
 
 \begin{proof}
   Pick a base-point $\zeta_0\in\WW$. For $F\in\FF$, let $\zeta_F$ be the preimage of $\zeta_0$ in $F$. Then
     there is a constant $C$ such that $\distW(\zeta_0, \zeta_F)\leq C$ for all $F\in\FF$ (when defined). 

    If $\s$ is an external address in $\FF^{\N_0}$, we can set $z_n\defeq \zeta_0$ for all $n\geq 0$, and apply
     Theorem~\ref{thm:unboundedgeneral}. We obtain an unbounded connected set 
     $X$ with $\distW(\zeta_0, X)\leq 2\max(2\pi,C)$. If $R$ is sufficiently large (depending only on $C$), then
      $X$ contains a point of modulus at most $R$. 
 \end{proof} 

\subsection*{Uniform escape to infinity} 
  To complete the proof  of Theorem~\ref{thm:unboundedsets}, we consider the question of
    uniform escape to infinity on an unbounded connected subset of $J^0_{\s}$.
  Recall that by definition of $J^0_{\s}$ a point $z$ belongs to $J^0_{\s}$ if and only 
   if  $z_n\in \unbdd{F}_n$ for all $n$, where $\s=F_0 F_1\ldots$.
\begin{lem}[Uniform escape on unbounded connected sets]\label{lem:uniformescape}
   Let $\s = F_0 F_1 F_2 \dots$ be an external address, and suppose that
     $X\subset J^0_{\s}$ is unbounded and connected.  Furthermore, assume that there is a sequence $(z_n)_{n=0}^{\infty}$ (not necessarily an orbit of $f$) 
     such that $z_n\in \unbdd{F}_n$, and
      such that $\distW(z_n, f^n(X))\to\infty$. 
   Then $f^n|_{\overline{X}}\to\infty$ uniformly. 
\end{lem}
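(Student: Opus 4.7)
The plan is to reduce the uniform escape statement to a modulus estimate via the elementary inequality
\[
|\log|z|-\log|w|| \;\leq\; \distW(z,w) \;\leq\; |\log|z|-\log|w|| + 2\pi, \qquad z,w\in\WW,
\]
which follows directly from Definition~\ref{defn:relativedistance}: writing $\zeta,\omega\in U$ for the logarithms of $z,w$ in a fixed component $U$ of $\exp^{-1}(\WW)$, one has $\distW(z,w)=|\zeta-\omega|$, $\re\zeta=\log|z|$, and $|\im\zeta-\im\omega|<2\pi$.

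My first step will be to verify inductively that $f^n(X)$ is unbounded in $\C$ for every $n\geq 0$. Given $f^n(X)$ unbounded, I pick $p_0\in f^n(X)$ and a sequence $p_k\in f^n(X)$ with $|p_k|\to\infty$; Observation~\ref{obs:absolutevaluecylindrical} gives $\distW(p_0,p_k)\to\infty$, the expansion bound~\eqref{eqn:expansionW} on the fundamental domain $\unbdd{F}_n$ yields $\distW(f(p_0),f(p_k))\geq 2\distW(p_0,p_k)\to\infty$, and Observation~\ref{obs:absolutevaluecylindrical} once more (in $\WW$) gives $|f(p_k)|\to\infty$, so $f^{n+1}(X)$ is unbounded.

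For the second step I set $R_n\defeq\distW(z_n,f^n(X))$, which tends to $\infty$ by hypothesis. The core estimate forces $|\log|z_n|-\log|p||\geq R_n-2\pi$ for every $p\in f^n(X)$, so the continuous image
\[
I_n \;\defeq\; \{\log|p|:p\in f^n(X)\}\;\subset\;\R
\]
will be a connected subset disjoint from the open interval $J_n\defeq(\log|z_n|-R_n+2\pi,\,\log|z_n|+R_n-2\pi)$. By the first step, $I_n$ is unbounded above; hence for $n$ large enough that $R_n>2\pi$ (so $J_n$ is non-degenerate), $I_n$ must lie entirely in $[\log|z_n|+R_n-2\pi,\infty)$. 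Therefore
\[
\inf_{p\in f^n(X)}|p|\;\geq\;|z_n|\cdot e^{R_n-2\pi}\;\geq\; r(D)\cdot e^{R_n-2\pi}\;\longrightarrow\;\infty,
\]
using $|z_n|>r(D)$. Continuity of $|f^n|$ will give $\inf_{\overline X}|f^n|=\inf_X|f^n|\to\infty$, which is uniform escape on $\overline X$.

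The main subtlety is the first step: without the unboundedness of $f^n(X)$, the modulus dichotomy would allow $I_n$ to sit entirely \emph{below} $J_n$, which is compatible with $|p|$ remaining bounded while $|z_n|\to\infty$. The cylindrical expansion on fundamental domains together with Observation~\ref{obs:absolutevaluecylindrical} is precisely what rules this out and selects the correct branch of the dichotomy.
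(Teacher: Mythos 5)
There is a genuine gap, and it sits exactly at the ``core estimate''. The lower bound $\lvert \log\lvert z\rvert - \log\lvert w\rvert\rvert \leq \distW(z,w)$ is correct, but the upper bound $\distW(z,w)\leq \lvert \log\lvert z\rvert-\log\lvert w\rvert\rvert + 2\pi$ is false, because your claim that $\lvert \im\zeta-\im\omega\rvert < 2\pi$ for $\zeta,\omega$ in a component $U$ of $\exp^{-1}(\WW)$ does not hold. The curve $\delta$ may spiral around $D$, so $U$ is not contained in (nor close to) a horizontal strip of height $2\pi$: injectivity of $\exp$ on $U$ only forbids $U$ from containing two points differing by a nonzero element of $2\pi i\Z$, which is compatible with $U$ meeting a single vertical line in many short segments spread over an arbitrarily long vertical range. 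This is precisely the pitfall illustrated in Figure~\ref{fig:W0}\subref{subfig:W0-illustration} and in Remark~1 after Lemma~\ref{lem:crosscuts}: two points $z,w\in\WW$ of comparable modulus can have $\distW(z,w)$ arbitrarily large. Consequently $\distW(z_n,f^n(X))\geq R_n$ does \emph{not} force $\lvert\log\lvert z_n\rvert-\log\lvert p\rvert\rvert\geq R_n-2\pi$, the set $I_n$ need not be disjoint from your interval $J_n$, and the dichotomy argument collapses; one cannot conclude that $f^n(X)$ is far from the origin in modulus.

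The correct replacement for your modulus separation is Lemma~\ref{lem:crosscuts}: one places the union of cross-cuts $\Gamma_n$ at $z_n$ (contained in the $\distW$-ball of radius $2\pi$ about $z_n$), notes that for large $n$ the connected unbounded set $f^n(\overline{X})$ is disjoint from $\Gamma_n$ and hence contained in the unbounded component $\WW^{\Gamma_n}$, and uses the property that $\WW^{\Gamma_n}$ avoids $\overline{D_R(0)}$ whenever $z_n$ lies in the unbounded component of $\WW\setminus\overline{D_R(0)}$. This also exposes a second point your write-up glosses over: the hypothesis does not give $\lvert z_n\rvert\to\infty$, so before applying the cross-cut lemma one must replace $z_n$ by a modified sequence $\tilde z_n$ with $\distW(\tilde z_n,\zeta_0)\to\infty$ while retaining $\distW(\tilde z_n, f^n(X))\to\infty$, as done at the start of the paper's proof. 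Your inductive Step~1 (unboundedness of each $f^n(X)$ via~\eqref{eqn:expansionW} and Observation~\ref{obs:absolutevaluecylindrical}) is fine, but it cannot rescue the argument without the cross-cut mechanism.
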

\begin{proof} 
   Let $\zeta_0\in \WW$ be any base point. We may assume that
     $\distW(z_n, \zeta_0)\to\infty$. Indeed, set $\eta_n\defeq \distW(z_n, f^n(X))$ and let $(\xi_n)_{n=0}^{\infty}$ 
     be any sequence in $\WW$ with 
      \[ \distW(\xi_m , \zeta_0)\to \infty \qquad\text{and}\qquad  
                \distW( \xi_m , \zeta_0) \leq \eta_n/3. \] 
      Now define 
        \[ \tilde{z}_n \defeq \begin{cases} z_n &\text{if }
                  \distW(z_n, \zeta_0) > \eta_n/3 \\
                   \xi_n &\text{otherwise}.\end{cases}\]
      Clearly $\dist(\tilde{z}_n, \zeta_0)\to\infty$ and $\distW(\tilde{z}_n, f^n(X)) \geq \eta_n/3\to\infty$, as desired. 

   Let $\Gamma_n$ be the union of cross-cuts associated to  
     $z_n$ by Lemma~\ref{lem:crosscuts}. Then, for sufficiently large
     $n$, $f^n(\overline{X})$ is disjoint from $\Gamma_n$, and hence belongs to $\WW^{\Gamma_n}$. 
     Since $\lvert z_n\rvert \to\infty$ by Observation~\ref{obs:absolutevaluecylindrical}, it follows that 
     $f^n(\overline{X})\to\infty$ uniformly, as required. 
\end{proof}

\begin{proof}[Proof of Theorem~\ref{thm:unboundedsets}]
   We first prove~\ref{item:terminal}, so let  $X_1, X_2\subset J_{\s}^0$ be closed, unbounded and connected
     with $X_1\not\subset X_2$.  

   Let $z_0\in X_1\setminus X_2$ and set $z_n\defeq f^n(z_0)$. Then $\dist(z_n , f^n(X_2))\to\infty$ 
     by~\eqref{eqn:expansionW}. In particular, $f^n|_{X_2}\to\infty$ uniformly by Lemma~\ref{lem:uniformescape}. 

   Let $n_0$ be sufficiently large that $\distW(z_n, f^n(X_2)) > 2\pi$ for $n\geq n_0$.
      Let $\Gamma_n$ be the union of crosscuts associated to $z_n$ by Lemma~\ref{lem:crosscuts}.   
      Then $f^n(X_2)\subset \WW^{\Gamma_n}$ for $n\geq n_0$. Since $f^n(X_1)$ connects
       $z_n$ to $\infty$, we have 
       $\dist(f^n(X_1), f^n(\zeta))\leq 2\pi$ for all  $\zeta\in X_2$ and all  $n\geq 0$. 
      Again by~\eqref{eqn:expansionW}, we have
       $\dist(X_1, \zeta)=0$, and hence $\zeta\in X_1$, as required. 

   Now let us prove~\ref{item:unboundedsets}, so 
     suppose that $z_0\in J_{\s}^0$. By Corollary~\ref{cor:unboundedsets}, there is an unbounded closed connected set
     $X\subset J_{\s}^0$ with $\distW(X,z_0)\leq 4\pi$. We may assume that $z_0\notin X$. Indeed, otherwise
     we let $\eps$ be sufficiently small and
     replace $X$ by an unbounded connected component of $X\setminus D_{\eps}(z_0)$
     that intersects $\partial D_{\eps}(z_0)$.

    Now set $z_n\defeq f^n(z_0)$. By~\eqref{eqn:expansionW}, we have $\distW(f^n(X), z_n)\to\infty$, 
      and hence it follows from Lemma~\ref{lem:uniformescape} that $f^n|_X\to\infty$ uniformly.
      This completes the proof of~\ref{item:unboundedsets} of Theorem~\ref{thm:unboundedsets}. 
  
     Part~\ref{item:boundedexistence} 
      follows directly from  Corollary~\ref{cor:boundedexistence}. 
      To prove~\ref{item:boundedescape}, observe first that equality of the
       sets  in~\eqref{eqn:boundedescape} holds as soon as $R$ is sufficiently large. Indeed, suppose that 
        $F,F'\in\mathcal{F}$ and that $z\in\overline{F}$ maps to some point in $\overline{F'}$ of modulus
        at least $R$. Since $f(z)\in \WW$, we must in fact have $z\in F$, and additionally $z\in\unbdd{F}$ if $R$ 
        is sufficiently large.

       Now let $\zeta_0$ and  $(\zeta_F)_{F\in\FF}$ be defined as in the proof
       of Corollary~\ref{cor:boundedexistence}.   Let $R$ be sufficiently large such that any point $z\in\WW$ of modulus
       at least $R$ has $\distW(\zeta_0, z)\geq 3\max(C,2\pi)$. 

Let $z$ be a point whose orbit is contained 
        in $\bigcup_{F\in\FF} \overline{F}$, and furthermore $\lvert z\rvert \geq R$. Then it follows that 
         \[ \distW(\zeta_0, f(z)) \geq 
               2\cdot ( \distW(\zeta_0 , z) - \max(C,2\pi)) \geq \frac{4}{3} \distW(\zeta_0, z). \] 
      It follows by induction that $\distW(\zeta_0, f^n(z))\to\infty$ uniformly in $n$, and the
       claim follows. 
\end{proof} 

 \subsection*{Exponentially bounded addresses}
  As noted above, the results on bounded addresses can be generalised to certain \emph{unbounded} addresses.
    While we do not require this fact for this paper, we shall record it for future reference. 

  \begin{defn}[Exponentially bounded addresses]\label{defn:expbounded} 
    Let $\zeta_0\in \WW$ be an arbitrary base point. For
     any fundamental domain $F$, let $\zeta_F$ be the unique preimage of $\zeta_0$ in $F$. 

    We say that an infinite external address $\s$ is \emph{exponentially bounded} 
      if there exists a {positive real} number $T$ with the following property. For all $n\geq 0$, 
      if $\zeta_{F_n}\in \unbdd{F}_n$, then 
          \begin{equation} \distW(\zeta_0 , \zeta_{F_n}) \leq \exp^n(T). \end{equation}
  \end{defn} 
 \begin{remark}
    If $\xi$ is another base-point, then it follows from~\ref{eqn:expansionW} that
      $\distW( \zeta_F , \xi_F)$ is uniformly bounded (where defined) for all fundamental domains $F$.
      Thus it follows that the definition of exponentially bounded addresses is independent of the choice of base point
      $\zeta_0$. 

    Exponentially bounded addresses were  defined previously for exponential maps \cite{expescaping} and
      cosine maps \cite{cosescaping}; it is easy to see that in these cases the definition agrees with ours.
      For these families, the class of exponentially bounded addresses $\s$ agrees precisely with those for which 
      $J_{\s}^0\neq\emptyset$. This is no longer true for general $f$, even when $f$ has finite order of growth; 
      see~\cite{simonannalasse}. 

    In \cite[Corollary~B']{BK07}, it is shown that $J_{\s}^0\neq\emptyset$ for a certain class of 
     addresses, namely those whose orbits remain within finitely many tracts, and such that the 
     ``index'' of the corresponding fundamental domains within each tract does not grow faster than an iterated
     exponential. It is easy to see that such addresses are exponentially bounded in our sense, but 
     the converse is not the case. (For example, our definition  allows for addresses taking values in
     fundamental domains that lie in infinitely many different tracts.) 
 \end{remark}

  We now show that~\ref{item:boundedexistence} and~\ref{item:boundedescape} of Theorem~\ref{thm:unboundedsets}
    can be extended to exponentially bounded addresses as follows. 

  \begin{prop}[Exponentially bounded addresses are realised]\label{prop:exponentiallybounded} Let  $\s$ be an  exponentially bounded address. Then $J_{\s}^0\neq\emptyset$.

     More precisely, there is a number $R>0$, depending only on the base-point $\zeta_0\in \WW$ and 
       $T>0$, with the following property. If $\s$ is exponentially bounded for this choice of $\zeta_0$ and $T$,
       then  $J_{\s}^0$ contains an unbounded connected set $X$ on  which the iterates tend to infinity uniformly,
       and which contains a point of modulus $R$. 

    Conversely, the iterates of $f$ tend to infinity uniformly on 
       \begin{equation}\label{eqn:expboundedunion} \bigcup_{\s} J_{\s}^0\setminus D_R(0), \end{equation}
     where the union is taken over all external addresses $\s$ as above. 
  \end{prop}     
   \begin{proof}  
     We shall use an expansion estimate \cite[Lemma~3.1]{R3S}, which is stronger
      than~\eqref{eqn:expansionW} at large distances. (Compare also~\cite[Lemma~3.3]{BK07}.) We will use
      this estimate in the following form, which follows easily from the version stated in  \cite{R3S}: 
      There are constants $C_1,C_2>0$ with the following property.
       If $F$ is a fundamental  domain and $\zeta_1,\zeta_2\in \unbdd{F}$ with $\distW(\zeta_1,\zeta_2)\geq C_1$, then
              \begin{equation}\label{eqn:expgrowth}
                 \distW(f(\zeta_1), f(\zeta_2)) \geq \exp(C_2 \cdot \distW(\zeta_1, \zeta_2)).
              \end{equation}

         Now fix $\zeta_0$ and  $T>0$, and denote by $\mathcal{S}$ the set of all
          addresses that satisfy Definition~\ref{defn:expbounded} for these choices. We may 
         assume without loss of generality that $\zeta_0$ is chosen sufficiently large to ensure that
          $\zeta_F\in \unbdd{F}$ for every fundamental domain $F$. (To this end, we may need to increase $T$, but 
          only by a finite amount according to the remark following 
          Definition~\ref{defn:expbounded}.) 
          Define $E(t)\defeq \exp(C_2 \cdot t)$. 
          By basic properties of exponential growth, there is $\tilde{R}>0$ such that the following hold for all 
               $x\geq \tilde{R}/3$. 
          \begin{align}
                \label{eqn:exp1}E(x) &> 3E(x/2) > E(x/2) > 2x, \qquad\text{and}  \\
                \label{eqn:exp2}E^n(x) &\geq  E^n(\tilde{R}/3) > \exp^n(T) + 2\pi. 
          \end{align}
        We choose $R$ sufficiently large to ensure that $\distW(z,\zeta_0) > \tilde{R}$ whenever
            $\lvert z \rvert \geq  R$. 
     
       Suppose that  $\s\in\mathcal{S}$, and let $z\in J_{\s}^0$ such that  $t\defeq \distW(z,\zeta_0) \geq \tilde{R}$. 
         We claim that 
            \begin{equation}\label{eqn:iteratedexpgrowth} \distW( f^n(z) , \zeta_0) \geq 3E^n(t/3).  \end{equation}
          Indeed, this is trivial for $n=0$, and if the claim holds for $n$, then 
            \[ \distW(f^n(z) , \zeta_{F_n}) \geq
               \distW(f^n(z) , \zeta_0)  - \exp^n(T) \geq  2E^n(t/3). \]
        Hence, by~\eqref{eqn:expgrowth} and~\ref{eqn:exp1}, 
           \[ \distW(f^{n+1}(z), \zeta_0) = \distW(f^{n+1}(z), f(\zeta_{F_n})) \geq E(2E^n(t/3)) \geq 3E^{n+1}(t/3). \]
        The claim follows by induction. In particular,
              \[ \distW(f^n(z),\zeta_0) \geq 3E^n(\tilde{R}/3) \geq  \exp^n(T).
                    \]
         Since this applies to all points in the union~\eqref{eqn:expboundedunion}, the second 
         claim of the proposition follows. 

       Now let us prove the first. Set $t\defeq \tilde{R}/3$. For $n\geq 0$, choose $z_n\in \unbdd{F}_n$ with
         $\distW(z_n , \zeta_0) = E^n(t)$. 
         Such a point exists because $\dist(\zeta_{F_n} , \zeta_0) < E^n(t)$, and $\zeta_{F_n}$ can be 
         connected to infinity within  $\unbdd{F}_n$. 
  
       For $n\geq 0$, let 
        $\Gamma_n$ be the union of cross-cuts from Lemma~\ref{lem:crosscuts}, applied to
         $z_n$, and let  $A_n^k$, for $n,k\geq 0$, 
         be defined precisely as in the proof of Theorem~\ref{thm:unboundedgeneral}.       

     \begin{claim}
       $\distW(A_n^k , \zeta_0) \leq 3E^n(t)$ for all $n,k\geq 0$. 
     \end{claim}
     \begin{subproof}
       If $A_n^k\cap \Gamma_n \neq \emptyset$, then the claim follows by choice of 
        $z_n$. In particular, this is always the case for $k=0$. 

      So suppose that $k\geq 0$ is such that the claim holds for all  $n\geq 0$, and that 
        $n$ is such that $A_n^{k+1}\cap \Gamma_n = \emptyset$. Then, by
        definition,  $f(A_n^{k+1}) = A_{n+1}^k$. By the inductive hypothesis, the latter
        contains a point $w$ with $\distW(w, \zeta_0) \leq 3E^{n+1}(t)$. 
        Set $\tilde{w} \defeq (f|_{F_n})^{-1}(w) \in A_{n+1}^k$. Then, 
        by~\eqref{eqn:expgrowth} and~\eqref{eqn:exp1}, 
                   \[ \distW( \tilde{w} , \zeta_{F_n}) \leq E^{-1}(3 E^{n+1}(t)) \leq
                             2E^n(t). \] 
       So
          \[ \distW(\tilde{w} , \zeta_0)  \leq \distW(\tilde{w} , \zeta_{F_n}) + \distW(\zeta_{F_n} , \zeta_0) \leq
                  2E^n(t) + \exp^n(T) \leq 
                   3E^n(t) \]
      by~\eqref{eqn:exp2}, as claimed.
     \end{subproof} 

     In particular, the set $\bigcap_{k\geq 0} A_0^k$ has a connected component $X$ containing 
       a point with $\dist(A_n^k , \zeta_0) \leq 3t = \tilde{R}$, and hence of modulus
       at most $R$. Since $X$ is connected and unbounded, it also contains a point of modulus exactly $R$. 
       The proof is complete. 
   \end{proof} 

\section{Hyperbolic expansion and fundamental tails}\label{sec:tails}
 For the remainder of the article, we shall specialise to the case where our 
     transcendental entire function $f$ has bounded postsingular set $\PP(f)$.
        
     This section collects some fundamental preliminary material concerning
     these functions. We begin by noting a global expansion property away from the postsingular set,
    and then proceed to introduce the notation of \emph{fundamental tails}, which will later be used to study 
    the structure of the set of escaping points.

Recall that if $\Omega$ is a hyperbolic domain, 
  we denote by  $\rho_{\Omega}$   the density of the hyperbolic metric on $\Omega$.
  \begin{prop}[Hyperbolic expansion]\label{prop:expansion}
   For every transcendental entire function $f$, $\# \P(f) \geq 2$. 

   Now suppose that $\P(f)$ is bounded, let $\P$ be a compact forward-invariant set with
    $\P(f)\subset \P$, and let 
    $\Omega$ be the unbounded connected component of $\C\setminus\P$. 
   If $z\in V\defeq f^{-1}(\Omega)\subset \Omega$, 
     then $f$ is strictly expanding at $z$ in the hyperbolic metric of $\Omega$. Moreover, this expansion
     factor tends to infinity as $z\to\infty$ in $V$. That is, 
        \begin{equation}\label{eqn:Omegaexpands}
          \lVert\Deriv f(z)\rVert_{\Omega} \defeq \lvert f'(z)\rvert \cdot \frac{\rho_{\Omega}(f(z))}{\rho_{\Omega}(z)} > 1, \end{equation}
    and 
      \begin{equation}\label{eqn:strongexpansion}
        \lVert \Deriv f(z)\rVert_{\Omega} \to\infty \qquad\text{as}\qquad \lvert z\rvert \to \infty.
      \end{equation}

    In particular, for every $\eps>0$ there is $\Lambda>1$ such that
      $\lVert \Deriv f(z)\rVert_{\Omega}\geq \Lambda$ whenever $z\in V$ with 
      $\dist(z, \P)\geq \eps$. 
  \end{prop}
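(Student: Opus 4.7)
The plan is to dispatch the assertions of the proposition in turn. For the first claim $\#\P(f)\geq 2$, I would argue by contradiction. If $\P(f)=\{a\}$, then $a$ is fixed by $f$ and $S(f)\subseteq\{a\}$. Since a transcendental entire function cannot have $S(f)=\emptyset$ (otherwise $f\colon\C\to\C$ would be a covering and hence affine), we have $S(f)=\{a\}$, so $f\colon\C\setminus f^{-1}(a)\to\C\setminus\{a\}$ is an unbranched covering. As $\pi_1(\C\setminus\{a\})=\Z$ admits only the subgroups $\{0\}$ and $n\Z$, either $f^{-1}(a)=\emptyset$ (whence $f(z)=a+\lambda e^{\mu z}$ up to affine reparametrisation) or $|f^{-1}(a)|=1$ with $f$ polynomial, contradicting transcendence. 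In the exponential case $f(a)=a+\lambda e^{\mu a}\neq a$, contradicting $f(a)=a$.

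Granted $\#\P(f)\geq 2$, the domain $\Omega$ is hyperbolic. Because $\P\supseteq S(f)$ is disjoint from $\Omega$, the restriction $f\colon V\to\Omega$ has no singular values in $\Omega$ and is therefore an unbranched holomorphic covering. Schwarz-Pick then upgrades this to a local isometry,
\[ |f'(z)|\cdot\rho_\Omega(f(z)) = \rho_V(z) \qquad (z\in V), \]
so that $\|\Deriv f(z)\|_\Omega=\rho_V(z)/\rho_\Omega(z)$. The inequality~\eqref{eqn:Omegaexpands} thus reduces to the strict inequality $\rho_V(z)>\rho_\Omega(z)$ on $V$, which by the monotonicity of the hyperbolic density under inclusion follows from $V\subsetneq\Omega$ (since $\Omega$ is connected and $V$ is an open proper subset, the inclusion cannot be a covering). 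To verify $V\subsetneq\Omega$, pick any $p\in\P$: Picard's theorem makes $f^{-1}(p)$ an infinite discrete subset of $\C$, hence unbounded; since $\P$ is compact and $\Omega$ contains a neighbourhood of $\infty$, cofinitely many points of $f^{-1}(p)$ lie in $\Omega\setminus V$.

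For the asymptotic statement~\eqref{eqn:strongexpansion}, the idea is to compare $\|\Deriv f\|_\Omega$ with the cylindrical derivative $\|\Deriv f\|_{\cyl}$ from~\eqref{eqn:ELexpansion}, which diverges as $|f(z)|\to\infty$. Since $\P$ is bounded, $\Omega$ contains $\{|z|>R_0\}$ for some $R_0$, and a standard comparison with the hyperbolic density of a punctured disc at infinity gives $\rho_\Omega(z)\asymp 1/(|z|\log|z|)$ for large $|z|$. When $|z|\to\infty$ in $V$ and $|f(z)|\to\infty$ simultaneously, combining this estimate with~\eqref{eqn:ELexpansion} yields $\|\Deriv f(z)\|_\Omega\to\infty$. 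I expect the main obstacle to be the complementary regime, where $|z_n|\to\infty$ along a sequence while $|f(z_n)|$ remains bounded: such $z_n$ must approach the boundary of a tract $T$ of $f$, so $\dist(z_n,\partial V)\to 0$, forcing $\rho_V(z_n)\to\infty$ by the standard divergence of the hyperbolic density near the boundary of a hyperbolic domain; meanwhile $\rho_\Omega(z_n)\to 0$ since $|z_n|\to\infty$, so the ratio $\|\Deriv f(z_n)\|_\Omega$ still diverges.

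The uniform lower bound $\Lambda$ follows by compactness. For any $R>0$, the set $K_{\eps,R}=\{z\in V\colon\dist(z,\P)\geq\eps,\ |z|\leq R\}$ is compact in $V$, so by continuity and~\eqref{eqn:Omegaexpands} the function $\|\Deriv f\|_\Omega$ attains a minimum $\Lambda_1>1$ on $K_{\eps,R}$. By~\eqref{eqn:strongexpansion}, we may choose $R$ so large that $\|\Deriv f(z)\|_\Omega>\Lambda_1$ for all $z\in V$ with $|z|>R$; then $\Lambda\defeq\Lambda_1$ has the required property, completing the proof.
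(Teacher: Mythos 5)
Your treatment of the first two assertions and of the final ``in particular'' follows essentially the paper's route: $\# \P(f)\geq 2$ via the covering-space argument, and~\eqref{eqn:Omegaexpands} via the local isometry $\rho_V(z)=\rho_\Omega(f(z))\lvert f'(z)\rvert$ for the covering $f\colon V\to\Omega$, together with strict monotonicity of the hyperbolic density for the proper inclusion $V\subsetneq\Omega$ (established, as in the paper, by Picard). One small caveat in the last step: $K_{\eps,R}$ need not be compact \emph{in} $V$, since its closure can meet $\partial V=f^{-1}(\partial\Omega)$ at points far from $\P$; this is harmless because $\rho_V/\rho_\Omega\to\infty$ there, but it should be addressed.

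The genuine gap is in your proof of~\eqref{eqn:strongexpansion}. In the regime $\lvert f(z)\rvert\to\infty$, the qualitative statement~\eqref{eqn:ELexpansion} does not suffice: writing $\lVert\Deriv f(z)\rVert_{\Omega}=\lVert\Deriv f(z)\rVert_{\cyl}\cdot\bigl(\lvert f(z)\rvert\rho_\Omega(f(z))\bigr)/\bigl(\lvert z\rvert\rho_\Omega(z)\bigr)$ and using $\rho_\Omega(w)\asymp 1/(\lvert w\rvert\log\lvert w\rvert)$, the second factor is comparable to $\log\lvert z\rvert/\log\lvert f(z)\rvert$, which may tend to $0$; one needs the quantitative Eremenko--Lyubich bound $\lVert\Deriv f(z)\rVert_{\cyl}\gtrsim\log\lvert f(z)\rvert$ to compensate. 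More seriously, the complementary regime is wrong as stated. If $\lvert z_n\rvert\to\infty$ with $\lvert f(z_n)\rvert$ bounded, $z_n$ need not approach the boundary of a tract or of $V$ at all: $f(z_n)$ merely has to stay in $\Omega$, which contains plenty of bounded points at definite distance from $\partial\Omega\subset\P$, and the set where $\lvert f\rvert\leq M$ may contain arbitrarily large disks (think of $e^z$ on a left half-plane). Moreover, even where $\dist(z_n,\partial V)\to 0$, this does not force $\rho_V(z_n)\to\infty$ for a sequence tending to infinity, since the standard lower bound for the density of a multiply connected domain degenerates when the nearest and second-nearest boundary components lie at wildly different scales. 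The paper avoids both problems with a single uniform estimate: by Lemma~\ref{lem:preimages}, $\C\setminus V\supset f^{-1}(\P)$ contains a sequence $c_n\to\infty$ with $\lvert c_{n+1}/c_n\rvert\leq 2$, whence $1/\rho_V(z)=O(\lvert z\rvert)$ as $z\to\infty$ in $V$; combined with $\rho_\Omega(z)=O(1/(\lvert z\rvert\log\lvert z\rvert))$ this gives $\lVert\Deriv f(z)\rVert_{\Omega}=\rho_V(z)/\rho_\Omega(z)\gtrsim\log\lvert z\rvert$ with no case distinction. You should replace the two-regime analysis by this, or supply correct substitutes for both regimes.
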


   This result was proved, for a hyperbolic entire function  $f$ and a certain choice of $\P$, 
    in \cite[Lemma 5.1]{Re09}. The same proof goes through
    whenever $\P$ intersects the unbounded connected component of $\C\setminus S(f)$. This
    can always be ensured by adding a periodic orbit to $\P$ that intersects this component,      
    which
    is sufficient for all our purposes. 
    However, for completeness and future reference, we shall
     prove the result for general $\P$, with a slightly
     simpler proof than that given in \cite{Re09}. 
   To do so, we use the following simple fact.

 \begin{lem}[Preimages in annuli]\label{lem:preimages}
   Let $f$ be an entire transcendental function which is bounded on an unbounded connected set. Let $z_1,z_2\in\C$.
     Then, for all $C>1$, and all sufficiently large $R$, 
      $f^{-1}(\{z_1,z_2\})$ contains a point of modulus between $R/C$ and $C\cdot R$. 
 \end{lem}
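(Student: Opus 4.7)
The plan is to argue by contradiction, using Montel's theorem applied to rescalings of $f$ around large circles. Suppose, to the contrary, that there exist $C > 1$ and a sequence $R_n \to \infty$ such that the annulus $A_n \defeq \{R_n/C < \lvert z\rvert < C R_n\}$ contains no preimages of $z_1$ or $z_2$. Let $\Gamma$ denote the unbounded connected set on which $\lvert f\rvert \leq M$. Since $\Gamma$ is connected and unbounded, the continuous image $\{\lvert z\rvert : z\in\Gamma\}$ is an unbounded connected subset of $[0,\infty)$, so $\Gamma$ intersects every circle $\{\lvert z\rvert = R\}$ with $R > \inf_{\zeta\in\Gamma}\lvert\zeta\rvert$. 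In particular, for each sufficiently large $n$ I can pick $\zeta_n \in \Gamma$ with $\lvert\zeta_n\rvert = R_n$ and $\lvert f(\zeta_n)\rvert \leq M$.

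Next I would rescale. Define $f_n(z) \defeq f(R_n z)$ on the fixed annulus $A \defeq \{1/C < \lvert z\rvert < C\}$. By the contradiction hypothesis, each $f_n$ takes values in $\C \setminus \{z_1, z_2\}$ on $A$. Since $\hat{\C}\setminus\{z_1, z_2, \infty\}$ is hyperbolic (three punctures), Montel's theorem implies that $\{f_n\}$ is normal on $A$ when viewed as a family of maps into $\hat{\C}$; thus, after passing to a subsequence, $f_n$ converges locally uniformly either to a holomorphic function $g\colon A \to \C \setminus \{z_1, z_2\}$ or to $\infty$. The points $w_n \defeq \zeta_n/R_n$ lie on the unit circle (a compact subset of $A$) and satisfy $\lvert f_n(w_n)\rvert = \lvert f(\zeta_n)\rvert \leq M$, so uniform escape to $\infty$ on the unit circle is ruled out. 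Hence the limit is holomorphic, and the family is locally uniformly bounded; there exists $M' > 0$ with $\lvert f_n(z)\rvert \leq M'$ for $\lvert z\rvert = 1$ and all large $n$. Translating back, $\max_{\lvert w\rvert = R_n}\lvert f(w)\rvert \leq M'$ for large $n$, contradicting the fact that $M(r,f) \to \infty$ as $r \to \infty$ for the transcendental entire function $f$.

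The argument consists of just two essential moves (picking a bounded-image witness on each circle via $\Gamma$, and then invoking normality via Montel), and I do not anticipate any genuine obstacle. The one subtlety is the Montel step, where one must guarantee that the ``witness points'' $w_n$ remain in a compact subset of $A$; this is automatic because $\zeta_n$ was chosen to have modulus exactly $R_n$. The hypothesis that $f$ be \emph{transcendental} enters only at the very last line, through the unboundedness of $M(r,f)$; for polynomials of low degree the conclusion genuinely fails.
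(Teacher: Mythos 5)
Your proposal is correct and follows essentially the same route as the paper: rescale to $g_n(z)=f(R_nz)$ on a fixed annulus, apply Montel's theorem for the family omitting $z_1,z_2$, rule out locally uniform divergence to $\infty$ using a point of the unbounded connected set on each circle $\lvert z\rvert=R_n$, and conclude that $f$ is bounded on arbitrarily large circles, contradicting unboundedness of the maximum modulus (equivalently, the paper's maximum-principle-plus-Liouville step). You even make explicit the connectedness argument that the paper leaves implicit in its claim $\limsup\min_{\lvert z\rvert=1}\lvert g_n(z)\rvert<\infty$.
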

 \begin{proof}
   Let $C>1$ and set $A\defeq \{z\in\C\colon 1/C < \lvert z \rvert <C\}$.
     Suppose by contradiction that $R_n\to\infty$ is a sequence such that the functions
      \[ g_n\colon A\to\C; \qquad z\mapsto f(R_n\cdot z) \]
     omit both {$z_1$ and  $z_2$}. By Montel's theorem, this sequence of functions is normal, and hence 
      converges locally uniformly, possibly after restricting to a subsequence. By assumption,
      $\limsup \min_{|z|=1} g_n(z) <\infty$, and hence the limit function is holomorphic. But this implies
      that $f$ remains bounded on the circle of radius $R_n$ as  $n\to\infty$. Hence $f$ is bounded by 
      the maximum principle and hence constant by Liouville's theorem, a contradiction.
  \end{proof}

 \begin{proof}[Proof of Proposition~\ref{prop:expansion}]
    The
    fact that $\# \P(f) \geq 2$ is well-known: Otherwise, $f$ would be a self-covering of a 
      punctured plane,     and hence conformally conjugate to $z\mapsto z^d$ for some $d$.  
       However, $f$ is transcendental. 

    So $\Omega$ is indeed a hyperbolic domain. 
              Since $f\colon V\to\Omega$ is a covering map, and hence
      a local isometry with respect to the hyperbolic metrics of $V$ and $\Omega$,     
     $\rho_V(z)=\rho_\Omega(f(z))\lvert f'(z)\rvert$ for all $z\in V$.

   The open mapping theorem implies that every connected component
    of $V=f^{-1}(\Omega)$ is unbounded 
     (compare e.g.\ \cite[Lemma~4.3]{alhabib-rempe15}). 
     Hence, by forward-invariance of $\P$, we have $V\subset \Omega$.
    
       Since $\# \mathcal{P} \geq 2$, and by  Picard's theorem, $\C\setminus V = \C\setminus f^{-1}(\P)$ contains
       points of arbitrarily large modulus. Hence 
      $V\subsetneq \Omega$.  By Pick's theorem, it follows that
    \[ \lVert \Deriv f(z)\rVert_{\Omega} = \lvert f'(z)\rvert \cdot \frac{\rho_\Omega(f(z))}{\rho_{\Omega}(z)}
        = \frac{\rho_V(z)}{\rho_{\Omega}(z)} > 1 \]
   for all $z\in V$. This establishes the first claim.
     
   Furthermore, since $f\in\BB$, there is an unbounded connected set on which $f$ is bounded. (For example,\ the
    boundary of one of the  tracts of $f$). By Lemma~\ref{lem:preimages}, there is a sequence 
    $(c_n)_{n\geq 0}$ in $\Omega\setminus V$ such that $c_n\to\infty$ and 
    $\lvert c_{n+1}/c_n\rvert \leq 2$. (We can even ensure
    $\lvert c_{n+1}/c_n\rvert \to 1$ by letting the constant $C$ in Lemma~\ref{lem:preimages} tend to $1$, but
   do not require this here.)   
    Hence $1/\rho_V(z) = O(\lvert z\rvert)$ as $z\to\infty$ in $V$.
    (See \cite[Lemma 2.1]{Re09} and compare also  \cite[Proposition~3.4]{wandering} and \cite{mindaquotients}.) 
     On the other hand, 
    $\rho_{\Omega}(z) = O(1/(\lvert z\rvert \log \lvert z \rvert))$ as $z\to\infty$. The claim follows. 
 \end{proof} 

\begin{cor}[Sets remaining in $\Omega$]\label{cor:remaininginOmega} Let $f\in\B$. 
  Let $\Omega, \P$ be as in Proposition~\ref{prop:expansion}, and suppose that $U\subset \Omega$ is open with $f^n(U)\subset \Omega$ for all $n$. Then 
    $\dist(f^n(z),\P)\to 0$ uniformly on compact subsets of $U$. 

  In particular, if $z_0\in\Omega$ with $f^n(z_0)\in\Omega$ for all $n$, and $\limsup \dist(f^n(z_0),\P)>0$, then $z_0\in J(f)$. 
\end{cor}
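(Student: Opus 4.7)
The plan is to combine normality of the iterates on $U$ with the strict hyperbolic expansion from Proposition~\ref{prop:expansion}. Since $\#\P \geq 2$ by Proposition~\ref{prop:expansion}, the family $\{f^n|_U\}$ avoids at least three points on the Riemann sphere (the points of $\P$ together with $\infty$), so by Montel's theorem it is a normal family of meromorphic maps $U \to \Chat$.

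Fix $z_0 \in U$. By the chain rule,
\[
  \lVert \Deriv f^n(z_0) \rVert_\Omega = \prod_{j=0}^{n-1} \lVert \Deriv f(f^j(z_0)) \rVert_\Omega,
\]
and since every factor strictly exceeds $1$ by Proposition~\ref{prop:expansion}, this sequence is strictly increasing in $n$. If its limit is finite, then the factors must tend to $1$; combined with the uniform lower bound $\lVert \Deriv f \rVert_\Omega \geq \Lambda > 1$ on $V \cap \{\dist(\cdot, \P) \geq \eps\}$ (Proposition~\ref{prop:expansion}) and the blow-up~\eqref{eqn:strongexpansion} at infinity, this rules out both $|f^j(z_0)| \to \infty$ along any subsequence and $f^j(z_0)$ remaining outside an $\eps$-neighbourhood of $\P$ for infinitely many $j$, forcing $\dist(f^n(z_0), \P) \to 0$. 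It therefore suffices to derive a contradiction from the assumption that the product diverges to $+\infty$ while $\dist(f^{n_k}(z_0), \P) \geq \eps > 0$ along some subsequence $n_k$.

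By normality, pass to a further subsequence so that $f^{n_k} \to g$ locally uniformly on $U$ with $g: U \to \Chat$ meromorphic or $g \equiv \infty$. Since $\partial\Omega \subseteq \P$ and $\dist(f^{n_k}(z_0), \P) \geq \eps$, the value $g(z_0)$ lies either in $\Omega$ or equals $\infty$. In the first case $g$ is holomorphic near $z_0$, so $\lvert(f^{n_k})'(z_0)\rvert$ and $\rho_\Omega(f^{n_k}(z_0))$ both converge to finite positive limits; hence $\lVert \Deriv f^{n_k}(z_0) \rVert_\Omega$ has a finite limit, contradicting divergence of the monotone sequence. The main obstacle is the case $g(z_0) = \infty$; here the pointwise estimate alone does not suffice, and I instead apply Schwarz-Pick to the holomorphic map $f^n : U \to \Omega$ to obtain the uniform diameter bound $\diam_\Omega(f^n(K)) \leq \diam_U(K) < \infty$ for a compact connected neighbourhood $K \subset U$ of $z_0$. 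Since $f^{n_k}(K) \to \infty$ uniformly and $\lVert \Deriv f \rVert_\Omega \to \infty$ at infinity by~\eqref{eqn:strongexpansion}, combined with the strict hyperbolic expansion $\rho_V > \rho_\Omega$ on $V$, the $\Omega$-diameter $\diam_\Omega(f^{n_k+1}(K))$ would have to exceed the uniform upper bound on $\diam_\Omega(f^n(K))$, a contradiction. The ``in particular'' statement follows by applying the same pointwise chain-rule argument on the Fatou component of $z_0$, where $\{f^n\}$ is normal by definition and only the assumption $f^n(z_0) \in \Omega$ is used to ensure that $\lVert \Deriv f^n(z_0) \rVert_\Omega$ is well-defined.
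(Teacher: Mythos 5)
Your overall framework (normality via Montel, plus monotonicity of $\lVert \Deriv f^n(z_0)\rVert_{\Omega}$ along the orbit) is a legitimate alternative to the paper's argument, and the case where the limit function $g$ is finite at $z_0$ is handled correctly. The genuine gap is the case $g(z_0)=\infty$, i.e.\ $f^{n_k}\to\infty$ locally uniformly near $z_0$ --- which is precisely the hard case, since it amounts to the Eremenko--Lyubich assertion that no open set can escape to infinity. Your contradiction does not follow from the stated ingredients, for two reasons. First, there is no lower bound on $\diam_{\Omega}(f^{n_k}(K))$: Schwarz--Pick only gives an upper bound, and this diameter may well tend to $0$ (near infinity $\rho_{\Omega}(z)\asymp 1/(\lvert z\rvert\log\lvert z\rvert)$, so even sets of Euclidean diameter comparable to $\lvert z\rvert$ have small $\Omega$-diameter), so ``$\diam_{\Omega}(f^{n_k+1}(K))$ exceeds $M$'' cannot be forced by an expansion factor alone. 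Second, the pointwise estimate $\lVert\Deriv f\rVert_{\Omega}\geq\lambda$ does not yield $\dist_{\Omega}(f(a),f(b))\geq\lambda\,\dist_{\Omega}(a,b)$: since $f\colon V\to\Omega$ is a covering, the $\Omega$-geodesic from $f(a)$ to $f(b)$ lifts to a path from $a$ to \emph{some} preimage $b'$ of $f(b)$, which need not be $b$; one only obtains $\dist_{\Omega}(f(a),f(b))\geq\dist_{\Omega}(a,b')$, and distinct preimages can be hyperbolically close near infinity. So no diameter growth, hence no contradiction, is obtained this way.

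To close this case you need an input of the type the paper uses: by Lemma~\ref{lem:preimages}, $f^{-1}(\P)\subset\C\setminus V$ meets every large annulus, so $\dist_{\Omega}(z,f^{-1}(\P))$ stays bounded as $z\to\infty$ in $\Omega$; pulling short connecting curves back along the orbit of $z_0$ (pullbacks contract $\rho_{\Omega}$, with the contraction accumulating because of the infinitely many passages through $\{\dist(\cdot,\P)\geq\eps\}$) produces points of $f^{-(n_k+1)}(\P)\subset\C\setminus U$ converging to $z_0$, contradicting openness of $U$. Alternatively one can run the Koebe $1/4$-theorem argument of Lemma~\ref{lem:J}, but that requires univalent restrictions of $f^{n}$, which your set $K$ does not provide. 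Two smaller points: locally uniform convergence $\dist(f^n(\cdot),\P)\to 0$ needs a word beyond the pointwise statement (normality supplies it), and in the ``in particular'' part the Fatou component of $z_0$ is not known to satisfy $f^n(U)\subset\Omega$, so one must first use equicontinuity together with $f^{-1}(\Omega)\subset\Omega$ to manufacture such a neighbourhood $U$, as the paper does; in any event this part inherits the same gap.
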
 
 \begin{proof}  Note that $(f^n|_U)$ form a normal family by Montel's theorem,
     and hence $U\subset F(f)$. Therefore the first claim follows from the second: 
     if $\dist(f^n(z),\P)\to 0$ for all $z\in U$, then this convergence is automatically
     uniform on compact subsets of $U$. 
    
     So suppose that $z_0\in \Omega$, and that
      there exist $\eps>0$ and an increasing sequence $(n_k)_{k=0}^{\infty}$ such that 
        $\dist(f^{n_k}(z_0),\P)\geq \eps$. We must show that $z_0\in J(f)$.
        
    It follows from Lemma~\ref{lem:preimages} that 
        $\dist_{\Omega}(z, f^{-1}(\P))$ remains bounded as $z\to\infty$
        in $\Omega$. Hence 
        \[ \delta \defeq \sup_{k} \dist_{\Omega}(f^{n_k}(z_0),f^{-1}(\P)) < \infty. \]
   
      So we can connect $f^{n_k}(z_0)$ to a point of $f^{-1}(\P)$ by a curve $\gamma_k$ 
         of hyperbolic length at most $\delta$ in  
         $\Omega$. Pulling back $\gamma_k$ under $f^{n_k}$, we obtain a curve
         $\alpha_k$ connecting $z_0$ to a point $w_k \in f^{-(n_k+1)}(\P)$. 
         By Proposition~\ref{prop:expansion}, each of the curves
         $\alpha_k^j = f^{n_j}(\alpha_k)$, for $j<k$, has hyperbolic 
         length at most $\delta$. Hence these curves stay a uniform distance away from
         $\P$, and again by Proposition~\ref{prop:expansion}, there is $\lambda>1$ such 
            that 
           \[ \lVert \Deriv f (z) \rVert \geq \lambda  \]
           for all $k$, all $j < k$, and all $z\in \alpha_k^j$. 

       So in fact $\alpha_k$ has hyperbolic length at most $\delta \cdot \lambda^k$, 
         and $w_k\to z_0$ as $k\to\infty$. 
         converging to $z_0$. 
         But $f^{n_{k+1}}(w_k)\in \P$, and $\dist(f^{n_{k_1}}(z_0),\P)\geq \eps$. 
         Therefore the family of iterates of $f$ is not equicontinuous at $z_0$,
           and $z_0\in J(f)$, as desired.
\end{proof} 
\begin{remark}
  The result can also be proved by appealing to the classification of Fatou components. 
\end{remark}

 \subsection*{Fundamental tails}  
   Let $f$ be an entire transcendental function with bounded postsingular set,
      and denote the unbounded connected component of $\C\setminus\P(f)$ by $\Omega$.  
       (In everything that follows, we could more generally let 
         $\Omega$ be as in  Proposition~\ref{prop:expansion}; 
         i.e.\ the unbounded connected component of $\C\setminus \P$ where $\P$ is a 
         forward-invariant compact   set containing $\P(f)$. However, we shall not require 
         this extra generality.)   
      
     Let $D$ and $\gamma$ be as in Section~\ref{sec:unboundedsets}.
      We may additionally
       assume that $D$ is chosen sufficiently large to ensure that 
         $\P(f)\subset D$ and that
       \begin{equation}\label{eqn:Omeganormalised}
         \lVert \Deriv f(z)\rVert_{\Omega} \geq 2  
      \end{equation} 
    whenever $f(z)\notin D$ (recall \eqref{eqn:strongexpansion}). 
         
         Recall that \emph{fundamental  domains} are the 
         connected components of the
          preimage of $\WW = \C\setminus (\overline{D}\cup \delta)$ under $f$.
        All concepts that follow depend a priori 
     on this choice of fundamental domains, i.e.\ on the choice of the initial configuration consisting of
      $D$ and $\delta$. However, it turns out that this choice is not essential. {(Compare Observation~\ref{obs:filamentsindependence}.)}
 
  The postsingular set $\P(f)$ is contained in $D$, and  the image of any fundamental domain 
    is contained in  $\C\setminus \ov{D}$. Hence the closure of a fundamental domain 
    does not intersect the postsingular set. 
     It follows that for any fundamental domain $F$,  any $n\geq 0$,  and any connected component $\tail$ of $f^{-n}(F)$, $\tail$ is a Jordan domain
    on the Riemann sphere whose boundary contains $\infty$ and whose closure is mapped homeomorphically onto  $\ov{F}$ by $f^{n}$. Moreover, $f^k(z)\to\infty$ as $z\to\infty$ in $\tail$, for all $k\geq n+1$. 
    
 \begin{defn}[Fundamental tails]
   Let $n\geq 1$. 
     A connected component $\tail$ of $f^{-n}(\WW)$ is called a  \emph{fundamental tail of level $n$}. 
     In particular, the fundamental tails of level $1$ are precisely the fundamental domains of $f$. 
 \end{defn}

\begin{prop}[Facts about fundamental tails]\label{prop:facts}
  Fundamental tails of the same level are disjoint. Moreover, for every $N$ and any compact set $K$,
    there are at most finitely many fundamental tails of level at most $N$ that meet $K$.

   If $\tail$ is a fundamental tail of level $n>1$, then $f(\tail)$ is a fundamental tail of level $n-1$. 
\end{prop}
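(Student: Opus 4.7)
The first claim is immediate from the definition, since fundamental tails of level $n$ are by construction the connected components of the open set $f^{-n}(\WW)$, and hence pairwise disjoint.

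For the finiteness statement, I would apply Lemma~\ref{lem:preimageK} to the holomorphic map $f^n\colon \C \to \C$ with open set $U = \WW$, for each $n \in \{1,\dots,N\}$ separately. Because $\delta$ is an arc from $\overline{D}$ to infinity, the boundary $\partial \WW$ is contained in the locally connected set $\partial D \cup \delta$, so the hypothesis of the lemma is satisfied. This bounds the number of level-$n$ fundamental tails meeting $K$ for each fixed $n$, and taking the union over the finitely many values $n \leq N$ yields the desired finiteness.

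For the third claim, let $\tau$ be a fundamental tail of level $n>1$. From the paragraph preceding the proposition, $f^{n-1}$ maps $\overline{\tau}$ homeomorphically onto $\overline{F}$ for some fundamental domain $F$; moreover, $f$ itself maps $\overline{F}$ homeomorphically onto $\overline{\WW}$, since $F$ lies in a tract $T$ on which $f\colon T \to \C\setminus\overline{D}$ is a universal cover and $\WW$ is simply connected. Composing, $f^n$ restricts to a homeomorphism from $\overline{\tau}$ onto $\overline{\WW}$; in particular $f^n(\tau) = \WW$. Since $f(\tau)$ is connected and contained in $f^{-(n-1)}(\WW)$, it lies in a unique fundamental tail $\tau'$ of level $n-1$, which by the same principle satisfies $f^{n-1}|_{\tau'}\colon \tau' \to \WW$ bijectively. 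Combined with $f^{n-1}(f(\tau)) = f^n(\tau) = \WW$, injectivity of $f^{n-1}|_{\tau'}$ forces $f(\tau) = \tau'$, as required.

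The principal obstacle is precisely this last step: the containment $f(\tau) \subseteq \tau'$ is trivial, but upgrading to equality is where the hypothesis that $\overline{F}$ avoids $\P(f)$ becomes essential, since it is what underlies the global injectivity (homeomorphism property) of $f^{n-1}$ on the closure of a fundamental tail of level $n-1$. Without that, $f(\tau)$ could in principle be a proper subset of $\tau'$.
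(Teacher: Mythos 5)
Your proof is correct and follows the same route as the paper, which handles the first two claims exactly as you do (disjointness of connected components, plus Lemma~\ref{lem:preimageK} applied to $f^n$ with $U=\WW$ for each $n\leq N$) and dismisses the third claim as holding ``by definition''; your injectivity argument, using that $f^{n-1}$ is injective on the tail $\tau'$ of level $n-1$ containing $f(\tau)$, is the honest version of that remark. One small inaccuracy: $f$ does \emph{not} map $\overline{F}$ homeomorphically onto $\overline{\WW}$, since the two boundary arcs of $F$ lying over $\delta$ are identified by $f$ — but this is harmless, because your argument only uses the equality of open sets $f^n(\tau)=\WW$, which follows from $f^{n-1}(\tau)=F$ and $f(F)=\WW$.
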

\begin{proof}
    Recall that the  fundamental tails of level  $N$ are precisely the connected components of $f^{-N}(\WW)$. 
     Hence they are pairwise disjoint,
     and the second claim follows from Lemma~\ref{lem:preimageK}.
     The final claim holds by definition. 
\end{proof}

  \begin{lem}[Fundamental domain associated to fundamental tail]\label{lem:fdtotails}
    Let $\tail$ be a fundamental tail of level $n$. Then there is a unique fundamental domain
    $F$ such that $F\cap \tail$ is unbounded. In fact, if $n>1$, then 
    $F$ contains all sufficiently large points of $\overline{\tail}$. 
  \end{lem}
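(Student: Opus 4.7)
For $n = 1$ the tail $\tau$ is itself a fundamental domain, so the statement is immediate from the disjointness of fundamental domains (Proposition~\ref{prop:facts}). For $n \geq 2$ I would argue by induction on $n$.

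The main geometric input is that $\tau$ is a Jordan domain in $\hat{\C}$ with $\infty$ on its boundary, so a homeomorphism from the closed unit disc sending a boundary point to $\infty$ shows that $\overline{\tau} \cap \{|z| \geq R\}$ is connected for every sufficiently large $R$. Combined with the property $f^k(z) \to \infty$ as $z \to \infty$ in $\tau$ for $k \leq n$ (which extends to $\overline{\tau}$ by continuity of the $f^k$), this produces $R$ so large that $|f(z)| > r(D)$ and $|f^2(z)| > r(D)$ for every $z \in \overline{\tau}$ with $|z| \geq R$. In particular $f(z) \notin \overline{D}$.

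The crux is to show that, for such $z$, the image $f(z)$ lies inside a tract $T_0$. Let $\sigma = f(\tau)$, a fundamental tail of level $n-1$. If $n = 2$, then $\sigma = F_0$ is a fundamental domain contained in some tract $T_0$, so $f(z) \in \overline{F_0} \subset \overline{T_0}$; were $f(z) \in \partial T_0$, then $f^2(z) \in f(\partial T_0) \subset \partial D$, contradicting $|f^2(z)| > r(D)$, and hence $f(z) \in T_0$. If $n \geq 3$, the inductive hypothesis applied to $\sigma$ furnishes a fundamental domain $F_0 \subset T_0$ containing every sufficiently large point of $\overline{\sigma}$; since $|f(z)|$ can be made as large as desired by enlarging $R$, this gives $f(z) \in F_0 \subset T_0$.

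Combining $f(z) \in T_0$ with $T_0 \cap \delta = \emptyset$ and $f(z) \notin \overline{D}$ yields $f(z) \in \WW$, so $z \in f^{-1}(\WW)$, which is the disjoint union of the fundamental domains. Because $\overline{\tau} \cap \{|z| \geq R\}$ is connected and contained in this disjoint union of open sets, it lies entirely in a single fundamental domain $F$, proving that $F$ contains all sufficiently large points of $\overline{\tau}$. Uniqueness is then immediate: $F \cap \tau$ contains the unbounded set $\tau \cap \{|z| \geq R\}$, while for any other fundamental domain $F' \neq F$ disjointness forces $F' \cap \tau \subset \{|z| < R\}$, which is bounded. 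The step I expect to be the main obstacle is the boundary control $f(z) \in T_0$ in the base case $n = 2$; it plays the identity $f(\partial T_0) \subset \partial D$ off against the expansion estimate~\eqref{eqn:normalised} that forces $|f^2(z)|$ to be large.
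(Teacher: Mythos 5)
Your overall strategy is close to the paper's: induct on $n$, show that every sufficiently large point $z\in\overline{\tau}$ has $f(z)\in \WW$, and then conclude that these points lie in a single component of $f^{-1}(\WW)$. The membership argument itself is sound (in particular, playing $f(\partial T_0)\subset\partial D$ against $\lvert f^2(z)\rvert > r(D)$ in the base case is correct). The gap is your ``main geometric input''. The existence of a homeomorphism from $\overline{\D}$ onto $\overline{\tau}\cup\{\infty\}$ sending a boundary point to $\infty$ does \emph{not} imply that $\overline{\tau}\cap\{\lvert z\rvert\geq R\}$ is connected: such a homeomorphism carries no information about Euclidean modulus, so the preimage of $\{\lvert z\rvert\geq R\}\cup\{\infty\}$ is merely a compact subset of $\overline{\D}$ shrinking to a point as $R\to\infty$, and it may be disconnected for every $R$. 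Concretely, a tract of a function in $\B$ -- and a fortiori a fundamental domain or a fundamental tail -- may have infinitely many ``fingers'' that reach out to arbitrarily large modulus and then return; the tips of such fingers form components of $\overline{\tau}\cap\{\lvert z\rvert\geq R\}$ separated from the unbounded one, for every $R$. Your final step, placing the whole set $\overline{\tau}\cap\{\lvert z\rvert\geq R\}$ inside one fundamental domain, then no longer follows, and this is precisely the wild tract geometry the paper is built to accommodate, so it cannot be dismissed as a technicality.

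The paper obtains the required connectivity in the image rather than in $\tau$ itself. By the inductive hypothesis one has the fundamental domain $F_1$ associated to $f(\tau)$ and a \emph{connected} unbounded set $A_2\subset \overline{F_1}\cap\overline{f(\tau)}\setminus\overline{D}\subset\WW$ such that $f(\tau)\setminus A_2$ is bounded. Since $f\colon\overline{\tau}\to\overline{f(\tau)}$ is a homeomorphism fixing $\infty$, the set $A= f^{-1}(A_2)\cap\overline{\tau}$ is connected, lies in $f^{-1}(\WW)$ and hence in a single fundamental domain $F$, and $\overline{\tau}\setminus A$ is bounded because it is the preimage of a bounded set under this homeomorphism. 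To repair your argument you would have to replace the round annulus $\{\lvert z\rvert\geq R\}$ by a set adapted to the internal geometry of $\tau$ -- for instance the $f|_{\overline{\tau}}$-preimage of a connected ``end'' of $\overline{f(\tau)}$ as above -- for which connectivity genuinely holds.
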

\begin{proof}
 We proceed by induction. The claim is trivial for $n=1$.
  Now suppose that  $n>1$. By induction, there is a unique fundamental domain
    $F_1$ whose intersection with the fundamental tail $f(\tail)$, of level $n-1$, is unbounded. 

    Let $A_1$ be the unbounded connected component of $\overline{F_1}\setminus D$, and let 
     $A_2$ be the unbounded connected component of $A_1\cap \overline{f(\tail)}\setminus \overline{D}$. 
     Then $f(\tail)\setminus A_2$ is bounded. Indeed, this is clear if $n=2$, and otherwise follows from
     the inductive hypothesis. 

    Recall that $f\colon \overline{\tail} \to \overline{f(\tail)}$ is a homeomorphism. Since $A_2\subset \WW$, there is 
      a unique fundamental domain $F$ containing $A\defeq f^{-1}(A_2)\cap \overline{\tail}$,
      and $\overline{\tail}\setminus A\subset \overline{\tail}\setminus F$ is bounded, as claimed. 
\end{proof}

 It follows from the above that we can associate natural symbolic sequences to fundamental 
   tails. 

 \begin{defn}[Addresses of fundamental tails]
   Let $\tail$ be a fundamental tail of level  $n$, and denote by  
     $F_k(\tail)$ the unique fundamental domain whose intersection with
     the fundamental tail  $f^k(\tail)$ is unbounded.
     We call the finite sequence
      $\s= F_0(\tail) F_1(\tail) \dots F_{n-1}(\tail)$, of length  $n$,  the \emph{(finite) external address} of~$\tail$.
 \end{defn}
 
Conversely, we can construct a fundamental  tail  having an arbitrary prescribed (finite) address by taking repeated pull-backs
   along the correct branches. Recall that a sequence $\s^1$, say of length $n$,
     is a \emph{prefix} of another sequence $\s^2$ of length $m\geq n$ if
     the first $n$ entries of  $\s^2$ coincide with those of $\s^1$. 

\begin{deflem}[Tails at a given address]\label{deflem:pullbacks}
  Let $\s= F_0 F_1 \dots$ be a finite or infinite sequence of fundamental domains having length at least $n\geq 1$.
    Then there is a unique fundamental     tail $\tail$ of level $n$ having address $\tilde{\s} \defeq F_0 F_1 \dots F_{n-1}$. We denote this fundamental 
    tail by $\tail_n(\s)$. We also define the inverse branches
     \[ f^{-n}_{\s} \defeq (f^n|_{\tail_n(\s)})^{-1}\colon \WW \to \tail_n(\s). \]
\end{deflem}
\begin{proof} The proof is by induction on the level $n$ of the tails.  For $n=1$ and for every fundamental domain $F_0$, the fundamental  tail of level 1 and address $F_0$ is the fundamental domain $F_0$ itself, which is  unique and has address $F_0$ by definition.  
  Now let $\s$ be in the claim and let $\tail=\tail_{n-1}(\sigma(\s))$ be the fundamental tail of level $n-1$ 
  and address  $F_1\ldots F_{n-1}$. This tail exists and  is unique by the inductive hypothesis.
   Let $R$ be sufficiently large,
	   and let  ${\tail^{1}}$ be the unique
    unbounded    connected component of $\tail\setminus \overline{D_R(0)}$.  By Lemma~\ref{lem:fdtotails}, 
    ${\tail}^{1}$ is contained in the fundamental domain $F_1=F_0(\tail)$ for sufficiently large $R$. 
     Hence, if we additionally assume that $R$ is greater than the radius of $D$, we have
     $\tail^{1}\subset f(F_0) = \WW$.
    So there is a unique connected component $\tilde{\tail}^{1}$    of $f^{-1}(\tail^{1})$ contained in $F_0$, 
      and a unique connected component
     $\tilde{\tail}$   of $f^{-1}(\tail)$ containing $\tilde{\tail}^{1}$.
     Then $F_0(\tilde{\tail})=F_0$; i.e.,\ $F_0$ is the initial entry in the 
     address of the fundamental tail $\tilde{\tail}$, which hence has address $\s$.
\end{proof}

 The following are immediate consequences of the preceding results and definitions. 
\begin{obs}\label{Obs:Nested tails}
  Let $\tail$ be a fundamental tail of level $n$, and let $\s$ be the address of  $\tail$.  
    Then the address of $f(\tail)$ is $\sigma(\s)$, where $\sigma$ denotes the shift map.

  Suppose that $\tail^1$ and $\tail^2$ are fundamental tails of levels $n_1$ and $n_2$, with $n_1 \geq n_2$. 
    Let $\s^1$ and $\s^2$
    be the addresses of $\tail^1$ and $\tail^2$, respectively.  
     Then  $\tail^1 \cap \tail^2$ 
     is unbounded if and only if $\s^1$ is a prefix of $\s^2$. In this case, if addditionally $n_1>n_2$,
     all sufficiently large points of $\overline{\tail^1}$ lie in $\tail^2$. 
\end{obs}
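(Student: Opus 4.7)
The assertion that the address of $f(\tail)$ equals $\sigma(\s)$ is an unpacking of the definition. By Lemma~\ref{lem:fdtotails}, the $k$th entry of the address of a tail $\tilde\tail$ of level $m$ is the unique fundamental domain having unbounded intersection with $f^k(\tilde\tail)$. Taking $\tilde\tail = f(\tail)$ and using $f^k(f(\tail)) = f^{k+1}(\tail)$ for $0 \le k \le n-2$ shows that the $k$th entry of the address of $f(\tail)$ equals the $(k+1)$st entry of the address of $\tail$, which is exactly $\sigma(\s)$.

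\textbf{Part 2, forward direction.} Suppose $\tail^1 \cap \tail^2$ is unbounded, and fix $0 \le k < n_2$. Because $f^k$ restricted to $\overline{\tail^i}$ is a homeomorphism onto $\overline{f^k(\tail^i)}$ sending $\infty$ to $\infty$, the image $f^k(\tail^1 \cap \tail^2)$ is an unbounded subset of $f^k(\tail^1) \cap f^k(\tail^2)$. Lemma~\ref{lem:fdtotails} applied to each of $f^k(\tail^i)$ identifies its large part with a single fundamental domain, namely the $k$th entry of the address of $\tail^i$. Since distinct fundamental domains are disjoint (Proposition~\ref{prop:facts}), the unboundedness of the intersection forces these two entries to agree for every $k < n_2$, giving the prefix relation between $\s^1$ and $\s^2$.

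\textbf{Part 2, backward direction.} For the converse, assume the address of the lower-level tail is an initial segment of the address of the higher-level tail. The plan is to prove by induction on $n_2$ that the large part of $\overline{\tail^1}$ is contained in $\tail^2$. The base case $n_2 = 1$ is immediate: $\tail^2$ is then the initial fundamental domain $F_0(\tail^1)$, and Lemma~\ref{lem:fdtotails} applied to $\tail^1$ supplies the containment. For the inductive step, Part~1 transfers the prefix relation from $(\tail^1, \tail^2)$ to $(f(\tail^1), f(\tail^2))$, which have levels $n_1-1$ and $n_2-1$, and the inductive hypothesis yields $f(z) \in f(\tail^2)$ for all sufficiently large $z \in \overline{\tail^1}$. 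Simultaneously, Lemma~\ref{lem:fdtotails} places such $z$ in the common fundamental domain $F_0(\tail^1) = F_0(\tail^2)$, and the inductive construction of Definition and Lemma~\ref{deflem:pullbacks} then recognises $z$ as lying in $\tail^2$.

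\textbf{Main obstacle.} The most delicate point is the closing step of the inductive argument: the combined information ``$z \in F_0(\tail^2)$'' and ``$f(z) \in f(\tail^2)$'' must genuinely force $z \in \tail^2$, rather than place $z$ in a different preimage branch of $f(\tail^2)$. This relies on the fact that the restriction of $f$ to any single fundamental domain is a conformal isomorphism onto $\WW$ (since $\WW$ is simply connected and $f$ on the enclosing tract is a universal cover of $\C\setminus\overline{D}$), so the relevant inverse branch is uniquely pinned down by the choice $F_0(\tail^2)$. Once this uniqueness is made precise, the remainder of the argument is routine bookkeeping of fundamental tails along the address.
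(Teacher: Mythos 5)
Your proposal is correct and is exactly the derivation the paper intends: the paper states this as an Observation with no written proof ("immediate consequences of the preceding results and definitions"), and your argument assembles it from Lemma~\ref{lem:fdtotails}, the disjointness of fundamental domains, and the uniqueness in Definition and Lemma~\ref{deflem:pullbacks} in the natural way. The one point you rightly flag as delicate — pinning down the inverse branch — is resolved precisely as in the proof of Definition and Lemma~\ref{deflem:pullbacks}: for $z$ large one has $f(z)$ in the unbounded part of $f(\tail^2)$, which lies in $\WW$, so $(f|_{F_0(\tail^2)})^{-1}$ is applicable and lands in $\tail^2$ by the uniqueness of the tail with that address.
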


\section{Filaments}\label{sec:filaments}
 Maintaining the same notation as in the previous section, we now define and study 
    the central objects of this article: 
     filaments. Recall that $\P(f)$ is bounded; the main goal of this section is to show that, under this assumption,
     each of the sets $J_{\s}^0$ defined 
      in Section \ref{sec:unboundedsets} can be consistently extended to a larger~-- and, in a certain sense, maximal~--
      set $J_{\s}$. The intersection of $J_{\s}$ with the escaping set $I(f)$ forms
      the \emph{filament} $G_{\s}$ at address $\s$.

     As we shall see, each filament is an unbounded connected set of escaping points, 
      the escaping set can be written as the union of filaments, and via their external addresses
      the collection of filaments is endowed with a natural combinatorial structure. 
      Furthermore, the definition of filaments does not depend on the initial choices made in the 
      construction of fundamental domains.  Together these facts indicate that 
      filaments can indeed be considered a natural generalisation of ``hairs'' or ``rays''. 

 For a fundamental domain $F$, recall that $\unbdd{F}$ denotes the unbounded connected component of
    $F\setminus\overline{D}$. We extend this definition to fundamental tails as follows.

\begin{defn}[Unbounded parts of tails]
   Let $\tau$ be a fundamental tail of level $n$. 
     We define $\unbdd{\tail}$ to be the unbounded connected component of
       $\tail\setminus f^{-(n-1)}(\overline{D})$. In other words, if $F=f^{n-1}(\tail)$, then 
       $\unbdd{\tail}$ is the component of $f^{-(n-1)}(\unbdd{F})$ contained in $\tail$.
\end{defn}
Observe that, if $\s$ is an external address and $n\geq 2$, then $\unbdd{\tail}_n(\s)$ 
   is precisely the unbounded connected component of $\tail_n(\s)\cap \tail_{n-1}(\s)$.

\begin{defn}[Filaments]\label{defn:filament}
   Let $\s$ be an (infinite) external 
    address. We say that a point
      $z\in \C$ \emph{has external address $\s$} if $z\in\unbdd{\tail}_{n+1}(\s)$ for all sufficiently large~$n$.

   We denote the set of all points $z\in \C$ having external address $\s$ by $J_{\s}$. The 
    \emph{filament} $G_{\s}$ is defined to be $G_{\s}\defeq J_{\s}\cap I(f)$.
\end{defn}  
\begin{remark}
   If $z\in J_{\s}$, then  $f^n(z)$ belongs to $\unbdd{F}_n$ for all sufficiently large $n$.
    In particular,  $z\in J(f)$ by Lemma~\ref{lem:J}. 
    Observe, however, that $J_{\s}$ is not closed in general.

Note that other notions of points having external address $\s$ appear in the literature;
    for example, in~\cite{eremenkoproperty}, $z$ is said to have address $\s=F_0 F_1 \dots$ if 
    $f^n(z)\in F_n$ for all $n\geq 0$. The advantage of the above definition, in the context of
    postsingularly bounded functions, is that we shall see that every escaping
    point has an external address (Corollary~\ref{cor:escapingpointsarefilaments}), and that this address 
    is in a certain sense independent of the initial choice of fundamental domains (Observation~\ref{obs:filamentsindependence}).
\end{remark}

\begin{lem}[Properties of addresses and filaments]\label{lem:addressproperties} 
   Suppose that $\s$ is an external address and $z\in  J_{\s}$. Then 
   \begin{enumerate}[(a)]
      \item $z\notin J_{\tilde{\s}}$ for $\s\neq \tilde{\s}$.\label{item:Jsdisjoint}
     \item The point $f(z)$ has address $\sigma(\s)$.\label{item:imageaddress}
     \item If $w\in f^{-1}(z)$, then $w$ has an address in $\sigma^{-1}(\s)$, and every such address 
       is realised by exactly one element of  $f^{-1}(z)$. 
     \item The restriction $f\colon J_{\s} \to J_{\sigma(\s)}$ is a continuous bijection. 
  \end{enumerate}
\end{lem}
 \begin{proof}
     The first claim is trivial since fundamental tails of a given level are disjoint.
      Now write $\s = F_0 F_1 F_2 \dots$; observe that 
      $f({\unbdd{\tail}}_{n+1}(\s)) = \unbdd{\tail}_n(\sigma(\s))$
       and that $f\colon \unbdd{\tail}_{n+1}(\s) \to \unbdd{\tail}_n(\sigma(\s))$ 
      is a conformal isomorphism.
 
      By definition, $z$ belongs to $\unbdd{\tail}_n(\s)$ 
      for all sufficiently large $n$; say for 
      $n\geq n_0$. 
      Let $n\geq \max(1,n_0-1)$. Then $f(z)\in f({\unbdd{\tail}}_{n+1}(\s)) = \unbdd{\tail}_n(\sigma(\s))$,
      as claimed in~\ref{item:imageaddress}.

    Now let $w\in f^{-1}(z)$. For $n \geq  n_0$,
    we have $z = f(w) \in \unbdd{\tail}_n(\s)$. In particular, for $n\geq n_0+1$, 
     there is a fundamental tail $\tailtilde_{n}$ such that $f(\tailtilde_{n}) = \tail_{n-1}(\s)$ and
     $w\in\unbdd{\tailtilde}_n$. Recall that $\unbdd{\tail}_n(\s)\subset \tail_{n-1}(\s)$, and that
     the intersection $\unbdd{\tailtilde}_{n+1} \cap \tailtilde_n$ is non-empty (since it contains~$w$). 
     Hence $\unbdd{\tailtilde}_{n+1}\subset \tailtilde_n$ for all $n\geq n_0+1$. 
     That is, $\tailtilde_{n+1}$ tends 
     to infinity within $\tailtilde_n$. So if $F$ is the fundamental domain whose intersection with $\tailtilde_{n_0+1}$ is 
     unbounded, then $\tailtilde_n$ tends to infinity in $F$ for $n\geq n_0+1$. It follows that 
     $\tailtilde_n = {\tail}_n(F\s)$. In particular, $w$ has address $F\s\in \sigma^{-1}(\s)$.

   Conversely, let $\tilde{\s} \in \sigma^{-1}(\s)$; that is, $\tilde{\s} = F \s$ for some fundamental domain $F$.
     Then the fundamental tail $\tailtilde_{n_0+1}$ of level  $n_0+1$ associated
     to the address $\tilde{\s} \defeq F\s$ is a component of $f^{-1}(\tail_{n_0})$. Hence there is 
     a unique element $w$ of $f^{-1}(z)$ in $\tailtilde_{n_0+1}$, and $w$ has address $\tilde{\s}$ as above. 

   The final claim follows from the previous two. 
  \end{proof}

 The following proposition establishes a connection between the sets
   $J_{\s}$ defined in  Definition~\ref{defn:filament}
   and the sets $J_{\s}^0$ studied in Section~\ref{sec:unboundedsets}.

 \begin{prop}[$J_{\s}$ and $J_{\s}^0$]\label{prop:externaladdresses}
    Let $\s$ be an (infinite) external address. If $z\in J_{\s}^0$, then
        $z\in \unbdd{\tail}_n(\s)$ for all $n\geq 1$. In particular, $J_{\s}^0\subset J_{\s}$. 
 \end{prop}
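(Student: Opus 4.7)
The plan is to prove by induction on $n \geq 1$ the stronger assertion that $J_\s^0 \subset \unbdd{\tail}_n(\s)$ for every external address $\s$. Since Definition~\ref{defn:dreadlock} requires a point of $J_\s$ merely to lie in $\unbdd{\tail}_{m+1}(\s)$ for all sufficiently large $m$, the inclusion $J_\s^0 \subset J_\s$ is then immediate. The base case $n = 1$ holds by definition, as $\unbdd{\tail}_1(\s) = \unbdd{F}_0$ and $z \in \unbdd{F}_0$ whenever $z \in J_\s^0$.

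For the inductive step, apply the hypothesis to $f(z) \in J_{\sigma(\s)}^0$ to obtain $f(z) \in \unbdd{\tail}_n(\sigma(\s))$. The two descriptions of $\unbdd{\tail}$ recorded just before Definition~\ref{defn:dreadlock} combine to give $\unbdd{\tail}_{n+1}(\s) = (f|_{\tail_{n+1}(\s)})^{-1}(\unbdd{\tail}_n(\sigma(\s)))$, so once one knows $z \in \tail_{n+1}(\s)$, the membership $z \in \unbdd{\tail}_{n+1}(\s)$ is forced by $f(z) \in \unbdd{\tail}_n(\sigma(\s))$. Since $f|_{F_0}\colon F_0 \to \WW$ is a homeomorphism and $z \in \unbdd{F}_0 \subset F_0$, the point $z$ is the unique preimage of $f(z)$ in $F_0$, so it remains to identify the component of $f^{-1}(\tail_n(\sigma(\s)))$ containing $z$ with $\tail_{n+1}(\s)$ itself.

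For this identification I would invoke the pullback construction of Definition and Lemma~\ref{deflem:pullbacks}: for $R$ sufficiently large, the inverse-branch image $(f|_{F_0})^{-1}(\tail^1)$ of the unbounded part $\tail^1$ of $\tail_n(\sigma(\s)) \setminus \overline{D_R(0)}$ is contained in $\tail_{n+1}(\s)$ by construction. Joining $f(z)$ to $\tail^1$ by a path within the connected set $\unbdd{\tail}_n(\sigma(\s))$ and lifting by $(f|_{F_0})^{-1}$ yields a path in $F_0$ from $z$ into $\tail_{n+1}(\s)$; connectedness of the preimage component then forces $z \in \tail_{n+1}(\s)$. The main obstacle will be ensuring that the lifted path indeed lies in $\tail_{n+1}(\s)$, which reduces to checking that $\unbdd{\tail}_n(\sigma(\s)) \subset \WW$ so that the inverse branch $(f|_{F_0})^{-1}$ is defined along the whole path; this mild strengthening can be carried through the induction using the second characterisation of $\unbdd{\tail}$ recorded just before Definition~\ref{defn:dreadlock}.
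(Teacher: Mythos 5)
Your reduction is sound up to the point you yourself flag: the identity $\unbdd{\tail}_{n+1}(\s) = (f|_{\tail_{n+1}(\s)})^{-1}\bigl(\unbdd{\tail}_n(\sigma(\s))\bigr)$ is correct, so everything does come down to showing $z\in\tail_{n+1}(\s)$. But the ``mild strengthening'' you propose in order to close the argument is false. For $n\geq 2$, membership of a point $w$ in $\unbdd{\tail}_n(\sigma(\s))$ constrains only $f^{n-1}(w)$ to lie in $\unbdd{F}_n$; nothing prevents $w$ itself, or the intermediate iterates, from lying in $\overline{D}$ or on $\delta$. The characterisation recorded before Definition~\ref{defn:dreadlock} gives only $\unbdd{\tail}_n(\sigma(\s))\subset\tail_{n-1}(\sigma(\s))$, and this does not telescope down to $\WW$: already $\tail_1(\sigma(\s))=F_1$ is a component of $f^{-1}(\WW)$, which may perfectly well meet $\overline{D}$ (this is the whole reason the paper distinguishes $F$ from $\unbdd{F}$). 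Consequently your path from $f(z)$ to $\tail^1$ inside $\unbdd{\tail}_n(\sigma(\s))$ may cross $\overline{D}\supset S(f)$, where $(f|_{F_0})^{-1}$ cannot be continued; and even if you reroute the path through $\C\setminus\overline{D}$ and lift it via the universal covering $f\colon T\to\C\setminus\overline{D}$ of the tract containing $F_0$, the lift may terminate at a preimage of the far-out point lying in a \emph{different} fundamental domain of $T$, which would only place $z$ in a level-$(n+1)$ tail whose address has the wrong first entry. So this is a genuine gap, not a routine verification.

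The paper avoids lifting paths altogether. For each $n$ let $\tailtilde_n$ be the level-$n$ fundamental tail containing $z$ (it exists, and $z\in\unbdd{\tailtilde}_n$, because $f^{n-1}(z)\in\unbdd{F}_{n-1}$). Then $\unbdd{\tailtilde}_{n+1}$ is a connected component of $f^{-n}(\unbdd{F}_n)$ while $\tailtilde_n$ is a connected component of $f^{-n}(\WW)$; since $\unbdd{F}_n\subset\WW$ and both sets contain $z$, one gets $\unbdd{\tailtilde}_{n+1}\subset\tailtilde_n$ for free. This nesting shows that $\tailtilde_{n+1}$ tends to infinity inside $\tailtilde_n$, hence (applying the same to $f^k(z)$) that the address of $\tailtilde_n$ is $F_0\dots F_{n-1}$, so $\tailtilde_n=\tail_n(\s)$ by the uniqueness in Definition and Lemma~\ref{deflem:pullbacks}. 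If you wish to keep your inductive framework, this ``components of preimages of nested sets are nested once they intersect'' step is the ingredient that should replace the path-lifting.
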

\begin{proof}
   This is essentially the content of the second paragraph of \cite[Proof of Theorem~1.1]{eremenkoproperty}. Since that proof is somewhat concise, and we are using a different terminology, we provide the details. 

   For $n\geq 1$, let $\tailtilde_n$ denote the fundamental tail of level $n$ containing $z$.
    By assumption,  $f^{n-1}(z)\in \unbdd{F}_{n-1}$, and hence $z\in \unbdd{\tailtilde}_n$. 

   We claim that $\unbdd{\tailtilde}_{n+1} \subset \tailtilde_n$; that is,
     $\tailtilde_{n+1}$ tends to infinity within $\tailtilde_n$. 
     Indeed, $\unbdd{\tailtilde}_{n+1}$ is a connected component of  $f^{-n}(\unbdd{F}_n)$ and
     $\tailtilde_n$ is a connected component of  $f^{-n}(\WW)$. Since $\unbdd{F}_n\subset \WW$ and 
     $\unbdd{\tailtilde}_{n+1}\cap \tailtilde_n \neq\emptyset$, the claim follows. 

   Inductively, $\tailtilde_n$ tends to infinity within $F_0=\tailtilde_1$ for each $n\geq 0$. 
    Applying this fact to $f^k(z)$, we see likewise that $f^k(\tailtilde_n)$ tends to infinity within $F_k$  
    for $n\geq k$. Thus, for any $n\geq 1$, we conclude that $\tailtilde_n =\tail_n(\s)$, and hence 
    $z\in \unbdd{\tailtilde}_n = \unbdd{\tail}_n(\s)$, as desired.
\end{proof} 

 \begin{cor}[Escaping points are organised in filaments]\label{cor:escapingpointsarefilaments}
    \mbox{}
   \begin{enumerate}[(a)]
      \item A point $z$ has an external address if and only if there is $n$ and an external address $\s$ such that 
               $f^n(z)\in J_{\sigma^n(\s)}^0$.\label{item:addresscharacterisation}
      \item\label{item:stayinglarge}
       There is a number $R>0$ with the following property. If $z\in J(f)$ is such that $\lvert f^n(z)\rvert \geq R$ for all
          sufficiently large $n$, then 
         $z$ has an external address $\s$. 
      \item Every escaping point $z\in I(f)$ has an external address $\s$, and hence belongs to a filament $G_{\s}$.\label{item:escapingpointshaveaddresses} 
   \end{enumerate}
 \end{cor}
\begin{proof}
  The ``only if'' direction of~\ref{item:addresscharacterisation} is immediate from Definition~\ref{defn:filament}. 
    On the other hand, if $z$ has the stated property, then $f^{n}(z)$ has address $\s$ by   
     Proposition~\ref{prop:externaladdresses}, and hence 
      $z$ also has an external address by Lemma~\ref{lem:addressproperties}.

  To prove~\ref{item:stayinglarge}, let $\FF$ be the set of fundamental domains that intersect $D$.
      {Recall that  $F\setminus \unbdd{F}$ is bounded for all
     $F\in\FF$, and that $\FF$ is finite by Lemma~\ref{lem:preimageK}. Now} fix
     \[ R > \sup\left\{ \lvert z\rvert \colon z\in \overline{D}\cup \bigcup_{F\in\FF} F\setminus\unbdd{F}\right\}. \]

    Suppose that $\lvert f^n(z)\rvert \geq R$ for all $n\geq n_0$. Then, for all $n\geq n_0$, 
     $f^n(z)$ belongs to some fundamental domain  $F$, and by choice of $R$, it must belong to $\unbdd{F}$. 
     Now $z$ has an address by~\ref{item:addresscharacterisation}. 

  Finally,~\ref{item:escapingpointshaveaddresses} is an immediate consequence of~\ref{item:stayinglarge}.  
\end{proof}  

\begin{rmk}
  Under the hypotheses of Proposition~\ref{prop:externaladdresses}, 
    $f^n(z)$ belongs to $\unbdd{F}_n$ 
      for all $n$. However,  
      it may well be that  $z$  belongs to a \emph{bounded} component of $\tail_k(\s)\setminus \overline{D}$ for 
      all sufficiently large $k$ 
       and similarly for all points on the forward orbit of $z$.  

      Indeed, typically when $I(f)$ does not consist of hairs, exactly this is the case 
       for many escaping points, so 
       Proposition~\ref{prop:externaladdresses} is not at all trivial, and uses the boundedness of the postsingular set
      in an essential manner. No analogue thereof is currently known for functions with unbounded postsingular set,
      and in particular in this setting there is no canonical way of associating external addresses to arbitrary
      escaping points as in Corollary~\ref{cor:escapingpointsarefilaments}. 
      This is a major challenge in showing the unboundedness of components of the
      escaping set. (Compare Corollary~\ref{cor:lms} below.) 
\end{rmk}

\subsection*{Connectedness of filaments and uniform escape to infinity}
  Thanks to Proposition~\ref{prop:externaladdresses}, we can use the 
    results of Section~\ref{sec:unboundedsets} to study filaments. We use 
    the following definition from \cite{arclike}. 

   \begin{defn}[Escaping composants] \label{defn:unifesccomponent}
    Let  $z\in I(f)$. Then the
      \emph{escaping composant} $\mu(z)=\mu(z,f)$ is the union of all 
       connected sets $A$ containing $z$ on which the iterates of $f$ tend to infinity uniformly. 
   \end{defn}

  \begin{remark}
While $\mu(z)$ is a union of sets on which the iterates tend to infinity uniformly, typically $f^n$ will not tend to infinity
     uniformly on  $\mu(z)$.
  \end{remark}

   \begin{lem}[Filaments and uniform escape]\label{lem:uniformcomponents}
     Let $\s$ be an external address, and suppose that
        $z\in G_{\s}$. Let $A\ni z$ be a connected set
        {and suppose that there is $N\geq 0$ such that $f^n(A)\cap D=\emptyset$ for $n\geq N$. 
        (In particular, this is the case when the iterates of $f$ tend to infinity uniformly on $A$.)}

   Then there is $n_0\geq N$ such that 
       $A\subset \unbdd{\tail}_n(\s)$ for $n\geq n_0$. 
      In particular, $\mu(z)\subset G_{\s}$. 
   \end{lem}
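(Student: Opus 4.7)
The plan is to prove the first assertion $A \subset \unbdd{\tail}_n(\s)$ for all sufficiently large~$n$; the second claim $\mu(z) \subset G_\s$ is then immediate, since every connected set $A \ni z$ on which the iterates escape uniformly satisfies the standing hypothesis, is placed in~$J_\s$ by that assertion, and consists entirely of escaping points, whence $A \subset J_\s \cap I(f) = G_\s$. Abbreviate $\s = F_0 F_1 F_2 \ldots$, $\tail_n := \tail_n(\s)$ and $\unbdd{\tail}_n := \unbdd{\tail}_n(\s)$. Since $z \in G_\s \subset J_\s$, Definition~\ref{defn:dreadlock} gives $m_0$ with $z \in \unbdd{\tail}_n$ for every $n \geq m_0$, so in particular $f^{n-1}(z) \in \unbdd{F}_{n-1}$ and $\lvert f^n(z)\rvert \to \infty$.

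The topological reduction is as follows. Since $\unbdd{\tail}_n$ is the connected component of $f^{-(n-1)}(\unbdd{F}_{n-1})$ containing~$z$, and $A$ is connected with $z \in A$, it suffices to prove $f^{n-1}(A) \subset \unbdd{F}_{n-1}$ for all $n \geq n_0$. My plan is first to upgrade the hypothesis to $f^n(A) \cap \overline{D} = \emptyset$ for all large~$n$ (i.e., to uniform escape on~$A$); I describe the upgrade below. Granting this, $f^{n-1}(A)$ lies in $f^{-1}(\C \setminus \overline{D})$, which is the disjoint union of the tracts of~$f$; by connectedness and the membership $f^{n-1}(z) \in T_{n-1}$ (the tract containing~$F_{n-1}$), we obtain $f^{n-1}(A) \subset T_{n-1}$. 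Because $\delta$ was chosen in the complement of the closures of the tracts, this forces $f^{n-1}(A) \cap \delta = \emptyset$, so $f^n(A) \subset \WW$ and $f^{n-1}(A) \subset F_{n-1}$; combined with disjointness from~$\overline{D}$ this yields $f^{n-1}(A) \subset \unbdd{F}_{n-1}$, as required.

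The main obstacle is the upgrade from ``$f^n(A) \cap D = \emptyset$'' to uniform escape on~$A$. The plan here is to apply Lemma~\ref{lem:crosscuts} at $z_n := f^n(z)$, producing cross-cuts $\Gamma_n \subset \WW$ (arcs of the circle of radius $\lvert z_n\rvert$) at cylindrical distance at most~$2\pi$ from~$z_n$, with an associated unbounded region $\WW^{\Gamma_n}$ that is disjoint from~$\overline{D}$ for all large~$n$ since $\lvert z_n\rvert \to \infty$. I would then prove $f^n(A) \subset \WW^{\Gamma_n}$ for all large~$n$, which immediately yields $f^n(A) \cap \overline{D} = \emptyset$. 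The inclusion is to be established by contradiction: a point $w \in A$ with $f^n(w) \notin \WW^{\Gamma_n}$ would, by connectedness of $f^n(A)$, give rise to a boundary witness $f^n(w') \in \Gamma_n \cup \partial D \cup \delta$ with $\distW(z_n, f^n(w')) \leq 2\pi$; the hypothesis $f^k(A) \cap D = \emptyset$ at every intermediate level~$k$ ensures that the iterates $f^k(w')$ can be tracked along the unique fundamental-domain branches of $f^{-1}$ prescribed by~$\s$, and iterating the cylindrical expansion~\eqref{eqn:expansionW} (factor at least~$2$ per step) would force $\distW(z, w') \leq 2\pi \cdot 2^{-n}$. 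Combining this collapse of the witnesses onto~$z$ with the constraint that $f^n(w')$ lies on~$\Gamma_n$, or at modulus exactly $r(D)$, or on~$\delta$ (each incompatible with arbitrarily close preimage approximation to the escaping~$z$ at successive~$n$) yields the desired contradiction. Verifying the availability of the correct backward branches at every level and making this collapse-vs-constraint argument precise is the technical heart of the proof.
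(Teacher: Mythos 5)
Your two reductions are sound: passing from $A\subset \unbdd{\tail}_n(\s)$ to $f^{n-1}(A)\subset \unbdd{F}_{n-1}$ via connectedness is correct, and your tract argument showing that $f^n(A)\cap\overline{D}=\emptyset$ forces $f^n(A)\cap\delta=\emptyset$, hence $f^n(A)\subset\WW$, is a genuine detail that the paper's own one-line proof leaves implicit. The paper itself is more direct: it asserts $A\subset f^{-n}(\WW)\cap f^{-(n-1)}(\WW)$ for $n\geq n_0$ and uses that $\unbdd{\tail}_n(\s)$ is a full connected component of this set containing $z$, so connectedness of $A$ finishes the proof in one step; your detour through $\unbdd{F}_{n-1}$ amounts to the same thing.

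The genuine gap is your ``upgrade'' from $f^n(A)\cap D=\emptyset$ to $f^n(A)\cap\overline{D}=\emptyset$. What you offer there is a plan rather than a proof, and the plan as sketched is circular: to iterate the expansion estimate~\eqref{eqn:expansionW} backwards along the orbit of the witness $w'$, you need $f^k(z)$ and $f^k(w')$ to lie in the \emph{same} set $\unbdd{F}_k$ at every intermediate level $k$ --- but $f^k(w')\in\unbdd{F}_k$ is essentially the conclusion of the lemma, and the hypothesis $f^k(w')\notin D$ does not supply it (the point could lie on $\partial D$, on $\delta$, or in a bounded component of $F_k\setminus\overline{D}$; in the first two cases $\distW$ is not even defined, and in the third~\eqref{eqn:expansionW} does not apply). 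Nor is the concluding ``collapse-vs-constraint'' contradiction ever made precise: preimages of $\Gamma_n$, of $\partial D$ and of $\delta$ do accumulate on $J(f)$, so a sequence $w'_n\to z$ with $f^n(w'_n)$ on these sets is not in itself absurd. Note that the upgrade is unnecessary in every application of the lemma in the paper: there the iterates of $f$ tend to infinity uniformly on $A$, so $f^n(A)$ eventually avoids the compact set $\overline{D}$ for free, after which your tract argument completes the proof. If you insist on the hypothesis exactly as stated (with the open disc $D$), you must either give a correct argument excluding points of $A$ whose orbits meet $\partial D$ at arbitrarily large times, or acknowledge that this case is not covered.
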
 
   \begin{proof}
     {Let $n_0\geq N+1$ be such that $z\in \unbdd{\tail}_n(\s)$ for $n\geq n_0$. 
      Let $n\geq n_0$. 
         Then 
               \[ A \subset f^{-n}(\WW) \cap f^{-(n-1)}(\WW). \] 
          Now $\unbdd{\tail}_n(\s)$ is a connected component of the set on the right-hand side, $z\in A\cap \unbdd{\tail}_n(\s)$, 
           and $A$ is connected. Hence $A\subset \unbdd{\tail}_n(\s)$ for  $n\geq n_0$, and 
           all points in $A$ have address $\s$.}
   \end{proof} 
  
  We remark that a filament $G_{\s}$ may contain uncountably many different escaping composants.  
    Moreover, it is possible for $\mu(z)$ to consist of a single point.
    (See \cite[Theorem~1.6]{arclike}.) 
   However, by the results of Section~\ref{sec:unboundedsets}, $G_{\s}$ contains a 
   distinguished uniformly escaping composant, namely the one consisting of those points
     for which the set $A$ can be taken to be unbounded. 

 \begin{deflem}[The core of a filament]\label{deflem:uniformlyescapingsetsarenested}
    Let $\s$ be an infinite external address. Let $\mathcal{X}$ be the collection of all 
       closed, unbounded, connected sets {$X\subset G_{\s}$} on which the
       iterates of $f$ tend to infinity uniformly. Then no element of $\mathcal{X}$ separates the plane.
       Furthermore, $\mathcal{X}$ is linearly ordered by inclusion; i.e., if 
      $X_1,X_2\in \mathcal{X}$ then  $X_1\subset X_2$ or  $X_2\subset X_1$. 

     In particular, if $J_{\s}\neq \emptyset$, then the union 
           $\mu_{\s}\defeq \bigcup \mathcal{X} \subset G_{\s}\neq\emptyset$  
       satisfies $\mu_\s=\mu(z)$ for all $z\in \mu_{\s}$.
     We call $\mu_{\s}$ the \emph{core} of the filament 
       $G_{\s}$.
 \end{deflem}

 \begin{proof}
    If $X\in \mathcal{X}$, then there is $n_0\geq 1$ 
      such that $X\subset \unbdd{\tail}_n(\s)$ for $n\geq n_0$. 
      In particular, $f^n(X)\subset J_{\sigma^n(\s)}^0$ for
      $n\geq n_0$.

      Let $n\geq n_0$. Since $f^n\colon \tau_n(\s)\to \WW$ is a conformal isomorphism, and $X$ is unbounded, it follows
      that $f^n(X)$ is unbounded for all $n$.  By Lemma~\ref{lem:J}
      we know that $f^n(X)$, and hence $X$, does not separate the plane.

    Furthermore, if $X_1,X_2\in \mathcal{X}$, then we can choose $n_0$ sufficiently large so that the above
      holds for both sets. 
      It follows from 
      part~\ref{item:terminal} of Theorem~\ref{thm:unboundedsets}
      that one of $f^{n_0}(X_1)$ and $f^{n_0}(X_2)$ is contained in the other, and the same holds for 
        $X_1$ and  $X_2$. 

   By part~\ref{item:unboundedsets} of Theorem~\ref{thm:unboundedsets}, together with
     Proposition~\ref{prop:externaladdresses}, the set $\mu_{\s}$ 
     (as defined in the statement of the lemma) is non-empty. By the fact we just proved, $\mu_{\s}$ 
     is a nested union of
     connected sets, and hence itself connected. Finally suppose
     that $z\in \mu_{\s}$; so $z\in X_z$ for some $X_z\in \mathcal{X}$. 
      Since $\mathcal{X}$ is linearly ordered by inclusion, we have
          \[ \mu_{\s} = \bigcup\{ X\in \mathcal{X}\colon X_z\subset X\} \subset
                 \mu(z). \] 
      Conversely, if the iterates of $f$ tend to infinity uniformly on the connected
        set $A\ni z$, then
      they do so also on the closed, unbounded and connected set 
       $\overline{A}\cup X_z$, which is contained in
       $G_{\s}$ by Lemma~\ref{lem:uniformcomponents}. Hence $A \subset \mu_{\s}$,
        and we have proved $\mu(z) = \mu_{\s}$, as desired.
  \end{proof}  
 
 \begin{prop}[Filaments are connected]\label{prop:core}
  Let $\s$ be an external address with $J_{\s}\neq\emptyset$. 
    Then $\mu_{\s}$ is dense in $J_{\s}$ and $G_{\s}$. In particular, both of these sets are connected and unbounded. 
 \end{prop}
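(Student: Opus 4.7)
The strategy is to prove that $\mu_\s$ is dense in $J_\s$; connectedness and unboundedness of both $J_\s$ and $G_\s$ will then be automatic. Indeed, $\mu_\s$ itself is connected and unbounded by Definition-Lemma~\ref{deflem:uniformlyescapingsetsarenested}, and any set sandwiched between a connected set and its closure is connected. So once density is established, the chain $\mu_\s\subset G_\s\subset J_\s\subset \overline{\mu_\s}$ gives connectedness of the three sets, and unboundedness of $\mu_\s$ propagates to the larger ones.

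For density, fix $z\in J_\s$. For $n$ large enough we have $z\in\unbdd{\tail}_{n+1}(\s)$, so that $f^n(z)\in J^0_{\sigma^n(\s)}$. Applying Theorem~\ref{thm:unboundedsets} to the shifted address $\sigma^n(\s)$ with base point $f^n(z)$ produces a closed, unbounded, connected set $X_n\subset J^0_{\sigma^n(\s)}$ on which the iterates of $f$ tend to infinity uniformly, with $\distW(f^n(z),X_n)\leq 2\pi$. Using the conformal isomorphism $f^n|_{\tail_n(\s)}\colon \tail_n(\s)\to \WW$ from Definition-Lemma~\ref{deflem:pullbacks}, I pull $X_n$ back to obtain $Y_n\defeq f^{-n}_{\s}(X_n)\subset \tail_n(\s)$, again closed, unbounded and connected. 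The iterates of $f$ tend to infinity uniformly on $Y_n$ (starting at the $n$-th iterate, hence overall). Every point of $Y_n$ is escaping, and by Lemma~\ref{lem:addressproperties}(c) applied inductively it has external address exactly $\s$, because we have pulled back along the specific branch landing in $\tail_n(\s)$. Thus $Y_n\subset G_\s$, and by the definition of $\mu_\s$, $Y_n\subset\mu_\s$.

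The technical heart is to show that some point of $Y_n$ lies arbitrarily close to $z$ as $n\to\infty$. Choose $y_n\in X_n$ with $\distW(f^n(z),y_n)\leq 2\pi+1$ and set $v_n\defeq f^{-n}_\s(y_n)\in Y_n$. Iterating the cylindrical expansion~\eqref{eqn:normalised} along the orbit of $z$ gives $\lVert Df^n(z)\rVert_{\cyl}\geq c\cdot 2^{n}$ for some $c>0$ depending on $z$, so that $|(f^{-n}_\s)'(f^n(z))|\cdot|f^n(z)|\leq C\cdot 2^{-n}|z|$. Working in logarithmic coordinates~-- i.e., composing with $\exp$ on the unique component of $\exp^{-1}(\WW)$ containing the logarithm of $f^n(z)$~-- the cylindrical disk of radius $2\pi+1$ around $f^n(z)$ becomes an honest Euclidean disk of fixed radius, contained in the simply-connected domain of the univalent map $f^{-n}_\s\circ\exp$. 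Applying Koebe's distortion theorem to this map at the base point then yields $|v_n-z|\leq C'\cdot 2^{-n}|z|\to 0$. Therefore $v_n\to z$, so $z\in\overline{\mu_\s}$, proving density.

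\textbf{Main obstacle.} The step I expect to require the most care is the last contraction estimate. The cylindrical bound $\distW(f^n(z),y_n)\leq 2\pi$ does \emph{not} imply $|y_n-f^n(z)|$ is small~-- it may be of order $|f^n(z)|$~-- so a naive Koebe argument on a small Euclidean disk around $f^n(z)$ fails. The key is to pass to logarithmic coordinates where the cylindrical neighbourhood becomes a bounded Euclidean neighbourhood inside a single component of $\exp^{-1}(\WW)$, and then exploit that the cylindrical expansion of $f^n$ translates, in these coordinates, to a derivative estimate $|(f^{-n}_\s\circ\exp)'|\lesssim 2^{-n}|z|$ at the base point. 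Combining this with the standard Koebe distortion bound then gives the required Euclidean contraction, and the rest of the proposition follows by the abstract sandwiching argument described above.
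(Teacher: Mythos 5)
Your overall architecture is the same as the paper's: reduce everything to density of $\mu_{\s}$ in $J_{\s}$, produce the sets $X_n\subset J^0_{\sigma^n(\s)}$ from Theorem~\ref{thm:unboundedsets} near $f^n(z)$, pull them back by $f^{-n}_{\s}$ into $\mu_{\s}$, and show that the pullbacks approach $z$. The first two thirds of your argument (the sandwiching $\mu_{\s}\subset G_{\s}\subset J_{\s}\subset\overline{\mu_{\s}}$ and the inclusion $Y_n\subset\mu_{\s}$) are fine.

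The gap is in the contraction step, exactly where you flagged the difficulty. The map $f^{-n}_{\s}\circ\exp$ is defined and univalent only on a connected component $U$ of $\exp^{-1}(\WW)$, and $U$ lies between two consecutive lifts of $\delta$, which differ by $2\pi i$. Hence for every $\zeta_0\in U$ the point $\zeta_0+2\pi i$ lies outside $U$, so the Euclidean disk of radius $2\pi+1$ about $\zeta_0$ is \emph{never} contained in the domain of $f^{-n}_{\s}\circ\exp$, and Koebe's theorem cannot be applied there as stated. (This is precisely the phenomenon behind Remark~1 following Lemma~\ref{lem:crosscuts}: cylindrical closeness does not imply closeness within $\WW$.) One could try to continue the inverse branch analytically across the lifts of $\delta$, but two further problems arise: when $\lvert f^n(z)\rvert$ is close to $r(D)$ --- which is not excluded by $f^n(z)\in J^0_{\sigma^n(\s)}$ --- the disk leaves the half-plane $\exp^{-1}(\C\setminus\overline{D})$ and enters $\exp^{-1}(D)\supset \exp^{-1}(\P(f))$, where no branch of $f^{-n}$ exists at all; and even inside the half-plane, the global univalence of the continued map on the disk (which Koebe requires) would need the restriction of $f^n$ to the relevant component of $f^{-n}(\C\setminus\overline{D})$ to be a \emph{universal} covering of $\C\setminus\overline{D}$, which you do not address. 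The paper sidesteps all of this by measuring the connecting curve in the hyperbolic metric of $\Omega$, the unbounded component of $\C\setminus\P(f)$: a curve of cylindrical length at most $2\pi$ in $\C\setminus\overline{D}\subset\Omega$ has uniformly bounded hyperbolic length in $\Omega$; the inverse branches of $f$ are defined on all of $\Omega$ and contract its hyperbolic metric; and Proposition~\ref{prop:expansion} upgrades this to uniform contraction along the pullback, giving $\ell_{\Omega}(\tilde{\gamma}_n)\to 0$ and hence $\dist_{\Omega}(z,\tilde{X}_n)\to 0$. Your argument can be repaired by substituting this hyperbolic estimate for the Koebe step (or by a case distinction invoking Lemma~\ref{lem:Euclideanshrinking} when $f^n(z)$ remains in a compact set), but as written the Euclidean contraction is not established.
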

\begin{proof}
  Let $z\in J_{\s}$. {By Corollary~\ref{cor:escapingpointsarefilaments},} there is $n_0$ such that 
      $f^n(z)\in J_{\sigma^n(\s)}^0$ for $n\geq n_0$. Let $X_n$ be the unbounded connected subset of
      $\mu_{\sigma^n(\s)}$ whose existence is guaranteed by Theorem~\ref{thm:unboundedsets}. 
      Recall that $X_n$ can be connected to  $f^n(z)$ by a curve $\gamma_n\subset\C\setminus \overline{D}$ 
      of cylindrical length at most $4\pi$ 
      such that $\gamma_n$ is homotopic to a curve in $\WW$. 
      In particular, the pullback  $\tilde{\gamma}_n$ of $\gamma_n$ along the orbit of $z$ 
      connects $z$ to the set $\tilde{X}_n \defeq f^{-n}_{\s}(X_n)\subset \mu(\s)$. 

    The density $\rho_{\Omega}$ of the 
      hyperbolic metric on $\Omega$ tends to zero like $1/\lvert z\rvert \lvert \log z\rvert$. 
      (Recall that $\Omega$ is the unbounded connected component of $\C\setminus\P(f)$.)
      Therefore 
      the hyperbolic length $\ell_{\Omega}(\gamma_n)$ is also uniformly bounded, independently of $n$. 
      It follows by Proposition~\ref{prop:expansion} that $\ell_{\Omega}(\tilde{\gamma}_n)\to 0$
      as $n\to \infty$. Hence $\dist_{\Omega}(z , \tilde{X}_n) \to 0$, and $z\in \overline{\mu_{\s}}$, 
      as claimed.
      
   Recall that the (relative) closure of a connected set is again connected. So 
      $J_{\s}$ and $G_{\s}$ are connected and unbounded, as the dense subset 
      $\mu_{\s}$ has these properties.
\end{proof}

   We note that the above result (and its proof) 
    is essentially a reformulation of the main argument in the proof of the main theorem 
     of \cite{eremenkoproperty}, which {we recover as} follows. 
\begin{cor}[Unbounded sets of escaping points]\label{cor:lms}
  If $f$ is a postsingularly bounded entire function, then every connected component of the
    escaping set $I(f)$ is unbounded.
\end{cor}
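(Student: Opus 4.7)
The plan is to derive Corollary~\ref{cor:lms} almost immediately from the results already assembled in the section: dreadlocks are connected unbounded subsets of $I(f)$, and every escaping point lies in some dreadlock.

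More precisely, I would proceed as follows. Let $C$ be an arbitrary connected component of $I(f)$, and pick any $z\in C$. By Corollary~\ref{cor:escapingpointsaredreadlocks}\ref{item:escapingpointshaveaddresses}, the point $z$ has an external address $\s$, so $z\in G_{\s}$. In particular $J_{\s}\neq\emptyset$, so Proposition~\ref{prop:core} applies and tells us that $G_{\s}$ is connected and unbounded. Since $G_{\s}\subset I(f)$ and $G_{\s}\ni z$, the connectedness of $G_{\s}$ forces $G_{\s}\subset C$. Hence $C$ contains an unbounded set and is itself unbounded.

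There is no genuine obstacle here: the work has been done in proving Corollary~\ref{cor:escapingpointsaredreadlocks} (which assigns an external address to every escaping point, using crucially that $\PP(f)$ is bounded) and in proving Proposition~\ref{prop:core} (which shows that each non-empty $G_{\s}$ is connected and unbounded, via the core $\mu_{\s}$ and the hyperbolic contraction estimate from Proposition~\ref{prop:expansion}). If I were to highlight a subtle point worth mentioning in the write-up, it would be to stress that this result is \emph{not} known without the postsingular boundedness hypothesis: as noted in the remark following Proposition~\ref{prop:externaladdresses}, in the general class $\BB$ there is no canonical way to assign external addresses to arbitrary escaping points, and the unboundedness of components of $I(f)$ remains a major open problem in that setting.
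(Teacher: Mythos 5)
Your proof is correct and is essentially identical to the paper's: both deduce the result by placing an arbitrary escaping point in a dreadlock $G_{\s}$ via Corollary~\ref{cor:escapingpointsaredreadlocks} and then invoking Proposition~\ref{prop:core} to conclude that $G_{\s}$ is a connected unbounded subset of $I(f)$.
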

\begin{proof}
  Let $z\in I(f)$. By  Corollary~\ref{cor:escapingpointsarefilaments}, $z\in G_{\s}$ for some external address $\s$, 
    and by Proposition~\ref{prop:core}, $G_{\s}$ is an unbounded connected subset of $I(f)$. 
\end{proof}

\subsection*{Independence of filaments from the construction}

 Note that the definition of addresses, and hence of filaments, depends a priori on the choice of fundamental domains, and hence
    on the domain $\WW$ (i.e., on the choice of the disc $D$ and the curve $\delta$). 
   However, if filaments are to be considered canonical objects associated to $f$, then this dependence should not be essential. We briefly discuss why this is indeed the case. 

 Suppose that $\hat{W}_0$ is a different choice of base domain, and that 
    $\hat{\tail}$ is a fundamental tail of level at least $2$, using this alternative initial configuration. 

  Then it is easy to see that there is a unique fundamental domain $F$ for the original domain $\WW$ such that all sufficiently large points of 
     $\hat{\tail}$ lie in $F$. Indeed, if $z\in \hat{\tail}$ is sufficiently large, then $f(z), f^2(z)\notin \overline{D}$, and in particular $f(z)\notin \delta$. 
      So $f(z)\in \WW$, and the claim is established. 

    In particular, if $\hat{\s}$ is an external address with respect to $\hat{W}_0$, then we can associate to $\hat{\s}$ an address $\s= F_0 F_1 \dots$ with respect
      to $\WW$. {Here for each $n\geq 0$, $F_n$ is the fundamental domain associated, in the above manner, 
      to the fundamental tails 
      $\hat{\tail}_k(\sigma^n(\hat{\s}))$, for $k\geq 2$. (Observe that $F_n$ is independent of $k$.)}
      It is easy to see that, in turn, $\hat{\s}$ can be obtained from $\s$ using the same procedure in the opposite direction, so that the correspondence
     $\s\mapsto \hat{\s}$ is a bijection between external addresses defined using the original configuration $\WW$, and those defined using the modified one. {The following observation is an easy consequence of this construction.}

  \begin{obs}[Filaments are independent of the initial configuration]\label{obs:filamentsindependence}
    Let $\s$ and $\hat{\s}$ correspond to each other as in the above construction; let $G_{\s}$ be the filament obtained by using preimages of 
     $\WW$ in the construction, and let $\hat{G}_{\hat{\s}}$ be the corresponding filament according to the choice $\hat{W}_0$. 
    Then $G_{\s} = \hat{G}_{\hat{\s}}$. 

   In other words, the collection $\mathcal{G} = \{G_{\s}\}$ of subsets of $J(f)$ is independent of the initial choice of $\WW$. 
  \end{obs} 
 
 We remark that the same is not true for the set $J_{\s}$. Indeed, suppose that
    $z$ is a non-escaping point that has an external 
   address for one choice of $\WW$. Then we can choose a different initial configuration $\tilde{W}_0$,
    where the initial disc $\tilde{D}$ is chosen sufficiently large to ensure that $z$ enters $\tilde{D}$
    infinitely many times. Clearly $z$ does not have an external address with respect to this configuration.

  In similar fashion, we see that filaments are preserved under iteration. Indeed, 
    let $n\geq 1$. Suppose that
      $\hat{D}$ is a disc centred around zero such that $f^{n-1}(D)\subset \hat{D}$, and that $\hat{\delta}\subset\delta$ 
      connects $\hat{D}$ to $\infty$ outside of $\hat{D}$. Then $\hat{W}_0\defeq \C\setminus(\closure(\hat{D})\cup \hat{\delta})$ 
      is a valid initial configuration
      for $f^n$. Every fundamental domain of $f^n$ (with respect to $\hat{W}_0$) is contained in a unique fundamental tail
      of level $n$ for $f$. Conversely, every fundamental tail of $f$ contains exactly one fundamental domain of $f^n$. 
     Hence there is a natural bijection between the fundamental domains of $f^n$ and finite external addresses of length $n$
     for $f$, and thus between external addresses of $f^n$ and those of $f$. In particular, we obtain the following.   

  \begin{obs}[Filaments and iterates]\label{obs:filamentiterates}
    Every filament of $f$ is a filament of $f^n$, for $n\geq 1$, and vice versa. 
  \end{obs}
 
\subsection*{Cyclic order of addresses and filaments}  
   There is a natural \emph{cyclic order} on the set of fundamental tails of any given level,
    and in particular on the set of fundamental domains of $f$: if $A, B, C$ are fundamental tails, then
     $A \prec B \prec C$ means that $B$ tends to infinity between $A$ and $C$ in positive orientation.
    (See Section~\ref{sec:cyclicorder} for background on the cyclic order near infinity.) 
    We can also define a cyclic order on the 
    collection of filaments, by choosing for each a closed, connected, unbounded set on which the 
     iterates tend to infinity, as in Proposition~\ref{prop:core}, and considering the cyclic order of these sets. 

  Recall that the function $f$ acts in a natural way on filaments, and maps fundamental tails of level
    $n+1$ to fundamental tails of level $n$. By the remark above, this action locally preserves
    cyclic order, in the following sense. Let $A\prec B \prec C$ be either filaments or 
    fundamental tails of some fixed level $n>1$. If the addresses of  $A$, $B$ and $C$ all have the same initial
    entry, then $f(A)\prec f(B)\prec f(C)$. 

  We can also define a ``lexicographical'' cyclic order on (finite or infinite) external addresses.  To do so, we use 
     the curve $\delta$ to convert the cylic order on fundamental domains to a 
    linear order in the usual  sense, setting  $F < \tilde{F}$ if and only if 
    $\delta \prec F \prec \tilde{F}$. This linear order gives rise to a lexicographic order $<$ on 
    external addresses in the usual sense. The cyclic order on addresses is then
    defined by $\s^1 \prec \s^2 \prec \s^3$ if and only if $s^1<s^2<s^3$, $s^2<s^3<s^1$, or
     $s^3<s^1<s^2$. 

   It follows from what was written above that this cyclic order on addresses agrees with 
    the cyclic order of the associated fundamental tails or filaments. 

\subsection*{Disjoint-type addresses} 
 To conclude this section, we discuss a particularly well-behaved type of filament. 
   Note that, in general, the points in the closure of a filament $G_{\s}$ need not belong to $J_{\s}$. 
   Indeed, this is the
   case for those filaments of greatest interest to us, namely those accumulating on a periodic point whose orbit does not lie outside of $\overline{D}$. 
   We show now that this cannot occur 
    when the address $\s$ (or a forward iterate thereof) contains only fundamental domains that do not intersect $\overline{D}$.

\begin{lem}[(Eventually) disjoint-type addresses]\label{lem:control}
  Let $\s = F_0 F_1 \dots$ be an external address, and suppose that there is $n_0\geq 0$ such that      
    $\overline{F_n}$ does not intersect $\partial D$ for $n \geq n_0$. Then 
    $\overline{G_{\s}}  = J_{\s} = f_{\s}^{-n}( J_{\sigma^{n}(\s)}^0)\subset \tail_n(\s)$ for $n\geq n_0$.
\end{lem}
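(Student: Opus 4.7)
The plan is to establish the chain $\overline{G_\s}=J_\s=f_\s^{-n}(J_{\sigma^n(\s)}^0)\subset\tail_n(\s)$ for $n\geq n_0$ in three steps: verify the set-theoretic identity, prove $J_\s$ is closed, and apply Proposition~\ref{prop:core} for density. The preparation I will lean on is an easy consequence of the hypothesis: for $k\geq n_0$, since $F_k$ is an unbounded connected subset of its tract (hence of $\C\setminus\overline{D}$), the condition $\overline{F_k}\cap\partial D=\emptyset$ upgrades to $\overline{F_k}\cap\overline{D}=\emptyset$, so $\unbdd{F_k}=F_k$. Together with the general fact $\overline{F_k}\cap\delta=\emptyset$ (which holds because $\delta$ lies outside all closed tracts), this yields $\overline{F_k}\cap\partial\WW=\emptyset$ for $k\geq n_0$.

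For the identity $J_\s=f_\s^{-n}(J_{\sigma^n(\s)}^0)$, I first record the nesting $\tail_m(\s)\subset\tail_n(\s)$ for $m\geq n\geq 1$, proved by induction since each $\tail_m(\s)$ is the image of $\tail_{m-1}(\sigma(\s))\subset F_1\subset\WW$ under $f|_{F_0}^{-1}$. For $\supseteq$: if $w\in J_{\sigma^n(\s)}^0$, set $z\defeq f_\s^{-n}(w)\in\tail_n(\s)$; then $z\in\tail_m(\s)$ for all $m\geq n$, and $f^{m-1}(z)=f^{m-1-n}(w)\in\unbdd{F_{m-1}}$ for $m>n$, so $z\in\unbdd{\tail_m(\s)}$ for all such $m$, hence $z\in J_\s$. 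For $\subseteq$: if $z\in J_\s$ then $z\in\unbdd{\tail_M(\s)}\subset\tail_n(\s)$ for some $M\geq n$, and for every $k\geq 0$ the containment $z\in\tail_{n+k+1}(\s)$ combined with $\unbdd{F_{n+k}}=F_{n+k}$ gives $f^{n+k}(z)\in\unbdd{F_{n+k}}$, so $f^n(z)\in J_{\sigma^n(\s)}^0$.

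The main technical step is closedness of $J_\s$. I first argue $J_{\sigma^n(\s)}^0=\bigcap_{k\geq 0}(f^k)^{-1}(\overline{F_{n+k}})$, which is visibly closed. The inclusion $\subseteq$ is trivial; for $\supseteq$, suppose $z$ lies in the right-hand side but $f^k(z)\in\partial F_{n+k}$ for some $k$. Since distinct open fundamental domains $F\neq F'$ are disjoint and satisfy $\partial F\cap F'=\emptyset$, we have $\partial F_{n+k}\cap f^{-1}(\WW)=\emptyset$, so $f(\partial F_{n+k})\subset\overline{\WW}\setminus\WW=\partial D\cup\delta$; hence $f^{k+1}(z)\in\partial D\cup\delta$, contradicting $f^{k+1}(z)\in\overline{F_{n+k+1}}$ by the preparation. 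To transfer closedness to $f_\s^{-n}(J_{\sigma^n(\s)}^0)$, take $z_m\to z\in\C$ with $z_m\in f_\s^{-n}(J_{\sigma^n(\s)}^0)\subset\tail_n(\s)$; continuity gives $f^n(z)\in J_{\sigma^n(\s)}^0\subset F_n\subset\WW$. Since $\tail_n(\s)$ is a Jordan domain on the sphere and $f^n$ maps $\tail_n(\s)$ conformally onto $\WW$, any finite boundary point of $\tail_n(\s)$ must map under $f^n$ into $\partial\WW$; so $f^n(z)\in\WW$ forces $z\in\tail_n(\s)$, whence $z\in f_\s^{-n}(J_{\sigma^n(\s)}^0)$. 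With $J_\s$ now closed, Proposition~\ref{prop:core} gives $\mu_\s\subset G_\s$ dense in $J_\s$, yielding $\overline{G_\s}=J_\s$. The main obstacle is precisely this closedness upgrade: $J_{\sigma^n(\s)}^0$ is a priori only a $G_\delta$ intersection of open sets, and only the disjoint-type hypothesis permits replacing each open $\unbdd{F_{n+k}}$ by its closure, for otherwise iterates could accumulate on $\overline{F_{n+k}}\cap\partial D$ without violating any constraint.
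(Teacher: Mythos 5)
Your proof is correct in substance and follows the same overall strategy as the paper's: upgrade the hypothesis to $\overline{F_n}\subset\WW$ for $n\geq n_0$, unwind the definitions to obtain $J_{\s}=f_{\s}^{-n}(J^0_{\sigma^n(\s)})$, show this set is closed, and finish with Proposition~\ref{prop:core}. The one real difference is where the closedness is produced: you prove $J^0_{\sigma^n(\s)}=\bigcap_{k\geq 0}(f^k)^{-1}(\overline{F_{n+k}})$ downstairs and transport closedness through the conformal map $f^n|_{\tau_n(\s)}$, whereas the paper makes the single observation $\overline{\tau_{n+1}(\s)}\subset\tau_n(\s)$ for $n\geq n_0$ and reads off both the identity (as $J_{\s}=\bigcap_{j>n}\tau_j(\s)$) and closedness (since $\overline{J_{\s}}\subset\bigcap_j\overline{\tau_j(\s)}\subset\bigcap_j\tau_{j-1}(\s)=J_{\s}$) from that one nesting. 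Your route is sound but does the same work twice; the closure nesting of the tails is the more economical organizing principle.

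Two justifications need repair, both minor. First, your parenthetical claim that a fundamental domain lies in $\C\setminus\overline{D}$ because its tract does is false: a tract is a component of the \emph{preimage} $f^{-1}(\C\setminus\overline D)$ and may well enter $D$ (this is precisely the non-disjoint-type phenomenon that the hypothesis excludes for $n\geq n_0$). The correct reason for the upgrade is the one you half-state: $F_k$ is open, connected, unbounded and misses $\partial D$, so it lies in the unbounded complementary component of $\partial D$, whence $\overline{F_k}\subset\C\setminus D$ and the hypothesis removes $\partial D$ as well. Second, in the inclusion $\supseteq$ of the identity you assert that $z=f_{\s}^{-n}(w)\in\tau_m(\s)$ for all $m\geq n$ as though it followed from the nesting $\tau_m(\s)\subset\tau_n(\s)$ you recorded -- but that nesting points the wrong way for this deduction. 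The statement is true, but needs either Proposition~\ref{prop:externaladdresses} applied to $w\in J^0_{\sigma^n(\s)}$ (then pulled back by $f_{\s}^{-n}$), or a one-line induction: since $f^{j}(w)\in F_{n+j}\subset\WW$ for all $j$, the component of $f^{-(j+1)}(\WW)$ containing $w$ is contained in the component of $f^{-j}(\WW)$ containing $w$, forcing that component to be $\tau_j(\sigma^n(\s))$. With these two fixes the argument is complete.
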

 \begin{proof}
   By assumption, we have $\overline{F_{n}}\subset W_0$, and in particular
      $\unbdd{F_n}=F_n$,   for $n \geq n_0$. 
   Thus $\overline{\tail_{n+1}(\s)}\subset \tail_n(\s)$ 
    for $n \geq  n_0$ (where we use the convention that $\tail_0(\s)= W_0$ for convenience). In particular,
     $\unbdd{\tail}_{n+1}(\s) = \tail_{n+1}(\s)$, and 
    \[ J_{\s} = \bigcup_{k> 0} \bigcap_{j\geq k} \unbdd{\tail}_j(\s) = 
             \bigcap_{j> n} \tail_j(\s) = 
              \{ z\in \tail_{n}(\s)\colon f^j(z)\in F_j\text{ for $j\geq n$}\} =
             f_{\s}^{-n}( J_{\sigma^n(\s)}^0) \]
     by definition. Furthermore,
     \[  \overline{G_{\s}} = \overline{J_{\s}} = \bigcap_{n > n_0} \overline{\tail_n(\s)} \subset
             \bigcap_{n > n_0} \tau_{n+1}(\s) = J_{\s} \] 
    (where the first equality holds by Proposition~\ref{prop:core}). This completes the proof.

 \end{proof} 
 
\begin{rmk}[Disjoint-type addresses]\label{rmk:disjointtype}
   If $n_0=0$, then we say that $\s$ is \emph{of disjoint type} (since $\tau_n(\s)$ and $\tau_{n'}(\s)$ have disjoint boundaries for $n\neq n'$). 
     For a \emph{disjoint-type} function (i.e., one that is hyperbolic with connected Fatou set), all addresses are 
     of disjoint-type, given a suitable choice of fundamental domains. 
In this case, every component of the Julia set $J(f)$ is one 
     of the sets $J_{\s}= J_{\s}^0$, 
     and the closure $J_{\s} \cup \{\infty\}$  in $\Ch$ is called a \emph{Julia continuum}. Compare \cite{arclike}.
  Suppose that, additionally, $G_{\s}$ is a hair
    (in the sense of Definition~\ref{defn:hair} below). Then it follows, using 
     \cite[Corollary~5.6]{arclike}, that 
    either $G_{\s} = J_{\s}$, and $J_{\s}$ is an arc to $\infty$ on which the iterates of $f$ tend to infinity uniformly, 
    or $J_{\s}\setminus G_{\s}$ contains a single non-escaping point $z_0$; here $G_{\s}$ is an arc connecting $z_0$ to $\infty$.    
    (For functions satisfying a head-start condition, this follows already from the results of~\cite{R3S}, without the need 
      for the results of~\cite{arclike}.)

In fact, it can be shown 
     that, whenever $\s$ is of disjoint type, the set $J_{\s}$ is homeomorphic to the component of the Julia set of 
   a  
   suitable disjoint-type function, with escaping points
     corresponding to escaping points. (Compare also Theorem~\ref{thm:juliacontinua} below.) Hence the above observation remains true for disjoint-type addresses, 
     even if $f$ itself is not of disjoint type.
    So we may think of filaments at disjoint-type addresses as always ``landing''. 
    When $\s$ is bounded, we show below 
    that this is true in a precise sense (see Proposition~\ref{prop:accumulationsets}~\ref{item:disjointtype}).

However, when $G_\s$ is not a hair, it may happen that $J_{\s}$ contains a dense or uncountable set of non-escaping points, 
   even when $f$ and hence all addresses are of disjoint type \cite[Theorem~2.3]{arclike}.
  In this article, we only consider landing properties for filaments at bounded external addresses, so these subtleties will not become relevant. 
\end{rmk}

\section{Hairs and filaments}\label{sec:hairs}
  
  We shall now discuss the relationship between hairs and filaments. 
     The term ``hair'' was coined by Devaney in the 1980s
     (see \cite[p.~168]{devaneybifurcation}), and is commonly used in an
     informal manner to refer to dynamically natural curves 
     in Julia sets of transcendental entire functions. 
    We use the 
    following convention. (See Remark~\ref{rmk:hair} below
    for comparison with some other definitions in the literature.) 

\begin{defn}[Hairs]\label{defn:hair}
  A filament $G_{\s}$ is a \emph{hair} if one of the following holds. 
     \begin{enumerate}[(a)]
        \item There is a homeomorphism
             $\gamma\colon [0,\infty]\to G_{\s}\cup \{\infty\}$ such that 
                 $\gamma(\infty)=\infty$ and $f^n|_{G_{\s}}\to\infty$ uniformly.\label{item:fastaddress}
        \item  There is a continuous bijection 
     $\gamma\colon (0,\infty] \to G_{\s}\cup\{\infty\}$ 
        such that
                 $\gamma(\infty)=\infty$ and $f^n|_{\gamma([t_0,\infty))}\to\infty$
                   uniformly, for any $t>0$.\label{item:slowaddress}
   \end{enumerate}
\end{defn}
\begin{remark}[Remark 1]
  In general, the bijection in~\ref{item:slowaddress} is not a homeomorphism.
    Indeed, even for exponential maps a hair may accumulate
    upon itself. Compare e.g.\ \cite{devaneyjarque,devaneyjarquemoreno,nonlanding,worsley}. 
\end{remark}
 \begin{remark}[Remark 2]
   The second part of condition~\ref{item:fastaddress} is essential. 
    Indeed, by \cite[Theorem~2.10]{arclike}, there exists a postsingularly bounded
     entire function having a filament 
     which is an arc connecting a finite endpoint to infinity, but such that
     the iterates on this arc do not tend to infinity uniformly. 
     
  On the other hand, the second part of condition~\ref{item:slowaddress} can be
   shown to be inessential. That is, suppose $\gamma\colon (0,\infty]\to G_{\s}\cup\{\infty\}$ 
   is a continuous bijection with $\gamma(\infty)=\infty$. Then, possibly after 
    reversing the orientation of $\gamma$ on $(0,\infty)$, for all $t>0$ we have
    $f^n|_{\gamma([t_0,\infty))}\to\infty$
                   uniformly. We shall not provide the (somewhat lengthy) proof,
                   as we do not require this fact in our paper. 
 \end{remark}

 \begin{obs}[Properties of hairs]\label{obs:boundedaddresshairs}
  \mbox{}\begin{enumerate}[(a)]
     \item $G_{\s}$ is a hair if and only if $G_{\sigma(\s)}$ is a hair. Up to
       suitable reparameterisation, the corresponding
       functions
        $\gamma$ and $\tilde{\gamma}$ satisfy $\tilde{\gamma}(t) = f(\gamma(t))$.
     \item 
   If $\s$ is a bounded external address, then $f^n|_{G_{\s}}$ does not tend to
     infinity uniformly. 
     In particular, if $\s$ is bounded, then $G_{\s}$ is a hair if and only if 
     condition~\ref{item:slowaddress} of Definition~\ref{defn:hair} holds.
      \end{enumerate}
 \end{obs} 
 \begin{proof}
  To prove the first claim, note that $f\colon G_{\s}\to G_{\sigma(\s)}$ is a continuous bijection. Furthermore, 
     for every $n\geq 2$, $f|_{\tau_n(\s)}$ extends to a homeomorphism $\tau_n(\s) \cup\{\infty\} \to \tau_{n-1}(\sigma(\s))\cup\{\infty\}$. 
      If $G_{\s}$ is a hair, define 
       $\tilde{\gamma}$ as in the claim. Clearly it is only necessary to show that $\tilde{\gamma}(t)\to\infty$ as $t\to\infty$, which follows 
       from the above fact since $\gamma|_{[1/2,1)}\subset \tau_n$ for sufficiently large $n$. For the converse direction, we define 
       $\gamma(t) \defeq (f|_{\tau_n(\s)})^{-1}(\tilde{\gamma}(t))$, where $n$ is sufficiently large depending on $t$, and proceed analogously. 
       
Now suppose that $\s$ is bounded, and let $R>0$ be as in Corollary~\ref{cor:boundedexistence}.
    Then, for all $n\geq 0$, the filament $f^n(G_{\s}) = G_{\sigma^n(\s)}$ contains a point of modulus at most $R$. Hence the iterates of $f$ do not tend to infinity
    uniformly on $G_{\s}$. 
 \end{proof} 

The following is an alternative formulation of Definition~\ref{defn:hair}, and allows
 us to connect the notion with our definition of {criniferous} functions. 

 \begin{prop}[Characterisation of hairs]\label{prop:haircharacterisation}
   A filament $G_{\s}$ of $f$ is a hair if and only if, for every $z\in G_{\s}$, there is
    an arc connecting $z$ to $\infty$ on which $f^n\to\infty$ uniformly. 
 \end{prop}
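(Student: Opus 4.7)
The proof splits into two directions, with the harder one being ``arcs imply hair.'' For the forward direction, suppose $G_{\s}$ is a hair and let $z\in G_{\s}$. If case~\ref{item:fastaddress} of Definition~\ref{defn:hair} holds, the sub-arc of $G_{\s}$ from $z$ to $\infty$ is an arc with uniform escape. If case~\ref{item:slowaddress} holds, write $z=\gamma(t_0)$ and set $A \defeq \gamma([t_0,\infty))$; uniform escape on $A$ is Lemma~\ref{lem:hairescape}. To verify that $A$ is actually an arc (i.e., that $\gamma|_{[t_0,\infty)}$ is a homeomorphism onto $A$), Lemma~\ref{lem:uniformcomponents} places $A\subset \unbdd{\tail}_n(\s)$ for some large $n$. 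Since $f^n$ extends to a homeomorphism from $\unbdd{\tail}_n(\s)\cup\{\infty\}$ onto $\unbdd{F}_n\cup\{\infty\}$ in $\hat{\C}$, and $[t_0,\infty]$ is compact, the continuous injection $f^n\circ\gamma|_{[t_0,\infty]}$ (extended by $\infty\mapsto\infty$) is a homeomorphism onto its image; pulling back gives the claim.

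For the reverse direction, assume every $z\in G_{\s}$ lies on an arc $A_z$ connecting $z$ to $\infty$ with $f^n\to\infty$ uniformly. By Lemma~\ref{lem:uniformcomponents} we have $A_z\subset G_{\s}$. The first key step is uniqueness of $A_z$: if $A$ and $A'$ are two such arcs, both belong to the chain $\mathcal{X}$ of Definition-Lemma~\ref{deflem:uniformlyescapingsetsarenested}, hence are nested, say $A\subset A'$; parametrizing them as homeomorphisms $\alpha,\beta\colon[0,1)\to A, A'$ based at $z$ and escaping to $\infty$, the comparison map $\beta^{-1}\circ\alpha\colon[0,1)\to[0,1)$ is continuous, injective (hence monotonic), fixes $0$, and must tend to $1$ as $t\to 1^-$ (else $\alpha(t)$ would not escape to $\infty$), so it is a homeomorphism and $A=A'$. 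It follows that the family $\{A_z\}_{z\in G_{\s}}$ is linearly ordered by inclusion.

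If this family admits a maximum $A_{z_0}$, then $G_{\s}=\bigcup_z A_z\subset A_{z_0}\subset G_{\s}$, so $G_{\s}=A_{z_0}$ is an arc from $z_0$ to $\infty$ with uniform escape, giving case~\ref{item:fastaddress} of Definition~\ref{defn:hair}. Otherwise, I use separability of $G_{\s}$: pick a countable dense set $\{w_n\}$, and inductively let $A^{(n)}$ be the larger (in the chain) of $A^{(n-1)}$ and $A_{w_n}$; extracting strict inclusions yields $A^{(1)}\subsetneq A^{(2)}\subsetneq\cdots$ with $\bigcup_n A^{(n)}=\bigcup_n A_{w_n}$. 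This union equals $G_{\s}$: otherwise some $z\in G_{\s}$ would satisfy $w_n\in A_z$ for all $n$, so density and closedness of $A_z$ in $G_{\s}$ would give $G_{\s}\subset A_z$, contradicting the no-maximum assumption. Finally, I build inductive parametrizations $\alpha_n\colon[-(n-1),\infty)\to A^{(n)}$, each extending the previous by adjoining the new subarc on a suitable parameter interval; the common extension $\alpha\colon\R\to G_{\s}$ is a continuous bijection with $\alpha(t)\to\infty$ as $t\to\infty$, and precomposing with a homeomorphism $(0,\infty)\cong\R$ yields case~\ref{item:slowaddress}. The main obstacle lies in this no-maximum case: both the cofinality argument (exploiting closedness of $A_z$ in $G_{\s}$ and density of $\{w_n\}$) and the inductive construction of consistent parametrizations require care; the uniqueness of $A_z$ is crucial throughout, as without it the order-theoretic structure on $\{A_z\}$ could branch.
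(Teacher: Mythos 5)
Your proposal is correct and follows essentially the same route as the paper: both directions rest on Lemma~\ref{lem:hairescape}, Lemma~\ref{lem:uniformcomponents} and the linear ordering/uniqueness of the arcs $\gamma_z$ from Lemma~\ref{deflem:uniformlyescapingsetsarenested}, followed by extracting a cofinal increasing chain from a countable dense subset and gluing parameterisations, with the same dichotomy (your ``maximum exists'' case corresponds to the paper's terminating/limiting case giving Definition~\ref{defn:hair}~\ref{item:fastaddress}). The only differences are cosmetic: you verify explicitly that $\gamma([t_0,\infty))$ is an arc (which follows already from compactness of $[t_0,\infty]$ in $\hat{\C}$) and you phrase the case split in terms of a maximal element of $\{A_z\}$ rather than the existence of $\lim_{t\to 0}\gamma(t)$.
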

  \begin{proof}
   If $G_{\s}$ is a hair, then the stated condition holds by definition. 
   
   So now suppose that every point $z\in G_{\s}$ can be connected to infinity by an arc $\gamma_z$ on which the iterates tend to infinity uniformly. So 
   $\gamma_z\subset \mu(z) \subset \mu_{\s}\subset G_{\s}$ {by Lemma~\ref{lem:uniformcomponents}. 
   By Lemma~\ref{deflem:uniformlyescapingsetsarenested}, the arcs $\gamma_z$ are linearly ordered by inclusion,
    and the arc $\gamma_z$ is unique.}

Let $(x_n)_{n=0}^{\infty}$ be a countable dense subset of 
    $G_{\s}$. Set $n_0=0$ and define $n_{k+1}$ inductively as the minimal value of $n$ for which $x_n$ does not lie on the 
     arc $\gamma_{x_{n_k}}$. {(If no such $n$ exists, then 
     $G_{\s}=\gamma_{x_{n_k}}$ satisfies Definition~\ref{defn:hair}~\ref{item:fastaddress} and we are done.)}
We set $y_k \defeq x_{n_k}$. Then the union of the arcs $\gamma_{y_k}$ is a single continuous injective
     curve $\gamma\colon (0,\infty)\to G_{\s}$, which can be parameterised such that $\gamma([1/k,\infty)) = \gamma_{y_k}$ for all $k$. 

   First suppose that $\gamma(t)$ has a limit $z_0\in G_{\s}$ as $t\to 0$. Then {$\gamma_{z_0} = 
     \gamma\cup \{z_0\} =\overline{\gamma} = G_{\s}$, since $\gamma$ is dense in $G_{\s}$. 
     Hence $G_{\s}$ satisfies Definition~\ref{defn:hair}~\ref{item:fastaddress}.}

 Otherwise, 
     $\gamma\bigl((0,\infty)\bigr)\not\subset \gamma_z$ for all $z\in G_{\s}$. Hence we see that $\gamma_z\subset \gamma_{y_k}$ for sufficiently large $k$; it follows
     that $\gamma$ is surjective, and Definition~\ref{defn:hair}~\ref{item:slowaddress} holds. 
 \end{proof} 

  \begin{cor}[Filaments and criniferous functions]\label{cor:filamentscriniferous}
    A transcendental entire function $f$ with bounded postsingular set is {criniferous} if and only if every filament is a hair.
  \end{cor}
\begin{proof}
  Recall that every escaping point of $f$ belongs to some filament. Hence the claim is immediate from {Proposition~\ref{prop:haircharacterisation}}.
\end{proof}

Observe that we now have two apparently different definitions of periodic hairs, namely 
  Definition~\ref{defn:periodichair}, and the case of a periodic filament (i.e., a filament $G_{\s}$ at a periodic address~$\s$) that is a hair. 
  We conclude the section by showing that the two coincide.

\begin{prop}[Periodic hairs]\label{prop:periodichairfilament}
  Every periodic hair in the sense of Definition~\ref{defn:periodichair} is a periodic filament, 
    and this filament is a hair in the sense of 
    Definition~\ref{defn:hair}. 
    Conversely, any periodic filament that is a hair has a parameterisation as a periodic hair in the sense of Definition~\ref{defn:periodichair}. 
\end{prop}
\begin{proof}
  Suppose that $\gamma$ is a periodic hair. Then, for every $t\in\R$, the iterates $f^n$ tend to infinity 
     uniformly on
     $\gamma\bigl([t,\infty)\bigr)$.
    Hence, by 
    Lemma~\ref{lem:uniformcomponents}, $\gamma$
    is contained in a filament $G_{\s}$, whose address $\s$ is necessarily periodic. Let $\mathcal{X}$ be as in
    Lemma~\ref{deflem:uniformlyescapingsetsarenested}. If $X\in\mathcal{X}$, then 
    $f^n(X)\subset \gamma\bigl([0,\infty)\bigr)$ for sufficiently large $n$, and hence
    $X\subset \gamma\bigl([-n,\infty)\bigr)$. Thus we conclude that $\gamma = \mu_{\s}$.

    Let $R$ be as in part~\ref{item:boundedescape} of Theorem~\ref{thm:unboundedsets}, applied to the set of fundamental domains occurring in $\s = F_0 F_1 F_2 \dots$. 
     It follows that there is $t$ such that $\gamma((-\infty,t))$ contains only points of modulus less than $R$. 
     Indeed, by choice of $R$ there is $n_1$ such that $f^n(z)\notin \gamma([0,1])$ for $n\geq n_1$, whenever $z\in G_{\s}$ with $\lvert z \rvert \geq R$; then
     we  may take $t = 1-n_1$.

       Let $z\in G_{\s}$, and let $n_0$ be such that $f^n(z)\in F_n$ and $\lvert f^n(z)\rvert > R$ for $n\geq n_0$. We may assume that $n_0$ is a 
      multiple of the period of $\s$. {Recall that
       $f^{n_0}(z)\in G_{\s}$ is in the closure of $\gamma$ by Proposition~\ref{prop:core}. 
       Since $\gamma((-\infty,t))$ contains no points
       of modulus greater than $R$, it follows that $f^{n_0}(z)\in \gamma([t,\infty))$.}  
      Thus $z\in \gamma([t-n_0 , \infty))$. We have proved that $G_{\s} = \gamma$, and clearly $G_{\s}$ is a hair in the sense of Definition~\ref{defn:hair}. 

   Now suppose that $\s$ is periodic and that $G_{\s}$ is a hair. Let $\tilde{\gamma}\colon (0,\infty) \to G_{\s}$ be a parameterisation of $G_{\s}$ as 
       in part~\ref{item:slowaddress} of Definition~\ref{defn:hair}. Consider the point $z_0\defeq \tilde{\gamma}(1)$, and its image 
      $z_1 = f(z_0)$, then $z_1 = \tilde{\gamma}(t)$ for some $t$. It follows easily that $t>1$, and that the piece 
      $\tilde{\gamma}\bigl((1,t)\bigr)$ is disjoint from its forward and backward images in $G_{\s}$. We may reparameterise $\tilde{\gamma}$ 
      to a curve $\gamma\colon\R\to G_{\s}$ such that $\gamma([0,1])$ corresponds to $\tilde{\gamma}([1,t])$, and such that
     $\gamma(t+1) = f(\gamma(t))$ for all $t$. Then $\gamma$ is a periodic hair in the sense of Definition~\ref{defn:periodichair}.
\end{proof} 

 \begin{rmk}[On the concept of hairs]\label{rmk:hair}
   The notion of a ``dynamic ray'' of an entire function is given in \cite[Definition~2.2]{R3S}. By the same reasoning as in 
     Observation~\ref{obs:boundedaddresshairs}, in our setting of postsingularly bounded functions this definition
     can be phrased as follows: a ``dynamic ray'' of a
     postsingularly bounded entire function is 
     a maximal curve in the escaping set satisfying~\ref{item:slowaddress} of
     Definition~\ref{defn:hair}. Hence every dynamic ray is contained in the
     core of a
     filament, and a filament $G_{\s}$ is a dynamic ray
     if and only if it satisfies Definition~\ref{defn:hair}~\ref{item:slowaddress}. 
     In particular, for a bounded address $\s$, $G_{\s}$ is a dynamic ray if
     and only if it is a hair. 
     Moreover, for criniferous functions~-- and in particular the class of functions for which
    hairs are constructed in \cite{R3S}~-- every 
    dynamic ray in the sense of~\cite[Definition~2.2]{R3S} is
    either a hair in the sense of Definition~\ref{defn:hair}, or becomes such
    upon the addition of a finite escaping endpoint. 
 
  In general, \cite{R3S} leaves open
   the possibility that a filament contains a hair as a proper subset. 
    For example, it follows from~\cite[Theorem~2.5]{arclike} 
    that there is an entire function $f$ with bounded postsingular set 
    (and indeed of disjoint type), 
    having a bounded-address filament
    $G_{\s}$ with the following properties. The set $\overline{G_{\s}}\setminus G_{\s}$ consists of a single point $z_0$ with bounded orbit, and 
     $\hat{G}_{\s}$ is homeomorphic to a $\sin(1/x)$ continuum, with the starting point of the accumulating curve corresponding to $\infty$, and one of 
     the endpoints of the limiting interval situated at $z_0$. Then the accumulating curve 
   itself is a ``dynamic ray'', but this ray does not include all points of $G_{\s}$. 

  Here, we restrict only to consider cases where
    the entire filament $G_{\s}$ is a hair, and hence leave open the question
    of whether proper subsets of filaments should be considered ``hairs'' or not. 
    
   Also note that \cite[p.~740]{devaneyhairs} defines a notion of \emph{Devaney hairs}; for postsingularly bounded functions 
     such a Devaney hair is a curve $\gamma$ as in~\ref{item:slowaddress} of
     Definition~\ref{defn:hair}, with the addition of a finite, not necesarily escaping,
     endpoint. In particular, if $G_{\s}$ is a hair, then $G_{\s}$ contains many Devaney hairs in the sense of~\cite{devaneyhairs}, linearly ordered by inclusion.
     Conversely, any Devaney hair is either contained in a filament, or consists of a filament together with a finite landing point. 
 \end{rmk}

\section{Accumulation sets and landing properties of bounded-address filaments}\label{sec:accumulationlanding}

We now study the accumulation behaviour of filaments at bounded external addresses. 
(The restriction
      to bounded addresses is due to the phenomena discussed at the end of
      Remark~\ref{rmk:disjointtype}.)
   In the case where the filament in question is a hair, it is clear that the ``accumulation set'' of this filament should
    be the 
    set of limit points of the curve $\gamma(t)$ as $t\to 0$, where $\gamma$ is the 
    curve from  Definition~\ref{defn:hair}.  For periodic hairs, one can see easily 
     that this is equivalent to fixing a base point on this hair, considering its successive preimages
     along $\gamma$, and studying the 
      accumulation set of this sequence. This motivates the following definition.
      
 \begin{defn}[Accumulation sets of bounded addresses]\label{defn:accumulationsetabstract}
    Let $\s$ be a bounded external address, and let $\zeta\in \WW$.
 
  For $n\geq 1$, set $\zeta_n \defeq \zeta_n(\s) \defeq f_{\s}^{-n}(\zeta)\in \tau_n(\s)$.
   (Recall from Definition~\ref{deflem:pullbacks} that $f_{\s}^{-n}=(f^n|_{\tail_n(\s)})^{-1}$.)
     Then the \emph{accumulation  set} $\Lambda(\s,\zeta)$ of $\s$ with respect to $\zeta$
      is defined to be the accumulation set (in  $\Ch$) of the sequence
      $(\zeta_n)_{n=1}^{\infty}$.
 \end{defn}
Note that this is an abstract definition of the accumulation set associated to an address; it uses only the notion of
     fundamental tails, and does not require the definition of filaments or their properties. 
  That this is a natural concept may not be clear a priori, but should become apparent
   through the results proved in this section. 
   We  begin by verifying that $\Lambda(\s,\zeta)$ is 
    independent of $\zeta$. 
 
   This follows
   from the following well-known fact about the shrinking of univalent preimages.  (Compare also~\cite[Proposition 3]{Lyu83}.)
Recall that $\Omega$ is the unique unbounded component of $\C\setminus P(f)$.

\begin{lem}[Euclidean shrinking]\label{lem:Euclideanshrinking}
    Suppose that $V\Subset \Omega$ is a bounded  Jordan domain.
      Then,  for any 
        $\eps>0$ and any compact set $K\subset\C$, 
        there exists $N_\eps$ with the following property. For  $n\geq N_\eps$,  
        every connected component of  $f^{-n}(V)$ 
        that intersects $K$ has Euclidean diameter at  most $\eps$.  
 \end{lem}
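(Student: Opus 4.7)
My plan is to proceed by contradiction, using a normal family argument on inverse branches. First I would enlarge $V$ slightly: choose a Jordan domain $\tilde V$ with $V\Subset\tilde V\Subset\Omega$. Since $\tilde V$ is simply connected and disjoint from $S(f)\subset \PP(f)\subset \C\setminus\Omega$, for every $n\geq 1$ each connected component $\tilde V_n$ of $f^{-n}(\tilde V)$ is contained in $\Omega$ (using the backward invariance $f^{-1}(\Omega)\subset\Omega$) and is mapped conformally onto $\tilde V$ by $f^n$. Denote the corresponding univalent inverse branch by $\psi_n\colon\tilde V\to\tilde V_n\subset\Omega$. Every component $V_n$ of $f^{-n}(V)$ has the form $\psi_n(V)$ for some such branch.

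\textbf{Koebe reduction.} Because $V\Subset\tilde V$, the Koebe distortion theorem yields a constant $C=C(V,\tilde V)$ with $\sup_{w\in V}|\psi_n'(w)|\leq C\,|\psi_n'(w_0)|$ for every $w_0\in V$. In particular,
\[ \diam(V_n) \;=\; \diam\bigl(\psi_n(V)\bigr) \;\leq\; C\cdot|\psi_n'(w_0)|\cdot\diam(V). \]
If $V_n\cap K\neq\emptyset$, I would pick $z_n\in V_n\cap K$ and set $w_n\defeq f^n(z_n)\in V$, so that $\psi_n(w_n)=z_n$. It therefore suffices to prove $|\psi_n'(w_n)|\to 0$ uniformly as $n\to\infty$, taken over all components $V_n$ that meet $K$.

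\textbf{Extracting a limit.} Suppose this fails: there exist $\eps_0>0$ and $n_k\to\infty$ with components $V_{n_k}$ meeting $K$ and $|\psi_{n_k}'(w_{n_k})|\geq\eps_0$. Since $\#\PP(f)\geq 2$ by Proposition~\ref{prop:expansion}, the complement $\hat\C\setminus\Omega$ contains at least three points, so $\Omega$ is hyperbolic and the family $\{\psi_n\}$ of univalent maps $\tilde V\to\Omega$ is normal in the spherical sense. After passing to a subsequence, $\psi_{n_k}\to\psi$ locally uniformly on $\tilde V$, $w_{n_k}\to w_\infty\in\overline V\subset\tilde V$, and $z_{n_k}\to z_\infty\in K$. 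A constant limit is immediately impossible: for $\psi\equiv \infty$ we would get $z_{n_k}\to\infty$, contradicting $z_{n_k}\in K$, while for $\psi\equiv c\in\C$ the derivatives $|\psi_{n_k}'(w_{n_k})|$ would tend to $0$, contradicting the lower bound. By Hurwitz, $\psi$ is therefore univalent, and $U\defeq \psi(\tilde V)\cap\C$ is a non-empty open subset of $\C$ containing $z_\infty$.

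\textbf{Closing the argument.} The core difficulty, and where I expect the main obstacle, is ruling out the univalent limit $\psi$. My plan is to invoke Corollary~\ref{cor:remaininginOmega}. By uniform convergence of $\psi_{n_k}$ on compact subsets of $\tilde V$, any compact $L\subset U$ satisfies $L\subset\psi_{n_k}(\tilde V)=\tilde V_{n_k}$ for all sufficiently large $k$. Fixing any $n\geq 0$, I would choose $k$ with $n_k\geq n$ and $L\subset\tilde V_{n_k}$; then $f^n(L)\subset f^n(\tilde V_{n_k})$, which is a component of $f^{-(n_k-n)}(\tilde V)\subset\Omega$ by backward invariance of $\Omega$. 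Hence $f^n(U)\subset\Omega$ for every $n\geq 0$, and Corollary~\ref{cor:remaininginOmega} gives $\dist(f^n(z),\PP)\to 0$ locally uniformly on $U$. On the other hand, for every $z\in U$ and all large $k$ we have $f^{n_k}(z)\in\tilde V$, so $\dist(f^{n_k}(z),\PP)\geq\dist(\tilde V,\PP)>0$. This contradiction closes the proof.
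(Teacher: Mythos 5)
Your proof is correct and follows essentially the same route as the paper's: argue by contradiction, enlarge $V$ to $\tilde V\Subset\Omega$, use Koebe distortion on the univalent inverse branches, and derive a contradiction with Corollary~\ref{cor:remaininginOmega} from an open set whose forward orbit stays in $\Omega$ yet returns to $\tilde V$ infinitely often. The only (harmless) difference is how that open set is produced: the paper applies Koebe's $1/4$-theorem to find a fixed round disc contained in infinitely many of the domains $\tilde V_{n_k}$, whereas you extract it as the image of a univalent normal-family limit via Hurwitz and kernel convergence.
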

 \begin{proof}
  Suppose, by contradiction, that there is
    a sequence $(V_{k})_{k=0}^{\infty}$ of $n_k$-th preimages of $V$, with $n_k\to\infty$,
    with $V_k\cap K\neq \emptyset$ and $\inf_k \diam(V_k)>0$. 
    Let
    $\tilde{V}\Subset \Omega$ be a slightly larger Jordan domain than $V$ with $\overline{V}\subset\tilde{V}$, 
     and let $\tilde{V}_{k}$ be the component of $f^{-n_k}(\tilde{V})$ containing
    $V_{n_k}$. Then $f^{n_k}\colon \tilde{V}_{k}\to \tilde{V}$ is a covering map and hence a conformal isomorphism, 
    whose inverse
    $\phi_k\defeq (f|_{\tilde{V}_k})^{-1} \colon \tilde{V}\to \tilde{V}_k$ maps $V$ to $V_k$.  
 
   By assumption, there is a sequence $(z_k)_{k=0}^{\infty}$ with $z_k\in K\cap V_k$. 
     By Koebe's distortion theorem, $\tilde{V}_k$ contains a round disc around $z_k$ whose diameter 
     is comparable
     to that of $V_k$. By assumption, the latter is bounded from below. Hence, if $U$ is a sufficiently small disc
     centred at a limit point of the sequence  $(z_k)$, then $U$ 
      is contained in infinitely many
    $\tilde{V}_{n_k}$. It follows that $f^n(U)\subset \Omega$ for all $n\geq 0$, and $f^{n}(U)\subset \tilde{V}$ 
    for infinitely many $n$. This contradicts Corollary~\ref{cor:remaininginOmega}.
\end{proof}

 \begin{cor}[Spherical shrinking]\label{cor:shrinking}
    Suppose that $V\Subset \Omega$ is a bounded  Jordan domain.
      Then,  for any 
        $\eps>0$ there exists $N$ such that for  $n\geq N$  every connected component of  $f^{-n}(V)$ 
         has spherical diameter at  most $\eps$.  
 \end{cor}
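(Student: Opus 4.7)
My plan is to reduce the spherical statement to the Euclidean one of Lemma~\ref{lem:Euclideanshrinking} by splitting components of $f^{-n}(V)$ according to whether they lie near $\infty$ or near the origin. The key observation is that a subset of $\hat{\C}$ has small spherical diameter in two very different ways: either it lies in a small Euclidean disc in $\C$, or it lies entirely outside a very large Euclidean disc (and hence in a small spherical neighbourhood of $\infty$). Lemma~\ref{lem:Euclideanshrinking} supplies the first kind of smallness on any prescribed compact set; the second kind comes for free from compactness of the Riemann sphere.

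To carry this out, given $\eps>0$ I would first pick $R>0$ so large that the spherical diameter of $\hat{\C}\setminus D_R(0)$ is at most $\eps/2$. Any component of $f^{-n}(V)$ disjoint from the compact set $K\defeq \overline{D_R(0)}$ is then contained in $\C\setminus\overline{D_R(0)}$ and therefore automatically has spherical diameter at most $\eps/2$, for every $n$. For the remaining components I would choose $\eps'\in(0,1]$ small enough that any subset of $D_{R+1}(0)$ of Euclidean diameter at most $\eps'$ has spherical diameter at most $\eps/2$; such an $\eps'$ exists because the spherical and Euclidean metrics are bilipschitz equivalent on the bounded set $D_{R+1}(0)$. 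Applying Lemma~\ref{lem:Euclideanshrinking} with this choice of $\eps'$ and with $K=\overline{D_R(0)}$ yields an $N=N_{\eps'}$ such that for $n\geq N$, every component of $f^{-n}(V)$ meeting $K$ has Euclidean diameter at most $\eps'$, is therefore contained in $D_{R+1}(0)$, and hence has spherical diameter at most $\eps/2$. Combining the two cases, every component of $f^{-n}(V)$ has spherical diameter at most $\eps$ for all $n\geq N$.

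There is no substantive obstacle: Lemma~\ref{lem:Euclideanshrinking} does the real work, and what remains is a routine metric comparison exploiting the dichotomy above. The only mild technicality is the order of quantifiers~-- one must fix $R$ first, and only then choose the Euclidean tolerance $\eps'$ in terms of the bilipschitz constant between the spherical and Euclidean metrics on $D_{R+1}(0)$~-- so that the two cases combine cleanly.
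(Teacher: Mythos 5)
Your argument is correct and is essentially the paper's own proof: the same dichotomy between components meeting a large closed disc $K=\overline{D_R(0)}$ (handled by Lemma~\ref{lem:Euclideanshrinking} together with the comparability of the spherical and Euclidean metrics on a bounded set) and components disjoint from $K$ (which automatically lie in a spherically small neighbourhood of $\infty$). The only differences are cosmetic, such as splitting $\eps$ into two halves and explicitly confining the first kind of component to $D_{R+1}(0)$.
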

\begin{proof}
   Let $K\subset\C$ be a compact set whose complement has spherical diameter less than~$\eps$. 
    Since the spherical and Euclidean metrics are comparable on any compact subset of the plane, there is
    $\eps_1$ such that any set of Euclidean diameter at most $\eps_1$ that intersect $K$ 
     has spherical diameter at most $\eps$. 
     Let $N=N_{\eps_1}$ be as in Lemma~\ref{lem:Euclideanshrinking}, let $n\geq N$, and let $X$ be a connected  
     component of $f^{-n}(V)$. Then either $X\cap K=\emptyset$, and hence $X$ has spherical diameter
     less than $\eps$, or $X\cap K\neq\emptyset$ and Lemma~\ref{lem:Euclideanshrinking} applies. 
     In the latter case, $\diam  X\leq \eps_1$, and hence $X$ also has spherical diameter at most $\eps$ by 
     choice of $\eps_1$.      
\end{proof}

\begin{cordef}[Accumulation sets and landing of filaments]\label{item:accumulationwelldefined}\label{defn:accumulationset2}
 Let $\s$ be a bounded external address. Then $\Lambda(\s,\zeta^1) = \Lambda(\s,\zeta^2)$ 
        for any two choices of  $\zeta^1,\zeta^2\in\WW$. 
        
        We write $\Lambda(G_{\s})\defeq \Lambda(\s)\defeq \Lambda(\s,\zeta)$, 
          and call $\Lambda(G_{\s})$ the \emph{accumulation set}
         of the filament $G_{\s}$.  The filament $G_\s$ is said to \emph{land} at a point $z_0\in\Ch$ if   $\Lambda(G_\s)=\{z_0\}$.
\end{cordef}
\begin{proof}
  Let $V$ be a bounded Jordan domain in $W_0$ containing both $\zeta^1$ and $\zeta^2$. For $j\in\{1,2\}$ and $n\geq 1$, write
    $\zeta_j^n \defeq f^{-n}_{\s}(\zeta^j) \in {\tail}_n\defeq {\tail}_n(\s)$. 
    Since $f^{n}\colon \tau_n \to W_0$ is univalent, for any $n$ the points $\zeta^1_n$ and $\zeta^2_n$ belong to the same connected component 
    of $f^{-n}(V)$. By Corollary~\ref{cor:shrinking}, the spherical diameter of this component tends to zero as $n\to\infty$. Hence the sequences
    $(\zeta^1_n)_{n=1}^{\infty}$ and $(\zeta^2_n)_{n=1}^{\infty}$   have the same accumulation set in $\Ch$.
\end{proof}

The following establishes a number of fundamental properties of accumulation sets.
 \begin{prop}[Properties of accumulation sets]\label{prop:accumulationsets}
   Let $\s$ be a bounded external address, and $G_{\s}$ the filament at address $\s$. 
   \begin{enumerate}[(a)]
      \item\label{item:accumulationconnected}
      The accumulation set $\Lambda(\s)$ is a nonempty connected subset of $\Ch$.
      \item\label{item:accumulationsetinclosure}%
             The closure $\hat{G_{\s}}$ of $G_{\s}$ in $\Ch$ is given precisely by 
                        \[\hat{G}_{\s} = G_{\s} \cup \Lambda(\s) \cup \{\infty\} . \]
     \item  If $J_{\s}\setminus G_{\s} \neq\emptyset$, then this set has a unique element $z_0$,
           and $G_{\s}$ lands at $z_0$, which has bounded orbit. 
     \label{item:landingfromaddress}
     \item {If $\s$ is of disjoint type, then $G_{\s}$ lands at a point $z_0\in J_{\s}$ having 
         bounded orbit.}\label{item:disjointtype}
    \item
     Let $U$ be a  neighbourhood of a point $z_0\in \Lambda(\s)$. Then $f^n$ does not tend to infinity uniformly on $ U\cap G_{\s}$.
   \label{item:accumulationnonuniform}
   \item
  Let $K\subset  \hat{{G}_{\s}}$ be compact with $K\cap\Lambda(\s)=\emptyset$.  Then $f^n\to\infty$ 
       uniformly on $K\cap\C$,  and in particular $K\cap \C\subset  J_{\s}$.
   \label{item:uniformawayfromaccumulation}
   \end{enumerate}
 \end{prop}

 Before proving these facts, we observe a few consequences. Firstly, we see that we can 
    characterise the accumulation set purely in terms of $G_{\s}$ as a subset of the escaping set, justifying the  notation $\Lambda(G_{\s})=\Lambda({\s})$.
 
\begin{cor}[Topological landing criterion]\label{cor:alternativelanding}
     Let $G_{\s}$ be a filament at a bounded address $\s$. 
      Then the accumulation set of $G_{\s}$ consists precisely of those points
      in $\hat{G}_{\s}$ having a neighbourhood $U$ such that $f^n$ does not tend to infinity uniformly on $U\cap G_{\s}$. 
 
    In particular, $G_{\s}$ lands at a point   $z_0\in\C$ if and only if \(\hat{G}_{\s} = G_{\s}\cup\{z_0,\infty\}\) and 
    if furthermore, for every neighbourhood $U$ of $z_0$ in $\Ch$, the iterates of $f$ tend to infinity 
      uniformly on $G_{\s}\setminus U$. 
 \end{cor}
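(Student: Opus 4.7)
The plan is to derive both assertions as bookkeeping consequences of parts~\ref{item:accumulationsetinclosure} and~\ref{item:accumulationnonuniform} of Proposition~\ref{prop:accumulationsets}; no genuinely new dynamics enters. I read the first statement in the only way that gives a correct characterisation, namely that $z\in\Lambda(\s)$ iff $z\in\hat{G}_{\s}$ and \emph{no} open neighbourhood $U$ of $z$ in $\Ch$ satisfies $f^n\to\infty$ uniformly on $U\cap G_{\s}$; equivalently, every neighbourhood of $z$ contains points of $G_{\s}$ on which escape fails.

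For the first statement, if $z\in\Lambda(\s)$ then $z\in\hat{G}_{\s}$ by~\ref{item:accumulationsetinclosure}, and any open $U\ni z$ meets $\Lambda(\s)$, so the second clause of~\ref{item:accumulationnonuniform} supplies points of $U\cap\mu_{\s}\subset U\cap G_{\s}$ on which the iterates fail to escape uniformly. Conversely, if $z\in\hat{G}_{\s}\setminus\Lambda(\s)$, then since $\Lambda(\s)$ is closed in $\Ch$ (as the accumulation set of a sequence) I can choose an open neighbourhood $U$ of $z$ with $\overline{U}\cap\Lambda(\s)=\emptyset$, and the first clause of~\ref{item:accumulationnonuniform} yields uniform escape on $\overline{U}\cap\overline{G_{\s}}\supset U\cap G_{\s}$.

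For the landing characterisation the forward direction is immediate: $\Lambda(\s)=\{z_0\}$ and~\ref{item:accumulationsetinclosure} give $\hat{G}_{\s}=G_{\s}\cup\{z_0,\infty\}$, and for any open neighbourhood $U$ of $z_0$ in $\Ch$ I shrink $U$ to $U'$ with $\overline{U'}\subset U$, so that $V\defeq\Ch\setminus\overline{U'}$ is open, contains $G_{\s}\setminus U$, and has $\overline{V}\cap\Lambda(\s)=\emptyset$; the first clause of~\ref{item:accumulationnonuniform} then delivers uniform escape on $G_{\s}\setminus U$. For the converse, the identity $\hat{G}_{\s}=G_{\s}\cup\{z_0,\infty\}$ combined with~\ref{item:accumulationsetinclosure} forces $\Lambda(\s)\subset\{z_0,\infty\}$; supposing for contradiction that some $w\in\Lambda(\s)$ has $w\neq z_0$, I pick disjoint open neighbourhoods $U\ni z_0$ and $V\ni w$ in $\Ch$, so that $V\cap G_{\s}\subset G_{\s}\setminus U$ is a uniform-escape set by hypothesis, contradicting the second clause of~\ref{item:accumulationnonuniform} applied at $w\in V\cap\Lambda(\s)$. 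Non-emptiness of $\Lambda(\s)$ via~\ref{item:accumulationconnected} then gives $\Lambda(\s)=\{z_0\}$. There is no real obstacle here; the only care needed is in interpreting ``having a neighbourhood'' universally rather than existentially in the first statement, after which both claims are a direct assembly of pieces already in Proposition~\ref{prop:accumulationsets}.
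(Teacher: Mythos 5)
Your proof is correct and takes essentially the same route as the paper, whose own argument is simply to cite parts~\ref{item:accumulationsetinclosure} and~\ref{item:accumulationnonuniform} of Proposition~\ref{prop:accumulationsets} together with the nonemptiness of the accumulation set; your universal reading of ``having a neighbourhood'' is indeed the intended one (the existential reading is trivialised by taking $U=\Ch$). The details you supply -- choosing $U$ with $\hat{U}\cap\Lambda(\s)=\emptyset$ for points off the accumulation set, and the disjoint-neighbourhoods contradiction in the converse of the landing characterisation -- are exactly what the paper leaves implicit.
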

\begin{proof}[Proof of Corollary~\ref{cor:alternativelanding}, using Proposition~\ref{prop:accumulationsets}]
   The first claim is a direct consequence of Proposition~\ref{prop:accumulationsets}~\ref{item:accumulationnonuniform} and~\ref{item:uniformawayfromaccumulation}. 
     The second claim follows from the first (together with the fact that the accumulation set of $G_{\s}$ is nonempty). 
\end{proof}

In particular, it follows that for hairs our definition of the accumulation set agrees with the usual one. Recall by Observation~\ref{obs:boundedaddresshairs} that if a bounded-address filament 
   is a hair, it must satisfy part~\ref{item:slowaddress} of Definition~\ref{defn:hair}.

\begin{cor}[Accumulation sets of hairs]\label{cor:hairaccumulation}
  Suppose that a filament $G_{\s}$ at bounded address is a hair, and let $\gamma\colon (0,\infty)\to G_{\s}$ be the continuous bijection from
    Definition~\ref{defn:hair}. Then $\Lambda(G_{\s})$ is precisely the accumulation set of $\gamma(t)$ as $t\to 0$. 
\end{cor}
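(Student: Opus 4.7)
The plan is to use the dynamical criterion established in Corollary~\ref{cor:alternativelanding}: $z\in\Lambda(G_\s)$ if and only if $z\in\hat G_\s$ and no neighbourhood $U$ of $z$ in $\Ch$ admits uniform escape of the iterates $f^n$ on $U\cap G_\s$. Writing $A$ for the topological accumulation set of $\gamma(t)$ as $t\to 0^+$, I prove the two inclusions $\Lambda(G_\s)\subseteq A$ and $A\subseteq\Lambda(G_\s)$.

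For $\Lambda(G_\s)\subseteq A$: if $z\notin A$, pick $\eps>0$ and an open neighbourhood $U$ of $z$ in $\Ch$ with $U\cap\gamma((0,\eps))=\emptyset$. Since $\gamma$ is a bijection, this forces $U\cap G_\s\subseteq\gamma([\eps,\infty))$. By Lemma~\ref{lem:hairescape}, $f^n\to\infty$ uniformly on $\gamma([\eps,\infty))$, hence on $U\cap G_\s$, so the criterion yields $z\notin\Lambda(G_\s)$.

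For the reverse inclusion $A\subseteq\Lambda(G_\s)$ I argue by contradiction. Assume that some $z\in A$ admits a neighbourhood $U_0$ on which $f^n$ tends to infinity uniformly on $U_0\cap G_\s$; by the criterion and the first inclusion, necessarily $z\in A\setminus\Lambda(G_\s)$. Applying Corollary~\ref{cor:alternativelanding} at every point of $A\setminus\Lambda(G_\s)$ supplies a local uniform-escape neighbourhood at each such point; by a compactness-and-connectedness argument on $A$ (using that $A$ is a compact connected subset of $\hat G_\s$), these local neighbourhoods can be amalgamated, together with a suitable treatment of $\Lambda(G_\s)\cap A$, into a single open set $V\supseteq A$ on which $f^n$ still tends to infinity uniformly on $V\cap G_\s$. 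Since $A$ is compact and $\gamma(t)$ accumulates on $A$ as $t\to 0^+$, we have $\gamma((0,\eps))\subseteq V$ for all sufficiently small $\eps>0$; combining uniform escape on $V\cap G_\s$ with uniform escape on $\gamma([\eps,\infty))$ from Lemma~\ref{lem:hairescape} produces uniform escape on the entire dreadlock $G_\s=\gamma((0,\infty))$. This contradicts the final assertion of Lemma~\ref{lem:hairescape}: for bounded $\s$, $f^n$ does not tend to infinity uniformly on $G_\s$.

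The technical heart, and the main obstacle, is the enlargement step: points of $A\cap\Lambda(G_\s)$ possess no uniform-escape neighbourhoods, so $V$ cannot be assembled naively. The construction must therefore exploit the already-proven inclusion $\Lambda(G_\s)\subseteq A$ together with the connectedness of $A$ to show that, up to shrinking $U_0$ if necessary, the excess accumulation of $\gamma(t)$ near $\Lambda(G_\s)\cap A$ happens deep along the escape direction and does not obstruct the construction of $V$.
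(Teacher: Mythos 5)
Your first inclusion, $\Lambda(G_{\s})\subseteq A$, is correct and is exactly the paper's argument (uniform escape on $\gamma([\eps,\infty))$ from Lemma~\ref{lem:hairescape} plus the criterion of Corollary~\ref{cor:alternativelanding}). The reverse inclusion, however, has a genuine gap, and the strategy as described cannot be repaired. You want an open set $V\supseteq A$ on which $f^n\to\infty$ uniformly on $V\cap G_{\s}$. But by the inclusion you have already proved, $A\supseteq\Lambda(G_{\s})$, and $\Lambda(G_{\s})\neq\emptyset$ by Proposition~\ref{prop:accumulationsets}~(a); by the very criterion you are invoking (the ``only if'' half of Corollary~\ref{cor:alternativelanding}), \emph{no} neighbourhood of \emph{any} point of $\Lambda(G_{\s})$ admits uniform escape on its intersection with $G_{\s}$. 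So the amalgamated set $V$ cannot exist, independently of how cleverly one treats $\Lambda(G_{\s})\cap A$; the final contradiction (``uniform escape on all of $G_{\s}$'') is therefore never reached by a valid construction, and your closing paragraph only names the obstruction without overcoming it. Note also that a single point $z\in A\setminus\Lambda(G_{\s})$ with a uniform-escape neighbourhood $U_0$ is not by itself contradictory: $U_0\cap G_{\s}$ may contain points $\gamma(t_k)$ with $t_k\to 0$ and still escape uniformly, since the failure of uniformity on $\gamma((0,\eps))$ can be concentrated elsewhere, near $\Lambda(G_{\s})$.

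The paper's proof of $A\subseteq\Lambda(G_{\s})$ goes a different way. If $\gamma$ lands, the claim is immediate from the first inclusion and $\Lambda(G_{\s})\neq\emptyset$. Otherwise, by the closure identity $\hat{G}_{\s}=G_{\s}\cup\Lambda(\s)\cup\{\infty\}$ (Proposition~\ref{prop:accumulationsets}~(b)), every point of $A\setminus(G_{\s}\cup\{\infty\})$ automatically lies in $\Lambda(G_{\s})$; the whole difficulty is the possible self-accumulation set $A\cap G_{\s}$. The paper shows that if $\gamma(t_0)\in A$ then $\gamma\bigl((0,t_0]\bigr)\subset A$ (using that pieces of other hairs accumulate on $\gamma([t,\infty))$ from both sides), so $A\cap G_{\s}$ is a countable union of compact nowhere dense arcs in the continuum $A$; Baire's theorem then gives that $A\setminus(G_{\s}\cup\{\infty\})$ is dense in $A$, and closedness of $\Lambda(G_{\s})$ finishes the proof. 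You would need to adopt an argument of this kind (or otherwise rule out that $A\cap G_{\s}$ has interior in $A$) rather than the neighbourhood-amalgamation scheme.
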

\begin{proof}[Proof of Corollary~\ref{cor:hairaccumulation}, using Proposition~\ref{prop:accumulationsets}]
   Let $\Lambda(\gamma)\subset\Ch$ denote the set of accumulation points of $\gamma(t)$ as $t\to 0$.
    Recall that the iterates of $f$ tend to infinity uniformly on $\gamma([t,\infty))$ for all $t>0$, by  Definition~\ref{defn:hair}.
     Thus we see from 
    part~\ref{item:accumulationnonuniform} of Proposition~\ref{prop:accumulationsets} (or from 
    Corollary~\ref{cor:alternativelanding}) 
  that $\Lambda(G_{\s}) \subset \Lambda(\gamma)$.

  {It remains to prove the opposite inclusion, $\Lambda(\gamma)\subset \Lambda(G_{\s})$. 
     If $\gamma$ lands, i.e., when $\#\Lambda(\gamma) = 1$, this follows from the first inclusion and the fact that
      $\Lambda(G_{\s})\neq\emptyset$. So we may assume that $\Lambda(\gamma)$ is a nondegenerate 
      continuum.}

   {Since $\Lambda(\gamma)\subset \hat{G}_{\s}$, we have 
           \[ \tilde{\Lambda} \defeq \Lambda(\gamma)\setminus (G_{\s} \cup \{\infty\}) \subset \Lambda(G_{\s}) \]
          by~\ref{item:accumulationsetinclosure} of Proposition~\ref{prop:accumulationsets}. As $\Lambda(G_{\s})$ is closed,
           it thus suffices to  prove that $\tilde{\Lambda}$ is dense in $\Lambda(\gamma)$.} 

{This is clear if $\Lambda(\gamma)\cap G_{\s}=\emptyset$. Otherwise,  let $t_0>0$ be such that
      $\gamma(t_0)\in \Lambda(\gamma)$. We claim that $\gamma\bigl((0,t_0]\bigr)\subset \Lambda(\gamma)$.}
      
  {This follows from a well-known argument that we sketch as follows; 
     compare~\cite[Lemma~5.1]{topescapingnew} for the
     case of exponential maps. For every  $t>0$, there are pieces of
      other hairs of $f$ accumulating uniformly from above and below on $\gamma\bigl([t,\infty)\bigr)$.  (For example,
      this follows from \cite[Proposition~8.1]{arclike} via \cite[Theorem~1.1]{Re09}.) Thus, in order to accumulate on
       $\gamma(t_0)$, the curve $\gamma$ must also accumulate on  $\gamma([t,t_0])$; letting  $t\to 0$, the
       claim is established.}

 {Let $t_0\in (0,\infty]$ be the supremum over all possible choices of $t_0$, and let $(t_n)_{n=0}^{\infty}$ 
     be a decreasing sequence with  $t_n\to 0$. Then 
     $A_n \defeq \gamma([t_n,t_0])$ is a compact and nowhere dense subset of $\Lambda(\gamma)$ for all
     $n$. By Baire's theorem,
      \[ \tilde{\Lambda} = \Lambda(\gamma)\setminus \bigcup A_n \]
      is dense in $\Lambda(\gamma)$, as claimed. This completes the proof.}
\end{proof}
\begin{remark}[Remark 1]
{The proof shows that a hair $G_{\s}$ either lands, or otherwise 
     a generic point in $\Lambda(G_{\s})$ belongs to $\C\setminus G_{\s}$. 
     It is plausible that this is true without the assumption that $G_{\s}$ is a hair, with a similar proof.
     This would simplify the characterisation in Corollary~\ref{cor:alternativelanding} as follows:
     A filament $G_{\s}$ at a bounded address $\s$ 
     lands at a finite point $z_0\in\C$ if and only if $\overline{G_{\s}}\setminus G_{\s} = \{z_0\}$.} 
\end{remark}
\begin{remark}[Remark 2]
  For periodic addresses, or for addresses satisfying a head-start condition as in \cite{R3S}, 
     the proof of Corollary~\ref{cor:hairaccumulation}
   is considerably simpler. Indeed, in this setting it is easy to see directly that the iterates of $f$ do not tend to infinity
   uniformly on any neighbourhood of any point of $\Lambda(\gamma)$.
\end{remark}

Finally,  we observe that \emph{most}  periodic rays land and \emph{most} periodic points  are landing points.    Compare also \cite{BK07,BF15,Be16}.
 \begin{cor}[Most periodic filaments land]
  Let $p\geq 1$. Then, for all but finitely many periodic addresses $\s$ of period $p$, the 
    filament $G_{\s}$ lands at a periodic point $z\in J_{\s}$ of  period $p$.  
   Conversely, for all but finitely many periodic points $z$ of period $p$, the point $z$ is repelling
     and there is a periodic address $\s$ of period $p$ such that $z\in  J_{\s}$; in particular, 
     $G_{\s}$ lands at $z$.
 \end{cor}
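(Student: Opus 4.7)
The plan is to deduce the corollary from Theorem~\ref{thm:DHdreadlocks} (the main landing theorem for periodic dreadlocks) via a combinatorial bookkeeping argument.

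I first justify the parenthetical: if $\s$ has period $p$ and $G_\s$ lands at a periodic point $z \in J_\s$, then $z$ has minimal period exactly $p$. Writing $q$ for the minimal period of $z$, we have $q \mid p$; Lemma~\ref{lem:addressproperties} yields $z = f^q(z) \in J_{\sigma^q(\s)}$, and the pairwise disjointness of the sets $J_{\tilde\s}$ then forces $\sigma^q(\s) = \s$, so $p \mid q$, giving $q = p$.

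The key structural observation is a natural bijection between, on the one hand, the repelling or parabolic periodic points $z$ of minimal period $p$ whose full orbit avoids $\overline{D} \cup \delta$, and, on the other hand, the periodic addresses $\s$ of minimal period $p$ for which $G_\s$ lands at a point of $J_\s$. Given such a $z$, setting $F_n$ to be the unique fundamental domain containing $f^n(z)$ produces a periodic address $\s_z$ with $z \in J_{\s_z}$ tautologically; conversely, any periodic address $\s$ of period $p$ with $z(\s) \in J_\s$ has $z(\s)$ of minimal period $p$ with orbit in fundamental domains, and $\s = \s_{z(\s)}$. Via this bijection together with the finite-landing clause of Theorem~\ref{thm:DHdreadlocks}, both assertions of the corollary reduce to showing finiteness of the following two exceptional sets: the repelling or parabolic periodic orbits of minimal period strictly dividing $p$ that are landing orbits of some periodic dreadlock of period $p$, and the repelling or parabolic periodic orbits of period $p$ whose orbit intersects $\overline{D} \cup \delta$. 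The second is handled by the observation that zeros of the entire function $f^p(z) - z$ are discrete, so only finitely many of them lie in the bounded set $\overline{D}$, while a suitably regular choice of $\delta$ meets only finitely many such zeros in any bounded region; the few remaining orbits hitting $\delta$ contribute only finitely many exceptional addresses via the finite-landing clause.

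The main obstacle is the first exceptional set. I would attack it by combining the finite-landing count (each periodic point supports only finitely many periodic dreadlocks, hence only finitely many of period $p$) with the geometric constraint that only finitely many rep/par periodic orbits of minimal period $q$ with $q \mid p$ and $q < p$ can serve as landing orbits of period-$p$ dreadlocks in the first place. I expect this constraint to follow from a combinatorial rotation-number analysis at repelling periodic points, in the spirit of the material in Section~\ref{sec:classS} (which bounds the number of periodic dreadlocks landing together at a given point): the existence of a period-$p$ dreadlock landing at a period-$q$ point forces the $f^q$-orbit of dreadlocks at that point to have size $p/q$, which, combined with the uniform finite-landing bound, restricts the contribution severely. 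Finally, the fact that all but finitely many period-$p$ points are repelling is standard: under the hypothesis that $\P(f)$ is bounded, each non-repelling cycle is associated to a singular orbit, so there are only finitely many such cycles of period $p$.
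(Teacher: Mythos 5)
Your proposal takes a fundamentally different route from the paper's, and it contains a genuine gap. The paper proves this corollary in Section~\ref{sec:accumulationlanding}, \emph{before} Theorem~\ref{thm:DHdreadlocks} is even stated: after passing to the iterate $f^p$ (so that one may take $p=1$), it observes that only finitely many fundamental domains meet $\overline{D}$, so all but finitely many fixed addresses are of disjoint type and therefore land at a point of their own $J_{\s}$ by Proposition~\ref{prop:accumulationsets}~\ref{item:disjointtype}; dually, all but finitely many fixed points lie in $\unbdd{F}$ for some fundamental domain $F$, hence lie in some $J_{\s}\setminus G_{\s}$, and Proposition~\ref{prop:accumulationsets}~\ref{item:landingfromaddress} applies. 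Your plan instead rests on Theorem~\ref{thm:DHdreadlocks}; this is not circular, but it uses the hardest results of the paper to prove a statement designed to follow from the elementary accumulation-set machinery.

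The concrete gap is that your case analysis of the ``bad'' addresses is incomplete. A period-$p$ address $\s$ can land, by Theorem~\ref{thm:DHdreadlocks}, at a repelling point $z$ of minimal period $p$ whose orbit avoids $\overline{D}\cup\delta$, and yet fail to satisfy $z\in J_{\s}$: since several dreadlocks may land at the same point, nothing in your setup forces $\s$ to coincide with the canonical address $\s_z$ determined by the orbit of $z$. Such $\s$ belong to neither of your two exceptional sets. The finite-landing clause bounds the number of these $\s$ \emph{per landing point}, but there are infinitely many period-$p$ points, so finiteness of the total does not follow from it. Closing this hole seems to require precisely the disjoint-type observation above (a disjoint-type periodic dreadlock lands inside its own $J_{\s}$, and only finitely many period-$p$ addresses fail to be of disjoint type once one works with $f^p$), at which point the detour through Theorem~\ref{thm:DHdreadlocks} is superfluous. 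Two smaller points: your treatment of the first exceptional set (orbits of period strictly dividing $p$) is an intention rather than an argument, although Lemma~\ref{lem:One for all} would in fact dispose of it, since all periodic dreadlocks landing at a common point have the same period; and your justification that all but finitely many period-$p$ points are repelling, via counting singular orbits, does not apply when $S(f)$ is infinite --- the correct reason is that non-repelling period-$p$ points lie in a fixed compact set determined by $\P(f)$, where the zeros of $f^p(z)-z$ are finite in number (in the paper this is automatic, since the points under consideration lie in $J(f)$ and outside $D\supset\P(f)$).
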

  \begin{proof} 
      Passing to an iterate, we can assume that $p=1$. (Recall that the sets $J_{\s}$ are pairwise disjoint by Lemma~\ref{lem:addressproperties}~\ref{item:Jsdisjoint}.)
      Only finitely many fundamental domains $F$ intersect
     $\overline{D}$, and hence all but finitely many fixed addresses of $f$ are of disjoint type. Hence the
     first claim follows from Proposition~\ref{prop:accumulationsets}~\ref{item:disjointtype}.    Similarly, all but finitely many fixed points of $f$ are contained in $\WW$, and hence have 
     a fixed external address. So the second claim is a consequence 
     of Proposition~\ref{prop:accumulationsets}~\ref{item:landingfromaddress}.
 \end{proof}
The remainder of the section is dedicated to establishing Proposition~\ref{prop:accumulationsets}.
    We shall do so by applying Corollary~\ref{cor:shrinking} to a suitable large Jordan domain $V$, depending on the
    collection of fundamental domains involved.
   The following technical lemma collects the properties that we require of $V$.

 \begin{lem}[Domains for bounded-address filaments]\label{lem:boundeddomain}
 Let    $\zeta$ belong to an unbounded connected component of 
    $\WW\cap f^{-1}(D)$
  and let $R>0$. 
  Let $\FF$ be any finite collection of fundamental domains of $f$. 
   Then there is a Jordan domain $V\Subset \Omega$    with the following properties.
    \begin{enumerate}[(i)]
     \item $\zeta\in V$. 
     \item For all $F\in\FF$, the unique preimage $\zeta_F$ of $\zeta$ in $F$ also belongs to $V$.
     \item For all $F\in\FF$, there is a connected component $A_F$ of $V\cap \overline{F}$ containing $\zeta_F$ as well as all points of $\overline{F}$ having 
       modulus at most $R$.\label{item:AF}
     \item\label{item:AFunbdd} 
      If $U$ is the connected component of $V\cap \WW$ containing $\zeta$, 
        then $U\cap A_F$ intersects the unbounded connected component $\unbdd{F}$ of $F\setminus \overline{D}$, 
        for all $F\in\FF$. 
    \end{enumerate}
 \end{lem}
\begin{proof}
See Figure~\ref{fig:V}.
  Write $\FF = \{F^1,\dots, F^n\}$. Let $\gamma$ be a Jordan curve in $\C\setminus\overline{D}$ that intersects the arc $\delta$ in exactly one point.
    We may assume that $\gamma$ is chosen so large that $\gamma$ surrounds $\zeta$, $f(\overline{D})$ and the set $\{f(z)\colon \lvert z\rvert \leq R\}$.

  For each $i$, set $\gamma^i \defeq f^{-1}(\gamma)\cap F^i$ and $\zeta^i \defeq \zeta_{F^i}$. Then $\gamma^i$ is a cross-cut of $F^i$ that separates $\zeta^{i}$,
    $F^i\cap\overline{D}$ and all points of modulus at most $R$ in $F^i$ 
       from $\infty$ in $F^i$. In particular, $\gamma^i$ belongs to the unbounded connected component of $F^i\setminus\overline{D}$. 

    Let $X^i$ be the closure of the bounded component of
     $F^i\setminus \gamma^i$, and let $Y$ be the union of these components. Observe that different $X^i$ may intersect when the corresponding fundamental
     domains are adjacent. In this case, the corresponding two $X^i$ intersect precisely in a preimage component of the piece of $\delta$ that connects $\overline{D}$ to
     $\gamma$. Let  $Y_1,\dots,Y_k$ be the  $k\leq n$ connected components of $X$, and set $\Gamma_j \defeq Y_j\cap f^{-1}(\gamma)$. That is,
     $\Gamma_j$ is the union of finitely many $\gamma^i$, together with their endpoints.

  Then each $Y_k$ is a closed Jordan domain in $\Omega\setminus\delta$, and $\Gamma_j$ is an arc of $\partial Y_k$.
    (Recall that the closure of a fundamental domain does not meet $\delta$.)
     Let $\widetilde{Y}$ be obtained from $Y$ by adding, for each $j\leq k$,
    an arc $\beta_j$ 
    in $W_0\setminus  X$ joining $\zeta$ to a point of $\Gamma_j$; this
      is possible by the assumption on $\zeta$. 
      We may assume that two different $\beta_j$ intersect only at $\zeta$.

  Then $\widetilde{Y}$ is a compact and full set, and we may let $V$ be a Jordan domain containing $\widetilde{Y}$. The point
    $\zeta$ and all $\zeta^i$ belong to $V$ by construction. Moreover, 
   each $Y_j$ is connected, and hence belongs to a single connected component of $V\cap (\bigcup_i\overline{F^i})$. Finally,
    the union of all $\beta_j$ and all $\Gamma_j$ is connected by construction, is contained in $\C\setminus \overline{D}$ and 
    intersects the unbounded connected component of $F^i\setminus\overline{D}$ for all $i$. This completes the construction. 
\end{proof}
 
\begin{figure}
\def\svgwidth{.65\textwidth}
\begingroup%
  \makeatletter%
  \providecommand\rotatebox[2]{#2}%
  \ifx\svgwidth\undefined%
    \setlength{\unitlength}{512.05929974bp}%
    \ifx\svgscale\undefined%
      \relax%
    \else%
      \setlength{\unitlength}{\unitlength * \real{\svgscale}}%
    \fi%
  \else%
    \setlength{\unitlength}{\svgwidth}%
  \fi%
  \global\let\svgwidth\undefined%
  \global\let\svgscale\undefined%
  \makeatother%
  \begin{picture}(1,0.82915159)%
    \put(0,0){\includegraphics[width=\unitlength]{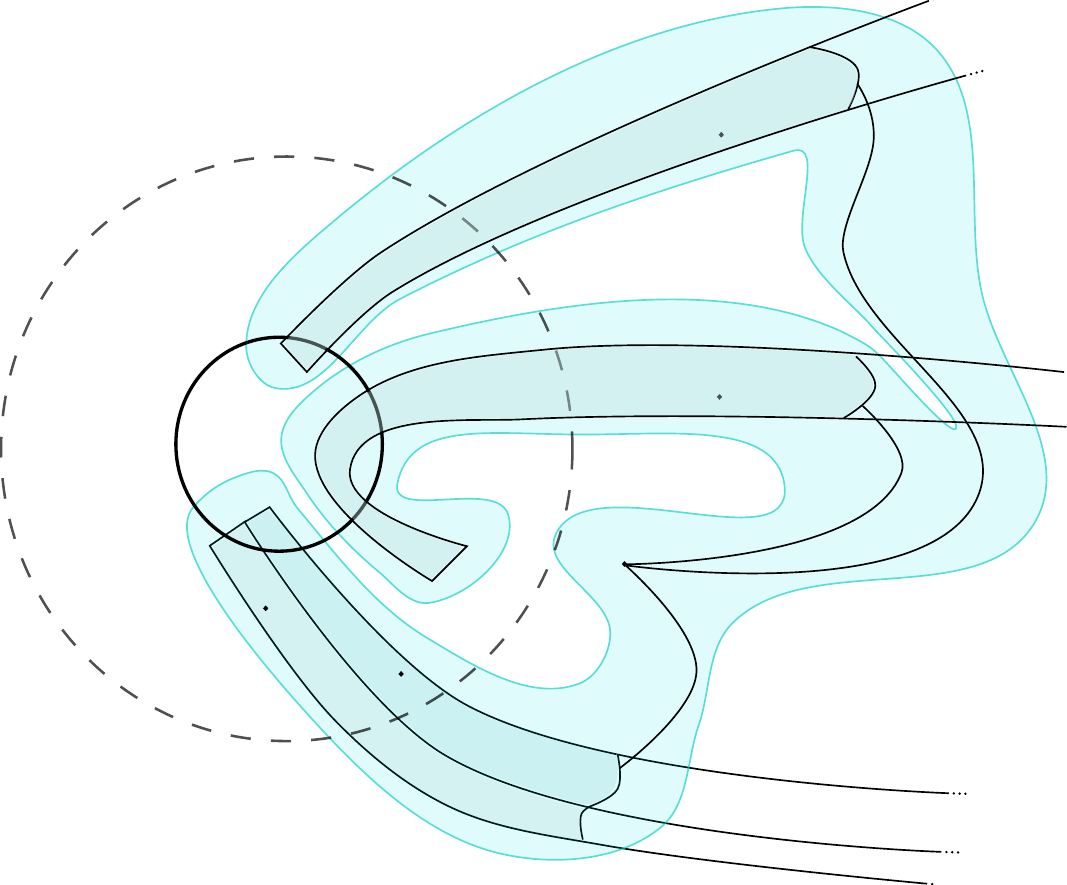}}%
    \put(0.19342145,0.41931805){\color[rgb]{0,0,0}\makebox(0,0)[lb]{\small{$D$}}}%
    \put(0.47550362,0.635){\color[rgb]{0,0,0}\makebox(0,0)[lb]{\small{$F^1$}}}%
    \put(0.47988486,0.4593828){\color[rgb]{0,0,0}\makebox(0,0)[lb]{\small{$F^2$}}}%
    \put(0.47871883,0.10){\color[rgb]{0,0,0}\makebox(0,0)[lb]{\small{$F^3$}}}%
    \put(0.57943319,0.31950585){\color[rgb]{0,0,0}\makebox(0,0)[lb]{\small{$\zeta$}}}%
    \put(0.38458639,0.19473898){\color[rgb]{0,0,0}\makebox(0,0)[lb]{\small{$\zeta^3$}}}%
    \put(0.7,0.45005447){\color[rgb]{0,0,0}\makebox(0,0)[lb]{\small{$\zeta^2$}}}%
    \put(0.7,0.70425243){\color[rgb]{0,0,0}\makebox(0,0)[lb]{\small{$\zeta^1$}}}%
    \put(0.255,0.24){\color[rgb]{0,0,0}\makebox(0,0)[lb]{\small{$\zeta^4$}}}%
    \put(0.81570605,0.75789049){\color[rgb]{0,0,0}\makebox(0,0)[lb]{\small{$\Gamma_1$}}}%
    \put(0.82037022,0.4523865){\color[rgb]{0,0,0}\makebox(0,0)[lb]{\small{$\Gamma_2$}}}%
    \put(0.58716108,0.07){\color[rgb]{0,0,0}\makebox(0,0)[lb]{\small{$\Gamma_3$}}}%
    \put(0.13034575,0.67533117){\color[rgb]{0,0,0}\makebox(0,0)[lb]{\small{$D_R$}}}%
    \put(0.66793204,0.17402376){\color[rgb]{0,0,0}\makebox(0,0)[lb]{\small{$\beta_3$}}}%
    \put(0.85757137,0.37685538){\color[rgb]{0,0,0}\makebox(0,0)[lb]{\small{$\beta_2$}}}%
    \put(0.8015041,0.59452831){\color[rgb]{0,0,0}\makebox(0,0)[lb]{\small{$\beta_1$}}}%
    \put(0.42387446,0.71985521){\color[rgb]{0,0,0}\makebox(0,0)[lb]{\small{$V$}}}%
    \put(0.45,0.055){\color[rgb]{0,0,0}\makebox(0,0)[lb]{\small{$F^4$}}}%
  \end{picture}%
\endgroup%

  \caption{\label{fig:V}
    A domain $V$ as described in Lemma~\ref{lem:boundeddomain}. The domain $V$ is shaded. 
       For clarity, we include only the fundamental domains $F^i$ that comprise the collection $\tilde{F}$; each of these is contained in a tract of $f$ and is 
         adjacent to other fundamental domains, which are not shown.}
         \end{figure}

 The domain $V$ from Lemma~\ref{lem:boundeddomain} allows us to study the accumulation sets of filaments.
   The following lemma is crucial not only in our study of accumulation sets, but also 
    for the proofs of our main theorems. The key idea is that, in order to investigate the landing of
    a given filament $G_{\s}$, we can study a certain chain of simply connected domains (obtained as iterated
    preimages of the domain $V$ from Lemma~\ref{lem:boundeddomain}) whose diameters shrink to zero.

\begin{lem}[Preimage domains]\label{lem:preimagedomains}
   Let $\FF$ be a finite collection of fundamental domains of $f$, {and assume that $\FF$ contains
     every fundamental domain $F$ with $\overline{F}\cap \overline{D}\neq\emptyset$.}
     Let $\zeta$, $R$ and $V$ be as in Lemma~\ref{lem:boundeddomain}. 
     If $R$ was chosen sufficiently large (depending only on $\FF$), then the following holds.
   
  Let $\s=F_0 F_1 \dots $ be any external address. For $n\geq 1$, set
     $\zeta_n(\s) \defeq f^{-n}_{\s}(\zeta)\in \tail_n\defeq \tail_n(\s)$.
     Also let  $V_n= V_{n}(\s)$  be the unique component of $f^{-n}(V)$ containing $\zeta_n(\s)$.
        Then the following hold for all $n\geq 1$. 
     \begin{enumerate}[(1)]
       \item The spherical diameter of $V_n(\s)$ tends to  $0$ as $n\to\infty$.          In particular,
         if $(\omega_n)_{n=0}^{\infty}$ is any sequence with $\omega_n\in V_n(\s)$ for all sufficiently large $n$, then
          $\Lambda(\s)$ coincides with the set $\Lambda_\omega$ of accumulation points of the sequence $(\omega_n)$. \label{item:shrinking}
       \item $\tail_{n+1}(\s) \subset \unbdd{\tail}_{n+1}(\s)\cup V_n(\s) \subset \tail_n(\s)\cup V_n(\s)$ and in fact
                  $\overline{\tail_{n+1}(\s)} \subset \tail_n(\s)\cup V_n(\s)$.\label{item:fundamentalunion}
       \item \ If $F_n\in \FF$, then
          $\zeta_{n+1}(\s)\in V_n(\s)$; in particular $V_n (\s)\cap V_{n+1}(\s)\neq \emptyset$.
\label{item:chain}
     \end{enumerate}
   If $F_k\in\FF$ for all $k\geq0$, then additionally:
   \begin{enumerate}[(1),resume]
       \item $V_n(\s)\cap \mu_{\s}\neq \emptyset$. \label{item:filamentintersection}        \item $f^m\to\infty$ uniformly on 
            $A_n\defeq G_{\s}\setminus \bigcup_{j\geq n} V_j(\s)$. (In particular, 
              $A_n$ is closed and $A_n = J_{\s}\setminus \bigcup_{j\geq n} V_j(\s)$.)
              In fact, if $z$ belongs to this set, then 
               $\lvert f^m(z)\rvert \geq \rho_{m-n}$ for $m\geq n$, where the sequence 
              $(\rho_{\ell})_{\ell=0}^{\infty}$ depends only on $\FF$.\label{item:uniformity}
  \end{enumerate}
\end{lem}
 \begin{proof} 
   By Corollary~\ref{cor:shrinking} the spherical diameter of $V_n(\s)$ tends to  $0$ as $n\to\infty$, uniformly in $\s$. In particular, 
    the set $\Lambda_{\omega}$ in~\ref{item:shrinking} is independent of the choice of the sequence
    $(\omega_n)$. Also recall that $\zeta\in V$ by assumption; if
     we choose $\omega_n = \zeta_n$, then $\Lambda(\s)=\Lambda(\s,\zeta) = \Lambda_\omega$. This proves~\ref{item:shrinking}.

    Part~\ref{item:fundamentalunion} is trivial if $F_n\notin \FF$.
     Indeed, by assumption on $\FF$, we then have 
       $\overline{F_n}\subset \WW$, 
      $\tau_{n+1}(\s)=\unbdd{\tau}_{n+1}(\s)$ and
      $\overline{\tau_{n+1}}(\s)\subset \tau_n(\s)$ by definition. 
       So suppose
        that 
        $F_n\in\FF$. Then  
       $F_n\subset \unbdd{F}_n \cup A_{F_n}$ and 
      $\overline{F_n}\subset \WW \cup A_{F_n}$, where $A_{F_n}\subset V$ is the connected set from 
       Lemma~\ref{lem:boundeddomain}~\ref{item:AF}. 
       
Observe that $\tau_{n+1}(\s)$ is the connected component of $f^{-n}(F_n)$ 
containing $\zeta_{n+1}(\s)$, that $\tau_n(\s)$ is the connected component of
$f^{-n}(\WW)$ containing $\zeta_n(\s)$, and that $V_n(\s)$ is the connected
component of $f^{-n}(V)$ containing $\zeta_n(\s)$. Let $A_1$ be the connected 
component of $f^{-n}(A_{F_n})$ contained in $V_n(\s)$ and let $A_2$ be the 
connected component of $f^{-n}(A_{F_n})$ contained in
$\overline{\tau_{n+1}(\s)}$. Since $\tau_{n+1}(\s)\subset \unbdd{\tau}_{n+1}(\s) \cup A_2$ and $\overline{\tau_{n+1}(\s)} \subset \tau_n(\s) \cup A_2$, we should
show that $A_1 = A_2$. 

Let $x\in A_F\cap \unbdd{F}_n$ be a point that can be connected to $\zeta$ in $\WW\cap V$. Such a point exists by Lemma~\ref{lem:boundeddomain}~\ref{item:AFunbdd}. Let
  $x_n$ be the unique point of $f^{-n}(x)$ in $A_1$. Then $\zeta_n(\s)$ and $x_n$
  belong to the same connected component of $f^{-n}(\WW)$; i.e., 
  $x_n\in \tau_n(\s)$. Now $x\in \unbdd{F}_n$, and $\unbdd{\tau}_{n+1}(\s)$ is the connected
    component of $f^{-n}(\unbdd{F}_n)$ contained in $\tau_n(\s)$. So 
    $x_n \in \unbdd{\tau}_{n+1}(\s)\subset \tau_{n+1}(\s)$, and hence
    $x_n \in A_2$. We have shown $A_1\cap A_2\neq \emptyset$, and therefore
     $A_1=A_2$. Also 
     $\zeta_{n+1}(\s)\in A_2 \subset V$. We have proved both~\ref{item:fundamentalunion}
      and~\ref{item:chain}.

  Now assume that   all fundamental domains $F_n$ occurring in $\s=F_0\ldots F_{n-1} F_n\ldots$ belong to $\FF$. Let $n\geq 0$.

 We next prove~\ref{item:filamentintersection}. For $n\geq 0$, let $X_n\subset \mu_{\sigma^n(\s)}\subset G_{\sigma^n(\s)}$ be a closed unbounded connected set as in 
   Theorem~\ref{thm:unboundedsets}~\ref{item:unboundedsets}. 
   By Theorem~\ref{thm:unboundedsets}~\ref{item:boundedexistence}, if $R$ is large enough 
   (depending only on $\FF$), then $X_n$ can be chosen to contain  a point of radius at most $R$. In particular, 
   $X_n$ intersects the set $A_{F_n}$ from Lemma~\ref{lem:boundeddomain}.
   So if $\tilde{X_n}\subset \mu_{\s}$ is the connected component of $f^{-n}(X_n)$ contained in $\tau_{n+1}(\s)$, 
    then $\tilde{X_n}\cap V_n(\s)\neq \emptyset$.

To prove \ref{item:uniformity}, suppose that $z\in A_n \defeq G_{\s}\setminus \bigcup_{j\geq n} V_j(\s)$. Since
    $z\in G_{\s}$,  there
    is $m_0$ such that $z\in \unbdd{\tail}_m(\s)$ for 
    $m > m_0$. If $m_0$ is minimal with this property,
    then by~\ref{item:fundamentalunion},
    \[ z\in \unbdd{\tail}_{m_0+1}(\s)\setminus \unbdd{\tail}_{m_0}(\s) \subset 
       V_{m_0}(\s). \]
    By assumption on $z$, we must have $m_0 < n$. 
    So, for $m \geq n$, we have 
    $f^m(z) \in \unbdd{F}_m$. Furthermore,
    by the proof of~\ref{item:fundamentalunion}, 
    $f^m(z)\notin A_{F_m}$, and thus $\lvert f^m(z)\rvert > R$. 
    
  So, if $R$ is chosen sufficiently large, $f^n(A_n)$ belongs to the set 
    from Theorem~\ref{thm:unboundedsets}~\ref{item:boundedescape}, on which the iterates
    tend to infinity uniformly and which depends only on $\FF$. 
    The same holds for $f^n(\overline{A_n})$;
     in particular, this set is contained in~$G_{\sigma^m(\s)}$.
\end{proof}

 \begin{proof}[Proof of Proposition~\ref{prop:accumulationsets}]
   Let $\FF$ be a finite collection of fundamental domains containing
     all fundamental domains occurring in $\s$, and also all 
     fundamental domains whose closure intersects $\overline{D}$. Let 
     $\zeta$, $V$ and $V_n$ be as in Lemma~\ref{lem:preimagedomains}.
     Here we assume that $R$ is chosen at least as large as the numbers from
      Theorem~\ref{thm:unboundedsets}~\ref{item:boundedexistence} and~\ref{item:boundedescape}.

      Throughout the proof we will frequently refer to properties \ref{item:shrinking}--\ref{item:uniformity} 
        of filaments $G_{\s}$ with $\s\in\FF^{\infty}$, as established in Lemma~\ref{lem:preimagedomains}.

By~\ref{item:shrinking},
      \[ \Lambda(\s) = \bigcap_{N\geq 1} \closure_{\Ch}\bigl\{\zeta_n(\s)\colon n\geq N\bigr\} = \bigcap_{N\geq 1} \closure_{\Ch}\left(\bigcup_{n\geq N} V_n\right). \]
     Each of the sets in the intersection on the right is compact and connected by~\ref{item:chain}; claim~\ref{item:accumulationconnected} follows. 

  We next prove~\ref{item:accumulationnonuniform}.
   First let $U$ be a neighbourhood of some point $z_0\in \Lambda(\s)$. By~\ref{item:shrinking}, 
   there are infinitely many  $n$ such that 
   $V_n\subset U$. 
   By definition, all points in $V_n$ map to the bounded set $V$ after $n$ iterations, and $V_n$ contains a point of
     $\mu_{\s}$ by~\ref{item:filamentintersection}. Hence $f^n$ does not
     tend to infinity uniformly on $U\cap \mu_{\s}\subset U\cap G_{\s}$, as claimed. Observe that this argument
     also shows that
     \begin{equation}\label{eqn:accumulationsetinclosure}
        \Lambda(\s)\subset \hat{G}_{\s}.\end{equation}

  On the other hand, let $K\subset \hat{G}_{\s}$ be compact with $K\cap \Lambda(\s)=\emptyset$. 
    Then there is $N\geq 1$ such that 
     \[ K \cap \overline{\bigcup_{n\geq N} V_n }= \emptyset. \]
     So $f^n$ tends to infinity uniformly on
       $K$ by~\ref{item:uniformity}, and in particular $K\subset J_{\s}$. This proves~\ref{item:uniformawayfromaccumulation}.

  Now we establish the remaining claims in Proposition~\ref{prop:accumulationsets}. 
    Let $z\in \overline{G_{\s}}\setminus\Lambda(\s)$. By~\ref{item:uniformawayfromaccumulation}, applied to $K=\{z\}$,
      we see that $z\in G_{\s}$. Thus
         \[ \hat{G}_{\s} = \overline{G_{\s}}\cup\{\infty\} \subset G_{\s}\cup \Lambda(\s) \cup\{\infty\}. \]
            Together with~\eqref{eqn:accumulationsetinclosure}, this proves~\ref{item:accumulationsetinclosure}.

  Next suppose that $z_0\in J_{\s} \setminus G_{\s}$; that is, $z_0$ has address $\s$ but is not escaping. There is $n_0$ such that $f^n(z_0)\in J_{\sigma^n(z)}^0$ for $n\geq n_0$. Hence
     \(\lvert f^n(z_0)\rvert < R \) by Theorem~\ref{thm:unboundedsets}~\ref{item:boundedescape}. So $f^n(z_0)\in V$ for $n\geq n_0$, 
     and thus $z_0\in V_n$. By~\ref{item:shrinking}, 
     this proves $\Lambda(\s)= \{z_0\}$, as claimed.  

  It remains to prove~\ref{item:disjointtype}. Recall from  Lemma~\ref{lem:control} that, if $\s$ is of disjoint type, then
    $J_{\sigma^n(\s)} = \overline{G_{\sigma^n(\s)}}\subset F_n$ for all $n\geq 0$. 
   {Let $X$ be the set from~\ref{thm:unboundedsets}~\ref{item:boundedescape}.
    Then there is $n_0$ such that $\lvert f^n(z)\rvert > R$ for all $n\geq n_0$ and all $z\in X$. 
    Set 
     \[ \tilde{R}\defeq \max\{ \lvert f^j(z)\rvert \colon \lvert z\rvert \leq R\text{ and } j \leq n_0\}. \]

  By Theorem~\ref{thm:unboundedsets}~\ref{item:boundedexistence}, for all $n\geq 0$ there is 
      $\zeta_n\in J_{\s}$ such that $\lvert f^n(\zeta_n)\rvert \leq R$. We claim that 
     $\lvert f^j(\zeta_n)\rvert \leq \tilde{R}$ for all $n\geq n_0$ and all
    $j\leq n$. Indeed, let $j$ be minimal such that
     $\lvert f^j(\zeta_n)\rvert >R$ (if no such $j$ exists, there is nothing to prove). Then $f^j(\zeta_n)\in X$, and 
     hence we must have $j> n-n_0$, and the claim follows by the definition of $\tilde{R}$.}

    Let $z_0\in J_{\s}$ be a limit point of the sequence $(\zeta_n)$; then all points on the orbit of $z_0$
     have modulus at most $\tilde{R}$. The claim now follows from~\ref{item:landingfromaddress}. 
 \end{proof}

\begin{rmk}[Coding trees]
  Fix $\FF$, $\zeta$ and $V$ as in Lemmas~\ref{lem:boundeddomain} and~\ref{lem:preimagedomains}. For each $F\in\FF$, we can choose an arc
     $\gamma_F$ connecting $\zeta$ to the point $\zeta_F$, and we may assume that these arcs are disjoint except at $\zeta$. For each 
     $\zeta_{F_0}$ and each arc $\gamma_{F_1}$, there is a component of $f^{-1}(\gamma_{F_1})$ connecting $\zeta_{F_0}$ to
     some point $\zeta_{F_0 F_1}$ of $f^{-2}(\zeta)$. By Lemma~\ref{lem:preimagedomains}, this is precisely the point contained
     in the fundamental tail at address $F_0 F_1$. 

   Continuing inductively, we obtain an infinite tree with root $\zeta$, whose vertices of depth $n>0$ are the elements of $f^{-(n)}(\zeta)$ contained
     in fundamental domains of level $n-1$ whose addresses contain only entries from $\FF$, and whose edges are
     all components of $f^{-n}(\gamma_F)$ for some $F\in\FF$. Recall that the spherical length of these edges tends to zero as $n\to\infty$. 

   This tree can be considered to be an analogue of the \emph{geometric coding tree} used by Przytycki \cite{Pr94} in the case of
     rational functions. We see that, for each address $\s$ whose entries are drawn from $\FF$, the accumulation set of
     the  filament $G_{\s}$ coincides precisely with the accumulation set of a branch of this coding tree. However, we will not use this language
     in the following. 
\end{rmk}

\section{Separation properties of filaments}\label{sec:separation}

 We now prove that a filament that lands at a non-escaping point $z_0\in\C$ 
  does not separate the plane. This fact is not used in our
  paper (except to deduce the corresponding parts of 
  Theorems~\ref{thm:landingsets} and~\ref{thm:hyperbolicsetsintroduction}), but is
   important for applications. 

 \begin{thm}[Filaments do not separate]\label{thm:separation}
  Let $f$ be a postsingularly bounded function $f$, and let $\s$ be 
    a bounded external address of $f$. Assume that $G_{\s}$ lands at a point
    $z_0\in \C\setminus G_{\s}$. Then $\overline{G_{\s}}$ does not separate the plane. 
 \end{thm}
 
  It is plausible that this can be directly deduced from our results and techniques
    in Section~\ref{sec:accumulationlanding}; indeed Pfrang does this for
    postsingularly finite $f$ \cite{pfrangthesis}. Instead, we deduce 
    Theorem~\ref{thm:separation} by relating
    landing filaments to Julia continua of disjoint-type entire functions. Recall from Remark~\ref{rmk:disjointtype} that
     a function $g$ is of \emph{disjoint type} if $g$ is hyperbolic with connected Fatou set, and that a \emph{Julia continuum} of $g$ is a set 
     of the form $\hat{C}=C\cup\{\infty\}$, where $C$ is 
     a connected component of $J(g)$.

\begin{thm}[Filaments and Julia continua]\label{thm:juliacontinua}
 Let $f$ be a postsingularly bounded function, and let $\s$ be a bounded (resp.\ periodic)
   external address of $f$ such that $G_{\s}$ lands. If $\lambda$ is
   sufficiently small to ensure that $g\colon z\mapsto \lambda f(z)$ is of disjoint type,
   then $\hat{G_{\s}}$ is homeomorphic to a 
   Julia continuum 
   $\hat{C}$ of $g$ at a bounded (resp.\ periodic) external address. 
     
  The homeomorphism can be chosen to
    fix $\infty$, and send $z_0$ to the unique point of bounded orbit
   in $\hat{C}$.
\end{thm}
  Much is known about the topology of Julia continua of disjoint-type entire functions;
    see \cite{arclike}. Theorem~\ref{thm:juliacontinua} allows us to transfer this
    information to landing filaments. In particular, we can easily deduce 
    Theorem~\ref{thm:separation}.

  \begin{proof}[Proof of Theorem~\ref{thm:separation}, using Theorem~\ref{thm:juliacontinua}]
Let $g$ be a disjoint-type function
       as in Theorem~\ref{thm:juliacontinua}. 
      Then the Fatou set $F(f)$ is connected and non-empty by definition.
       Hence $J(f)$ is a nowhere dense set that does not separate the plane, so no subset
       of $J(f)$ separates the plane. 

     So by Theorem~\ref{thm:juliacontinua}, the set
     $\hat{G}_{\s}$ is homeomorphic to a 
      non-separating plane continuum.        
      It is well-known that being  a one-dimensional 
      non-separating plane continuum is a topological property. 
       (Indeed, a one-dimensional plane continuum is non-separating
         if and only if it is tree-like; see  
          \cite[Theorem~6]{bing1951} and \cite[Theorem~1.5]{manka2012}. Compare also
         \cite{jungcktimm}.)  So $\hat{G}_{\s}$ is also non-separating. 
   \end{proof} 

\begin{proof}[Proof of Theorem~\ref{thm:juliacontinua}]
  By \cite[Example on p.~392]{BK07}, for $\lambda\in\C$ small enough the function $g$
     is indeed of disjoint type; we fix such $\lambda$ in the following. 
   By \cite[Theorem~1.1]{Re09}, there is a map $\theta$ 
    defined on 
 \[ J_{\geq R}(f)\defeq\{z\in \C\colon |f^n(z)|>R \text{ for all $n>1$}\}, \]
     which is a conjugacy between
       $f$ on $J_{\geq R}(f)$ and $g$ on $\theta(J_{\geq R}(f))$. 
    Furthermore, $\theta$ extends continuously to $\infty$ with $\theta(\infty)=\infty$~--
        in particular, $\theta$ maps escaping points of $f$ to escaping points of $g$~--
 and 
    $J_{\geq Q}(g)\subset \theta(J_{\geq R}(f))$ for some $Q$ \cite[Lemma~3.3]{Re09}. 

Assuming that $R>0$ is sufficiently large, the proof of \cite[Theorem~3.2]{Re09}
  furthermore implies the following.
 For any external address 
 address $\s$ of $f$, there is an external address $\tilde{\s}$ of $g$ such that
 \begin{equation}\label{eqn:corresponding address}
 \theta(J_\s^0(f)\cap J_{\geq R}(f))\subset J^0_{\tilde{\s}}(g).
 \end{equation}
 If $\s$ is bounded (resp.\ periodic), then so is $\tilde{\s}$. 
  We can extend $\theta|_{J_{\geq R}(f)\cap I(f)}$ to a bijection
    $\theta\colon I(f)\cup\{\infty\}\to I(g)\cup\{\infty\}$ by defining 
  \[ 
   \theta(z)\defeq g_{\tilde{\s}}^{-n}(\theta(f^n(x))).
  \] 
   The value $\theta(z)$ is independent of $n$ and, in particular,
     agrees with the original value when $z \in J_{\geq R}(f)\cup I(f)$.
       This follows from~\eqref{eqn:corresponding address} 
      and Proposition~\ref{prop:externaladdresses} and the conjugacy relation for $\theta$. 
      Note that we do not claim that this bijection is continuous on 
      $I(f)$. 
  
Now suppose that $\s$ is bounded and the filament $G_\s(f)$ lands at a point
 $x_0\in \C\setminus G_{\s}$. Note that 
  the filament $G_{\tilde{\s}}$ of $g$ also lands at some point
  $y_0\in\C$ of bounded orbit since
   the function $g$ is of disjoint type. 
    (Recall Proposition~\ref{prop:accumulationsets}~\ref{item:disjointtype}.)

  Consider the closures  $\hat{G}_\s(f)$, $\hat{G}_{\tilde{\s}}(g)$ of 
     $G_\s(f)$ and $G_{\tilde{\s}}(g)$ in $\Ch$. For $n\in\N$ define 
 $$
X_n\defeq f_{\s}^{-n}(J_{\sigma^n(\s)}^0(f)\cap J_{\geq R}(f))\cup\{\infty\} \subset \hat{G}_{\s}.
$$ 
 By definition, $X_n\supset X_{n-1}$,
   and $\theta$ is continuous when restricted to $X_n$. We claim that
   $\theta$ is continuous on $X\defeq \bigcup_n X_n = G_{\s}(f)\cup\{\infty\}$. 

 Let $x\in X$. By assumption, 
   $x\notin \Lambda(\s) = \{x_0\}$. 
    Hence by Corollary~\ref{cor:alternativelanding}, $z$ has a  
    neighborhood $U$ in $X$ on which the iterates escape to infinity uniformly.
    Then, for sufficiently large $n$, $f^n(U)\subset J_{\s}^0(f)$, 
    and hence $U\subset X_n$. Since $\theta$ is continuous on $X_n$ and $U$
    is a neighbourhood of $x$, 
      $\theta$ is continuous at $x$. 

  Moreover, $\theta^{-1}$ is continuous on the sets
      \[ Y_n \defeq g_{\s}^{-n}(J_{\sigma^n(\tilde{\s})}^0(g)\cap J_{\geq Q}(g))\cup\{\infty\}. \] 
      Hence, by the same argument as for $f$, the map $\theta^{-1}$ is 
       continuous on $Y = G_{\tilde{\s}}(g)\cup\{\infty\}$, and 
       $\theta\colon X\to Y$ is a homeomorphism. 
       In particular, it extends to a homeomorphism between their respective
       one-point compactifications $\hat{G}_{\s}(f) = X\cup \{x_0\}$ and
       $\hat{G}_{\tilde{\s}}(g) = Y\cup \{y_0\}$. 
\end{proof} 
  
\begin{remark}
 It is plausible that the homeomorphism in Theorem~\ref{thm:juliacontinua} is \emph{ambient}; 
    i.e., it extends to a homeomorphism of $\C$ onto itself. 
\end{remark}  

\section{Landing theorems for filaments}\label{sec:maintheorems}
 With 
  the definition of filaments in Definition~\ref{defn:filament} and
   their accumulation sets in Definition~\ref{defn:accumulationset2}
    we can now state the main result of our paper.  
  Recall that a filament $G_{\s}$ is \emph{periodic} if the address $\s$ is periodic under the shift map. Equivalently, 
        $G_{\s}$ is periodic under the action of $f$ as a subset of $\C$.

  \begin{thm}[Douady-Hubbard landing theorem for filaments]\label{thm:DHfilaments}
    Let $f$ be a transcendental entire function whose post-singular set $\P(f)$ is bounded. 

   Then every periodic filament of $f$ lands at a repelling or parabolic periodic point, and conversely every
    repelling or parabolic periodic point of $f$ is the landing point of at least one and at most finitely many periodic filaments. 
 \end{thm} 

  We also obtain a corresponding theorem about landing properties at more general hyperbolic sets. 

 \begin{thm}[Landing at hyperbolic sets]\label{thm:mainhyperbolic}
   Let $f$ be a transcendental entire function with bounded postsingular set. Moreover, suppose that $K$ is a hyperbolic set for $f$. 
    Then every point $z\in K$ is the landing point of a filament at some bounded external address $\s$; 
    if $z$ is periodic, then so is $\s$. 
 \end{thm}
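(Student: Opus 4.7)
The plan is to construct, for each $z_0\in K$, an external address $\s$ whose dreadlock $G_{\s}$ lands at $z_0$. By Proposition~\ref{prop:accumulationsets}, it suffices to produce $\s$ with accumulation set $\Lambda(\s)=\{z_0\}$, and by the spherical shrinking in Lemma~\ref{lem:preimagedomains}~\ref{item:shrinking}, this reduces to arranging that the preimage domains $V_n(\s)$ approach $z_0$ as $n\to\infty$.

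First I would set up the expansion framework. Since $K$ is a hyperbolic set we may assume $K\cap\P(f)=\emptyset$, so $K\subset\Omega=\C\setminus\P(f)$, and Proposition~\ref{prop:expansion} together with compactness of $K$ yields a uniform expansion factor $\Lambda>1$ for $f$ on $K$ in the hyperbolic metric of $\Omega$. After choosing the auxiliary curve $\delta$ to avoid the compact set $K$, enlarge the Jordan domain $V$ from Lemma~\ref{lem:boundeddomain} so that $V\Supset K$ while still $V\Subset\Omega$. Since $K$ contains no critical point of $f$ and $V$ is simply-connected in $\Omega$, the component $W_n^0$ of $f^{-n}(V)$ containing $z_0$ is mapped conformally onto $V$ by $f^n$; combining uniform expansion with Koebe distortion (or invoking Corollary~\ref{cor:shrinking} directly) gives $\diam(W_n^0)\to 0$.

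Next I would extract the address. Because critical values of $f$ lie in $\P(f)\subset\C\setminus\Omega$, the single-valued inverse branches $g_k$ of $f$ satisfying $g_k(z_{k+1})=z_k$ extend analytically to all of $V$. Setting $G_n:=g_0\circ g_1\circ\dots\circ g_{n-1}\colon V\to W_n^0$, the point $y_n:=G_n(\zeta)\in f^{-n}(\zeta)\cap W_n^0$ satisfies $y_n\to z_0$. For each fixed $j\geq 0$, the partial composition $g_j\circ\dots\circ g_{k-1}(\zeta)$ converges to $z_j\in K$ as $k\to\infty$. Since $\delta\cap K=\emptyset$ and only finitely many fundamental domains meet a bounded neighbourhood of $K$ (Lemma~\ref{lem:preimageK}), this partial composition eventually lies in a single fundamental domain $F_j$ belonging to a finite collection $\FF$. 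Setting $\s:=F_0F_1F_2\dots$, one verifies that $\zeta_n(\s)=y_n$ and $V_n(\s)=W_n^0\ni z_0$ for all sufficiently large $n$, whence $\Lambda(\s)=\{z_0\}$. If $z_0$ has period $p$, the orbit is periodic and $g_{k+p}=g_k$, so the resulting $\s$ is automatically periodic of period $p$.

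The main obstacle is verifying the identification $V_n(\s)=W_n^0$; equivalently, that $G_n(\zeta)$ lies in the fundamental tail $\tail_n(\s)$, so as to agree with $\zeta_n(\s)=f_{\s}^{-n}(\zeta)$. The difficulty is that $G_n(\zeta)$ approaches $z_0\in K\subset\overline{D}$ and its intermediate iterates approach $z_j\in K\subset\overline{D}$, so these points typically do not lie in $W_0$. This is precisely the phenomenon captured by Lemma~\ref{lem:preimagedomains}~\ref{item:fundamentalunion}, which shows that $\tail_{n+1}(\s)$ is covered by $\unbdd{\tail}_{n+1}(\s)\cup V_n(\s)$; combining this structural result with the stabilisation of $F_j$ for large $n$ is what allows one to conclude that $G_n(\zeta)$ lies in $\tail_n(\s)$, closing the argument.
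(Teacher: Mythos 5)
Your high-level strategy---pull back a base domain along the orbit of $z_0$, invoke the shrinking of preimage components, and read off an external address whose accumulation set is $\{z_0\}$---is in the right spirit, and you correctly single out Lemma~\ref{lem:preimagedomains}~\ref{item:fundamentalunion} as the key structural input. However, there are two genuine gaps. First, the reduction ``we may assume $K\cap\P(f)=\emptyset$'' is unjustified and false: a hyperbolic set may well meet the postsingular set (a repelling periodic point onto which a singular orbit lands is one; the paper stresses that $\P(f)$ itself is often a hyperbolic set, which is the main intended application). Your construction depends essentially on a simply connected $V\Supset K$ with $\overline{V}\subset\Omega$, which then cannot exist (and even when $K\subset\Omega$, a Jordan domain containing $K$ may be forced to contain points of $\P(f)$). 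The paper's proof never asks $V$ to contain $K$: it uses the Euclidean balls $B_n(z)$ around orbit points, where only $\lvert f'\rvert\geq 2$ on a neighbourhood of $K$ is needed, and keeps $V$ as in Lemma~\ref{lem:boundeddomain}, containing only $\zeta$ and the points $\zeta_F$.

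Second, and more seriously, the extraction of the address is wrong as stated. The points $w_{j,k}=g_j\circ\cdots\circ g_{k-1}(\zeta)$ converge to $z_j\in K\subset \overline{D}$, so for large $k$ we have $f(w_{j,k})\in D$, hence $w_{j,k}\notin f^{-1}(\WW)$ and $w_{j,k}$ lies in \emph{no} fundamental domain; Lemma~\ref{lem:preimageK} is not relevant here. What is true is that $w_{j,k}\in f^{-(k-j)}(\zeta)$ lies in a unique fundamental \emph{tail} of level $k-j$, and the real task is to show that the finite addresses of these tails are compatible as $k$ varies (each a prefix of the next, with stabilising first entry), so that they assemble into one infinite address $\s$ with $\zeta_n(\s)=w_{0,n}$. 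That compatibility is exactly the content of the paper's prefix claim (proved using~\ref{item:fundamentalunion} together with the shrinking of the $V_n$) and the subsequent K\"onig's lemma argument; your closing paragraph invokes~\ref{item:fundamentalunion} to justify the ``stabilisation of $F_j$'', but that stabilisation is precisely what must be proved, so the argument is circular. The periodicity assertion inherits the same gap: in the paper it requires a separate pigeonhole argument on the finite address sets $\S_n(z)$, not merely the periodicity of the inverse branches.
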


In fact, we establish the following more precise version of Theorem~\ref{thm:mainhyperbolic}. Note that
   the space of external addresses is equipped with the product topology; this is the same as the
    order topology arising from the cyclic order on addresses. 

  \begin{thm}[Accessibility of points  in hyperbolic sets]\label{thm:hyperboliclanding_abstract}
   Let $K$ be a hyperbolic set of a postsingularly bounded entire function $f$. Then there exists a compact 
     and forward-invariant set $\mathcal{S}_K$ of bounded addresses of $f$, with the following properties. 
   \begin{enumerate}[(a)]
     \item\label{item:filamentslanding}%
        For every $\s\in \mathcal{S}_K$, the filament at address $\s$ lands at a point $z_0(\s)\in K$.
     \item\label{item:continuouslanding}%
       The function $\mathcal{S}_K\to K; \s\mapsto z_0(\s)$ is surjective and continuous. 
        In particular, every point of $K$ is the landing point of a filament. 
     \item\label{item:accessibleperiodic}%
         If $z_0$ is periodic, then all bounded-address 
      filaments landing at $z_0$ are periodic with the same period, and the number of such filaments is finite. 
     \item\label{item:uniformlanding} The filaments at addresses in $\mathcal{S}_K$ land uniformly at $K$, in the following sense.
          Let $\zeta\in\WW$ and let $\eps>0$. Then there is $n_0$ such that
         $\dist(\zeta_n(\s),K)\leq \eps$ for all  $n\geq n_0$ and all  $\s\in \mathcal{S}_K$.
 (Recall from Definition~\ref{defn:accumulationsetabstract} that $\zeta_n(\s) = f_{\s}^{-n}(\zeta)$.)
   \end{enumerate}
  \end{thm}

Note that we do not claim that $\mathcal{S}_K$ can be chosen to consist of \emph{all} addresses of filaments landing at points of $K$.
 In particular, we do not prove that the function $\s\mapsto z_0(\s)$ is continuous on the latter set.
 
 The remainder of the paper will be dedicated to the proofs of these theorems. Let us first show that they
     imply 
      the theorems stated
   in the introduction. 

 \begin{proof}[Proof of Theorem~\ref{thm:landingsets}, using Theorem~\ref{thm:DHfilaments}]
  Let $\zeta$ be a repelling or parabolic periodic point. By Theorem~\ref{thm:DHfilaments}, $\zeta$ is the landing point of a periodic
    filament $G_{\s}$. Let us set $A \defeq G_{\s}$, and let $p$ be the period of $\s$. 
   Recall that $G_{\s}\subset I(f)$ by definition, and is unbounded and connected by Proposition~\ref{prop:core}.
    Since $G_\s$ lands at $\zeta$, we see from Corollary~\ref{cor:alternativelanding} 
    that $\ov{A}=A\cup\{\zeta\}$ and that,
   for any neighborhood $U$ of $\zeta$, $f^n\to\infty$ uniformly on $A \setminus U$. 
   Furthermore, by Theorem~\ref{thm:separation}, $\overline{A}$ 
    does not separate the plane.
   This proves that $A$ satisfies properties~\ref{item:Aclosure} and~\ref{item:Auniform} 
    of Theorem~\ref{thm:landingsets}. Since $p$ is the period of $\s$, we have $f^p(A)=G_{\sigma^p(\s)}=A$
    and  $f^j(A)\cap A=G_{\sigma^j(\s)}\cap G_{\s} = \emptyset$ if $0<j< p.$
 \end{proof}

 \begin{proof}[Proof of Theorem~\ref{thm:mainrays}, using Theorem~\ref{thm:DHfilaments}]
  Recall that every periodic hair is a periodic filament (Proposition~\ref{prop:periodichairfilament}), and for criniferous functions, every periodic filament is a periodic hair
  (Corollary~\ref{cor:filamentscriniferous}).
    Moreover, {by Corollary~\ref{cor:hairaccumulation}}
    such a filament lands if and only if the corresponding hair lands in the sense of Definition~\ref{defn:periodichair}. 

  Thus Theorem~\ref{thm:mainrays} follows immediately from Theorem~\ref{thm:DHfilaments}.  
 \end{proof} 

\begin{proof}[Proof of Theorem~\ref{thm:hyperbolicsetsintroduction}, using Theorem~\ref{thm:hyperboliclanding_abstract}]
  Define $\mathcal{A}\defeq \{G_{\s}\colon \s\in \mathcal{S}_K\}$,
    where $\mathcal{S}_K$ is as in Theorem~\ref{thm:hyperboliclanding_abstract}. 
   Properties~\ref{item:hyperbolicclosure},~\ref{item:hyperbolicimage} and~\ref{item:hyperbolicperiod} in Theorem~\ref{thm:hyperbolicsetsintroduction}
   follow immediately, as in the proof of Theorem~\ref{thm:landingsets}. Furthermore, 
   if $f$ is {criniferous}, then every element of $\mathcal{A}$ is an arc connecting its landing point $\zeta(A)$ to
   $\infty$, by Corollary~\ref{cor:hairaccumulation}. 

   Thus it remains to establish~\ref{item:hyperbolicuniform}. Observe that, since $\mathcal{S}_K$ is 
    compact and forward-invariant, only finitely many different fundamental domains can occur within the
    addresses in $\mathcal{S}_K$. Let $\FF$ be the collection of all these fundamental domains,
    as well as of all fundamental domains whose closure intersects $\overline{D}$. Let $V$ be
    as in Lemma~\ref{lem:preimagedomains}, and let $n_0$ be as in
    part~\ref{item:uniformlanding} of 
    Theorem~\ref{thm:hyperboliclanding_abstract} for $\eps/2$. By Lemma~\ref{lem:preimagedomains},
    we can find $n_1\geq n_0$ such that $\diam V_n(\s) < \eps/2$ for $n\geq n_1$. Thus
     $\dist(z, K)<\eps$ for all $z\in V_n(\s)$.

   Let $z\in G_{\s}$ for some $\s\in\mathcal{S}_K$, and suppose that $\dist(z,K)\geq \eps$.
     Then, by the above $z \in G_{\s}\setminus \bigcup_{j\geq n_1}V_j(\s)$. By~\ref{item:uniformity} of 
      Lemma~\ref{lem:preimagedomains},
      the set of points with this property escape to infinity at a rate that depends only
      on $\FF$ and on $n_0$ (and hence only on $\eps$). This completes the proof.
\end{proof}

\begin{rmk}[Shortcut to the landing theorems]\label{rmk:shortcut}
Observe that Definition~\ref{defn:accumulationsetabstract}, concerning accumulation sets,
  requires only the definition of the fundamental tails $\tau_n(\s)$ associated to an address $\s$, rather than
   any properties of the filament $G_{\s}$ itself. Moreover, a key point in our 
    proofs of Theorems~\ref{thm:DHfilaments},~\ref{thm:mainhyperbolic} 
    and~\ref{thm:hyperboliclanding_abstract} is that we can ignore the fine structure of
    filaments and 
    use only the sets $V_n$ from Lemma~\ref{lem:preimagedomains}. We only require 
    properties~\ref{item:shrinking}, \ref{item:fundamentalunion} and~\ref{item:chain} of this lemma,
    all of which are likewise independent of the construction and analysis of filaments in
    Sections~\ref{sec:unboundedsets},~\ref{sec:filaments} and~\ref{sec:hairs}.

   Hence it would be possible to prove these theorems without requiring any 
     results from those sections, 
     and using 
     only the elementary parts of Section~\ref{sec:accumulationlanding}. Furthermore, while we used 
     properties of filaments above to deduce the statements of Theorems~\ref{thm:landingsets} and~\ref{thm:hyperbolicsetsintroduction},      many of these can be established more easily \emph{a posteriori} for the filaments under consideration, using the 
     additional information that these filaments land. Nonetheless, the
     general material concerning filaments is crucial to the interpretation of our results as a natural
     extension of the classical Douady-Hubbard landing theorem for functions without hairs.  
\end{rmk}

\section{Periodic filaments land}\label{Periodic filaments land}

  We now prove the first half of Theorem~\ref{thm:DHfilaments}, which we restate here as follows.

 \begin{thm}[Landing of periodic filaments]\label{thm:landingperiodic}
   Let $f\colon\C\to\C$ be a transcendental entire function with bounded postsingular set.

   Then every periodic filament of $f$ lands at a repelling or parabolic periodic point of $f$, 
     where the period of the landing point divides that of the filament. 
 \end{thm}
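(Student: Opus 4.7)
\emph{Setup and main invariant.} The strategy is to study the accumulation set $\Lambda \defeq \Lambda(\s) \subset \hat{\C}$ attached to the periodic dreadlock (Definition~\ref{defn:accumulationset}) and show it collapses to a single periodic point. Let $p$ denote the period of $\s$, fix a finite collection $\FF$ of fundamental domains containing every entry of $\s$ together with every fundamental domain whose closure meets $\overline{D}$, and take $\zeta$, $V$, and $V_n = V_n(\s)$ as in Lemma~\ref{lem:preimagedomains}. By Proposition~\ref{prop:accumulationsets}\ref{item:accumulationconnected}, $\Lambda$ is nonempty, compact and connected. A preliminary step is to rule out $\infty \in \Lambda$, i.e.\ to show that the sequence $\zeta_n = \zeta_n(\s)$ stays in a fixed bounded subset of $\C$; I would obtain this by combining the uniform-escape statement of Theorem~\ref{thm:unboundedsets}\ref{item:boundedescape} (or Lemma~\ref{lem:preimagedomains}\ref{item:uniformity}) with the fact that $f^n(\zeta_n) = \zeta$ for every $n$, so no $\zeta_n$ can lie in any region on which forward iterates of $f$ escape to infinity.

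\emph{Contraction step.} The identity $f^p(\zeta_{n+p}) = \zeta_n$ together with $\sigma^p(\s) = \s$ gives $f^p(\Lambda) \subset \Lambda$. Now fix $k \in \{0, \dots, p-1\}$ and look at the subsequence $(\zeta_{np+k})_{n \geq 0}$. Passing from $\zeta_{np+k}$ to $\zeta_{(n+1)p+k}$ is a single univalent inverse branch of $f^p$ (dictated by the periodic pattern of $\s$), and the associated preimage domains $V_{np+k}$ have spherical diameter tending to $0$ by Lemma~\ref{lem:preimagedomains}\ref{item:shrinking}, while consecutive $V_j$ intersect by Lemma~\ref{lem:preimagedomains}\ref{item:chain}. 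Hence $(\zeta_{np+k})_n$ is Cauchy in $\hat{\C}$ and converges to some $z^*_k \in \Lambda$. Continuity of $f$ yields $f(z^*_k) = z^*_{k-1}$ with $z^*_p = z^*_0$, so $\Lambda$ coincides with the finite set $\{z^*_0, \dots, z^*_{p-1}\}$; since $\Lambda$ is connected, all the $z^*_k$ collapse into a single point $z^* \in \C$. Thus $G_\s$ lands at $z^*$ and $f^p(z^*) = z^*$.

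\emph{Classification of the landing point.} Since $\Lambda \subset \hat{G}_\s \subset J(f) \cup \{\infty\}$ by Proposition~\ref{prop:accumulationsets}\ref{item:accumulationsetinclosure}, the fixed point $z^*$ of $f^p$ lies in the Julia set, which excludes attracting and Siegel-centre behaviour. If $z^* \notin \P(f)$, Proposition~\ref{prop:expansion} applied to $f^p$ in a neighbourhood of $z^*$ yields $\lVert \Deriv f^p(z^*)\rVert_\Omega > 1$; evaluated at the fixed point this reduces to $\lvert (f^p)'(z^*)\rvert > 1$, so $z^*$ is repelling. If instead $z^* \in \P(f)$, then $z^*$ may be parabolic, which is allowed, and the remaining task is to rule out irrationally indifferent (Cremer or Siegel-boundary) behaviour. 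A snail-lemma style argument should do it: for an irrationally indifferent $z^*$ the local dynamics rotates at each iterate by a fixed nonzero angle, so the successive preimages $\zeta_{np+k}$ would have to wind around $z^*$ with non-vanishing angular displacement, contradicting the uniform shrinking of the domains $V_n$ and the clean convergence $\zeta_{np+k} \to z^*$.

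\emph{Main obstacle.} The delicate step is the case $z^* \in \P(f)$ and the exclusion of Cremer behaviour, since Proposition~\ref{prop:expansion} degenerates on $\P(f)$ and so the hyperbolic expansion argument cannot be invoked directly. A cleaner alternative, which the authors may adopt, is to enlarge the forward-invariant compact set in Proposition~\ref{prop:expansion} to contain an auxiliary repelling periodic orbit disjoint from the cycle of $z^*$: this restores uniform hyperbolic expansion on a domain still containing a punctured neighbourhood of $z^*$, so that the contraction step of the second paragraph transfers and forces the cycle of $z^*$ to be either repelling or parabolic.
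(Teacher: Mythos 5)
Your overall architecture is the same as the paper's: study the backward orbit $\zeta_n(\s)$ of a base point through the shrinking, chained domains $V_n$, show the accumulation set is a single fixed point of $f^p$, and then classify that point. However, three of your steps have genuine gaps. (i)~Your exclusion of $\infty$ from $\Lambda(\s)$ does not work as stated. The uniform-escape statements you cite (Theorem~\ref{thm:unboundedsets}~\ref{item:boundedescape}, Lemma~\ref{lem:preimagedomains}~\ref{item:uniformity}) apply to points whose \emph{entire} forward orbit stays in $\bigcup_{F\in\FF}\overline{F}$ at large modulus, resp.\ to points of $G_{\s}$ avoiding the $V_j$; the points $\zeta_n$ satisfy neither hypothesis, since their forward orbits terminate at $\zeta$ and then follow the (unconstrained) orbit of $\zeta$. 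So ``$f^n(\zeta_n)=\zeta$ is bounded'' produces no contradiction with any escape statement available to you. The paper instead concatenates the arcs joining $\zeta_{n+1}$ to $\zeta_n$ (preimages of an arc in $V$) into a curve $\gamma$ with $f(\gamma(t))=\gamma(t+1)$, and uses the expansion factor $\geq 2$ of Proposition~\ref{prop:expansion} near infinity to show the hyperbolic lengths of the pieces are summable in $\Omega$; completeness of the hyperbolic metric then forbids $\gamma(t)\to\infty$. (ii)~``Shrinking diameters plus consecutive overlaps'' does not make $(\zeta_{np+k})_n$ Cauchy: such a sequence can still drift around a nondegenerate continuum, and near a parabolic point the relevant inverse branch of $f^p$ is \emph{not} a uniform contraction, so no contraction argument is available. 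What is true, and suffices, is that every finite accumulation point is fixed by $f^p$ (since $\dist(\zeta_j,\zeta_{j+1})\to 0$ and $f^p(\zeta_{j+p})=\zeta_j$), and that the fixed points of $f^p$ form a discrete set while the accumulation set is connected; this is the paper's route and you should use it in place of the Cauchy claim.

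(iii)~The classification step is the weakest part. When the whole cycle of $z^*$ lies in $\Omega$, your use of Proposition~\ref{prop:expansion} does give $\lvert (f^p)'(z^*)\rvert>1$. But when $z^*\in\P(f)$ your proposed remedy~--- enlarging the forward-invariant compact set $\P$~--- goes the wrong way: enlarging $\P$ only shrinks $\Omega$, and no admissible choice of $\P\supset\P(f)$ places a point of $\P(f)$ inside $\Omega$, so hyperbolic expansion can never be ``restored'' at such a point. The correct tool is the Douady--Sullivan snail lemma, which requires precisely the invariant curve $\gamma$ landing at $z^*$ that you never construct; once $\gamma$ is built, the snail lemma yields directly that $z^*$ is repelling or parabolic with multiplier $1$, with no case distinction on whether $z^*\in\P(f)$. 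Your ``winding'' sketch captures the intuition for Siegel-type multipliers but does not dispose of Cremer multipliers, for which the rotation-by-a-fixed-angle picture is not a proof.
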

 \begin{proof}[Proof of Theorem~\ref{thm:landingperiodic}]
  Let $G_\s$ be a periodic filament. Recall from  Observation~\ref{obs:filamentiterates}
    that any filament of $f$ is also a filament of any iterate of $f$ (and vice versa). 
     Hence, it is no loss of generality to assume
    that $\s$ is fixed by $\sigma$; i.e., $\s= F F F F \dots$, where $F$ is a fundamental domain of $f$.

  Fix $\zeta$ and $V$ as in Lemmas~\ref{lem:boundeddomain} and \ref{lem:preimagedomains}. Let $\tail_n$ be the fundamental 
     tail of level $n$ associated to $\s$, and let $\zeta_n$ be the unique element of $f^{-n}(\zeta)$ in $\tail_n$. 

   Recall that $\zeta_1 = \zeta_F \in V$, and let $\gamma\colon [0,1]\to V$ be a rectifiable curve with $\gamma(0)=\zeta_1$ and $\gamma(1)=\zeta$. Then, by Lemma~\ref{lem:preimagedomains}, 
    for every $n\geq 1$ there is a component $\gamma_n$ of $f^{-n}(\gamma)$ connecting $\zeta_n$ and $\zeta_{n+1}$. 
   We can hence combine these to a continuous curve $\gamma\colon [-\infty,1]\to\C$,   with $f(\gamma(t))=\gamma(t+1)$ for $t\leq 0$, where
    $\gamma|_{[0,1]} = \gamma$, and $\gamma|_{[-n,1-n]} = \gamma_n$. 
    
 {By Lemma~\ref{lem:preimagedomains}~\ref{item:shrinking}, 
the spherical diameter of $\gamma_n$ tends to zero as $n\to\infty$. Therefore
    the set $\Lambda(\gamma)$ of accumulation points of $\gamma(t)$ as $t\to -\infty$ is precisely
    the accumulation set $\Lambda(G_\s)=\Lambda(\s,\zeta)$ of $G_\s$ in the sense of Definition~\ref{defn:accumulationsetabstract}.}

  \begin{claim}
   As $t\to -\infty$, $\gamma(t)$ converges to a fixed point of $f$. 
  \end{claim} 
 \begin{subproof}  Recall that the spherical distance between $\zeta_n$ and $\zeta_{n+1}$ tends to zero as $n\to \infty$, 
    and $f(\zeta_{n+1})=\zeta_n$. Hence, by continuity, 
     any finite point of  $\Lambda(\gamma) = \Lambda(\s,\zeta)$ is a fixed point of $f$. 
    Since the set of fixed points is discrete in $\C$, and $\Lambda(\gamma)$ is connected,
    the latter set is a singleton, whose sole element is either a fixed point or $\infty$. We must exclude the second possibility.

So suppose, by contradiction, that $\gamma(t)\to\infty$ as $t\to-\infty$. 
   Recall that $\Omega$ is the unbounded connected component of $\C\setminus \P(f)$.
    Let $\rho_n\defeq \ell_{\Omega}(\gamma_n)$ be the hyperbolic length of
    $\gamma_n$ in $\Omega$. 
 Since $\gamma_n\to\infty$ and the postsingular set is bounded, formula~\eqref{eqn:strongexpansion} of Proposition~\ref{prop:expansion} implies that
     \[ \rho_{n+1} \leq \rho_n/2 \]
     for all sufficiently large $n$. It follows that the hyperbolic length of  $\gamma$ is bounded. 
     As the hyperbolic metric on $\Omega$ is complete, this contradicts our assumption that $\gamma$ tends to
     $\infty$.
 \end{subproof} 

  The classical snail lemma of Douady and Sullivan (see \cite[Expos\'e VIII, Proposition 2, p. 60]{DH84} or \cite[Lemma 16.2]{Mi}) 
    shows that the limit point of $\gamma$ is either repelling or parabolic with multiplier $1$, 
     and the proof is complete. 
 \end{proof} 
\begin{remark}
  In fact, the above claim follows already from~\cite[Theorems~B.1 and~B.2]{Re08}. 
     Here Theorem~B.1 is a hyperbolic
     expansion argument going back to the proof by Douady and Hubbard~\cite[Expos\'e VIII]{DH84} of the first half of the 
     landing theorem for polynomials. On the other hand, the 
      proof of \cite[Theorem~B.2]{Re08}, which shows that $\gamma(t)$ cannot 
      converge to infinity as $t\to-\infty$,  
      used the notion of ``extendability'', which was developed for more general purposes in \cite{Re08}. 
      For the reader's convenience, we gave the complete and much simpler proof
      above, in the spirit of
      Deniz~\cite{De}.
\end{remark}

{It is possible that the landing point in Theorem~\ref{thm:landingperiodic} is also the landing point
   of other filaments. However, as we now observe, there can only be finitely many of these,
    and they all need to be periodic. The idea of the proof is very similar to the polynomial case
     \cite[Lemma~18.12]{Mi}, but we need to take into account the 
      non-compactness of the space of addresses. Compare also~\cite[Lemma~3.2]{expcombinatorics} for the  case of exponential maps.}
\begin{lem}\label{lem:One for all} Let $f$ be a transcendental entire function with bounded
   postsingular set, and let $z$ be the landing point of a filament with periodic address. Then the number of 
    bounded-address filaments landing at $z$ is finite, and their addresses are all periodic of the same period.
\end{lem}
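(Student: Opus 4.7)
The plan is to adapt Milnor's argument for polynomials (\cite[Lemma~18.12]{Mi}) to the transcendental setting. By Theorem~\ref{thm:landingperiodic}, $z$ is a repelling or parabolic periodic point, of period $p$ dividing the period of the given dreadlock. Using Observation~\ref{obs:dreadlockiterates}, I would replace $f$ with $f^p$ so that $z$ is fixed; since the $f$-period of an address is $p$ times the $f^p$-period of the corresponding address (the forward orbit of a dreadlock landing at $z$ can only return to $z$ at multiples of $p$), the common-period conclusion transfers under this reduction. Let $\mathcal{R}$ denote the set of bounded-address dreadlocks landing at $z$, on which the shift $\sigma$ acts (since $f(z) = z$). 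The proof has three parts: \textbf{(i)}~$\mathcal{R}$ is finite, \textbf{(ii)}~every element is $\sigma$-periodic, and \textbf{(iii)}~all elements share the same $\sigma$-period.

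For \textbf{(i)}, I would use Lemma~\ref{lem:preimagedomains} together with local expansivity at $z$ (Koenigs linearization in the repelling case, Fatou coordinates on a repelling petal in the parabolic case). For each $\tilde{\s} \in \mathcal{R}$, the preimage neighborhoods $V_n(\tilde{\s})$ shrink spherically to $\{z\}$ and hence eventually lie inside any fixed neighborhood $U$ of $z$. Only finitely many fundamental domains have $z$ on their closure, so the local branch structure of $f^{-1}$ at $z$ offers only finitely many ``preimage chains'' consistent with landing at $z$; this forces $|\mathcal{R}| < \infty$.

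For \textbf{(ii)}, since $\mathcal{R}$ is finite and $\sigma$-invariant, each element is preperiodic, and I would upgrade preperiodic to periodic by establishing injectivity of $\sigma|_\mathcal{R}$: suppose $\sigma(\tilde{\s}^1) = \sigma(\tilde{\s}^2) = \rho$ with both $\tilde{\s}^j \in \mathcal{R}$ and write $\tilde{\s}^j = F^j\rho$. The preimage dreadlock $G_{F^j\rho}$ lands at $z$ precisely when the branch $f|_{F^j}^{-1}$ sends sequences in $G_\rho$ converging to $z$ to sequences in $F^j$ also converging to $z$; locally at $z$, the branch of $f^{-1}$ with this property is unique (or in the parabolic case, uniquely determined by the repelling petal from which $G_\rho$ approaches $z$), so $F^1 = F^2$. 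Hence $\sigma|_\mathcal{R}$ is a bijection of the finite set $\mathcal{R}$, so each element is periodic. For \textbf{(iii)}, the elements of $\mathcal{R}$ carry a natural cyclic order at $z$, obtained from their asymptotic access at $z$ via the local linearization or Fatou coordinate. Since $f$ is locally orientation-preserving at $z$, the bijection $\sigma|_\mathcal{R}$ preserves this cyclic order and is thus a rotation of the finite cyclic set $\mathcal{R}$; all orbits of a rotation have the same length, giving the common $\sigma$-period, and reverting from $f^p$ to $f$ multiplies this period by $p$.

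The main obstacle is step~\textbf{(i)}, the finiteness of $\mathcal{R}$. For polynomials this reduces to a standard count of accesses from the connected basin of infinity, but in the transcendental setting no compact basin is available and dreadlocks can be topologically wild (e.g., hereditarily indecomposable continua, cf.\ Remark~\ref{rmk:disjointtype}), so a direct geometric argument is not feasible. The essential tactical move, made available by the framework of Section~\ref{sec:accumulationlanding}, is to bypass the fine topology of dreadlocks altogether and work instead with the combinatorics of base-point preimages $\zeta_n(\tilde{\s})$, using the spherical shrinking of Lemma~\ref{lem:preimagedomains} and the local contraction at $z$ to reduce finiteness to a finite combinatorial count. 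A secondary subtlety in step \textbf{(iii)} is that elements of $\mathcal{R}$ may have different initial fundamental domains when $z$ lies on the common closure of several, so the global cyclic order on addresses is not directly preserved by $\sigma$ and one must work with the local cyclic order at $z$ instead.
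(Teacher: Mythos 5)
Your steps (ii) and (iii) are sound and close in spirit to the paper's argument (injectivity of $\sigma$ on the set of landing addresses from local injectivity of $f$ at $z$, and order-preservation forcing a rotation). The gap is step (i), which you correctly identify as the crux but do not actually prove. The assertion that ``only finitely many fundamental domains have $z$ on their closure, so the local branch structure of $f^{-1}$ at $z$ offers only finitely many preimage chains consistent with landing at $z$'' does not yield finiteness: an address is an infinite sequence, and even if every entry were drawn from a finite set $\FF$ (which itself is not true in general -- the paper only shows each address in $\mathcal{S}_z$ contains \emph{some} symbol of a finite set infinitely often), the set $\FF^{\N}$ is uncountable and for each fixed $n$ the number of components of $f^{-n}(V)$ meeting a neighbourhood of $z$ grows with $n$, so no uniform local count is available. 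More decisively, your step (i) makes no use of the hypothesis that one of the dreadlocks landing at $z$ is \emph{periodic}; but as the paper notes (citing \cite{LP}), a repelling periodic point of a polynomial with escaping critical orbit can be the landing point of uncountably many external rays, none periodic. So finiteness cannot follow from purely local considerations at a repelling fixed point; it must exploit the periodic dreadlock.

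The paper's proof inverts your architecture: it establishes periodicity \emph{first} and obtains finiteness as a corollary. After reducing to a fixed address $\s^0$ landing at $z$, one shows (a) every $\s\in\mathcal{S}_z$ contains, infinitely often, a symbol from the finite set $\FF$ of fundamental domains $F$ with $\overline{F}\cap\overline{D}\neq\emptyset$ or $z\in F$ (otherwise a tail of $\s$ is of disjoint type and $G_{\s}$ cannot accumulate at $z$); and (b) $\sigma$ is injective and preserves the cyclic order on $\mathcal{S}_z$, hence preserves the linear order $<$ based at the fixed address $\s^0$, so the sequence $(\sigma^n(\s))_{n\geq 0}$ is monotone. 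Monotonicity together with the infinitely recurring symbol $F\in\FF$ forces $F_n=F$ for all large $n$, and injectivity of $\sigma$ then propagates this back to give $\s=FFF\dots$. Thus every address in $\mathcal{S}_z$ is a constant sequence over the finite set $\FF$, which is what delivers finiteness. If you want to salvage your outline, you must either supply this monotonicity argument in place of step (i), or find another mechanism that genuinely uses $\s^0$; deducing periodicity from finiteness, as you propose, puts the cart before the horse here.
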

  \begin{proof}   
   Let $\s^0$ be the address of the periodic filament landing at $z$; we may assume that its period
    $p$ is minimal with this property. 
    Replacing $f$ by $f^p$, we may then assume that $p=1$. 
    Let $\mathcal{S}_z$ be the set of bounded external addresses $\s$ for which  $G_{\s}$ lands at
     $z$. 

  \begin{claim}
    There is a finite collection  $\FF$ of fundamental domains with the following property.
     Every address $\s\in\mathcal{S}_z$ contains some element of $\FF$ infinitely many times. 
  \end{claim}
  \begin{subproof}
    Let $\FF$ consist of all fundamental domains $F$ such that either $\overline{D}\cap \overline{F}\neq\emptyset$,
     or $z\in F$. Clearly $\FF$ is finite. Suppose that $\s = F_0 F_1 F_2 \dots$ is such that
     $F_n\notin \overline{F}$ for $n\geq n_0$. Then $\sigma^{n_0}(\s)$ is of disjoint type, and
     $z\notin F_{n_0} \supset f^{n_0}(\overline{G_{\s}})$. 
     In particular, $\s$ does not land at $z$. 

    So any address in $\mathcal{S}_z$ contains infinitely many entries from  $\FF$; since 
      the latter set is finite, at least one of these is itself repeated infinitely many times. 
  \end{subproof}

   Since $f$ maps a neighborhood of $z$ to another neighborhood of $z$
     as an orientation-preserving homeomorphism, it
      preserves the cyclic order of the filaments landing at $z$. 
      As remarked at the end of Section~\ref{sec:cyclicorder}, 
      this implies that $f$ also preserves the cyclic order of these filaments at $\infty$. In other words, 
      the shift map $\sigma\colon \mathcal{S}_z \to \mathcal{S}_z$ is injective and 
      preserves the cyclic order of addresses 
      on $\mathcal{S}_z$.

     Recall that $\s^0$ is a fixed address, say $\s^0 = F^0 F^0 F^0 \dots$. 
      Hence $\sigma$ also preserves the (linear) order $<$, 
      where $\s < \tilde{\s}$ means that $\s^0\prec \s \prec \tilde{\s}$ in the cyclic ordering. 
      So if $\s = F_0 F_1 F_2\dots$ is an element of $\mathcal{S}_z$, then 
      $(\sigma^n(\s))_{n=0}^{\infty}$ is monotone. If $F\in\FF$ is the domain from the claim above,
      then we clearly must have $F_n = F$ for all sufficiently large $n$. By injectivity of $\sigma$, we conclude
      that $\s = F F F \dots$ is itself a fixed address. As $\FF$ is finite, the proof is complete.
 \end{proof}

\begin{rmk}\label{rmk:nounboundedatperiodic}
 Recall that we defined filaments only for maps with bounded postsingular sets, and 
   landing of filaments only for filaments at bounded addresses. This allows us to state the lemma
   in the above simple form, which is all that will be required for the purpose of our main results.

 However, observe that the proof of the lemma is purely combinatorial, and does not utilise either assumption in
   an essential manner. In particular, let $f\in\B$ be arbitrary (not necessarily with bounded postsingular set), 
   and let $z_0\in J(f)$ be  a fixed point of $f$. Suppose that $\mathcal{S}_z$ is a forward-invariant set of
   external addresses $\s$ of $f$, and that for every $\s\in\mathcal{S}$ there is an unbounded connected set
   $A_{\s}$ with the following properties:
   \begin{itemize}
     \item the cyclic order of the sets $A_{\s}$ at infinity agrees with the
        cyclic order of their external addresses; 
     \item $z_0\in\overline{A_{\s}}$ for all $\s\in \mathcal{S}_z$;
     \item $\overline{A_{\s}}\cap A_{\tilde{\s}}=\emptyset$ for $\s\neq \tilde{\s}$;
     \item $\overline{A_{\s}}$ does not separate the plane; 
     \item $f(A_{\s}) = A_{\sigma(\s)}$. 
   \end{itemize}

  If $\mathcal{S}$ contains a periodic element, then it follows as above that $\mathcal{S}_z$ is finite and 
    contains only periodic addresses. In particular, for any $f\in\B$, the landing point of a periodic hair
    cannot be the landing point of a non-periodic hair.  
\end{rmk}

\section{Landing at hyperbolic sets}\label{sec:Landing at hyperbolic sets}

\begin{proof}[Proof of Theorem~\ref{thm:hyperboliclanding_abstract}]
Let $K$ be a hyperbolic set of $f$. Replacing $f$ by a sufficiently high iterate, 
there is a neighbourhood $U$ of $K$ such that $\lvert f'(z)\rvert \geq 2$ for all $z\in U$. 
  (It is easy to see that proving the theorem for an iterate of $f$ also establishes it for $f$ itself;
     we leave the details to the reader.)
 We may additionally assume that the disc $D$ in the definition of fundamental domains is chosen so large that
    $K\subset D$. Finally, by Corollary~\ref{cor:shrinking}, it is enough to 
     prove~\ref{item:uniformlanding} 
    for some specific choice of
     $\zeta\in W_0$. Therefore we may fix $\zeta$ belonging to an unbounded
     connected component of 
     $W_0\cap f^{-1}(D)$, as required in the hypothesis of Lemma~\ref{lem:boundeddomain}.
    
   Set $\delta \defeq \dist(K, \partial U)$. For $z\in K$, define 
   $B_0(z)\defeq B(z,\delta)$, and let
   $B_n(z)$ denote the connected component of $f^{-n}(B_0(f^n(z)))$ containing $z$.  
   Then  $\overline{B_{n+1}(z)}\subset B_n(z)$ for all $n$,  and $f\colon B_{n+1}(z)\to B_{n}(f(z))$ is a conformal isomorphism.  For $n\geq 0$, define 
     \[ U_n \defeq \bigcup_{z\in K} B_n(z). \]  
     Then $\overline{U_{n+1}}\subset U_n\subset U$ and $f(U_{n+1})=U_n$ for all $n$. 
    Clearly $K=\bigcap_{n\geq 0} \overline{U_n} = \bigcap_{n\geq 0} U_n$.
    By the blowing-up property of the Julia set (see e.g.\ \cite[Lemma 2.2]{Ba84}), 
     and compactness of $K$,     there is    some $N_1$ with the following property:
    if $n\geq N_1$ and $z\in K$, then $f^{-n}(\zeta)\cap B_0(z)\neq\emptyset$. 
    In particular, for all such $n$ and $z$
     there is 
     a finite external address of length $n$ such that $\zeta_n(\s)\in B_0(z)$, and hence 
      $\tail_n(\s)\cap B_0(z)\neq \emptyset$.
  Define 
\[ \eps \defeq \frac{\delta}{2} \leq \min_{z\in K} \dist(\partial B_0(z) , \overline{B_1(z)}). \]

 Now let $V$ be as in Lemma~\ref{lem:boundeddomain} for the finite collection $\FF$ of fundamental domains that intersect $D$.  
   Similarly as in Lemma~\ref{lem:preimagedomains}, if $\s$ is an infinite external address or a finite
   external address of length at least $n$, we define 
     $V_{n}(\s)$  to be the unique component of $f^{-n}(V)$ containing $\zeta_n(\s)\defeq f^{-n}_{\s}(\zeta)\in \tail_n(\s)$.

     By Lemma~\ref{lem:Euclideanshrinking}, there exists $N\geq N_1$ with the following property. 
     Suppose that $n\geq N$ and that $\s$ is a finite or infinite external address of length at least $n$ 
      with $V_n(\s)\cap \overline{U_0} \neq \emptyset$. Then
 \begin{equation}
 \diam V_{n}(\s)< \eps.
 \end{equation}

For $n\geq N$ and $z\in K$, we now define $\mathcal{S}_n(z)$ to be the set of finite external addresses $\s$
      of length $n$ 
      for which the tail $\tau_{n}(\s)$ intersects $B_{n-N}(z)$. Observe that  
     \[ \mathcal{S}_n\defeq \bigcup_{z\in K} \mathcal{S}_n(z)\] is
      finite for every $n$ by Lemma~\ref{lem:preimageK}. We also define $\mathcal{S}_K$ to be the 
      set of infinite addresses $\s$ such that every prefix of length $n\geq N$ of $\s$ is an element 
      of $\mathcal{S}_n$. In order to show that this set has the properties 
      asserted in Theorem~\ref{thm:hyperboliclanding_abstract}, we investigate the
      sets $\mathcal{S}_n(z)$ more closely.
 
\begin{claim}[Claim  1]
   The following hold for all $n\geq N$ and all $z\in K$. 
    \begin{enumerate}[(i)]
      \item $\mathcal{S}_N(z)\neq \emptyset$.\label{item:B1}
      \item The shift map $\sigma$ maps $\mathcal{S}_{n+1}(z)$ onto $\mathcal{S}_n(f(z))$.\label{item:Bshift}
      \item $\mathcal{S}_n(z)\neq\emptyset$.\label{item:Bnonempty}
 \item Suppose that $\s=F_0 F_1 \dots F_{n}\in \mathcal{S}_{n+1}(z)$.
                Let $\pi_n(\s)$ be the prefix of length $n$ of $\s$; i.e., 
               $\pi_n(\s) = F_0 F_1 \dots F_{n-1}$. 
                Then $\pi_n(\s)\in \mathcal{S}_n(z)$.\label{item:Bprefix}   
     \item In particular, $\pi_N(\sigma^j(\s))\in \mathcal{S}_N\bigl(f^j(z)\bigr)$ for $j=0,\dots,n-N$.\label{item:Biteratedprefix}
    \end{enumerate}
\end{claim}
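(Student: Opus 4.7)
The five assertions can be established in a natural order, with~(iv) being the main technical point. For~(i), the blowing-up property combined with the choice $N\geq N_1$ makes $f^{-N}(\zeta)\cap B_0(z)$ non-empty; any point in this intersection lies in a unique connected component of $f^{-N}(\WW)$, i.e., in some fundamental tail $\tail_N(\s)$, and its address $\s$ is by definition an element of $\S_N(z)$.

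For~(ii), the forward inclusion is immediate by applying $f$: if $w\in\tail_{n+1}(\s)\cap B_{n+1-N}(z)$, then $f(w)\in \tail_n(\sigma(\s))\cap B_{n-N}(f(z))$, showing $\sigma(\s)\in\S_n(f(z))$. For surjectivity, given $\tilde\s\in\S_n(f(z))$ and $\tilde w\in\tail_n(\tilde\s)\cap B_{n-N}(f(z))$, I lift $\tilde w$ through the conformal isomorphism $f\colon B_{n+1-N}(z)\to B_{n-N}(f(z))$ to obtain $w'\in B_{n+1-N}(z)$ with $f(w')=\tilde w$; since $w'\in f^{-(n+1)}(\WW)$, it belongs to some level-$(n+1)$ tail $\tail_{n+1}(\s)$, and $\sigma(\s)=\tilde\s$ by the pairwise disjointness of tails of the same level (both $\tail_n(\sigma(\s))$ and $\tail_n(\tilde\s)$ contain $\tilde w$). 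Assertion~(iii) then follows by induction on $n\geq N$: the base case $n=N$ is~(i) applied at every point of $K$, and the inductive step uses the surjectivity in~(ii) together with the forward invariance $f(K)\subset K$, so that $\S_n(f(z))\neq\emptyset$ yields $\S_{n+1}(z)\neq\emptyset$.

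The heart of the claim is~(iv). The main input is Lemma~\ref{lem:preimagedomains}~\ref{item:fundamentalunion}, which, since the tail $\tail_n(\s)$ and the set $V_n(\s)$ depend only on the first $n$ entries of $\s$, yields
\[
\tail_{n+1}(\s)\subset \tail_n(\pi_n(\s))\cup V_n(\pi_n(\s)).
\]
Hence any $w\in \tail_{n+1}(\s)\cap B_{n+1-N}(z)$ either lies in $\tail_n(\pi_n(\s))$, in which case we are done immediately (as $B_{n+1-N}(z)\subset B_{n-N}(z)$), or lies in $V_n(\pi_n(\s))$. In the latter case, the point $\zeta_n(\pi_n(\s))\in V_n(\pi_n(\s))\cap\tail_n(\pi_n(\s))$ is the natural candidate for the desired intersection, so it suffices to show $V_n(\pi_n(\s))\subset B_{n-N}(z)$. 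This is the step I expect to be the main obstacle: although $\diam V_n(\pi_n(\s))<\eps=\delta/2$ by the choice of $N$, the Euclidean gap between $B_{n+1-N}(z)$ and $\partial B_{n-N}(z)$ is the preimage, under the conformal isomorphism $f^{n-N}\colon B_{n-N}(z)\to B_0(f^{n-N}(z))$, of the $\eps$-gap between $B_1(f^{n-N}(z))$ and $\partial B_0(f^{n-N}(z))$, and so shrinks as $n-N$ grows. I would overcome this by strengthening the choice of $N$ using Koebe distortion for $f^{n-N}$ on $B_{n-N}(z)$, so that the Euclidean diameters of $V_n(\pi_n(\s))$ are dominated by this gap for all relevant $\s$; cylindrical expansion of $f$ on preimage components of $V$ controls those diameters at a rate that, once $N$ is taken large enough, beats the decay of the gap.

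Finally,~(v) is obtained by combining~(ii) and~(iv): $j$ applications of~(ii) give $\sigma^j(\s)\in\S_{n+1-j}(f^j(z))$, and then $(n+1-j)-N$ applications of~(iv) successively truncate the last entry of $\sigma^j(\s)$ to yield $\pi_N(\sigma^j(\s))\in\S_N(f^j(z))$.
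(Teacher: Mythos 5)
Parts (i), (ii), (iii) and (v) of your argument are correct and essentially identical to the paper's: (i) from the blowing-up property and the choice of $N_1$, (ii) from the conformal isomorphism $f\colon B_{n+1-N}(z)\to B_{n-N}(f(z))$ together with the fact that the address of $f(\tail)$ is the shift of the address of $\tail$, (iii) by induction, and (v) by combining (ii) and (iv). Your reduction of (iv) is also the right one: by Lemma~\ref{lem:preimagedomains}~\ref{item:fundamentalunion} either $w\in\tail_n(\s)$ (done) or $w\in V_n(\s)$, in which case it suffices to prove $V_n(\s)\subset B_{n-N}(z)$, since $\zeta_n(\s)\in V_n(\s)\cap\tail_n(\s)$.

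The gap is precisely at the step you flag. Your proposed repair -- Koebe distortion plus the assertion that ``cylindrical expansion controls the diameters of $V_n(\s)$ at a rate that beats the decay of the gap'' -- is not available from the tools at hand: Lemma~\ref{lem:Euclideanshrinking} is a normality/compactness argument and gives no rate of decay for $\diam V_n(\s)$, and obtaining such a rate would essentially require already knowing that $V_n(\s)$ stays in the region where $f^{n-N}$ is expanding, which is close to what is being proved. The paper avoids the issue entirely by conjugating with $f^{n-N}$: setting $\tilde{\s}\defeq\sigma^{n-N}(\s)$ and $\tilde z\defeq f^{n-N}(z)$, the point $\tilde w\defeq f^{n-N}(w)$ lies in $V_N(\tilde{\s})\cap B_1(\tilde z)$, so the \emph{fixed} bound $\diam V_N(\tilde{\s})<\eps\leq\dist\bigl(\partial B_0(\tilde z),\overline{B_1(\tilde z)}\bigr)$ applies at level $N$ and yields $V_N(\tilde{\s})\subset B_0(\tilde z)$. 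Pulling back, $V_n(\s)$ is a connected subset of $f^{-(n-N)}\bigl(B_0(\tilde z)\bigr)$ meeting the connected component $B_{n-N}(z)$ of that preimage, hence $V_n(\s)\subset B_{n-N}(z)$. No distortion estimate or quantitative shrinking is needed; the shrinking gap at level $n$ is matched automatically because the containment is established at level $N$ and transported by the conformal isomorphism $f^{n-N}\colon B_{n-N}(z)\to B_0(\tilde z)$. You should replace your Koebe sketch with this push-forward/pull-back argument.
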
 
Nb.\ In~\ref{item:Bshift}, we would like to claim that
  $\sigma\colon \mathcal{S}_{n+1}(z)\to\mathcal{S}_n(f(z))$ is a bijection. However, it is conceivable
  that a tail $\tau$ of level $n$ intersects $B_{n-N}(f(z))$ in more than one connected component, and that
  $\tau\cup B_{n-N}(f(z))$ surrounds a singular value of $f$. In this situation, there may be two
  different components of $f^{-1}(\tau)$ that intersect $B_{n+1-N}(z)$, and $\sigma$ may therefore not be injective 
  on $\mathcal{S}_{n+1}(z)$. Nonetheless, Claim~3 below implies that this situation can arise
   only for small $n$. Hence, for sufficiently large $n$, the map $\sigma\colon \mathcal{S}_{n+1}(z)\to\mathcal{S}_n(f(z))$ will turn out to be a bijection after all (see Claim 4).
\begin{subproof}
  The first claim holds by choice of $N_1$. Part~\ref{item:Bshift} is immediate from the fact that
    $f\colon B_{n+1-N}(z)\to B_{n-N}(f(z))$ is a conformal isomorphism. Claim~\ref{item:Bnonempty} follows
    from these first two claims by induction. 

    Now let us prove~\ref{item:Bprefix}. 
    So suppose that $\s\in \mathcal{S}_{n+1}(z)$, and let $w\in \tail_{n+1}(\s)\cap B_{n+1-N}(z)$. 
     Recall from Lemma~\ref{lem:preimagedomains} that 
      $\tail_{n+1}(\s)\subset \unbdd{\tail}_{n+1}(\s) \cup V_{n}(\s)$.
     First consider the case where $w\in \unbdd{\tail}_{n+1}(\s)$. 
     Then $\s\in \mathcal{S}_{n}(z)$ since $w\in \unbdd{\tail}_{n+1}(\s)\cap B_{n+1-N}(z)\subset \tail_{n}(\s) \cap B_{n-N}(z)$. 

   Now suppose that $w\in V_{n}(\s)$. By definition, 
    \begin{equation}\label{eqn:nonemptyintersectionV}
          \zeta_n(\s) \in V_{n}(\s)\cap \tail_{n}(\s).\end{equation}
   Set $\tilde{\s} \defeq\sigma^{n-N}(\s)$, $\tilde{z}\defeq f^{n-N}(z)$ and 
    $\tilde{w}\defeq f^{n-N}(w)$.
    Then $\tilde{w}\in V_{N}(\tilde{\s})\cap B_1(\tilde{z})$. 
    By definition of $\eps$ and choice of $N$, 
    it follows that $V_N(\tilde{\s})\subset B_0(\tilde{z})$. 
    Now $B_{n-N}(z)$ is mapped conformally to  $B_0(\tilde{z})$ by $f^{n-N}$. Since
    $V_n(\s)$ is a connected component of $f^{N-n}(V_N(\tilde{\s}))$ and intersects $B_{n-N}(z)$, 
    we see that 
    $B_{n-N}(z)\supset V_{n}(\s)\ni \zeta_n(\s)$. By~\eqref{eqn:nonemptyintersectionV}, we see that 
    $\tail_{n}(\s)\cap B_{n-N}(z)\neq \emptyset$. This proves~\ref{item:Bprefix}).

    The final claim~\ref{item:Biteratedprefix} follows by induction from~\ref{item:Bshift} and~\ref{item:Bprefix}.
\end{subproof}

\begin{claim}[Claim 2]
    There is a finite
     collection $\tilde{\FF}\supset\FF$ of fundamental domains such that all entries
     of addresses in $\bigcup_{n\geq N} \mathcal{S}_n$ are in $\tilde{\FF}$. 
\end{claim}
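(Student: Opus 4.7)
The plan is to leverage Claim~1~\ref{item:Biteratedprefix}, which already says that every length-$N$ window of an address in $\S_n$ is itself a prefix of some element of $\S_N$. Thus, to bound the fundamental domains that can appear as entries, it suffices to bound the entries of elements of the single set $\S_N$, and then observe that finitely many length-$N$ windows cover the whole address.

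Concretely, I would first define
\[
\tilde{\FF} \defeq \FF \cup \{F\colon F \text{ is an entry of some } \tilde{\s} \in \S_N\}.
\]
The key preliminary step is to check that $\tilde{\FF}$ is finite. Since $K$ is compact, the open cover $\{B_0(z)\}_{z\in K}$ admits a finite subcover, so $\bigcup_{z\in K} B_0(z) = U_0$ has compact closure. Every element $\tilde{\s} \in \S_N$ has its tail $\tau_N(\tilde{\s})$ intersecting $\overline{U_0}$, so by Lemma~\ref{lem:preimageK} applied to $f^N$ (or equivalently to the preimage $f^{-N}(\WW)$), there are only finitely many such tails, hence only finitely many such $\tilde{\s}$. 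Because each $\tilde{\s}\in\S_N$ has exactly $N$ entries, $\tilde{\FF}$ is indeed finite.

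Next, to show that every entry of every $\s=F_0 F_1 \dots F_{n-1}\in\S_n$ lies in $\tilde{\FF}$, let $z\in K$ be such that $\s\in\S_n(z)$. For each $j$ with $0 \leq j \leq n-N$, Claim~1~\ref{item:Biteratedprefix} gives
\[
\pi_N(\sigma^j(\s)) = F_j F_{j+1} \dots F_{j+N-1} \in \S_N(f^j(z)) \subset \S_N,
\]
so each of $F_j, F_{j+1}, \dots, F_{j+N-1}$ belongs to $\tilde{\FF}$ by definition. As $j$ ranges over $0, 1, \dots, n-N$, the indices $\{j, j+1, \dots, j+N-1\}$ cover every index from $0$ to $n-1$. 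Therefore every entry of $\s$ is an element of $\tilde{\FF}$, which proves the claim.

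There is no significant obstacle here: the content is entirely bookkeeping that combines the finiteness of $\S_N$ (a direct application of Lemma~\ref{lem:preimageK}) with the window-shift statement in Claim~1~\ref{item:Biteratedprefix}. The only thing to be slightly careful about is ensuring that $\S_N$ itself is finite, which requires the observation that $\bigcup_{z\in K} B_0(z)$ lies in a compact subset of $\C$ so that Lemma~\ref{lem:preimageK} applies uniformly over $K$.
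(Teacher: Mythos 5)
Your proof is correct and is essentially identical to the paper's: the paper also defines $\tilde{\FF}$ by adjoining to $\FF$ all entries of the (finitely many) addresses in $\S_N$ and then invokes Claim~1~(v); the finiteness of $\S_N$ that you re-derive via Lemma~\ref{lem:preimageK} was already recorded in the paper immediately after the definition of $\S_n$. The sliding-window covering argument you spell out is exactly the implicit content of the paper's one-line deduction.
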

\begin{subproof}
    Let $\tilde{\FF}$ be obtained by adding to $\FF$ all fundamental domains appearing in 
    the finitely many external addresses in $\mathcal{S}_N$. 
    By part~\ref{item:Biteratedprefix} of Claim 1, it follows that
    all entries of $\s\in \mathcal{S}_n$ belong to $\tilde{\FF}$, for all $n\geq N$.
\end{subproof}

  Applying Lemma~\ref{lem:preimagedomains} again, this time to
    the collection $\tilde{\FF}$, we obtain a simply connected domain $\tilde{V}$ that we can use to
    study the accumulation sets of the addresses in $\mathcal{S}_K$.

\begin{claim}[Claim~3]
  For every $k\geq 0$ there is an $n_0\geq N$ with the following property.
    If $z\in K$, $n\geq n_0$ and  $\s\in \mathcal{S}_n(z)$, then
    $\tilde{V}_n(\s)\subset B_k(z)$. 
\end{claim}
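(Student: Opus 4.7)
I would prove Claim~3 by adapting the pull-back mechanism of Claim~1~\ref{item:Bprefix}, case~(b), to the enlarged domain $\tilde V$. The strategy is: at a bounded base level $N'(k)$ use Lemma~\ref{lem:Euclideanshrinking} to make $\tilde V_{N'(k)}(\tilde\s)$ arbitrarily small in Euclidean diameter, and at the high level $n$ use the conformal isomorphism $f^{n-N'(k)}\colon B_{n-N'(k)}(z)\to B_0(f^{n-N'(k)}(z))$ to pull the resulting containment back to $\tilde V_n(\s)\subset B_{n-N'(k)}(z)\subset B_k(z)$.

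\textbf{Setup and reduction to $k=0$.} For each $k$ I would set $\eta_k := \frac{1}{2}\min_{z\in K}\dist(\partial B_k(z),\overline{B_{k+1}(z)})$; this is positive by compactness of $K$ together with the strict inclusion $\overline{B_{k+1}(z)} \subset B_k(z)$. Applying Lemma~\ref{lem:Euclideanshrinking} to the Jordan domain $\tilde V$ and the compact set $\overline{U_0}$ then gives $N'(k)\geq N$ such that every component of $f^{-m}(\tilde V)$ meeting $\overline{U_0}$ has Euclidean diameter less than $\eta_0$ for all $m\geq N'(k)$. Claim~3 for arbitrary $k$ then reduces to the case $k=0$ via the shift: since $K$ is forward-invariant and $\sigma^k(\s)\in\S_{n-k}(f^k(z))$ by Claim~1~\ref{item:Bshift}, the conformal isomorphism $f^k\colon B_k(z)\to B_0(f^k(z))$ translates the desired containment $\tilde V_n(\s)\subset B_k(z)$ into the $k=0$ statement $\tilde V_{n-k}(\sigma^k(\s))\subset B_0(f^k(z))$.

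\textbf{The base case.} For $k=0$, fix $n\geq N'(0)+1$, $z\in K$, and $\s\in\S_n(z)$. The natural witness is $\zeta_n(\s)\in\tilde V_n(\s)$; if $\zeta_n(\s)\in B_1(z)$, then combined with the diameter bound $\diam \tilde V_n(\s)<\eta_0\leq \dist(\partial B_0(z),\overline{B_1(z)})$ this forces $\tilde V_n(\s)\subset B_0(z)$. The condition $\zeta_n(\s)\in B_1(z)$ is in turn equivalent, via the conformal isomorphism $f\colon B_1(z)\to B_0(f(z))$, to $\zeta_{n-1}(\sigma(\s))\in B_0(f(z))$, so what must ultimately be established is the bootstrap statement: $\zeta_m(\s')\in B_0(z')$ whenever $\s'\in\S_m(z')$ and $m$ is sufficiently large.

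\textbf{Main obstacle (the bootstrap).} The difficulty is that $\s'\in\S_m(z')$ guarantees only that \emph{some} witness $w$ lies in $\tau_m(\s')\cap B_{m-N}(z')$, and a priori $w$ may live in the unbounded piece $\unbdd{\tau}_m(\s')$ rather than in the $V$-component $V_{m-1}(\s')$ that contains $\zeta_m(\s')$. I plan to handle this by iterating Claim~1~\ref{item:Bprefix} down the chain of prefixes $\s'=\pi_m(\s')\supset\pi_{m-1}(\s')\supset\cdots\supset\pi_N(\s')$: at every intermediate level $j$ either the current witness stays in the unbounded tail and descends to the next shorter prefix, with the enclosing disc automatically shrinking to $B_{j-1-N}(z')$, or it falls into the $V$-component, in which case the Case-(b) argument of Claim~1~\ref{item:Bprefix} itself delivers $V_{j-1}(\pi_j(\s'))\subset B_{j-1-N}(z')$ and, via the conformal isomorphism $f^{m-j}$, propagates this containment back up the tail to pin $\zeta_m(\s')$ inside $B_0(z')$. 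The base level $N$ is automatic because the $\S_N$-condition itself reads $\tau_N\cap B_0(z')\neq\emptyset$. This careful accounting, using only the finiteness of $\tilde\FF$ from Claim~2 and the hyperbolic expansion of $f$ along $K$, is the technical core of the argument.
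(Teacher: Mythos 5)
Your overall architecture matches the paper's: locate a level $\tilde n\leq n$ at which one of the domains $\tilde V_{\tilde n}(\s)$ meets the shrinking disc, then propagate along the chain $\tilde V_{\tilde n}(\s),\tilde V_{\tilde n+1}(\s),\dots,\tilde V_n(\s)$ of pairwise-intersecting small domains back up to level $n$. But there is a genuine gap at the heart of your ``bootstrap'': you have no control on how far down the chain of prefixes the witness can descend before it leaves the unbounded tails and enters a $\tilde V$-domain. If the witness first enters $\tilde V_{j-1}(\s')$ at level $j$, what you obtain is $\tilde V_{j-1}(\s')\subset B_{j-1-N}(z')$, and propagating back up to level $m$ costs one diameter per step, i.e.\ an accumulated error of order $(m-j)\cdot\eps$. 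Since $m-j$ is a priori unbounded, this does not pin $\zeta_m(\s')$ inside $B_0(z')$. Worse, the terminal case you dismiss as ``automatic''~--- the witness descending all the way to level $N$ while staying in the unbounded tails~--- is precisely the case in which the argument yields nothing: knowing $\tau_N(\s')\cap B_0(z')\neq\emptyset$ says nothing about where the base-point preimage $\zeta_m(\s')$ sits.

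The missing idea is a uniform bound that the paper extracts from the hypothesis $K\subset D$. Every fundamental tail of level $N$ maps under $f^N$ into $W_0\subset\C\setminus\overline{D}$, whereas $K$ is forward invariant and contained in $D$; combined with Lemma~\ref{lem:preimageK} this yields $M$ such that $\tau_N(\s)$ is disjoint from the neighbourhood $U_M$ of $K$. Since the witness lies in $B_{n-N}(z)\subset U_M$ once $n\geq N+M$, the decomposition $\tau_n(\s)\subset\tau_N(\s)\cup\bigcup_{\tilde n=N}^{n-1}\tilde V_{\tilde n}(\s)$ from Lemma~\ref{lem:preimagedomains}~\ref{item:fundamentalunion} forces the witness into some $\tilde V_{\tilde n}(\s)$, and a one-step-at-a-time pull-back induction along $f\colon B_{n-N}(z)\to B_{n-1-N}(f(z))$ shows the maximal such $\tilde n$ satisfies $\tilde n\geq n-M$. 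Only with this bound does the chain argument close: the total drift is at most $(M+1)\eps_k$, which is absorbed by choosing $\eps_k$ proportional to $\dist(B_{k+1}(z),\partial B_k(z))/(M+1)$. A secondary remark: your reduction to $k=0$ via $f^k\colon B_k(z)\to B_0(f^k(z))$ is also not free, since $\tilde V_{n-k}(\sigma^k(\s))\subset B_0(f^k(z))$ only places the connected set $\tilde V_n(\s)$ in \emph{some} component of $f^{-k}\bigl(B_0(f^k(z))\bigr)$; identifying that component as $B_k(z)$ requires knowing that $\tilde V_n(\s)$ already meets $B_k(z)$, which is essentially the point at issue.
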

\begin{subproof}
 By assumption, $K\subset D$. In particular, there exists $M>0$ such that
     no fundamental tail of level $N$ intersects the neighbourhood $U_M$ of $K$. 
     Recall that $f(U_{j+1})=U_j$ for all $j$, and that
      the image of a fundamental tail of level $j+1$ is a tail of level $j$.  Hence it
      follows inductively that, if $n\geq N+M$,  no fundamental tail of level $n-M$ intersects
    $U_{n-N}$.

   Let $n\geq N+M$, let $z\in K$, and let $\s\in \mathcal{S}_n(z)$. Then 
         \[ \tau_n(\s) \subset \tau_{n-M}(\s) \cup \bigcup_{\tilde{n}=n-M}^{n-1} \tilde{V}_{\tilde{n}}(\s) \]  
     by 
     Lemma~\ref{lem:preimagedomains}~\ref{item:fundamentalunion}. 
     Observe that $\tau_n(\s)$ intersects $B_{n-N}(z)\subset U_{n-N}$ by definition of $\mathcal{S}_n(z)$,
      while
      $\tau_{n-M}(\s)$ is disjoint from $U_{n-N}$ by the above. Hence there exists some  $\tilde{n}\in \{n-M,\dots,n-1\}$ such that 
      $\tilde{V}_{\tilde{n}}(\s) \cap B_{n-N}(z)\neq \emptyset$.

    Let $k\geq 0$. If $n_1\geq N$ is sufficiently large, then by Lemma~\ref{lem:Euclideanshrinking}, 
       \[ \diam(\tilde{V}_n(\s)) < \eps_k \defeq \frac{\min_{z\in K} \dist(B_{k+1}(z),\partial B_k(z))}{M+1} \]
     whenever $n \geq n_1$, $\s\in \mathcal{S}_{n}(z)$ for some $z\in K$, and $\tilde{V}_n(\s)\cap U_0\neq \emptyset$.
    Set 
           \[ n_0\defeq \max(n_1 + M , k+N+1) \geq N+M \]
     and suppose that $z$, $n$ and $\s$ are as in the statement of Claim~3. 
 
    Let $\tilde{n}$ be as above; i.e., $\tilde{n}\geq n-M\geq n_1$ is such that
      $\tilde{V}_{\tilde{n}}(\s)\cap B_{n-N}(z)\neq \emptyset$. 
       Since $n-N\geq n_0 - N \geq k+1$, we see that $\tilde{V}_{\tilde{n}}(\s)$ intersects
      $B_{k+1}(z)$. 

    It follows inductively for $j=\tilde{n}, \tilde{n}+1, \dots , n$ that, for all
      $\zeta \in \tilde{V}_j(\s)$,
     \[ \dist(\zeta , B_{k+1}(z)) < (j+1-\tilde{n})\cdot \eps_k \leq \dist(B_{k+1}(z) , \partial B_k(z)). \]
     (In the inductive step, we use  that $\tilde{V}_{j}(\s)\cap \tilde{V}_{j-1}(\s)\neq\emptyset$ by Claim~2 and
       Lemma~\ref{lem:preimagedomains}~\ref{item:chain}, and that $j\geq n_1$.)
     In particular, $\tilde{V}_n(\s)\subset B_k(z)$, as required. 
\end{subproof}

\begin{claim}[Claim~4]
    Let $n_0$ be as in Claim~3, for $k=1$. Then $\sigma\colon \mathcal{S}_{n+1}(z) \to \mathcal{S}_n(f(z))$ is a bijection
     for all $n\geq n_0$ and $z\in K$.
  \end{claim}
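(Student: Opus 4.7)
The map in question is the prefix map $\pi_n\colon \S_{n+1}(z)\to\S_n(z)$, well-defined by Claim~1\ref{item:Bprefix} (I interpret the $\sigma$ in the statement as a slight abuse of notation, since the ordinary shift lands in $\S_n(f(z))$, not $\S_n(z)$). My plan is to establish bijectivity of $\pi_n$ for $n\geq n_0$ and $z\in K$ by showing that for each $\s\in\S_n(z)$ there is exactly one fundamental domain $F$ with $\s F\in\S_{n+1}(z)$.

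\emph{Existence.} Given $\s\in \S_n(z)$, fix $w\in \tau_n(\s)\cap B_{n-N}(z)$. Using the conformal isomorphism $f^{n-N}\colon B_{n-N}(z)\to B_0(f^{n-N}(z))$, the problem reduces to the level-$N$ situation at $f^{n-N}(z)\in K$: here $\sigma^{n-N}(\s)\in\S_N(f^{n-N}(z))$ by iterating Claim~1\ref{item:Bshift}. By the blowing-up property of $J(f)$ in the ball $B_1(f^{n-N}(z))$, one locates a preimage of $\zeta$ of depth $N+1$ lying in a tail whose address extends $\sigma^{n-N}(\s)$ by some final letter $F$. Pulling this back via $(f^{n-N})^{-1}$ yields a point of $\tau_{n+1}(\s F)\cap B_{n+1-N}(z)$, so $\s F\in\S_{n+1}(z)$.

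\emph{Uniqueness.} Suppose $\s F_1, \s F_2\in \S_{n+1}(z)$ with $F_1\neq F_2$, and pick $w_i\in \tau_{n+1}(\s F_i)\cap B_{n+1-N}(z)$. By Lemma~\ref{lem:preimagedomains}~(2), $w_i\in \tau_n(\s)\cup \tilde{V}_n(\s)$, and by Claim~3 with $k=1$ (using $n\geq n_0$), $\tilde{V}_n(\s)\subset B_1(z)$. The covering property $f^n\colon \tau_{n+1}(\s F_i)\to F_i$ gives $f^n(w_i)\in F_i$ in distinct fundamental domains. On the other hand, writing $f^n=f^{N-1}\circ f^{n+1-N}$ with $f^{n+1-N}$ a conformal isomorphism on $B_{n+1-N}(z)$, both points $f^{n+1-N}(w_i)$ lie in the single ball $B_0(f^{n+1-N}(z))$, hence also in the level-$N$ tails $\tau_N(\sigma^{n+1-N}(\s F_i))$ whose addresses differ only in the last letter. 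The plan is to exploit the smallness of $\tilde{V}_n(\s)$ and of the ball $B_{n+1-N}(z)$ for $n\geq n_0$, together with the hyperbolic expansion on $\Omega$ from Proposition~\ref{prop:expansion}, to show that the two portions of the tails lying in $B_{n+1-N}(z)$ are close enough under $f^n$ that their images cannot lie in distinct fundamental domains.

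\emph{Main obstacle.} The principal difficulty is the uniqueness step: a priori two distinct tails $\tau_{n+1}(\s F_1)$ and $\tau_{n+1}(\s F_2)$ could both meet the tiny ball $B_{n+1-N}(z)$. Converting the combinatorial separation $F_1\neq F_2$ into a geometric contradiction requires a quantitative propagation of fundamental-domain separation backwards through $f^n$. I expect to combine the shrinking of $\tilde{V}_n(\s)$ (Lemma~\ref{lem:preimagedomains}~(1)) with the expansion estimates on $\Omega$ to show that the relevant pieces of the two tails are constrained into a single preimage branch, contradicting the assumption that their $n$-th iterates lie in disjoint fundamental domains.
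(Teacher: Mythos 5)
Your proposal starts from a misreading of the claim, and this derails the whole argument. The map $\sigma$ in Claim~4 really is the shift (drop the \emph{first} letter); the codomain $\S_n(z)$ in the statement is a typo for $\S_n(f(z))$, as one sees from Claim~1\ref{item:Bshift} (which says $\sigma$ maps $\S_{n+1}(z)$ \emph{onto} $\S_n(f(z))$), from the way the paper's own proof writes $f(\tilde{V}_{n+1}(\s^j)) = \tilde V_n(\tilde\s)\subset B_0(f(z))$, and from the later application, where $z$ is periodic with $f^{n_0}(z)=z$ and $\psi$ is defined as the inverse of $\sigma^{n_0}|_{\S_{2n_0}(z)}$. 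You instead set out to prove bijectivity of the truncation map $\pi_n\colon\S_{n+1}(z)\to\S_n(z)$ (drop the \emph{last} letter). That is a genuinely different statement, it is not what the construction of the periodic address needs, and its injectivity is in fact doubtful: whenever two distinct infinite addresses in $\S_z$ first differ at some level $n\geq n_0$ (e.g.\ two dreadlocks landing at the same non-periodic point), their length-$(n+1)$ prefixes are distinct elements of $\S_{n+1}(z)$ with the same length-$n$ prefix, so $\pi_n$ fails to be injective there.

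Even taken on its own terms the proposal is incomplete. The uniqueness step is only a programme ("the plan is to exploit\dots", "I expect to combine\dots") with no executed argument, and you correctly identify that nothing rules out two distinct level-$(N)$ tails meeting the same ball $B_0(f^{n+1-N}(z))$ --- which is exactly why the truncation map is the wrong object. The existence step is also gapped: the blowing-up property produces a preimage of $\zeta$ in $B_1(f^{n-N}(z))$ lying in \emph{some} tail of level $N+1$, but you give no reason why that tail's address should extend the prescribed address $\sigma^{n-N}(\s)$.

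For comparison, the intended proof is two lines. Surjectivity is Claim~1\ref{item:Bshift}. For injectivity, take $\s^1,\s^2\in\S_{n+1}(z)$ with $\sigma(\s^1)=\sigma(\s^2)\eqdef\tilde\s$. By Claim~3 with $k=1$ (and $n+1\geq n_0$), both $\tilde V_{n+1}(\s^1)$ and $\tilde V_{n+1}(\s^2)$ lie in $B_1(z)$, and each is a component of $f^{-1}(\tilde V_n(\tilde\s))$ mapping onto $\tilde V_n(\tilde\s)\subset B_0(f(z))$. Since $f$ is univalent on $B_1(z)$, these two components coincide, so $\zeta_{n+1}(\s^1)=\zeta_{n+1}(\s^2)$ and hence $\s^1=\s^2$. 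Note that this argument uses only a \emph{single} application of $f$ on a ball where it is injective --- no propagation of expansion estimates through $f^n$ is needed.
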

   \begin{subproof}
   By Claim~1~\ref{item:Bshift}, it remains to show that $\sigma\colon \mathcal{S}_{n+1}(z)\to \mathcal{S}_n(f(z))$ is injective. 
       Let $z\in K$ and let $\s^1,\s^2\in \mathcal{S}_{n+1}(z)$ with $\sigma(\s^1) =\sigma(\s^2) \eqdef \tilde{\s}$. 
       Then, for $j=1,2$, we know that $\tilde{V}_{n+1}(\s^j)\subset B_1(z)$ by choice of $n_0$, and 
       $\tilde{V}_n(\tilde{\s}) = f(\tilde{V}_{n+1}(\s^j)) \subset B_0(f(z))$. Since $f\colon B_1(z)\to B_0(f(z))$ is univalent,
       it follows that $\tilde{V}_{n+1}(\s^1)=\tilde{V}_{n+1}(\s^2)$, and hence $\s^1=\s^2$, as required.
 \end{subproof}

  Now consider the directed graph $G$ whose vertices are the elements of 
           \[ \mathcal{V}(G) \defeq \bigcup_{n\geq N} \mathcal{S}_n, \]
    and which contains an edge from $\pi_n(\s)$ to $\s$ for every
     $\s \in \mathcal{S}_{n+1}$. Note that $G$ is a locally finite, infinite graph on countably many vertices.
     For $z\in K$, let $G_z$ be the induced subgraph of $G$ whose vertices are the elements of
     $\bigcup_{n\geq N} \mathcal{S}_n(z)$. 
     By~\ref{item:Bnonempty} and~\ref{item:Bprefix} of Claim~1, we can apply K\"onig's lemma, and 
     $G_z$ contains an infinite path for every $z\in K$. 

   Recall that $\mathcal{S}_K$ is the set of infinite external addresses $\s$ such that $\pi_n(\s)\in \mathcal{S}_n$ for all $n\geq N$.
     If $\s\in \mathcal{S}_K$, the sequence $(\pi_n(\s))_{n=N}^{\infty}$ forms 
     an infinite path in $G$. Conversely, every infinite path in $G$ determines an associated 
     address $\s\in\mathcal{S}_K$. For $z\in K$, denote by $\mathcal{S}_z$ the set of all $\s\in \mathcal{S}_K$ with 
     $\pi_n(z)\in \mathcal{S}_n(z)$ for all $n\geq N$. By the above,
     $\mathcal{S}_z\neq\emptyset$ for all $z\in K$. 

  The set $\mathcal{S}_K$ is shift-invariant by part~\ref{item:Bshift} of Claim~1.
   Furthermore, $\mathcal{S}_K$ is contained in the compact set of addresses all of whose entries are taken from $\tilde{\FF}$;
    we need to show that $\mathcal{S}_K$ is itself compact. 
    Suppose that $(\s^k)_{k=0}^{\infty}$ is a sequence of addresses in $\mathcal{S}_K$ converging to some address
    $\s$. Then the prefixes $\pi_{n+1}(\s)$ and $\pi_{n+1}(\s^k)$ agree for all sufficiently large $k$,
    and in particular $\pi_n(\s)$ and $\pi_{n+1}(\s)$ are two vertices of $G$ connected by an edge.
    It follows that $\s$ is indeed represented by an infinite path in $G$, and hence $\s\in \mathcal{S}_K$ as required. 

 To prove claim~\ref{item:filamentslanding} of Theorem~\ref{thm:hyperboliclanding_abstract}, we must show that 
   $G_{\s}$ lands at a point $z_0(\s)\in K$ for all $\s\in\mathcal{S}_K$. 
  By Claim~3, there is some $n_0$ such that, for all $n\geq n_0$ and
    all $\s\in \mathcal{S}_K$, there is $z\in K$ such that  $\tilde{V}_{n}(\s)\subset B_0(z)$. 
     In particular, $\diam \tilde{V}_{n_0}(\s) \leq 2\delta$, and by expansion of  $f$ on $U$, we conclude that 
      \begin{equation}\label{eqn:diameterestimate}
          \diam \tilde{V}_{n}(\s) \leq 2^{n_0-n+1} \cdot \delta. \end{equation}

In particular, $\zeta_n(\s)$ is a Cauchy sequence, and hence convergent. So $G_{\s}$ lands at 
       a point $z_0(\s)$ with
         \begin{equation}\label{eqn:landingdistance}
              \dist(z_0(\s) , \tilde{V}_n(\s)) \leq 2^{n_0-n+1}\cdot \delta, 
         \end{equation}
      for all $n\geq n_0$. It is clear from
      Claim~3 that the landing point $z_0(\s)$ belongs to $K$. 
  Furthermore, if $z\in K$ and $\s\in \mathcal{S}_z$, then $z_0(\s)=z$ by Claim~3.

 As
     $\mathcal{S}_z\neq\emptyset$ for all $z\in K$, this shows that the function
     $\s\mapsto z_0(\s)$ is surjective. 
     To prove continuity, suppose that $\s,\tilde{\s}\in \mathcal{S}_K$ agree in the first $n\geq n_0$ entries.
     Then $\tilde{V}_n(\s) = \tilde{V}_n(\tilde{\s})$, and hence
      \[ \dist( z_0(\s) , z_0(\tilde{\s})) \leq 3 \cdot 2^{n_0-n+1}\cdot \delta \]
      by~\eqref{eqn:diameterestimate} and~\eqref{eqn:landingdistance}. This completes the proof of~\ref{item:continuouslanding}.

    Part~\ref{item:uniformlanding} follows directly from Claim~3. It remains
       to establish~\ref{item:accessibleperiodic}.

    So let $z\in K$ be a periodic point of period $p$. By Lemma~\ref{lem:One for all}
    and~\ref{item:filamentslanding}, 
    it is enough to show that $\mathcal{S}_z$ contains a periodic address. 
    Let $n_0$ be as in Claim~4; by increasing $n_0$ if necessary, we can assume that $p$ divides $n_0$. 
    Let $\psi\colon \mathcal{S}_{n_0}(z) \to \mathcal{S}_{2n_0}(z)$ be the
     inverse of $\sigma^{n_0}|_{\mathcal{S}_{2n_0}(z)}$. 
     Since $\mathcal{S}_{n_0}(z)$ is finite, the function $\phi \defeq \pi_{n_0}\circ \psi$ has a periodic element;
      say $\phi^{k_0}(\s) =\s$. For $k\geq 0$, let $\s^k$ be the unique preimage of 
      $\s$ under $\sigma^{k n_0}$ in $\mathcal{S}_{(k+1)n_0}(z)$. We claim that 
        \[ \pi_{n_0}(\s^{k+1}) = \phi(\pi_{n_0}(\s^k)) \]
      for all $k\geq 0$. This is true for $k=0$ by definition. If $k>0$, we have 
        \[ \pi_{2n_0}(\s^{k+1}) = \psi(\sigma^{n_0}(\pi_{2n_0}(\s^{k+1}))) = 
            \psi( \pi_{n_0}(\sigma^{n_0}(\s^{k+1}))) = \psi(\pi_{n_0}(\s^k)). \]
       Hence 
      \[ \pi_{n_0}(\s^{k+1}) =\pi_{n_0} ( \psi(\pi_{n_0}(\s^k))) = \phi(\pi_{n_0}(\s^k)). \]

    So      $\pi_{n_0}(\s^k) = \phi^k(\s)$
     for all $k\geq 0$. Hence $\s^k$ can be written as a concatenation 
        \[ \s^k = \phi^k(\s) \s^{k_1}= \dots = \phi^k(\s) \phi^{k-1}(\s)  \dots \phi^1(\s) \s. \]
     Since $\s$ is periodic under $\phi$, of period $k_0$, we conclude that 
      $\s^k = \pi_{(k+1)n_0}(\s^{k+k_0})$.
     Hence there is an infinite path in $G_z$ passing through the vertices
      $\s^{j\cdot k_0}$, $j\geq 0$. The associated address is  the periodic sequence
               $(\phi^{k_0-1}(\s) \dots \phi^1(\s) \s)^{\infty}$,
       and the proof is complete.
\end{proof}

We note the following corollary, which proves the accessibility of certain singular values. For definitions,
   we refer to~\cite{RvS}.
 
 \begin{cor}[Accessibility of non-recurrent singular values] Let $f$ be a postsingularly bounded transcendental entire function, and let $v\in J(f)$ be a  non-recurrent   singular value for $f$ 
     whose forward orbit  does not pass through any critical points. Suppose that
     the $\omega$-limit set of $v$ does not contain parabolic 
    points, and does not intersect
     the $\omega$-limit set of a recurrent critical point or of a singular value contained in a wandering domain.
    Then there is a bounded-address filament of $f$ that lands at $v$.
 \end{cor}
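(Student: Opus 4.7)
The plan is to reduce the statement to Theorem~\ref{thm:mainhyperbolic} by producing a hyperbolic set $K$ of $f$ that contains the singular value $v$, and then invoking that theorem to land a bounded-address dreadlock at $v$.

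First, I would appeal to the results of \cite{RvS} to build $K$. The hypotheses of the corollary~-- non-recurrence of $v$, disjointness of $\omega(v)$ from the $\omega$-limit sets of recurrent critical points and of singular values in wandering domains, absence of parabolic points in $\omega(v)$, and the fact that the forward orbit of $v$ avoids critical points~-- are precisely those used in \cite{RvS} to ensure uniform hyperbolic expansion along the forward orbit $\{v,f(v),f^2(v),\dots\}$ for the spherical (or equivalently, after taking a suitable hyperbolic neighbourhood, for a Riemannian) metric. Combined with postsingular boundedness, this yields a compact forward-invariant set $K \subset J(f)$ containing $\{f^n(v): n\geq 0\}$, on which some iterate $f^k$ is uniformly expanding; i.e., $K$ is a hyperbolic set in the sense of the present paper. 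In particular $v \in K$.

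Once $K$ is in hand, Theorem~\ref{thm:mainhyperbolic} (or the more precise Theorem~\ref{thm:hyperboliclanding_abstract}) applies and furnishes, for every point of $K$, a dreadlock at a bounded external address landing at that point. Applying this to the point $v \in K$ produces the desired bounded-address dreadlock landing at $v$, completing the proof.

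The one non-trivial step is the first one: extracting a hyperbolic set $K$ containing $v$ from the hypotheses. Verifying the hyperbolic expansion estimate on the forward orbit of $v$ under the stated assumptions is the technical heart of \cite{RvS}, and I would invoke it as a black box here. Everything else is an immediate application of Theorem~\ref{thm:mainhyperbolic}.
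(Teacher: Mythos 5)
Your proposal is correct and follows essentially the same route as the paper: the paper also invokes \cite[Theorem~1.2]{RvS} as a black box to conclude that the forward-invariant compact set $\overline{\bigcup_n f^n(v)}$ is hyperbolic under the stated hypotheses, and then applies the landing theorem for hyperbolic sets to the point $v$. The only cosmetic difference is that the paper names the hyperbolic set explicitly as the orbit closure of $v$, whereas you leave $K$ slightly implicit.
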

 \begin{proof}
 By \cite[Theorem 1.2]{RvS}, if the   postsingular set is bounded then any forward invariant compact subset of the Julia set is hyperbolic provided it does not contain parabolic points, 
   critical points, or it intersects the $\omega$-limit set of a critical point or of a singular value contained in wandering domains. 
   Hence $P(a):=\ov{\bigcup_n f^n(a)}$ is hyperbolic and  every point in $P(a)$ is the landing point of a filament.
 \end{proof}
 
\section{Landing at parabolic points}\label{sec:parabolic}

 We now complete the proof of our analogue of the Douady-Hubbard landing theorem, Theorem~\ref{thm:DHfilaments}, by showing that parabolic periodic points are also accessible by filaments.

\begin{thm}[Parabolic points are accessible by filaments]\label{thm:parabolic}
  Let $f\in\BB$ with bounded postsingular set, and let $z_0$ be a parabolic periodic point. Then there is a periodic filament of $f$ that lands at $z_0$. 
\end{thm}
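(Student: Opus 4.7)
The plan is to adapt the strategy of Theorem~\ref{thm:hyperboliclanding_abstract} (landing at hyperbolic sets), replacing the uniform hyperbolic contraction by the geometric contraction of an attracting inverse branch on a repelling Fatou petal at $z_0$. By Observation~\ref{obs:dreadlockiterates} the collection of dreadlocks is invariant under passing to iterates, so after replacing $f$ by $f^{pq}$ for the period $p$ of $z_0$ and an appropriate $q$, we may assume that $z_0$ is a fixed point with $f'(z_0)=1$. The classical Leau-Fatou flower theorem then furnishes a repelling petal $V$ at $z_0$: a simply connected domain with $z_0\in\partial V$ on which an attracting inverse branch $g=(f|_{U})^{-1}$ (for some simply connected $U\supset V$) is well-defined, satisfies $g(V)\subset V$, and $g^n(z)\to z_0$ for every $z\in V$. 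The repelling Fatou coordinate $\psi\colon V\to\HH$ conjugates $g$ to the translation $w\mapsto w+1$ on a right half-plane.

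The key step is to exhibit a bounded periodic address $\s$ for which the pull-back sequence $\zeta_n(\s)=f_{\s}^{-n}(\zeta)$ (for some base point $\zeta\in\WW$) converges to $z_0$. Once this is done, $z_0\in\Lambda(\s)$ by Definition~\ref{defn:accumulationset}; the already-established Theorem~\ref{thm:landingperiodic} ensures that the periodic dreadlock $G_{\s}$ lands at a repelling or parabolic fixed point $z^*$, and since $\Lambda(\s)=\{z^*\}$ by Proposition~\ref{prop:accumulationsets}, we must have $z^*=z_0$. To construct such an $\s$, I exploit the flexibility in the choice of the disc $D$ and the arc $\delta$ defining $\WW$: by enlarging $D$ (which is constrained only by $\P(f)\subset D$ and the expansion conditions~\eqref{eqn:normalised} and~\eqref{eqn:Omeganormalised}) and routing $\delta$ away from $V$, I arrange that $V\setminus\ov D$ is a non-empty connected domain contained in a single fundamental domain $F$. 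Pick $\zeta\in V\setminus\ov D\subset F\cap\WW$ and set $\s\defeq F F F\ldots$; the candidate pull-back sequence is then the attracting backward orbit $g^n(\zeta)$, which converges to $z_0$ by the flower theorem.

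It remains to verify that $\zeta_n(\s)=g^n(\zeta)$ for every $n\geq 1$. Inductively this reduces to showing $g^n(\zeta)\in\tail_n(\s)$; since $\tail_n(\s)$ is the unique component of $f^{-1}(\tail_{n-1}(\s))$ contained in $F$, the claim amounts to checking $g^n(\zeta)\in F$ at every stage. The difficulty~-- and the main obstacle of the proof~-- is that for large $n$ the iterate $g^n(\zeta)$ enters $D$ and hence lies outside $\WW$; the required inclusion $g^n(\zeta)\in F$ can nonetheless hold, because the fundamental domain $F$ is merely a connected component of $f^{-1}(\WW)$ and may itself extend into $D$ along a region compatible with the petal. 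Thus the heart of the proof is a delicate combinatorial-geometric alignment between $F$, the petal $V$, and the disc $D$. I expect that working in the Fatou coordinate $\psi$~-- where $g$ becomes a translation and the nested preimages $\tail_n(\s)\cap V$ correspond to a family of right half-strips differing by integer translations~-- this alignment can be established and the convergence $\zeta_n(\s)\to z_0$ read off directly from the translation dynamics, following the strategy used in \cite{BL14} for exponential maps. Uniqueness of the landing configuration then follows from Lemma~\ref{lem:One for all}.
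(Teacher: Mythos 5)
Your overall strategy is the right one and matches the paper's at a high level: adapt the proof of Theorem~\ref{thm:hyperboliclanding_abstract}, replacing uniform hyperbolic contraction by the contraction of the attracting inverse branch $\psi$ on a repelling petal, and then conclude via Theorem~\ref{thm:landingperiodic}, Proposition~\ref{prop:accumulationsets} and Lemma~\ref{lem:One for all}. However, the step you yourself identify as ``the heart of the proof''~-- producing a bounded \emph{periodic} address $\s$ whose pull-back sequence $\zeta_n(\s)$ converges to $z_0$~-- is not carried out, and the specific construction you propose does not work. First, since the immediate basin of a parabolic cycle contains a singular value whose orbit accumulates on the cycle, we have $z_0\in\P(f)\subset D$; the repelling petal furnished by the Leau--Fatou flower theorem lives in a small neighbourhood of $z_0$ and hence \emph{inside} $D$, so $V\setminus\overline{D}=\emptyset$ and there is no base point $\zeta\in V\setminus\overline{D}$ to start from. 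You cannot shrink $D$ (it must contain $\P(f)$ and satisfy~\eqref{eqn:normalised} and~\eqref{eqn:Omeganormalised}), and enlarging it only makes matters worse; extending the petal across $\partial D$ into $\WW$ is not something the flower theorem provides. Second, even granting a base point, the claim that the attracting backward orbit $g^n(\zeta)$ realises the pull-backs of the single fixed address $FFF\ldots$ is precisely what needs proving, and there is no reason for the backward orbit to remain in one fundamental domain: fundamental domains are preimage components of $\WW$ and bear no a priori relation to the petal. In general the periodic address landing at $z_0$ along a given repelling direction has period larger than that of $z_0$ and mixes several fundamental domains, so the address cannot simply be prescribed in advance.

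The paper resolves exactly this difficulty by \emph{not} prescribing the address. Writing $B_i=\psi^i(P_v)$, it uses the blowing-up property of $J(f)$ to find $N$ with $f^{-N}(\zeta)\cap B_1\neq\emptyset$, and the shrinking of the preimage domains $V_n(\s)$ together with the fact that points leaving the repelling petal near $z_0$ must enter the attracting petals to show that if a fundamental tail $\tau_{n+1}(\s)$ meets $B_{n+1-N}$ then its length-$n$ prefix has tail meeting $B_{n-N}$. The finite sets $\S_n$ of such addresses are therefore nonempty and prefix-closed, K\"onig's lemma yields an infinite address whose pull-backs converge to $z_0$ along $v$, and the pigeonhole argument from Claim~4 of the proof of Theorem~\ref{thm:hyperboliclanding_abstract} upgrades this to a periodic address. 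You would need to supply an argument of this kind (or a genuine proof of your ``alignment'' claim in Fatou coordinates) for the proposal to constitute a proof.
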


 Let $f$ be as in the statement of the theorem. By passing to an iterate, we may assume that $f'(z_0)=1$.
    So $z_0$ is a multiple fixed point of $f$, say of multiplicity $m+1$ for $f$. Then there are $m$ 
   unit vectors $v_1\ldots v_n$, called 
   \emph{repelling directions} at $z_0$. Any backward orbit of $f$ converging to $z_0$ must asymptotically 
    converge to $z_0$ along one of these directions; see \cite[Lemma 10.1]{Mi}. 
    Similarly, there are $n$ attracting directions $w_n$ such that any forward orbit $(f^n(z))_{n=0}^{\infty}$ converging to $z_0$ 
    must converge to $z_0$ along one of these attracting directions $w_n$.

Let $U$  be a small simply connected neighborhood of $z_0$ on which $f$ is univalent,  
   and let $\psi\colon f(U)\to U$ be the branch of  $f^{-1}$ that fixes $z_0$.
  A \emph{petal} for an attracting (resp. repelling) direction $w$  (resp. ${v}$)  is an open  set $P\subset U$ containing $z_0$ on its boundary, such that 
\begin{enumerate}
\item $f(P)\subset P$ (resp. $\psi(P)\subset P$ );
\item an orbit  $z\ra f(z)\ra\ldots$ (resp. $z\ra \psi(z)\ra\ldots$) is eventually absorbed by $P$ if and only if it converges to $z_0$ from the direction $w$ (resp. $v$).
\end{enumerate}
  Petals for a given repelling or attracting direction are far from unique. 
    For each repelling direction $v$, we can choose a  repelling petal $P_v$ for $v$ which is  simply connected,  and such that  
    $\overline{\psi(P_v)} \subset P_v\cup\{z_0\}$ and 
    $\psi^n|_{P_v}\to z_0$ uniformly on $P_v$.
     Similarly, for each attracting direction $w$ we choose a simply connected attracting petal $P_w$ such that
    $f^n\to z_0$ uniformly on $P_w$. We furthermore require 
    that the union of these $n$ attracting and $n$ repelling petals forms a punctured neighborhood of $z_0$ 
   (see Definition 10.6 and Theorem 10.7 in \cite{Mi} and the subsequent discussion).

 \begin{defn}[Landing of filaments along a repelling direction]
   Let $\zeta\in \WW$, let $G_{\s}$ be a periodic filament of $f$, and let $v$ be a repelling direction at $z_0$. 
   We say that $G_{\s}$ \emph{lands at $z_0$ along $v$} 
    if the backwards orbit $(\zeta_n(\s))_{n=1}^{\infty}$  converges to $z_0$ along the direction $v$. 
 \end{defn}
We remark that it is not difficult to see that this is equivalent to requiring that
    $V_{n}(\s)\subset P_v$ for all sufficiently large $n$, where $V$ is as in Lemma~\ref{lem:preimagedomains}. In particular, the definition is
     independent of the choice of the base point $\zeta$.

The following establishes Theorem~\ref{thm:parabolic}.
 
\begin{prop}[Accessibility along repelling directions]\label{Parabolic accessibility}\label{prop:parabolicprecise} 
    Let $v$ be a repelling direction of $f$ at $z_0$. Then there is at least one periodic filament landing at~$z_0$ along~$v$.
\end{prop}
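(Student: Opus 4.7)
After passing to an iterate of $f$, I assume $f(z_0)=z_0$ and $f'(z_0)=1$, and I let $\psi$ be the inverse branch of $f$ fixing $z_0$. The plan is to adapt the combinatorial argument from the proof of Theorem~\ref{thm:hyperboliclanding_abstract}, with the contraction of $\psi$ on the repelling petal playing the role of the hyperbolic contraction there. Using the Leau--Fatou flower structure, I may choose the repelling petal $P_v$ so that $\overline{\psi(P_v)}\subset P_v\cup\{z_0\}$, so that $\psi^n\to z_0$ uniformly on $\overline{\psi(P_v)}$, and crucially so that $\psi$ is the \emph{only} branch of $f^{-1}$ mapping $P_v$ into itself.

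\textbf{Combinatorial structure.} Fix $\zeta\in\WW$, and let $V$ be the Jordan domain from Lemma~\ref{lem:boundeddomain} applied to the finite collection of fundamental domains meeting $\overline{D}$ or $\overline{\psi(P_v)}$. For a finite address $\s$ of length $n$, let $V_n(\s)$ be the preimage domain from Lemma~\ref{lem:preimagedomains}, and define $\mathcal{S}_n^v \defeq \{\s\text{ of length } n : V_n(\s)\subset \psi(P_v)\}$. Non-emptiness of $\mathcal{S}_n^v$ for large $n$ will follow from the blow-up property of the Julia set at $z_0\in J(f)$, combined with the shrinking of preimage domains from Corollary~\ref{cor:shrinking}. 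The key step will be to show that the shift map $\sigma\colon \mathcal{S}_{n+1}^v \to \mathcal{S}_n^v$ is a bijection for sufficiently large $n$. Surjectivity will follow from a $\psi$-pullback construction: for $\s'\in \mathcal{S}_n^v$, the image $\psi(V_n(\s'))\subset \psi^2(P_v)\subset \psi(P_v)$ is a preimage domain $V_{n+1}(G\s')$ for a unique fundamental domain $G$, giving $G\s' \in \mathcal{S}_{n+1}^v$. Injectivity will follow from the choice of $P_v$: any two preimages of $\s'$ in $\mathcal{S}_{n+1}^v$ would correspond to two distinct branches of $f^{-1}$ mapping a point of $P_v$ back into $P_v$, contradicting uniqueness of $\psi$.

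\textbf{Periodicity and landing.} Given bijectivity, $|\mathcal{S}_n^v|$ is finite and constant in $n\geq n_0$. Following the endgame of the proof of Theorem~\ref{thm:hyperboliclanding_abstract}, I would consider the self-map $\phi \defeq \pi_{n_0}\circ \sigma^{-n_0}\colon \mathcal{S}_{n_0}^v\to\mathcal{S}_{n_0}^v$, which has a periodic orbit on the finite set $\mathcal{S}_{n_0}^v$; concatenating the pattern along this orbit yields a periodic infinite address $\s^\infty$ of some period $p$. For this $\s^\infty$, the preimages $\zeta_n(\s^\infty)$ will lie in $\psi(P_v)$ for all $n\geq n_0$ and satisfy $\zeta_{n+p}(\s^\infty)=\psi^p(\zeta_n(\s^\infty))$, so by uniform contraction of $\psi^p$ on $\overline{\psi(P_v)}$ they converge to $z_0$ along $v$. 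Hence the periodic dreadlock $G_{\s^\infty}$ lands at $z_0$ along $v$, completing the proof.

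\textbf{Main obstacle.} The hard part will be the bijectivity of $\sigma$. In the hyperbolic setting (Claim~4 in the proof of Theorem~\ref{thm:hyperboliclanding_abstract}) this came from uniform hyperbolic contraction of inverse branches, which is unavailable in the parabolic case. Instead, the argument must rely on the Leau--Fatou flower structure: by choosing $P_v$ to lie within a suitably narrow sector of the flower at $z_0$, one can ensure that only $\psi$ among the inverse branches of $f$ can map $P_v$ back into $P_v$. Verifying this rigorously in the presence of fundamental domains not adapted to the flower structure, and checking that the resulting combinatorial argument closes to give bijectivity of $\sigma$ on the sets $\mathcal{S}_n^v$, will be the main technical challenge of the proof.
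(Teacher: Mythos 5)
Your proposal follows essentially the same route as the paper: the paper likewise sets $B_i=\psi^i(P_v)$, defines $\S_n$ as the finite addresses whose level-$n$ tails meet $B_{n-N}$, and then runs the combinatorial argument of Theorem~\ref{thm:hyperboliclanding_abstract} verbatim, with the uniform convergence $\psi^n\to z_0$ on the repelling petal replacing hyperbolic contraction. The one detail your sketch leaves implicit (under ``non-emptiness'') is where the flower structure actually enters: the paper chooses the petals with closures disjoint from $V\cup\WW$, so that the domains $V_n(\s)$ avoid the attracting petals, and then uses $\eps=\dist\bigl((\partial B_0)\setminus(\AA\cup\{z_0\}),\overline{B_1}\bigr)>0$ to conclude that a small $V_n(\s)$ meeting $B_1$ lies in $B_0$~--- whereas the bijectivity of $\sigma$ that you single out as the main obstacle resolves exactly as in the hyperbolic case, via univalence of $f$ on $\psi(P_v)$.
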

\begin{proof}
Let $\zeta\in\WW$ and let $V$ be as in Lemma \ref{lem:preimagedomains}, with $\FF$ once again the finite collection of fundamental domains
    whose closure intersects $\overline{D}$. Since $\overline{V}\subset \C\setminus \P(f)$ and 
     $z_0\in\P(f)\subset D$, we may assume that the repelling petals $P_v$ and 
     attracting petals $P_w$ chosen above all have closures disjoint from $V\cup \WW$. 

Let us define $B_i\defeq \psi^i(P_v)$ for $i\geq 0$.
   Let $\AA$ be the union of the attracting petals $P_w$. Since the union of attracting and repelling petals is a punctured neighbourhood of $z_0$,
   all points of $\partial B_0$ that are sufficiently close to $z_0$ must lie in $\AA\cup\{z_0\}$. So
    $\partial B_0 \setminus (\AA\cup \{z_0\})$ is a compact set disjoint from $\overline{B_1}$, and 
     \[ \eps \defeq \dist\bigl( (\partial B_0)\setminus (\AA \cup \{z_0\}), \overline{B_1} \bigr) > 0.\]

 Since $B_1$ intersects $J(f)$, there is an $N_1$ such that $f^{-n}(\zeta)\cap  B_1\neq \emptyset$ for $n\geq N_1$. 
   In particular, 
    there exists some finite external address of length $n$ such that $\tau_n(\s)\cap B_1\neq \emptyset$. By Lemma~\ref{lem:Euclideanshrinking}, there is
    $N\geq N_1$ such that, for all $n\geq N$ and all infinite external addresses $\s$ with $V_n(\s)\cap \overline{B_0}\neq \emptyset$, 
    $\diam V_n(\s) < \eps$ whenever $n\geq N$. 
    Observe that $V_n(\s)\cap \AA=\emptyset$ by our choice of petals. In particular, if $n\geq N$ and
    $V_n(\s)\cap B_1\neq\emptyset$, then $V_n(\s)\cap \partial B_0 = \emptyset$, and hence
     $V_n(\s)\subset B_0$. 

  As in the proof of Theorem~\ref{thm:hyperboliclanding_abstract}, for $n\geq N$ we define $\mathcal{S}_n$ 
    to consist of those finite external addresses of length $n$ for which $\tau_n(\s)$ intersects 
    $B_{n-N}$. 
    The remainder of the proof then proceeds analogously.
\end{proof}
 
In the case that all periodic filaments are hairs (for example, if $f$ is criniferous), 
   our Proposition~\ref{Parabolic accessibility} is a corollary of the Main Theorem in \cite{BF15} (since the hypothesis  that periodic rays land  is implied by assuming bounded postsingular set), with a completely different proof.  
   We remark that it is plausible that the results of \cite{BF15} can also be extended to non-criniferous functions, using    filaments instead of hairs.

\begin{proof}[Proof of Theorem~\ref{thm:DHfilaments}]
  That every periodic filament lands at a repelling or parabolic point was proved in Theorem~\ref{thm:landingperiodic}. Let $z_0$ be a repelling or parabolic point. If $z_0$ is repelling,
   then the orbit of $z_0$ is a hyperbolic set, and it follows from Theorem~\ref{thm:hyperboliclanding_abstract} that $z_0$ is the landing point of a periodic filament.
   If $z_0$ is a parabolic point, then this fact follows from Theorem~\ref{thm:parabolic}. By Lemma~\ref{lem:One for all} there are only finitely many  filaments landing at $z_0$ and they  are all  periodic of the same period.
\end{proof}

\section{Filaments landing together at points in a hyperbolic set}\label{sec:classS}

Recall from Theorem~\ref{thm:DHfilaments} that, for a repelling periodic point $z_0$ of a postsingularly bounded function $f$, the number of filaments landing at $z_0$ is finite. 
  In the polynomial case, this holds also for every point $z_0$ in a hyperbolic set $K$ of $f$.   
   It is plausible that this remains true also in the transcendental entire case.
    For postsingularly bounded exponential maps, the claim is proved in  \cite[Proposition~4.5]{BL14}, where
    it is proved that the number of hairs in question is even \emph{uniformly} bounded
    (depending on $K$). However, the proof uses the fact that postsingularly bounded exponential maps
    are non-recurrent, and hence the postsingular set is itself a hyperbolic set.

Here we shall be content with proving that the number of filaments of a postsingularly bounded function
    $f$ landing
   at a given point of a hyperbolic set is (pointwise) finite, in the important special case where 
    $f$ belongs to the \emph{Speiser class}; i.e.,\ the set of singular values $S(f)$ is finite.

\begin{thm}[Finitely many filaments landing together]\label{thm:boundedaddresses}
  Let $f$ be a postsingularly bounded entire function with finitely many singular values.
    Suppose that 
     $z_0\in J(f)\setminus I(f)$ is neither a Cremer periodic point nor a preimage of such.
     Then the number of bounded-address filaments $G_{\s}$ landing at $z_0$ is finite.
\end{thm}
\begin{remark}[Remark~1]
   The assumption that
   $f$ is postsingularly bounded implies, via Theorem~\ref{thm:DHfilaments} and Lemma~\ref{lem:One for all}, that 
   one can restrict to the case where $z_0$ is not (pre-)periodic. In addition, this hypothesis and the
   restriction to bounded addresses $\s$ ensure that
   we can speak about the filaments $G_{\s}$ and their landing properties at all. (Recall Remark~\ref{rmk:nounboundedatperiodic}.) 
   However, the argument can be applied also in more general circumstances.
   For example, the same proof can be used to show the following: if $S(f)$ is finite 
   (but the postsingular set is not necessarily bounded), 
   and $z_0\in J(f)\setminus I(f)$ is not periodic and also
   is the landing point of at least one bounded-address hair, 
   then the number of hairs landing at $z_0$ is finite, and all of them have bounded
   addresses.
\end{remark}
\begin{remark}[Remark~2]
The assumption that $z_0\notin I(f)$ is made to avoid complications in the
  case where $z_0$ itself belongs to one of the filaments landing at $z_0$. An escaping point in
  a bounded-address filament cannot in fact be accessible by the same or another filament, due to the
  presence of other filaments accumulating on it from both sides; recall the proof of Corollary~\ref{cor:hairaccumulation}, and compare~\cite[Theorem~2.3]{arclike}.
  Assuming this fact, the assumption that $z_0\notin I(f)$ could be omitted.
\end{remark}

\begin{cor}[Finiteness of filament portraits at hyperbolic sets]
  Let $f$ be a postsingularly bounded entire function  with finitely many singular values. If $K$ is a hyperbolic set for $f$, then
    every point $z_0\in K$ is the landing point of at least one and at most finitely many bounded-address filaments.  
\end{cor}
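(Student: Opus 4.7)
The plan is to combine Theorem~\ref{thm:hyperboliclanding_abstract} with Theorem~\ref{thm:boundedaddresses} and Lemma~\ref{lem:One for all} by case-splitting on the dynamical type of $z_0\in K$. Existence of at least one bounded-address dreadlock landing at every $z_0\in K$ is immediate from part~\ref{item:continuouslanding} of Theorem~\ref{thm:hyperboliclanding_abstract}, which furnishes a continuous surjection $\mathcal{S}_K\to K$ with $\mathcal{S}_K$ consisting of bounded addresses.

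For finiteness, if $z_0$ is neither periodic nor preperiodic, Theorem~\ref{thm:boundedaddresses} applies directly, using the landing dreadlock provided by the existence part. If $z_0$ is periodic, then part~\ref{item:accessibleperiodic} of Theorem~\ref{thm:hyperboliclanding_abstract} guarantees a periodic-address dreadlock landing at $z_0$, and Lemma~\ref{lem:One for all} yields the required finite bound.

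It remains to handle the case of strictly preperiodic $z_0$, with $f^k(z_0)$ periodic for some minimal $k\geq 1$. Any bounded-address dreadlock $G_{\s}$ landing at $z_0$ maps under $f^k$ to the bounded-address dreadlock $G_{\sigma^k(\s)}$ landing at $f^k(z_0)$, and by the periodic case only finitely many addresses $\s'=\sigma^k(\s)$ can arise. For each fixed such $\s'$, the address $\s=F_0 F_1\dots F_{k-1}\s'$ is determined by choosing its first $k$ entries. Since $G_{\s}\subset \tau_1(\s)=F_0$, landing at $z_0$ forces $z_0\in\overline{F_0}$, and applying $f^j$ yields $f^j(z_0)\in\overline{F_j}$ for $0\leq j<k$. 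Assuming $\delta$ is chosen piecewise analytic (which is harmless by the second remark after Lemma~\ref{lem:crosscuts}), each point of $\C$ lies in the closure of only finitely many fundamental domains; thus each $F_j$ admits only finitely many admissible choices, and the total count of addresses landing at $z_0$ is finite.

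The main obstacle is the preperiodic case, which is not covered verbatim by Lemma~\ref{lem:One for all}. The lifting argument above is elementary but relies on the combinatorial observation that each point of $\C$ lies in the closure of only finitely many fundamental domains. As an alternative, one could argue that hyperbolicity of $K$ precludes any iterate $f^j(z_0)$ from being critical, so $f^k$ is a local biholomorphism at $z_0$; combined with the disjointness of dreadlocks at distinct addresses (Lemma~\ref{lem:addressproperties}) this would force $\sigma^k$ to be finite-to-one on the set of addresses of bounded-address dreadlocks landing at $z_0$. The combinatorial route above is more self-contained and avoids the need to analyse the local structure of the image dreadlock near its landing point.
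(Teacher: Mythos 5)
Your decomposition into the non-preperiodic, periodic, and strictly preperiodic cases matches the structure of the paper's (very terse) proof, which cites Theorem~\ref{thm:mainhyperbolic} together with Theorem~\ref{thm:boundedaddresses} for non-periodic points and Lemma~\ref{lem:One for all} for periodic ones. You are right to flag the strictly preperiodic case as needing separate attention, since Theorem~\ref{thm:boundedaddresses} excludes preperiodic points and Lemma~\ref{lem:One for all} requires a periodic-address dreadlock landing at the point; the existence part and the periodic case in your write-up are correct and identical to the paper's.

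However, your argument for the preperiodic case has a genuine error. The claim that $G_{\s}\subset \tau_1(\s)=F_0$, so that landing at $z_0$ forces $z_0\in\overline{F_0}$, is false. By Definition~\ref{defn:dreadlock}, a point $z\in G_{\s}$ only satisfies $z\in\unbdd{\tail}_{n+1}(\s)$ for all \emph{sufficiently large} $n$, depending on $z$, and high-level fundamental tails are not contained in low-level ones~--- only their sufficiently large points are (Observation~\ref{Obs:Nested tails}). The landing point of a bounded-address dreadlock has bounded orbit and here lies in $K\subset D$, whereas $F_0$ is a component of $f^{-1}(\WW)$; there is no reason for $z_0$ to lie in $\overline{F_0}$ (think of a hair of an exponential map landing at a point of large negative real part while its address may begin with any fundamental domain in the far right half-plane). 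So the first $k$ entries of $\s$ are not constrained by the positions of $z_0,\dots,f^{k-1}(z_0)$ in the way you claim, and the count collapses. Your alternative sketch does not close the gap either: injectivity of $f^k$ near $z_0$ does not prevent infinitely many pairwise disjoint dreadlocks from accumulating at $z_0$ while all being mapped by $f^k$ onto the same $G_{\s'}$; their images would simply be disjoint pieces of $G_{\s'}$ accumulating at $f^k(z_0)$. The correct way to bound the admissible initial entries is Lemma~\ref{lem:compactaddresses}, which is precisely where the hypothesis $\# S(f)<\infty$ enters: it produces a finite set $\FF_2$ containing every entry of every bounded address whose dreadlock lands at $z_0$. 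Since each such address has $\sigma^k$-image in the finite set of periodic addresses landing at $f^k(z_0)$, it is determined by its first $k$ entries (finitely many choices from $\FF_2$) together with that image, and finiteness follows.
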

\begin{proof}
   By definition, a hyperbolic set contains no Cremer periodic points, their preimages, or escaping points. Hence this is a combination of Theorems~\ref{thm:mainhyperbolic} and~\ref{thm:boundedaddresses}.
\end{proof}

 We now fix a postsingularly bounded entire function  $f$ with $\# S(f)<\infty$ for the remainder of the section.
The key property that we need to establish in the proof of Theorem~\ref{thm:boundedaddresses} 
  is that the addresses of filaments landing at $z_0$ are uniformly bounded, in the sense that they all take their entries from a common finite family of fundamental domains. This    is the content of the following lemma.
\begin{lem}\label{lem:compactaddresses} Let $\FF_1$ be a finite collection  of fundamental domains for $f$.  Then there exists another  finite collection $\FF_2\supset \FF_1$ of fundamental domains such that the following holds. Suppose that $ \s^1$ takes  only entries from $\FF_1$ and that $G_{\s^1}$ lands at a non-escaping point $z_0\in\C$. If $\s^2$ is bounded and $G_{\s^2}$ also lands at $z_0$, then 
    all entries of $\s^2$ belong to $\FF_2$. 
\end{lem}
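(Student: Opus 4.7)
The plan is to reduce the statement to controlling only the first entry $F_0^2$ of $\s^2$, and then iterate using the shift. The first-entry constraint is easy to extract: since $\tau_n(\s^2)\subset F_0^2$ for every $n\geq 1$ (as the fundamental tails are nested inside their starting fundamental domain, being iteratively pulled back by $(f|_{F_0^2})^{-1}$), we have $\zeta_n(\s^2)\in F_0^2$ and hence $z_0=\lim\zeta_n(\s^2)\in\overline{F_0^2}$. Therefore $F_0^2$ belongs to the locally finite collection of fundamental domains whose closures contain $z_0$. Setting
\[ \Lambda_1 \defeq \{z_0 \in \C : z_0 \text{ is the landing point of } G_{\s^1} \text{ for some address } \s^1 \text{ with entries in } \FF_1\}, \]
if we can show $\Lambda_1\subset B_1$ for some bounded $B_1\subset\C$ depending only on $\FF_1$, then Lemma~\ref{lem:preimageK} produces the required finite collection
\[ \FF_2 \defeq \FF_1 \cup \{F \text{ fundamental domain}: \overline{F}\cap\overline{B_1}\neq\emptyset\}, \]
and the constraint $z_0\in\overline{F_0^2}$ forces $F_0^2\in\FF_2$.

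The iteration step is then straightforward: for every $k\geq 0$, both $\sigma^k(\s^1)$ and $\sigma^k(\s^2)$ are bounded addresses whose dreadlocks land at $f^k(z_0)\in\Lambda_1\subset B_1$, and the entries of $\sigma^k(\s^1)$ are still in $\FF_1$. Applying the first-entry argument to $\sigma^k(\s^2)$ yields $F_k^2\in\FF_2$ for every $k$, so all entries of $\s^2$ lie in $\FF_2$ as required.

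The main obstacle is establishing the boundedness of $\Lambda_1$, and this is where the Speiser hypothesis $\#S(f)<\infty$ enters essentially. On the hyperbolic domain $\Omega=\C\setminus\P(f)$, the inverse branches of $f$ uniformly contract the hyperbolic metric by Proposition~\ref{prop:expansion}. For any $\s^1$ with entries in $\FF_1$, the backward orbit $\zeta_n(\s^1)\to z_0$ is built by iteratively applying branches from the finite family $\{(f|_F)^{-1}:F\in\FF_1\}$, starting from the base point $\zeta\in\WW$. The first preimage $\zeta_1(\s^1)=\zeta_{F_0^1}$ lies in the finite bounded set $\{\zeta_F:F\in\FF_1\}$, and uniform hyperbolic contraction combined with Koebe distortion, and with the shrinking of the preimage domains $V_n(\s^1)$ from Lemma~\ref{lem:preimagedomains} applied to $\FF=\FF_1$, should confine the subsequent $\zeta_n(\s^1)$, and hence their limit $z_0$, to a bounded Euclidean region depending only on $\FF_1$. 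The delicate point is converting hyperbolic bounds into Euclidean ones, since the hyperbolic density of $\Omega$ degenerates both near $\partial\Omega$ and near $\infty$; one expects to combine this with the preimage-chain structure of Lemma~\ref{lem:preimagedomains} to localize $\Lambda_1$ in terms of the compact set $\P(f)$ and $\FF_1$.
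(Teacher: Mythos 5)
Your reduction to the first entry of $\s^2$, followed by iteration under the shift, matches the paper's strategy in outline; but the mechanism you use to pin down $F_0^2$ rests on a false claim. Fundamental tails are \emph{not} nested inside their initial fundamental domain: the inverse of $f\colon\tau_{n+1}(\s)\to\tau_n(\sigma(\s))$ is not a restriction of $(f|_{F_0})^{-1}$, because $\tau_n(\sigma(\s))$ is not contained in $\WW=f(F_0)$. Only the unbounded parts of the tails are nested (Observation~\ref{Obs:Nested tails}); the correct inclusion is $\tau_{n+1}(\s)\subset\tau_n(\s)\cup V_n(\s)$ (Lemma~\ref{lem:preimagedomains}), and the bounded ``excursions'' $V_n(\s)$ can leave $F_0$ entirely. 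Consequently $\zeta_n(\s^2)$ need not lie in $F_0^2$, and the landing point need not lie in $\overline{F_0^2}$: for instance, any repelling periodic point $z_0$ with $f(z_0)\in D$ lies in no $\overline{F}$ (since $f(\overline{F})\subset\C\setminus D$ for every fundamental domain $F$), yet it is the landing point of a periodic dreadlock by Theorem~\ref{thm:DHdreadlocks}. So even granting that the set of landing points is contained in a bounded set $B_1$ (which you only sketch), this does not restrict $F_0^2$ to the finitely many fundamental domains meeting $\overline{B_1}$: a priori, dreadlocks escaping to infinity through infinitely many different fundamental domains could all land at the same point.

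That is exactly the difficulty the lemma addresses, and it is where the hypothesis $\#S(f)<\infty$ must enter --- not in bounding the set of landing points, which would only use postsingular boundedness. The paper controls the first entry of $\s^2$ topologically: it represents $G_{\s^1}$ and $G_{\s^2}$ by disjoint arcs from $z_0$ to $\infty$, observes that pairwise disjointness forces the resulting dreadlock pairs into finitely many homotopy classes relative to the finite set $S(f)$ and the cut $\delta$ (Lemma~\ref{lem:finitehomotopy}), and then applies the homotopy lifting property (Observation~\ref{obs:homotopylifting}): the fundamental domain through which the lift of $\gamma_{\sigma(\s^2)}$ escapes is determined by the first entry of $\s^1$ and the homotopy class, with a separate finite analysis when $f(z_0)\in S(f)$. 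Your proposal uses neither the disjointness of dreadlocks nor the finiteness of $S(f)$ at the critical step, so the gap is not repairable within the framework you set up.
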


Let us suppose for a moment that the function  $f$ is {criniferous}. Then the idea  of the proof of  
   Lemma~\ref{lem:compactaddresses} can be described as follows. If $G_{\s^1}$ and $G_{\s^2}$ 
   land together at a point $z_0$, the filaments $G_{\sigma(\s^1)}$ and $G_{\sigma(\s^2)}$ 
   also land together at $f(z_0)$, by continuity of $f$. There is a branch $\phi$ 
    of the inverse of $f$ on the hair $G_{\sigma(\s^1)}$ that maps it to $G_{\s^1}$. The 
     curve
      $G_{\s^1}\cup \{z_0\}\cup G_{\s^2}$
     is then obtained by analytic continuation of $\phi$ 
     along the image curve
     $G_{\sigma(\s^1)}\cup \{f(z_0)\} \cup G_{\sigma(\s^2)}$. 
     For this reason, the homotopy class of the latter curve in  $\C\setminus S(f)$, 
     together with the 
     first entry of $\s^1$, essentially determines 
      the first entry of $\s^2$.
      As different pairs of hairs
      landing at the same point are 
      disjoint, and $S(f)$ is finite, there are only
      finitely many possible such homotopy classes. The claim follows. 
  In order to make this argument precise in the general case, i.e.\ where the filaments are not necessarily hairs,
   we should clarify what we mean by ``homotopy classes''.
   Let us fix
   the postsingularly bounded function $f$ with finite singular set for the remainder of the section.
   
Let $\Gamma$ be the class of continuous curves $\gamma\colon \R\to \C\setminus S(f)$
   that tend to
   infinity within  $\WW$ in both directions. 
   We shall say that such curves 
    $\gamma_1$ and $\gamma_2$ are \emph{homotopic (in $\Gamma$)} if they 
     are homotopic (relative to their endpoints at infinity) 
     in $\C\setminus (S(f)\cup \tilde{\delta})$, for some infinite piece $\tilde{\delta}$ of the
    curve $\delta$ used in the definition of fundamental domains.

  Similarly, let $\tilde{\Gamma}$ denote the set of curves connecting a finite endpoint $z_0\in\C$ 
     (possibly belonging to $S(f)$) to infinity
    within  $\C\setminus S(f)$, again tending to infinity within  $\WW$. Then we 
    analogously define
    homotopy classes for curves in $\tilde{\Gamma}$ having the same endpoint. 

We can now introduce a convenient notion for homotopy classes
    of bounded-address filaments. Suppose that $\s$ is a bounded external address,
      and that the filament $G_{\s}$ lands at a point $z_0\in\C\setminus G_{\s}$. 
      Then there is an infinite piece $\tilde{\delta}$ of $\delta$ not intersecting $G_{\s}$. 
      It follows that there is a  Jordan curve $J$, passing through infinity, that 
      separates $\overline{G_{\s}}$ from $\tilde{\delta}$ and all of the finitely many points
      of $S(f)\setminus \{z_0\}$. Let $\gamma$ be an arc connecting $z_0$ 
      to infinity in the connected
      component $V$ of $\C\setminus J$ containing $z_0$. The 
      \emph{homotopy class of $G_{\s}$} is the homotopy class of
      $\gamma$ in $\tilde{\Gamma}$, as defined above.
     
  Note that this homotopy class depends only on $\s$. Indeed, suppose that 
   $\tilde{V}$ is a second domain as above, and $\tilde{\gamma}\subset \tilde{V}$ connects
   $z_0$ to infinity. Since $\overline{G_{\s}} \subset V \cap \tilde{V}\eqdef U$, 
   this open set $U$ contains a curve $\alpha$ connecting $z_0$ to infinity. 
    (See e.g.\ \cite[Lemma~A.1]{Re08}.) Since $V$ is simply connected,
     $\alpha$ is homotopic to $\gamma$ in $V$, and hence in 
    $\tilde{\Gamma}$. For the same reason, $\alpha$ is homotopic to $\tilde{\gamma}$.
    
\begin{obs}[Disjoint curves representing homotopy classes]
  Let $\mathcal{S}_1,\dots, \mathcal{S}_n$ be finitely many different bounded 
   external addresses, such that each $G_{\mathcal{S}_j}$ lands at a non-escaping point $z_j\in\C$
   for all $j$. (We do not assume that all $z_j$ are distinct.) 
   Then there exists a collection $(\gamma_j)_{j=1}^n$ of arcs to infinity, 
   with $\gamma_j$ in the homotopy class of $G_{\s}$, such that these
   arcs are pairwise disjoint apart from common endpoints.
\end{obs}     
\begin{proof}
  Similarly 
    as above, we can find a finite collection of Jordan curves $(J_{\ell})_{\ell=1}^{m}$,
    disjoint from $\tilde{\delta}\cup \bigcup_{j=1}^n \overline{G_{\mathcal{S}_j}}$, such that 
    any two distinct landing points $z_{j_1}$ and $z_{j_2}$ are separated by
    some $J_{\ell}$. (Here, as above, $\tilde{\delta}$ is an infinite piece of the curve
    $\delta$ that does not intersect any of the filaments under consideration.)
    
    Let $V_j$ be the connected component of $\C\setminus \bigcup_{\ell=1}^m J_{\ell}$
    containing $z_j$. We can choose the curve $\Gamma$ in the definition of the
    homotopy class of 
    $G_{\mathcal{S}_j}$ in such a way that $\Gamma$ additionally separates $z_j$ from
    $\partial V_j$. This shows that the $\gamma_j$ may be chosen disjoint, 
    except possibly for those having a common endpoint. But any curves with a 
    common endpoint belong to the same $V_j$, and therefore can also be moved by
    homotopy within the simply connected domain $V_j$ to be disjoint, except at that
    endpoint. This completes the proof.
\end{proof}

 If two bounded-address
  filaments $G_{\s^1}$ and $G_{\s^2}$ land at a common non-escaping
  point $z_0$,
  we shall refer to these two filaments 
  as a \emph{filament pair}. If $z_0\notin S(f)$, then we can form a curve in
   $\Gamma$ by combining two arcs $\gamma_1$ and $\gamma_2$, in the 
   homotopy class of $G_{\s^1}$ and $G_{\s^2}$, respectively. The 
   corresponding homotopy class is called the \emph{homotopy class of the filament
   pair}. 

\begin{lem}[Finitely many homotopy classes]\label{lem:finitehomotopy}
  There are only finitely many different homotopy classes of filament pairs 
    not landing at singular values. 

   Similarly, for any $z_0\in\C$, there are only finitely many homotopy classes of filaments landing at $z_0$. 
\end{lem}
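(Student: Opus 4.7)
The plan is to reduce both statements to a classical fact from planar topology: in a Jordan domain with finitely many punctures, there are only finitely many homotopy classes of pairwise disjoint simple loops based at a common boundary point (and only finitely many of pairwise disjoint simple arcs with common endpoints).

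First I would fix a sub-arc $\tilde\delta\subset\delta$ terminating at $\infty$, so that $Y\defeq\hat{\C}\setminus(\tilde\delta\cup\{\infty\})$ is a Jordan domain on the Riemann sphere. Conformally $Y\cong\D$, with $\infty$ corresponding to a boundary point $p_0\in\partial\D$ and $S(f)\subset Y$ becoming a finite set $\tilde{S}\subset\D$. By pushing $\tilde\delta$ sufficiently far out along $\delta$, any homotopy that arises in the definition of homotopy classes of dreadlock pairs can be realised inside $Y\setminus S(f)\cong\D\setminus\tilde{S}$.

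Next, suppose for contradiction that there exist $N$ pairwise non-homotopic dreadlock pairs not landing at singular values, for arbitrarily large $N$. By the Lemma and Definition preceding the statement, we may choose the $2N$ representative curves $\gamma_{\s_i^1},\gamma_{\s_i^2}$ (for $i=1,\dots,N$) to be pairwise disjoint except at shared endpoints, and to avoid $\tilde\delta$. Each composite $\eta_i\defeq\gamma_{\s_i^1}\cup\{z_0^i\}\cup\gamma_{\s_i^2}$ is then a simple arc in $\C\setminus S(f)$ from $\infty$ to $\infty$, and under the conformal identification $Y\to\D$ becomes a simple loop $\tilde\eta_i\subset\overline{\D}$ based at $p_0$, bounding a Jordan subdomain $U_i\subset\D$. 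The crucial topological fact is that two such pairwise disjoint simple based loops are homotopic in $\D\setminus\tilde{S}$ relative to $p_0$ if and only if $U_i\cap\tilde{S}=U_j\cap\tilde{S}$ (with matching orientations); this is immediate from the winding-number description of $\pi_1(\D\setminus\tilde{S},p_0)$, together with the fact that two disjoint simple loops either bound an annular region or bound disjoint bigons meeting only at $p_0$. Since there are at most $2^{|S(f)|+1}$ signed subsets of $\tilde{S}$, we conclude that $N$ is bounded, contradicting the assumption.

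For the second statement, about dreadlocks landing at a fixed $z_0\in\C$, the same setup applies to arcs from $z_0$ to $\infty$: pairwise disjoint representatives lift to pairwise disjoint simple arcs in $\overline{\D}$ from a common interior point $\tilde z_0$ to the boundary point $p_0$. These arcs are cyclically ordered at $\tilde z_0$, and between two cyclically consecutive arcs lies a Jordan region in $\D$ which must contain at least one point of $\tilde{S}$ if the two arcs are not homotopic; hence the number of homotopy classes is at most $|S(f)|+1$. I expect the main obstacle to be the careful handling of configurations in which several dreadlock pairs share a common landing point $z_0^i$, so that the corresponding loops $\tilde\eta_i$ meet not only at $p_0$ but also at this interior point of $\D$; in such cases one would argue locally, excising a small disc around the shared landing point and comparing the arcs there in a cyclic order, after which the enclosed-punctures classification applies verbatim to the remaining configuration.
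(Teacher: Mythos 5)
Your proposal is correct and follows essentially the same route as the paper's proof: both arguments rest on the fact that the representative curves of distinct dreadlock pairs are pairwise disjoint simple arcs (away from shared endpoints), so that their homotopy classes are determined by the finitely many possibilities for the enclosed subset of $S(f)$ together with an orientation/side-of-$\delta$ bit, with the nesting-or-disjoint-interiors trichotomy doing the real work. The only imprecision is the appeal to a ``winding-number description of $\pi_1(\D\setminus\tilde{S},p_0)$'' --- winding numbers describe $H_1$, not the free group $\pi_1$ --- but this is harmless since the disjointness/bigon argument you give alongside it is the correct justification.
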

\begin{proof}
  The curves representing the homotopy class of two different filament pairs 
    are disjoint, except for the endpoints at infinity, and possibly a single additional point
    (if the filament pairs land at the same point). Also recall that neither curve self-intersects.
    It follows that, if both curves wind around the same
    collection of singular values in positive orientation, and both either surround or do not surround an infinite 
     piece of $\delta$, they represent the same homotopy class. 
    As there are only finitely many singular values, the set of homotopy classes is finite.
 
   The second claim follows in the same manner. 
\end{proof}

The following is immediate from the homotopy lifiting property.
\begin{obs}[Connecting fundamental domains]\label{obs:homotopylifting}
  Let $\gamma\in \Gamma$. 
   Suppose that $F$ is a fundamental domain, and let $\tilde{\gamma}\colon (-\infty,\infty)\to\C\setminus f^{-1}(S(f))$
     be the unique lift of $\gamma$ under $f$ such that $\tilde{\gamma}(-t)\in F$ for  all sufficiently large $t$.
     Then there is a fundamental domain  $\tilde{F}$ such that $\tilde{\gamma}(t)\in\tilde{F}$ for large $t$, 
     and $\tilde{F}$ depends only on $F$ and the homotopy class of $\gamma$ 
      in $\Gamma$. 

   Similarly, let $\gamma\in\tilde{\Gamma}$ connect a finite point $z_0\in\C$ to $\infty$.
    If $F$ is a fundamental domain, and 
    $\tilde{\gamma}$ is the lift of $\gamma$ under $f$ that tends to infinity within $F$, 
    then the finite endpoint $w_0$ of $\tilde{\gamma}$ depends only on $F$ and the 
    homotopy class of  $\gamma$ in $\tilde{\Gamma}$. 
\end{obs}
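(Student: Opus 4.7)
The plan is to apply the standard homotopy lifting property to the (unbranched) covering map
\[ f\colon \C\setminus f^{-1}(S(f))\longrightarrow \C\setminus S(f), \]
which is indeed a covering by the defining property of $S(f)$. Under this perspective, the claim reduces to verifying that, given the initial fundamental domain $F$ on the negative end, the positive end of the lift is pinned to a single fundamental domain $\tilde F$, and that $\tilde F$ is invariant under homotopy of $\gamma$ relative to $S(f)$ and $\delta$.

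For the first half of the observation, I would fix representatives $\gamma^0,\gamma^1\in \Gamma$ of a common homotopy class, together with a homotopy $H\colon \R\times[0,1]\to \C\setminus(S(f)\cup \tilde\delta)$ between them relative to endpoints at infinity, where $\tilde\delta$ is an appropriate infinite subarc of $\delta$. Let $\tilde\gamma^0$ be the lift of $\gamma^0$ stipulated in the statement, and lift $H$ uniquely to a continuous $\tilde H\colon \R\times[0,1]\to \C\setminus f^{-1}(S(f))$ with $\tilde H(\cdot,0)=\tilde\gamma^0$. The key observation is that ``relative to endpoints at infinity'' implies a uniform escape statement: there exists $T>0$ such that $H(t,s)\in \WW$ for all $|t|\geq T$ and all $s\in[0,1]$ (using also that the image of $H$ avoids $\tilde\delta\subset\delta$). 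Consequently $\tilde H$ maps each of the connected half-strips $[T,\infty)\times[0,1]$ and $(-\infty,-T]\times[0,1]$ into $f^{-1}(\WW)$; since fundamental domains are precisely the connected components of $f^{-1}(\WW)$, continuity of $\tilde H$ forces the image of each half-strip to lie in a single fundamental domain. On the negative half this is $F$ by the initial condition at $s=0$, and on the positive half we obtain the desired $\tilde F$, depending only on $F$ and the homotopy class of $\gamma$; in particular $\tilde\gamma^1(t)\in\tilde F$ for large positive $t$.

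The second assertion, for $\gamma\in\tilde\Gamma$, follows from essentially the same argument, with the negative end replaced by the finite endpoint $z_0$. A homotopy relative to $z_0$ lifts to a homotopy of paths sharing the unique preimage $w_0\in f^{-1}(z_0)$ prescribed by $\tilde\gamma$, by uniqueness of path lifting with a specified basepoint; the positive end is handled exactly as before. When $z_0\in S(f)$ the covering map does not extend to $z_0$, but the assumed existence of a finite endpoint of $\tilde\gamma$ forces the lift to extend continuously at the basepoint, and this extension is locally constant in the homotopy parameter since $f^{-1}(z_0)$ is discrete. The main (mild) technical point throughout will be to formalize ``relative to endpoints at infinity'' so as to extract the uniform-escape statement above; I anticipate no substantive obstacle beyond this routine verification.
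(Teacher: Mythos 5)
Your argument is correct and is exactly the route the paper takes: the paper offers no written proof at all, simply declaring the observation "immediate from the homotopy lifting property" for the covering $f\colon \C\setminus f^{-1}(S(f))\to\C\setminus S(f)$, which is precisely what you carry out (lifting the homotopy, using uniform escape into $\WW$ near the ends, and the connectedness of the half-strips to pin down the fundamental domain, respectively the discreteness/local structure of $f^{-1}(z_0)$ to pin down the finite endpoint). Your write-up supplies more detail than the paper itself.
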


\begin{proof}[Proof of Lemma~\ref{lem:compactaddresses}]
Let $\tilde{\FF}_2$ consist of all domains $\tilde{F}$ as in Observation~\ref{obs:homotopylifting},
    where $F$ ranges over the finitely many elements of $\FF_1$, and the homotopy class of  $\gamma$ ranges over
    the finitely many homotopy classes of filament pairs of $f$. 

   Now suppose that $G_{\s^1}$ and $G_{\s^2}$ form a filament pair, with $F^1_0\in \FF_1$.
     Let $z_0$ be the common landing point of the two filaments. If 
      $f(z_0)\notin S(f)$, then it follows from Observation \ref{obs:homotopylifting} (applied to 
      the curve $\gamma_{\sigma(\s^1)}\cup \{f(z_0)\} \cup \gamma_{\sigma(\s^2)}$) that
       $F^2_0\in \tilde{\FF}_2$. 

    On the other hand, suppose that $s=f(z_0)\in S(f)$. Then, by 
      Observation~\ref{obs:homotopylifting}, $z_0$ depends only on the homotopy class of $\gamma_{\sigma(\s^1)}$, 
       and the entry $F^1_0$. Hence, for each singular value $s$, there are only finitely many possible preimages
       $z_0$ that can arise as landing points of filaments whose first entry is in $\FF_1$.

     Consider such $z_0$, and the curve $\gamma = \gamma_{\sigma(\s^2)}\in\tilde{\Gamma}$ connecting $f(z_0)$ to $\infty$.
      Then $\gamma$ has $d$ different lifts starting at $z_0$, where $d$ is the local degree of $f$ at $z_0$,
      tending to infinity within fundamental domains $\tilde{F}_1,\dots,\tilde{F}_d$. This collection 
      of fundamental domains
      depends only on the homotopy class of $\gamma$ by Observation~\ref{obs:homotopylifting}. 
      In particular, there is a collection $\FF(z_0)$  of at most $m\cdot d$ fundamental domains,
       where $m$ is the (finite) number of homotopy classes of filaments connecting $s$ to $\infty$,
       such that $F^2_0\in \FF(z_0)$ whenever $\s^2$ is as above. 

     Recall that there are only finitely many singular values $s$, and for each of these only finitely many
       preimages $z_0$ as above. Thus we can add the finitely many sets $\FF(z_0)$ to $\tilde{\FF}_2$ to
       obtain a set $\FF_2$ with the desired property.
\end{proof}

\begin{proof}[Proof of Theorem~\ref{thm:boundedaddresses}]
  If $z_0$ is (pre-)periodic, then by assumption $f^n(z_0)$ is a repelling or 
  parabolic periodic point for some $n\geq 0$. As remarked above, in this case
   the conclusion of the theorem holds by 
     Theorem~\ref{thm:DHfilaments} and Lemma~\ref{lem:One for all}. Hence we can
      assume that $z_0$ is not a pre-periodic point.

  Since $f$ is postsingularly bounded, every orbit of $f$ passes through only finitely many critical points. Indeed, points with unbounded orbits cannot go through critical points at all,  and the intersection of any bounded orbit with the
  (discrete) set of critical points is finite. Hence, passing to a forward iterate, we may additionally assume that the forward orbit of $z_0$ does not 
    contain a critical point.  
   Let $\FF_1$ be the set of fundamental domains occurring in $\s$, and let $\FF_2$ be the 
   set whose existence is guaranteed by Lemma~\ref{lem:compactaddresses};
    say  $\FF_2 = \{ F^0, F^1 , \dots , F^{m-1}\}$, where we assume that
     \[ F^0 \prec F^1 \prec \dots \prec F^{m-1} \prec F^1 \]
     with respect to the cyclical order at infinity. 
   
  Let $X$ be the set of points on the unit circle $S^1 = \R/\Z$ having an $(m+1)$-ary expansion that
    contains only the entries $0,\dots,m-1$. Via the $(m+1)$-ary expansion, this set
    is order-isomorphic to $\{0,\dots,m-1\}^\N$, which in turn is clearly 
    order-isomorphic to $\FF_2^{\N}$. Let $\phi\colon \FF_2^{\N}\to X$ be this order-isomorphism;
    then  $\phi$ conjugates the shift on  $\FF_2^{\N}$ to the $(m+1)$-tupling map on $X$.  

  Suppose that $T_0$ is a collection of $p\geq 1$ bounded external addresses that land at $z_0$; 
    we claim that $p\leq m+1$. Indeed, for $j\geq 0$, define $T_j\defeq \sigma^j(T_0)$. Then
    all filaments at addresses in $T_j$ land at $f^j(z_0)$. Since $z_0$ is not pre-periodic and 
    its orbit does not pass through any critical points, the $T_j$ are pairwise disjoint, and
    $\sigma\colon T_j\to T_{j+1}$ is an order-preserving bijection for all $j$. Furthermore,
    the $T_j$ are pairwise \emph{unlinked}. That is, if $j\neq \tilde{j}$, then all elements of
    $T_j$ lie between the same two adjacent elements of $T_{\tilde{j}}$ with respect to circular order. 

   This means that the set $\phi(T_0)$ is a \emph{wandering $p$-gon} for the $(m+1)$-tupling map
     on $S^1$. Kiwi \cite[Theorem 1.1]{Kiwi02}
        proved that polynomials of degree $d$ do not have wandering $(d+1)$-gons. 
        A combinatorial version of this result (see 
         \cite[Theorem~B]{blokhlevin02}) implies that $p\leq m+1$ as claimed.
\end{proof}
\begin{remark} 
  It seems likely that one can also directly prove the absence of wandering $d+2$-gons for maps 
    with at most $d$ singular values. (Compare \cite{alhabib-rempe15} for the proof of the case  $d=1$, i.e.\
    the \emph{no wandering triangles theorem} for exponential maps.) This would imply that the number
    of filaments in Theorem~\ref{thm:boundedaddresses} is always bounded by $d+1$ (assuming that $z_0$ is
     not pre-critical). 
\end{remark}

\section{Appendix: Cyclic order of unbounded closed connected sets}\label{sec:cyclicorder}
  In this section, suppose
    that $\mathcal{A}$ is any pairwise disjoint collection of unbounded, closed, connected subsets of $\C$ such that, for every $A\in\mathcal{A}$, all
  elements of $\mathcal{A}\setminus\{A\}$ belong to the same connected component 
  of $\C\setminus A$. Observe that the latter condition holds, in particular,
  if no $A\in\mathcal{A}$ separates the plane. 
  
   The purpose of this section is to note that
     there is a natural \emph{cyclic order} (at $\infty$) on $A$. 
      Recall that a cyclic order is a ternary relation 
      $A\prec B \prec C$ that is cyclic, asymmetric, transitive and total \cite[\S~5]{cechpointsets}.

    In our case, the relation $A\prec B \prec C$ means that $B$ lies between  
     $A$ and $C$ in positive orientation.
     To make this precise, let us begin by defining a circular order on any finite subset of $\mathcal{A}$. So suppose that $A_1,\dots , A_n$ ($n\geq 3$) are distinct elements of 
     $\mathcal{A}$. Let $W_j$ be the connected component of $\C\setminus A_j$ that contains $A_i$ for $i\neq j$, and set
     $\tilde{A}_j \defeq \Ch\setminus W_j$. Then $K\defeq \bigcup_{j=1}^{n} \tilde{A}_j$ is a compact, connected and full set in $\Ch$, and its complement is
      \[ W \defeq \Ch\setminus K = \bigcap_{j=1}^{\infty} W_j. \] 
      In other words, the simply connected domain $W$ is the unique connected component 
     $W$ of $\C\setminus \bigcup_{j=1}^n A_j$ whose boundary intersects $A_j$ for each $j$. 

  We now consider the space of \emph{prime ends} of $W$; see \cite[Section~2.4]{pommerenke}. Recall that these form 
    a topological circle, and therefore possess a natural cyclic order. Note that the connected components of 
    $K\setminus\{\infty\}$ are precisely the $\tilde{A}_j\setminus\{\infty\}$. It follows 
    (e.g.\ as a consequence of the plane separation theorem \cite[Theorem~3.1, Chapter~VI]{whyb}) that there are exactly
   $n$ different accesses $\zeta_1,\dots,\zeta_n$ to $\infty$ from $W$. They separate the circle of prime ends into $n$
   complementary intervals $I_1,\dots,I_n$, which may be labeled such that $I_j$ consists of those prime ends that can be represented by
   a sequence of cross-cuts both of whose endpoints belong to $A_j$. We define the circular order of the sets $A_j$ at $\infty$ (in positive orientation) to be 
   the circular order of these intervals, taken in \emph{negative} orientation.

 If we add a new element $A_{n+1}$ of $\mathcal{A}$ to our collection, then it is easy to check that this does not change the definition of the
   circular order of $A_1,\dots,A_n$. Hence we do indeed obtain a well-defined circular order on all of $\mathcal{A}$.
 Moreover, suppose that 
     $\tilde{\mathcal{A}}$ is a second collection  as above, where every element of $\tilde{\mathcal{A}}$ is
      contained in an element of $\mathcal{A}$ and every element of $\mathcal{A}$ contains
     exactly one element of $\tilde{\mathcal{A}}$. Then the cyclic order on $\tilde{\mathcal{A}}$ coincides with
     the corresponding order on  $\mathcal{A}$.

   We can use this observation to define cyclic order also for pairwise disjoint collections of \emph{open} unbounded
    domains, each of which contains exactly one homotopy class of curves to infinity. (Simply replace each
    domain by a representative in the mentioned homotopy class.) 

   Furthermore, suppose that
    $U$ and $\tilde{U}$ are unbounded domains in $\C$, that $\phi\colon U\to \tilde{U}$ is a conformal isomorphism.
    Also suppose that $\mathcal{A}$ and $\tilde{\mathcal{A}}$ are collections as above, whose elements are contained 
    in $U$ and $\tilde{U}$, respectively, that $\phi$ maps every element of $\mathcal{A}$ to an 
    element of $\tilde{\mathcal{A}}$, and that all elements of $\tilde{\mathcal{A}}$ arise in this manner.
    Then the action of $\phi$ on $\mathcal{A}$ preserves cyclic order. 

  Finally, let $\mathcal{A}$ be a pairwise disjoint collection of closed, connected sets in $\C^*=\C\setminus\{0\}$,
    and that the closure of each element of $\mathcal{A}$ contains both $0$ and $\infty$. Then we can define the cyclic order at $\infty$ 
    on $\mathcal{A}$, by replacing each element of $\mathcal{A}$ by an unbounded connected subset that is closed in $\C$, and applying the above definition.
   Analogously, we can define a cyclic order on $\mathcal{A}$ at $0$. It is easy to see (again using the plane separation theorem) that both orders coincide, and depend 
   only on $\mathcal{A}$ rather than any choices made in the construction.

 \begin{remark}
   There are some subtleties to the definition of circular order on connected sets, compared with the case of arcs to infinity which has 
     been previously considered in the complex dynamics literature. For example, note that the assumption that the
     sets in $\mathcal{A}$ are closed is crucial. Indeed, consider the case of a Knaster bucket-handle continuum $X$, whose 
     terminal point (that is, the initial point of the half-ray running through all of the endpoints of the complementary intervals
      of the ternary Cantor set) has been placed at $\infty$, and consider the collection of path-connected components of
      this set. Every such component is unbounded and connected, but since each component accumulates everywhere upon $X$,
      there is no sensible circular order among them.
 \end{remark}

\section{Appendix: Unbounded postsingular sets}\label{sec:unboundedpostsingularset}
  As mentioned in the introduction, the Douady-Hubbard landing theorem no longer holds
    for polynomials with escaping singular values. It is still true that every repelling (or parabolic) periodic point
    is accessible from the basin of infinity, and even by a dynamic ray, if we extend this notion appropriately to the case where
    the ray passes through critical points; compare \cite{ELv,LP}). However, it is possible for the set of landing
    rays to be uncountable, and for none of these rays
    to be periodic; compare \cite[Appendix~C]{goldbergmilnor} and \cite{LP}.

Let us now briefly discuss the case of transcendental entire functions $f$ with unbounded
    postsingular set $\P(f)$.
    When $f\notin \BB$, the structure of the escaping set may change dramatically within a given 
   parameter space (compare \cite[Appendix~B]{rempesixsmith}), and hence it is not clear whether questions
   concerning the landing of rays or filaments are even meaningful in this setting. 
  Let us hence restrict to the case of $f\in\BB$.

  First suppose that $f$ has an escaping singular value. In addition to the above-mentioned 
    behaviour that occurs already for polynomials,  it is also possible for a repelling periodic
    point to not be accessible from the escaping set at all (by hairs or filaments). Indeed, this is the case
    for the fixed point of the exponential map $z\mapsto e^z$ having imaginary part between $0$ and $\pi$, and shows
   that the question of landing behaviour at periodic points becomes considerably more subtle when $\P(f)$ is unbounded.

 However, consider now the full family of exponential maps, $f_a\colon z\mapsto e^z+a$. Suppose that
    the singular value $a$ has an unbounded orbit but
     does not belong to the escaping set. Then $f_a$ is
     criniferous. In \cite{Re06}, it is shown that that all periodic hairs of $f_a$ land. 
     Conversely, every periodic point, with the exception of 
     at most one
     periodic orbit, is the landing point of a periodic hair.  The exceptional orbit cannot be parabolic, but 
      it is an open question whether it can be repelling. It is shown in \cite{Re06} that a 
      plausible conjecture about parameter space of exponential maps
      (the ``no ghost limbs conjecture'') would imply that this is not the case.

 Hence it is plausible that the Douady-Hubbard landing theorem remains valid for 
       exponential maps as above, which raises the question whether the main theorem of our paper
       may also have an extension for functions $f\in\B$ with unbounded but non-escaping singular orbits. 
       A crucial step is to ensure the landing of periodic rays (or filaments). Indeed, if periodic rays land and the function has good geometry in the sense of \cite{R3S}, one can show that the number of rationally invisible repelling periodic orbits is bounded by the number of free singular values \cite{BF20}, just 
         as for the exponential family. Unfortunately,  the proofs in \cite{Re06} that periodic rays land use sophisticated results on the structure of 
        the (one-dimensional) parameter
     space of exponential maps, and it appears that 
     fundamentally new approaches would be required to resolve this question in full generality. 
     
\newcommand{\etalchar}[1]{$^{#1}$}
\providecommand{\bysame}{\leavevmode\hbox to3em{\hrulefill}\thinspace}
\providecommand{\href}[2]{#2}


\begin{thebibliography}{BDH{\etalchar{+}}99}

\bibitem[ABR]{simonannalasse}
Simon Albrecht, Anna~M. Benini, and Lasse Rempe, \emph{Conjugacy classes of
  finite-order transcendental entire functions}, in preparation.

\bibitem[ALS11]{avilalyubichshen}
Artur Avila, Mikhail Lyubich, and Weixiao Shen, \emph{Parapuzzle of the
  {M}ultibrot set and typical dynamics of unimodal maps}, J. Eur. Math. Soc.
  (JEMS) \textbf{13} (2011), no.~1, 27--56. 

\bibitem[ARG17]{alhabib-rempe15}
Nada Alhabib and Lasse Rempe-Gillen, \emph{Escaping endpoints explode}, Comput.
  Methods Funct. Theory \textbf{17} (2017), no.~1, 65--100.

\bibitem[Bak84]{Ba84}
Irvin~Noel Baker, \emph{Wandering domains in the iteration of entire
  functions}, Proc. London Math. Soc. (3) \textbf{49} (1984), no.~3, 563--576.

\bibitem[BDH{\etalchar{+}}99]{BD}
Clara Bodel{\'o}n, Robert~L. Devaney, Michael Hayes, Gareth Roberts, Lisa~R.
  Goldberg, and John~H. Hubbard, \emph{Hairs for the complex exponential
  family}, Internat. J. Bifur. Chaos Appl. Sci. Engrg. \textbf{9} (1999),
  no.~8, 1517--1534.

\bibitem[Ben11]{Be11}
Anna~Miriam Benini, \emph{Triviality of fibers for misiurewicz parameters in
  the exponential family}, Conform. Geom. Dyn. \textbf{15} (2011), no.~9,
  133--151.

\bibitem[Ben15]{Be15}
\bysame, \emph{Expansivity properties and rigidity for non-recurrent
  exponential maps}, Nonlinearity \textbf{28} (2015), no.~7, 2003--2025.

\bibitem[Ben16]{Be16}
\bysame, \emph{A note on repelling periodic points for meromorphic functions
  with a bounded set of singular values}, Rev. Mat. Iberoam. \textbf{32}
  (2016), no.~1, 267--274.

\bibitem[BF15]{BF15}
Anna~Miriam Benini and N\'uria Fagella, \emph{A separation theorem for entire
  transcendental maps}, Proc. Lond. Math. Soc. (3) \textbf{110} (2015), no.~2,
  291--324.

\bibitem[BF17]{BF17}
Anna~Miriam Benini and N\'uria Fagella, \emph{Singular values and non-repelling
  cycles for entire transcendental maps}, 2017, to appear in Indiana Univ.
  Math. J.

\bibitem[BF20]{BF20}
Anna~Miriam Benini and N\'uria Fagella, \emph{A bound on the number of rationally invisible repelling orbits}, Advances in Mathematics \textbf{370}, (2020). 

\bibitem[Bin51]{bing1951}
R.~H. Bing, \emph{Snake-like continua}, Duke Math. J. \textbf{18} (1951),
  653--663.

\bibitem[BK07]{BK07}
Krzysztof Bara{\'n}ski and Bogus{\l}awa Karpi{\'n}ska, \emph{Coding trees and
  boundaries of attracting basins for some entire maps}, Nonlinearity
  \textbf{20} (2007), no.~2, 391--415.

\bibitem[BL02]{blokhlevin02}
A.~Blokh and G.~Levin, \emph{An inequality for laminations, {J}ulia sets and
  ``growing trees''}, Ergodic Theory Dynam. Systems \textbf{22} (2002), no.~1,
  63--97.

\bibitem[BL14]{BL14}
Anna~Miriam Benini and Mikhail Lyubich, \emph{Repelling periodic points and
  landing of rays for post-singularly bounded exponential maps}, Annales de
  l'Institut Fourier \textbf{64} (2014), no.~4, 1493--1520.

\bibitem[\v{C}ec69]{cechpointsets}
Eduard \v{C}ech, \emph{Point sets}, translated from the Czech by Ale\v{s} Pultr,
 Academic Press, New York-London, 1969.

\bibitem[Den14]{De}
Asl{\i} Deniz, \emph{A landing theorem for periodic dynamic rays for
  transcendental entire maps with bounded post-singular set}, J. Difference
  Equ. Appl. \textbf{20} (2014), no.~12, 1627--1640.

\bibitem[Dev84]{devaneybifurcation}
Robert~L. Devaney, \emph{Julia sets and bifurcation diagrams for exponential
  maps}, Bull. Amer. Math. Soc. (N.S.) \textbf{11} (1984), no.~1, 167--171.

\bibitem[DGH86]{dgh}
Robert~L. Devaney, Lisa~R. Goldberg, and John~H. Hubbard, \emph{A dynamical
  approximation to the exponential map by polynomials}, Preprint, 1986.

\bibitem[DH85]{DH84}
Adrien Douady and John Hubbard, \emph{Etude dynamique des polyn{\^o}mes
  complexes}, Pr{\'e}\-publications math{\'e}mathiques d'Orsay (1984 / 1985),
  no.~2/4.

\bibitem[DJ02]{devaneyjarque}
Robert~L. Devaney and Xavier Jarque, \emph{Indecomposable continua in
  exponential dynamics}, Conform. Geom. Dyn. \textbf{6} (2002), 1--12.

\bibitem[DJR05]{devaneyjarquemoreno}
Robert~L. Devaney, Xavier Jarque, and M\'{o}nica~Moreno Rocha,
  \emph{Indecomposable continua and {M}isiurewicz points in exponential
  dynamics}, Internat. J. Bifur. Chaos Appl. Sci. Engrg. \textbf{15} (2005),
  no.~10, 3281--3293.

\bibitem[DK84]{DK}
Robert~L. Devaney and Micha{\l} Krych, \emph{Dynamics of {${\rm exp}(z)$}},
  Ergodic Theory Dynam. Systems \textbf{4} (1984), no.~1, 35--52.

\bibitem[DT86]{DT86}
Robert~L. Devaney and Folkert Tangerman, \emph{Dynamics of entire functions
  near the essential singularity}, Ergodic Theory Dynam. Systems \textbf{6}
  (1986), no.~4, 489--503.

\bibitem[EFJS19]{FagellaTails}
Vasiliki Evdoridou, N\'{u}ria Fagella, Xavier Jarque, and David~J. Sixsmith,
  \emph{Singularities of inner functions associated with hyperbolic maps}, J.
  Math. Anal. Appl. \textbf{477} (2019), no.~1, 536--550.

\bibitem[EL89]{ELv}
Alexandre~{\`E}. Eremenko and Genadi~M. Levin, \emph{Periodic points of
  polynomials}, Ukrain. Mat. Zh. \textbf{41} (1989), no.~11, 1467--1471, 1581.

\bibitem[EL92]{EL92}
Alexandre~{\`E}. Eremenko and Mikhail~Yu. Lyubich, \emph{Dynamical properties
  of some classes of entire functions}, Ann. Inst. Fourier (Grenoble)
  \textbf{42} (1992), no.~4, 989--1020.

\bibitem[Fat26]{fatou26}
Pierre Fatou, \emph{Sur l'it\'eration des fonctions transcendantes enti\`eres},
  Acta Math. \textbf{47} (1926), 337--370.

\bibitem[GM93]{goldbergmilnor}
Lisa~R. Goldberg and John Milnor, \emph{Fixed points of polynomial maps. {II}.
  {F}ixed point portraits}, Ann. Sci. \'Ecole Norm. Sup. (4) \textbf{26}
  (1993), no.~1, 51--98.

\bibitem[Hub93]{Hu}
John~Hamal Hubbard, \emph{Local connectivity of {J}ulia sets and bifurcation
  loci: three theorems of {J}.-{C}. {Y}occoz}, Topological methods in modern
  mathematics (Stony Brook, NY, 1991), Publish or Perish, Houston, TX, 1993,
  pp.~467--511.

\bibitem[JT02]{jungcktimm}
Gerald Jungck and Mathew Timm, \emph{Another characterization of non-separating
  planar continua}, Topology Proc. \textbf{26} (2001/02), no.~1, 235--246,
  (Proceedings of the {S}pring {T}opology and {D}ynamical {S}ystems
  {C}onference, 2001).

\bibitem[Kiw02]{Kiwi02}
Jan Kiwi, \emph{Wandering orbit portraits}, Trans. Amer. Math. Soc.
  \textbf{354} (2002), 1473--1485.

\bibitem[LP96]{LP}
Genadi Levin and Feliks Przytycki, \emph{External rays to periodic points},
  Israel Journal of Mathematics \textbf{94} (1996), no.~1, 29--57.

\bibitem[Lyu83]{Lyu83}
Mikhail Lyubich, \emph{Entropy properties of rational endomorphisms of the
  riemann sphere}, Ergodic Theory Dynam. Systems \textbf{3} (1983), no.~3,
  351--385.

\bibitem[Ma{\'n}12]{manka2012}
Roman Ma{\'n}ka, \emph{Results and problems in fixed point theory for tree-like
  continua}, Topology Proc. \textbf{39} (2012), 113--130.

\bibitem[Mih10]{Mi10}
Helena Mihaljevi{{\'c}-Brandt}, \emph{A landing theorem for dynamic rays of
  geometrically finite entire functions}, 2010, pp.~696--714.

\bibitem[Mil06]{Mi}
John Milnor, \emph{Dynamics in one complex variable}, third edition ed., vol.
  160, Annals of Mathematics Studies, Princeton University Press, Princeton,
  NJ, 2006.

\bibitem[Min17]{mindaquotients}
David Minda, \emph{Quotients of hyperbolic metrics}, Comput. Methods Funct.
  Theory \textbf{17} (2017), no.~4, 579--590.

\bibitem[MR13]{wandering}
Helena Mihaljevi{\'c-Brandt} and Lasse Rempe{-Gillen}, \emph{Absence of
  wandering domains for some real entire functions with bounded singular sets},
  Math. Ann. \textbf{357} (2013), no.~4, 1577--1604. 

\bibitem[Nad92]{continuumtheory}
Sam~B. Nadler, Jr., \emph{Continuum theory. {A}n introduction}, Monographs and
  Textbooks in Pure and Applied Mathematics, vol. 158, Marcel Dekker Inc., New
  York, 1992.

\bibitem[Pfr19]{pfrangthesis}
David Pfrang, \emph{Homotopy {H}ubbard trees for post-singularly finite
  transcendental entire functions}, doctoral thesis, Jacobs University Bremen,
  2019.

\bibitem[Pom92]{pommerenke}
Christian Pommerenke, \emph{Boundary behaviour of conformal maps}, Grundlehren
  der Mathematischen Wissenschaften, vol. 299, Springer-Verlag, Berlin, 1992.

\bibitem[PRS18]{pfrangrothgangschleicher}
David Pfrang, Michael Rothgang, and Dierk Schleicher, \emph{Homotopy hubbard
  trees for post-singularly finite exponential maps}, 2018.

\bibitem[Prz94]{Pr94}
Feliks Przytycki, \emph{Accessibility of typical points for invariant measures
  of positive {L}yapunov exponents for iterations of holomorphic maps}, Fund.
  Math. \textbf{144} (1994), no.~3, 259--278.

\bibitem[Rem06a]{Re06}
Lasse Rempe, \emph{A landing theorem for periodic rays of exponential maps},
  Proc. Amer. Math. Soc \textbf{134} (2006), no.~9, 2639--2648.

\bibitem[Rem06b]{topescapingnew}
\bysame, \emph{Topological dynamics of exponential maps on their escaping
  sets}, Ergodic Theory Dynam.~Systems \textbf{26} (2006), no.~6, 1939--1975.

\bibitem[Rem07a]{eremenkoproperty}
\bysame, \emph{On a question of {E}remenko concerning escaping sets of entire
  functions}, Bull. London Math. Soc. \textbf{39} (2007), no.~4, 661--666.

\bibitem[Rem07b]{nonlanding}
\bysame, \emph{On nonlanding dynamic rays of exponential maps}, Ann. Acad.
  Sci. Fenn. Math. \textbf{32} (2007), no.~2, 353--369.

\bibitem[Rem08]{Re08}
\bysame, \emph{Siegel disks and periodic rays of entire functions}, J. Reine
  Angew. Math. \textbf{624} (2008), 81--102.

\bibitem[Rem09]{Re09}
\bysame, \emph{Rigidity of escaping dynamics for transcendental entire
  functions}, Acta Math. \textbf{203} (2009), no.~2, 235--267.

\bibitem[Rem16]{arclike}
\bysame, \emph{Arc-like continua, {J}ulia sets of entire functions, and
  {E}remenko's {C}onjecture}, Preprint, 2016, arXiv:1610.06278.

\bibitem[Roe08]{Ro08}
Pascale Roesch, \emph{On local connectivity for the {J}ulia set of rational
  maps: {N}ewton's famous example}, Ann. of Math. (2) \textbf{168} (2008),
  no.~1, 127--174.

\bibitem[Rot05]{guenterthesis}
G\"unter Rottenfu{\ss}er,
  \emph{\href{http://www.iu-bremen.de/phd/files/1123596586.pdf}{
  Dynamical Fine Structure of entire transcendental functions}}, doctoral
  thesis, International University Bremen, 2005.

\bibitem[RRRS11]{R3S}
G\"unter Rottenfu{\ss}er, Johannes R\"uckert, Lasse Rempe, and Dierk
  Schleicher, \emph{Dynamic rays of bounded-type entire functions}, Ann. of
  Math. (2) \textbf{173} (2011), no.~1, 77--125.

\bibitem[RRS10]{devaneyhairs}
Lasse Rempe, Philip~J. Rippon, and Gwyneth~M. Stallard, \emph{Are {D}evaney
  hairs fast escaping?}, J. Difference Equ. Appl. \textbf{16} (2010), no.~5-6,
  739--762.

\bibitem[RS08a]{expcombinatorics}
Lasse Rempe and Dierk Schleicher, \emph{Combinatorics of bifurcations in
  exponential parameter space}, Transcendental Dynamics and Complex Analysis
  (P.~Rippon and G.~Stallard, eds.), London Math. Soc. Lecture Note Ser., vol.
  348, Cambridge Univ. Press, 2008, pp.~317--370.

\bibitem[RS08b]{cosescaping}
G\"unter Rottenfusser and Dierk Schleicher, \emph{Escaping points of the cosine
  family}, Transcendental dynamics and complex analysis, London Math. Soc.
  Lecture Note Ser., vol. 348, Cambridge Univ. Press, Cambridge, 2008,
  pp.~396--424.

\bibitem[RS17]{rempesixsmith}
Lasse {Rempe-Gillen} and Dave Sixsmith, \emph{Hyperbolic entire functions and
  the {E}remenko-{L}yubich class: class {$\mathcal{B}$} or not class
  {$\mathcal{B}$}?}, Math. Z. \textbf{286} (2017), no.~3-4, 783--800.

\bibitem[RvS11]{RvS}
Lasse Rempe and Sebastian van~Strien, \emph{Absence of line fields and
  {M}a\~n\'e's theorem for nonrecurrent transcendental functions}, Trans. Amer.
  Math. Soc. \textbf{363} (2011), no.~1, 203--228.

\bibitem[RY08]{roeschyin}
Pascale Roesch and Yongcheng Yin, \emph{The boundary of bounded polynomial
  {F}atou components}, C. R. Math. Acad. Sci. Paris \textbf{346} (2008),
  no.~15-16, 877--880. 

\bibitem[SZ03a]{expescaping}
Dierk Schleicher and Johannes Zimmer, \emph{Escaping points of exponential
  maps}, J. London Math. Soc. (2) \textbf{67} (2003), no.~2, 380--400.

\bibitem[SZ03b]{SZ03}
\bysame, \emph{Periodic points and dynamic rays of exponential maps}, Ann.
  Acad. Sci. Fenn. Math. \textbf{28} (2003), 327--354.

\bibitem[Why42]{whyb}
Gordon~Thomas Whyburn, \emph{Analytic {T}opology}, American Mathematical
  Society Colloquium Publications, v. 28, American Mathematical Society, New
  York, 1942.

\bibitem[Wor18]{worsley}
Stephen Worsley, \emph{The topology of postsingularly finite exponential maps},
  doctoral thesis, University of Liverpool, 2018.

\end{thebibliography}
\end{document}